\definecolor{darkgreen}{rgb}{0,0.5,0}
\definecolor{darkblue}{rgb}{0,0,0.7}
\definecolor{darkred}{rgb}{0.9,0.1,0.1}
\newtheorem*{rep@theorem}{\rep@title}
\newcommand{\newreptheorem}[2]{%
\newenvironment{rep#1}[1]{%
 \def\rep@title{#2 \ref{##1}}%
 \begin{rep@theorem}}%
 {\end{rep@theorem}}}
\newtheorem{theorem}{Theorem}
\newtheorem{proposition}{Proposition}
\newtheorem{lemma}[proposition]{Lemma}
\newtheorem{corollary}[proposition]{Corollary}
\theoremstyle{remark}
\theoremstyle{definition}
\newtheorem{definition}[proposition]{Definition}
\newtheorem{remark}[proposition]{Remark}
\numberwithin{equation}{section}
\numberwithin{proposition}{section}
\newcommand{\Z}{\mathbb{Z}}
\newcommand{\N}{\mathbb{N}}
\newcommand{\R}{\mathbb{R}}
\newcommand{\E}{\mathbb{E}}
\newcommand{\Zd}{\mathbb{Z}^d}
\newcommand{\Rd}{{\mathbb{R}^d}}
\newcommand{\di}{\mathrm{d}}
\newcommand{\inte}[1]{%
 {\kern0pt#1}^{\mathrm{o}}%
}
\newcommand{\ep}{\varepsilon}
\renewcommand{\a}{\mathbf{a}}
\renewcommand{\subset}{\subseteq}
\DeclareMathOperator{\cov}{cov}
\DeclareMathOperator{\diam}{diam}
\DeclareMathOperator{\supp}{supp}
\renewcommand{\bar}{\overline}
\newcommand{\indc}{\mathds{1}}
\newcommand{\C}{C}
\newcommand{\G}{\mathcal{G}}
\newcommand{\f}{\mathbf{f}}
\renewcommand{\L}{\mathcal{L}}
\title{Parallel spin wave for the Villain model}
\author{Paul Dario}
\thanks{(P. Dario) LAMA, Universit\'e Paris-Est Cr\'eteil, CNRS UMR 8050, and CNRS, France.
{\footnotesize paul.dario@u-pec.fr.}
}
\author{Wei Wu}
\thanks{(W. Wu) Department of Mathematics, NYU Shanghai and NYU-ECNU Institute of Mathematical Sciences, China.
{\footnotesize wei.wu@nyu.edu.}
}
\begin{document}

\maketitle

\begin{abstract}
    In this paper, we study the Villain model in $\Zd$ in dimension $d\geq 3$. It is conjectured, that the parallel correlation function in the infinite volume Gibbs state, i.e., the map
    $$
    x \mapsto \langle \cos\theta(0) \cos\theta(x) \rangle_{\mu_{\mathrm{Vil}, \beta}} -\left( \langle \cos\theta(0) \rangle_{\mu_{\mathrm{Vil}, \beta}} \right)^2,
    $$
    decays like $|x|^{-2(d-2)}$ as $|x| \to \infty$ at low temperature. The results of~\cite{BFLLS} show that for the related XY model, this correlation decays at least as fast as $|x|^{2-d}$. We prove the optimal upper and lower bounds for the Villain model in $d=3$, up to a logarithmic correction, and also improve the upper bound in general dimensions.

    Our proof builds upon the approach developed in~\cite{DW}, which in turn is inspired by a key observation of Fr\"{o}hlich and Spencer~\cite{FS4d}: in the low temperature regime, a combination of duality transformation and renormalisation allows certain properties of the Villain model to be analysed in terms of a (vector-valued) $\nabla \varphi$ interface model. This latter model can be investigated using the Helffer-Sj\"{o}strand representation formula (originally introduced by Helffer and Sjöstrand~\cite{HS} and used by Naddaf and Spencer~\cite{NS} and Giacomin, Olla and Spohn~\cite{GOS} to identify the scaling limit of the model). In this paper, we go beyond the results of~\cite{DW} by deriving more refined estimates, combining the representation with tools from elliptic and parabolic regularity theory.
\end{abstract}

\section{Introduction}

In this paper we study the Villain model \cite{Vil}, a classical statistical mechanics model with two-component spins. For $L \in \N$, we consider the finite box $\Lambda_L := \left\{ - L , \ldots, L \right\}^d \subset \Zd$, where $\Zd$ is the Euclidean lattice in dimension $d \geq 3$. We denote by $\partial\Lambda_L$ its boundary, namely the set of vertices in $\Lambda_L$ that are connected to $\Zd\setminus \Lambda_L$ by some edge, and by $E(\Lambda_L)$ its edge set. We also set $\Lambda_L^\circ = \Lambda_L \setminus \partial \Lambda_L$. The Villain model at inverse temperature $\beta \in (0 , \infty)$ on the box $\Lambda_L$ with zero boundary condition is given by the following Gibbs measure on the set of functions $\Omega_{L} := \left\{ \theta : \Lambda_L \to [- \pi , \pi) \right\} \simeq [- \pi , \pi)^{\Lambda_L}$
\begin{equation} \label{e.finitedirichlet}
  \mu_{\mathrm{Vil}, \beta, L}(d\theta):= \frac{1}{Z_{L,\beta}} \prod_{(x,y) \subset E(\Lambda_L)} v_\beta(\theta(x) - \theta(y)) \prod_{x\in \partial \Lambda_L}\delta_0(\theta(x)) \prod_{x\in \Lambda_L^\circ}  \,d\theta(x),
\end{equation}
where the interaction $v_\beta$ is the heat kernel on $S^1$ defined by
\begin{equation*} 
v_\beta(\theta) := \sum_{m\in \Z} \exp{\left(-\frac{\beta}{2} (\theta+ 2\pi m)^2\right)},
\end{equation*}
and $Z_{L,\beta}$ is the normalization constant that makes \eqref{e.finitedirichlet} a probability measure. 
The expectation with respect to the Gibbs measure \eqref{e.finitedirichlet} is denoted by $\left\langle \cdot \right\rangle_{\mu_{\mathrm{Vil}, \beta, L}}$. We also define the spin variable, which takes values on the unit circle, by $S_x= (\cos{\theta(x)},\sin{\theta(x)})$. By the $\theta \to - \theta$ symmetry, we have
\begin{equation*}
\left\langle S_0 \cdot S_x \right\rangle_{\mu_{\mathrm{Vil}, \beta, L}} = \left\langle e^{i (\theta(0) - \theta(x))}\right\rangle_{\mu_{\mathrm{Vil}, \beta,L}}.
\end{equation*}
The Villain model is a fundamental example of the \textit{Abelian spin models}, and is believed to share the same large-scale properties of other models in this class, such as  the classical XY model. 

It is known that, as a consequence of the Ginibre correlation inequality \cite{Gi}, the thermodynamic limit of the measures \eqref{e.finitedirichlet} exists as $L \to \infty$ (i.e., the sequence of measures $\mu_{\mathrm{Vil}, \beta,L}$ converges as $L \to \infty$). We denote by $\mu_{\mathrm{Vil}, \beta}$ the corresponding infinite-volume Gibbs measure (N.B. this measure is invariant under translations). A natural question for the Villain model in infinite volume is whether the system exhibits long range order (or symmetry breaking), in the sense that $\left\langle  S_x \right\rangle_{\mu_{\mathrm{Vil}, \beta}} >0$. Indeed, spontaneous symmetry breaking only occurs when $d\ge 3$: in $d=2$ there is no spontaneous magnetization for any $\beta$ \cite{MW} (i.e., $\left\langle  S_x \right\rangle_{\mu_{\mathrm{Vil}, \beta}} = 0$ for any $\beta > 0$). In contrast, in $d\ge 3$ the breakthrough work of Fr\"{o}hlich, Simon and Spencer~\cite{FSS} shows that these models undergo an order/disorder phase transitions: there exists an critical inverse temperature $\beta_c > 0$ such that 
\begin{equation*}
    \mbox{for any}~ \beta > \beta_c, ~~\left\langle  S_x \right\rangle_{\mu_{\mathrm{Vil}, \beta}} \neq 0 \hspace{5mm} \mbox{and} \hspace{5mm} \mbox{for any} ~~ \beta < \beta_c,~~ \left\langle  S_x \right\rangle_{\mu_{\mathrm{Vil}, \beta}} = 0.
\end{equation*}

A fundamental question when the system has spontaneous symmetry breaking is to determine the precise asymptotics of the \textit{truncated spin correlations} $\left\langle S_0 \cdot S_x \right\rangle_{\mu_{\mathrm{Vil}, \beta}} -\left\langle S_0  \right\rangle_{\mu_{\mathrm{Vil}, \beta}}^2 $. It was conjectured in \cite{Dy} (see also \cite{MW}), known as the \textit{Gaussian spin-wave} prediction, that the asymptotic truncated spin correlation should behave like a massless free field, decaying as $|x|^{2-d}$. Heuristic spin-wave theory also leads to predictions about the asymptotic \textit{transversal} (namely, $\langle \sin\theta(0) \sin\theta(x) \rangle_{\mu_{\mathrm{Vil}, \beta}}$) and \textit{parallel} ($\langle \cos\theta(0) \cos\theta(x) \rangle_{\mu_{\mathrm{Vil}, \beta}}$) two-point functions. Progress was made in the 80s, when Fr\"{o}hlich and Spencer \cite{FS4d} and Bricmont, Fontaine, Lebowitz, Lieb and Spencer \cite{BFLLS} obtained upper and lower bounds for asymptotic spin correlations and transversal two-point functions.

The precise asymptotics, stated below, were recently obtained by the authors in \cite{DW}, thus rigorously justifying the spin wave conjecture for the spin correlations and the transversal two-point functions. 

\begin{theorem}[\cite{DW}]
\label{t.twopt}
Let $d\geq3$. There exist constants $\beta_0=\beta_0(d) \in (0 , \infty)$, $ c_1= c_1(\beta,d) > 0, c_2= c_2(\beta,d)> 0$, and $\alpha=\alpha(d)>0$ such that, for all $\beta>\beta_0$, the transversal two-point function has the asymptotics
\begin{equation} \label{e.2ptfirstfirst}
\langle \sin\theta(0) \sin\theta(x) \rangle_{\mu_{\mathrm{Vil}, \beta}}
= \frac{c_2}{|x|^{d-2}} + O\left(\frac{1}{|x|^{d-2+\alpha}}\right),
\end{equation}
and the spin-spin correlation function satisfies 
\begin{equation}
\label{e.2ptfirst}
  \left\langle S_0 \cdot S_x \right\rangle_{\mu_{\mathrm{Vil}, \beta}} =
 \langle S_0\rangle_{\mu_{\mathrm{Vil}, \beta}}^2 + \frac{c_1}{|x|^{d-2}}+ O\left(\frac{1}{|x|^{d-2+\alpha}}\right).
 \end{equation} 
\end{theorem}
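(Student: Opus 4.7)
The plan is to follow the Fröhlich--Spencer-inspired duality approach described in the introduction, converting the transversal two-point function into an observable of a vector-valued $\nabla\varphi$ interface model and then analyzing it with the Helffer--Sjöstrand representation. I would first apply the standard Sine-Gordon / Poisson summation duality to the Villain measure, introducing integer-valued charges on edges and rewriting the weights via a character expansion. After the Fröhlich--Spencer renormalization, which is available only in the low temperature regime $\beta > \beta_0(d)$, the integer modes get absorbed into a smooth, strictly convex, short-range gradient interaction, so Villain correlations become observables of an essentially Gaussian gradient field $\varphi : \Zd \to \R^n$ (the extra components arise from the dual vortex variables that must be tracked through the renormalization).

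Next, I would express the transversal two-point function as a derivative of the dual generating functional. Writing $\sin\theta(x) = \tfrac{1}{2i}(e^{i\theta(x)} - e^{-i\theta(x)})$ and expanding, one gets a linear combination of correlations of the form $\langle e^{i(\theta(0) \pm \theta(x))} \rangle$ which, after the duality, are controlled by the gradient model's two-point function with a source placed at $0$ and $x$. The Helffer--Sjöstrand representation then rewrites this as an expectation of the form $\E[G^{\a(\varphi)}(0,x)]$, where $G^{\a}$ is the Green's function of the random elliptic operator $-\nabla\cdot \a(\varphi)\nabla$ on $\Zd$ whose coefficients are functionals of the interface field.

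I would then invoke quantitative stochastic homogenization for this random operator. On the one hand, the dynamic (Langevin) viewpoint on $\varphi$ makes the coefficients stationary ergodic with good mixing in dimension $d \geq 3$; on the other hand, large-scale elliptic regularity theory and quantitative homogenization estimates give that $\E[G^{\a}(0,x)]$ is close, with algebraic rate, to the Green's function $\bar G^{\ahom}(0,x)$ of the homogenized operator $-\nabla\cdot \ahom \nabla$. Since $\ahom$ is a constant positive multiple of the identity by the lattice symmetries of the Villain measure, $\bar G^{\ahom}(0,x) = c(\ahom)|x|^{2-d} + O(|x|^{-d})$, and combining with the homogenization rate yields \eqref{e.2ptfirstfirst} with a constant $c_2 = c_2(\beta,d)$ that is explicit in terms of the effective diffusivity. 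The spin-spin estimate \eqref{e.2ptfirst} then follows by writing $S_0 \cdot S_x = \cos\theta(0)\cos\theta(x) + \sin\theta(0)\sin\theta(x)$, noting that $\langle\cos\theta\rangle_{\mu_{\mathrm{Vil},\beta}}^2$ captures the symmetry-broken contribution, and checking that the remaining parallel fluctuation contribution decays faster than $|x|^{2-d}$; for the purpose of Theorem \ref{t.twopt} only the cruder bound $|x|^{2-d}$ from \cite{BFLLS} on the parallel part is needed (the present paper's main contribution is the sharper parallel decay).

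The main obstacle is obtaining quantitative homogenization estimates on the Green's function for the operator arising from Helffer--Sjöstrand. The coefficients $\a(\varphi)$ are only stationary in the space-time Langevin dynamics on $\varphi$, not deterministically, so one must work with the parabolic version of the theory and establish enough decay of correlations for the $\nabla\varphi$ field to drive the two-scale expansion. Controlling the associated correctors and extracting pointwise Green's function asymptotics (as opposed to bulk $H^{-1}$-type bounds) is the technical heart of the argument, and is precisely where the parabolic regularity input mentioned in the abstract enters.
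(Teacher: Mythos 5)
Note first that the paper does not reprove this result; Theorem~\ref{t.twopt} is cited from~\cite{DW}, and the present article only describes the approach at a high level in its introduction. Your outline captures the broad skeleton of that approach correctly: the Fr\"ohlich--Spencer duality maps the Villain model to a vector-valued $\nabla\varphi$-interface model, one then writes the spin correlations as observables of $\mu_\beta$, and one applies the Helffer--Sj\"ostrand representation together with quantitative stochastic homogenization to extract the leading Green's-function behaviour. However, there are two substantive inaccuracies you should correct.

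First, after the duality and cluster expansion the resulting interaction is not \emph{short-range}: the measure $\mu_\beta$ (see its definition~\eqref{def:finite-volumeGibbs} in this paper) involves iterated Laplacians of all orders and a sum over charges $q \in \mathcal{Q}$ of arbitrarily large support, so the interaction has infinite range, with strength decaying only exponentially. This is explicitly flagged in the paper as one of the nontrivial features of the model, and it is the source of the logarithmic corrections and of the need for Schauder/Calder\'on--Zygmund-type regularity adapted to infinite-range operators. Your description would need to acknowledge and handle this.

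Second, and more seriously, your derivation of~\eqref{e.2ptfirst} from~\eqref{e.2ptfirstfirst} does not work. You propose writing $S_0\cdot S_x = \cos\theta(0)\cos\theta(x) + \sin\theta(0)\sin\theta(x)$ and invoking the~\cite{BFLLS} upper bound on the truncated $\cos$--$\cos$ correlation. But that bound is only $O(|x|^{2-d})$ --- the \emph{same} order as the leading term $c_2|x|^{2-d}$ in~\eqref{e.2ptfirstfirst}. It gives no asymptotic expansion for the parallel covariance, only an upper bound at the leading order, so it cannot be absorbed into an $O(|x|^{-(d-2+\alpha)})$ error term and does not produce a well-defined coefficient $c_1$. (Indeed, the identification $c_1 = c_2$ is stated in the present paper as a \emph{corollary} of the new Theorem~\ref{th:mainth}, which proves the parallel covariance decays \emph{strictly faster} than $|x|^{2-d}$; it was not available from~\cite{BFLLS}.) The correct route is the one implicitly reflected in Proposition~\ref{p.dual} here: one has a separate duality formula for $\langle \cos(\theta(0)-\theta(x))\rangle_{\mu_{\mathrm{Vil},\beta}} = \langle S_0\cdot S_x\rangle_{\mu_{\mathrm{Vil},\beta}}$, and the spin--spin asymptotic~\eqref{e.2ptfirst} is derived directly from that observable via Helffer--Sj\"ostrand and homogenization, in parallel with (but independently of) the argument for the transversal correlation. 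Your internal inconsistency (first claiming the parallel part decays strictly faster than $|x|^{2-d}$, then saying only the cruder bound is needed) is a symptom of this gap.
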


It remains to justify the spin wave prediction for the parallel two-point function. The best known rigorous bounds for the parallel two-point function in the XY model were obtained in \cite{BFLLS}. They proved that
\begin{equation*} 
    \frac{c}{|x|^{2(d-2)}} \leq \langle \cos\theta(0) \cos\theta(x) \rangle_{\mu_{\mathrm{XY} , \beta}} -\left( \langle \cos\theta(0) \rangle_{\mu_{XY, \beta}} \right)^2 
    \leq
    \frac{C}{|x|^{d-2}}.
\end{equation*}
It is believed that the lower bound above is optimal, as suggested by a spin-wave calculation. Indeed, a heuristic calculation by replacing the low temperature Villain model by a Gaussian free field, and using $\cos \theta \approx 1- \frac{\theta^2}{2}$ for $\theta$ small yields
\begin{equation} \label{eq:backoftheenvelop}
    \langle \cos\theta(0) \cos\theta(x) \rangle_{\mu_{\mathrm{Vil}, \beta}} -\left( \langle \cos\theta(0) \rangle_{\mu_{\mathrm{Vil}, \beta}} \right)^2 
    \approx
    \langle \varphi(0)^2 ; \varphi(x)^2 \rangle_{\mathrm{GFF}}
    = 
    c \langle \varphi(0) \varphi(x) \rangle_{\mathrm{GFF}}^2,
\end{equation}
and therefore decays as $\frac{C}{|x|^{2(d-2)}}$.  In this paper we obtain an almost optimal upper bound for the Villain model in $d=3$.

\begin{theorem} \label{th:mainth}
Let $d=3$, then there exists an inverse temperature $\beta_0 $ and an exponent $\kappa \in \N$, such that for any $\beta \geq \beta_0$, there exist constants $C_1 := C_1( \beta) <\infty, C_2 = C_2( \beta) >0$ such that, for any $x \in \Zd \setminus \{ 0 \}$,
\begin{equation*}
\frac{C_2}{|x|^{2}}
\le
    \langle \cos\theta(0) \cos\theta(x) \rangle_{\mu_{\mathrm{Vil}, \beta}} -\left( \langle \cos\theta(0) \rangle_{\mu_{\mathrm{Vil}, \beta}} \right)^2 
    \leq
    \frac{C_1 \left( \ln |x|_+ \right)^\kappa}{|x|^{2}}.
\end{equation*}
More generally, for $d \geq 3$, there exists an inverse temperature $\beta_0 := \beta_0(d) $ and an exponent $\kappa := \kappa(d) \in \N$, such that for any $\beta \geq \beta_0$, there exist constants $C_1 := C_1(d , \beta) < \infty, C_2 = C_2(d , \beta) > 0$ such that, for any $x \in \Zd \setminus \{ 0 \}$,
\begin{equation*}
   \frac{C_2}{|x|^{2(d-2)}}
   \leq
   \langle \cos\theta(0) \cos\theta(x) \rangle_{\mu_{\mathrm{Vil}, \beta}} -\left( \langle \cos\theta(0) \rangle_{\mu_{\mathrm{Vil}, \beta}} \right)^2 
   \leq \frac{C_1 \left( \ln |x|_+ \right)^\kappa}{|x|^{d-1}}.
\end{equation*}
\end{theorem}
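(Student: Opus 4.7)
The plan is to combine a trigonometric identity with the Helffer-Sj\"ostrand representation developed in~\cite{DW}. The starting point is the identity
\[
\cos\theta(0)\cos\theta(x) = \cos(\theta(0)-\theta(x)) - \sin\theta(0)\sin\theta(x),
\]
which, together with $\langle \cos(\theta(0)-\theta(x))\rangle_{\mu_{\mathrm{Vil}, \beta}} = \langle S_0 \cdot S_x\rangle_{\mu_{\mathrm{Vil}, \beta}}$ and $\langle \sin\theta(0)\rangle_{\mu_{\mathrm{Vil}, \beta}} = 0$, gives
\[
\langle \cos\theta(0)\cos\theta(x)\rangle_{\mu_{\mathrm{Vil}, \beta}} - \langle\cos\theta(0)\rangle_{\mu_{\mathrm{Vil}, \beta}}^2 = \bigl[\langle S_0 \cdot S_x\rangle_{\mu_{\mathrm{Vil}, \beta}} - \langle S_0\rangle_{\mu_{\mathrm{Vil}, \beta}}^2\bigr] - \langle \sin\theta(0)\sin\theta(x)\rangle_{\mu_{\mathrm{Vil}, \beta}}.
\]
By Theorem~\ref{t.twopt} each bracket on the right has a leading asymptotic of order $|x|^{-(d-2)}$, and the spin wave heuristic~\eqref{eq:backoftheenvelop} predicts that these two leading contributions cancel exactly. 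The first main task is to implement this cancellation rigorously.

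Rather than extracting and comparing the constants $c_1$ and $c_2$ term by term, I would analyse the difference \emph{jointly}. After the Fr\"ohlich-Spencer duality transformation employed in~\cite{DW}, both $\langle S_0 \cdot S_x\rangle - \langle S_0\rangle^2$ and $\langle \sin\theta(0)\sin\theta(x)\rangle$ admit Helffer-Sj\"ostrand representations as bilinear forms involving the Green's function of a uniformly elliptic PDE with random coefficients, driven by localized source terms at $0$ and $x$. Their difference corresponds to a specific combination of these sources whose leading-order, homogenized contribution vanishes identically, realizing the cancellation at the operator level. The residual is then bounded using the quantitative stochastic homogenization machinery of~\cite{DW}, combined with corrector estimates and elliptic/parabolic regularity for the Helffer-Sj\"ostrand operator. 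These arguments lose at most logarithmic factors, producing an upper bound of order $|x|^{-(d-1)}(\ln|x|_+)^{\kappa}$. The exponent $d-1$ arises because the first nontrivial correction to the homogenized Green's function involves a single derivative of the latter, which decays as $|x|^{-(d-1)}$. In $d=3$ this matches the conjectured exponent $2(d-2)=2$, making the upper bound sharp up to the logarithmic factor.

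For the lower bound I would make the Gaussian spin-wave heuristic of~\eqref{eq:backoftheenvelop} rigorous. Expanding $\cos\theta(y) = 1 - \theta(y)^2/2 + O(\theta(y)^4)$ and applying the Helffer-Sj\"ostrand formula to the truncated four-point observable $\langle \theta(0)^2; \theta(x)^2 \rangle$ reduces the problem to a positive lower bound for the squared effective two-point function $\langle\varphi(0)\varphi(x)\rangle_{\mathrm{eff}}^2$, which is comparable to $c|x|^{-2(d-2)}$ by the homogenization results underlying Theorem~\ref{t.twopt}. The quartic and higher error terms in the Taylor expansion are controlled by moment bounds for the gradient variables, uniform in the volume at low temperature.

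The principal obstacle is the operator-level cancellation underlying the upper bound: one must identify the leading Helffer-Sj\"ostrand contributions in the two correlations with the \emph{same} homogenized Green's function, not merely with two asymptotically equal multiples of it, so that the remainder is genuinely one order smaller than either term individually. Quantifying this residual by $|x|^{-(d-1)}$ relies on derivative bounds for the homogenized Green's function together with quantitative corrector estimates, which are jointly responsible for the exponent $d-1$ and the logarithmic loss $(\ln|x|_+)^{\kappa}$.
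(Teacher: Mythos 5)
Your trigonometric decomposition $\cos\theta(0)\cos\theta(x) = \cos(\theta(0)-\theta(x)) - \sin\theta(0)\sin\theta(x)$ is algebraically equivalent to the paper's $\cos\theta(0)\cos\theta(x) = \tfrac{1}{2}\cos(\theta(0)-\theta(x)) + \tfrac{1}{2}\cos(\theta(0)+\theta(x))$ (after dualizing the $\sin\sin$ term), so the starting point is essentially the same. However, the key mechanism you describe for the upper bound is too vague to count as a proof strategy, and it misses the actual engine of the argument. You assert that the "leading-order, homogenized contribution vanishes identically" when the two correlations are analyzed jointly, and that the residual $|x|^{-(d-1)}$ then comes from "derivative bounds for the homogenized Green's function." This is not how the improvement is obtained. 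What the paper actually exploits is that after one application of Helffer--Sj\"ostrand to $\cov_{\mu_\beta}[\exp(U_0+U_{\cos,0}),\cosh(U_x)\exp(U_{\cos,x})]$, the resulting integrand contains $\sinh(U_x)$, which is \emph{odd} under $\varphi\mapsto-\varphi$ and therefore has zero mean under $\mu_\beta$. This zero-mean structure is what permits a \emph{second} Helffer--Sj\"ostrand application, now involving the derivative $\partial\mathcal{G}_0$ of the first HS solution, which requires controlling a solution of the second-order Helffer--Sj\"ostrand equation (following Conlon--Spencer). The one extra power of $|x|^{-1}$ comes from the mixed codifferential estimate $\di^*_1\di^*_2\mathcal{G}_{\mathbf{f}}\lesssim|x-y|^{-d}$ (Proposition~\ref{prop.prop4.11chap4}), not from a derivative of a deterministic homogenized Green's function, and proving that estimate in the $L^p$-annealed sense is a substantial step requiring Calder\'on--Zygmund theory for systems with small ellipticity contrast (Appendix~\ref{app.CZreg}). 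None of this machinery is signalled in your sketch, so the "operator-level cancellation" remains unjustified.

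For the lower bound your proposal has a genuine conceptual gap. You propose applying the Helffer--Sj\"ostrand formula to the truncated four-point observable $\langle\theta(0)^2;\theta(x)^2\rangle$. The HS representation in this paper applies to the \emph{dual} interface field $\varphi\in\Omega$, not to the angle variable $\theta$, which lives in $[-\pi,\pi)^{\Zd}$ and is not a $\nabla\varphi$-type interface. There is no HS operator acting directly on functionals of $\theta$. Moreover the Taylor expansion $\cos\theta\approx 1-\theta^2/2$ is only a heuristic: the infinite-volume Villain angles are not close to zero almost surely, and controlling the quartic and higher remainder terms uniformly in the volume would require genuinely new estimates on the marginal distribution of $\theta(0)$. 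The paper bypasses all of this by a different route: the Dunlop--Newman correlation inequality (valid for the XY model) combined with the observation that the Villain model arises as a metric-graph limit of XY models, giving
\[
\langle\sin\theta_0\sin\theta_x\rangle^2\ \le\ \cov[\cos\theta_0,\cos\theta_x]\cdot\bigl(\langle\cos\theta_0\cos\theta_x\rangle+\langle\cos\theta_0\rangle^2\bigr),
\]
and then invoking the known asymptotics $\langle\sin\theta_0\sin\theta_x\rangle\sim c_2|x|^{2-d}$ from~\eqref{e.2ptfirstfirst}. This argument is soft and uses no homogenization input beyond Theorem~\ref{t.twopt}; your approach would need to be rebuilt from scratch to be made rigorous.
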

An immediate consequence of Theorem \ref{th:mainth} is that we can identify the leading coefficients in \eqref{e.2ptfirst} and \eqref{e.2ptfirstfirst}. 
\begin{corollary}
    In the setting of Theorem \ref{t.twopt}, $c_1 = c_2$.
\end{corollary}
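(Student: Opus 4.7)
The plan is purely algebraic: the corollary follows by comparing the three asymptotic statements (the two from Theorem \ref{t.twopt} and the upper bound from Theorem \ref{th:mainth}) and matching the leading-order coefficients at scale $|x|^{-(d-2)}$.

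First I would decompose the spin-spin correlation into its parallel and transversal parts using $S_0 \cdot S_x = \cos\theta(0)\cos\theta(x) + \sin\theta(0)\sin\theta(x)$. The $\theta \to -\theta$ symmetry of $\mu_{\mathrm{Vil},\beta}$ forces $\langle \sin\theta(0)\rangle_{\mu_{\mathrm{Vil},\beta}}=0$, so
\begin{equation*}
\langle S_0\rangle_{\mu_{\mathrm{Vil},\beta}}^2 = \langle \cos\theta(0)\rangle_{\mu_{\mathrm{Vil},\beta}}^2.
\end{equation*}
Subtracting this identity and the transversal asymptotics \eqref{e.2ptfirstfirst} from the spin-spin asymptotics \eqref{e.2ptfirst} yields
\begin{equation*}
\langle \cos\theta(0)\cos\theta(x)\rangle_{\mu_{\mathrm{Vil},\beta}} - \langle \cos\theta(0)\rangle_{\mu_{\mathrm{Vil},\beta}}^2 = \frac{c_1 - c_2}{|x|^{d-2}} + O\!\left(\frac{1}{|x|^{d-2+\alpha}}\right).
\end{equation*}

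Next I would invoke the upper bound provided by Theorem \ref{th:mainth}, which gives that the left-hand side is bounded above by $C_1 (\ln |x|_+)^\kappa / |x|^{d-1}$. Since $(\ln |x|_+)^\kappa/|x|^{d-1}$ and $|x|^{-(d-2+\alpha)}$ both decay strictly faster than $|x|^{-(d-2)}$, multiplying the previous identity by $|x|^{d-2}$ and sending $|x| \to \infty$ forces the leading coefficient to vanish, namely $c_1 - c_2 = 0$. No obstacle is expected: once Theorem \ref{th:mainth} is established, the corollary is a direct consequence of combining it with Theorem \ref{t.twopt}. (The one-sided upper bound alone suffices; the matching lower bound of Theorem \ref{th:mainth} is not needed for this argument.)
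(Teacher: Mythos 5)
Your decomposition and the matching of coefficients is exactly the argument the paper has in mind when it says the corollary is ``an immediate consequence'' of Theorem~\ref{th:mainth}, and the algebraic steps (using $S_0\cdot S_x = \cos\theta(0)\cos\theta(x)+\sin\theta(0)\sin\theta(x)$, the $\theta\mapsto-\theta$ symmetry to kill $\langle\sin\theta(0)\rangle$, and then subtracting \eqref{e.2ptfirstfirst} from \eqref{e.2ptfirst}) are all correct.

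The one error is the parenthetical claim that the upper bound alone suffices. After multiplying
\begin{equation*}
\langle \cos\theta(0)\cos\theta(x)\rangle_{\mu_{\mathrm{Vil},\beta}} - \langle \cos\theta(0)\rangle_{\mu_{\mathrm{Vil},\beta}}^2 = \frac{c_1 - c_2}{|x|^{d-2}} + O\!\left(\frac{1}{|x|^{d-2+\alpha}}\right)
\end{equation*}
by $|x|^{d-2}$ and using the upper bound $C_1(\ln|x|_+)^\kappa/|x|^{d-1}$ (which is $o(|x|^{-(d-2)})$), you only get $c_1 - c_2 \le 0$. If $c_1 < c_2$, the left-hand side would simply be negative at large $|x|$, which the upper bound does not forbid. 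To close the argument you need the complementary inequality $c_1 - c_2 \ge 0$, and this is precisely what the lower bound $C_2/|x|^{2(d-2)} > 0$ of Theorem~\ref{th:mainth} supplies (since $2(d-2) > d-2$ for $d\ge 3$); equivalently, one could invoke the Ginibre inequality to assert that the cosine--cosine covariance is nonnegative. Either way, the lower bound (or some positivity input) is genuinely needed; dropping it leaves the proof with a one-sided conclusion $c_1 \le c_2$.
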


\begin{remark}
Let us make two remarks about Theorem~\ref{th:mainth}:
\begin{itemize}
\item The proof gives the value $\kappa = 4d + 10$ (and in particular $\kappa = 22$ in dimension $3$). We mention that the argument could certainly be optimised to reduce the value of this exponent, but proving the upper bound with $\kappa = 0$ (which would be optimal in dimension $d = 3$) using the techniques developed in this article seems to (at least) require a (much) more technical argument.
\item Regarding the dependency of the constants in the inverse temperature $\beta$, we note that an inspection of the proof shows that $C_1 \leq \frac{A_1}{\beta^2}$ for some constant $ A_1 := A_1(d) < \infty$, and $C_2 \geq \frac{A_2}{\beta^2}$ for some constant $ A_2 := A_2(d) >0$. We additionally note that, the proof of this paper can be adapted (with some nontrivial modifications)  to show that 
\begin{equation*}
\langle \cos\theta(0) \cos\theta(x) \rangle_{\mu_{\mathrm{Vil}, \beta}} -\left( \langle \cos\theta(0) \rangle_{\mu_{\mathrm{Vil}, \beta}} \right)^2 
   \leq 
   \frac{C_0}{\beta^2|x|^{2(d-2)}}
   + 
   e^{-c\beta }\frac{ \left( \ln |x|_+ \right)^\kappa}{|x|^{d-1}}.
\end{equation*}
For $\beta \gg 1$, this inequality yields the optimal bound as long as $|x|\le e^{c'\beta}$ (and $|x|\le e^{e^{c'\beta}}$ for $d=3$).
\end{itemize}
\end{remark}

Understanding spin wave prediction in systems with continuous symmetry is one of the famous problems for spin systems in three dimensions. In the symmetry-breaking phase, the emergence of massless excitations--known as the Goldstone bosons-- is predicted by the celebrated \textit{Goldstone Theorem} \cite{gold}. This predicts, in particular, that in systems with an $O(N)$ symmetry, there exist $N-1$ distinct Goldstone bosons in the symmetry-breaking phase, and the connected $\phi^2-\phi^2$  correlation decays asymptotically like $|x|^{-2(d-2)}$. The study of these massless modes was first carried out by Symanzik in the setting of the Euclidean field theories~\cite{Sym1, Sym2}. An alternative approach was developed by Balaban in the 1980s and 1990s, through an elaborated renormalization group analysis for the $O(N)$ vector models with $N\ge 2$ (where the $N\ge 3$ case corresponds to non-Abelian spins) and culminated in \cite{BOC}. The results in \cite{BOC} were related to those in Theorem \ref{t.twopt}, and in the $N=2$ case, for the longitudinal two-point function, they were able to show 
\[
 \langle \cos\theta(0) \cos\theta(x) \rangle_{\mu_{\mathrm{XY},\beta}} - (\langle \cos\theta(0) \rangle_{\mu_{\mathrm{XY},\beta}})^2  
   \leq \frac{C_1}{|x|^{d-2+\rho}},
\]
for some $\rho >0$. Theorem \ref{th:mainth} improves upon this result, and our approach to the $N=2$ case differs from Balaban's approach, in that it is based on the duality between the Abelian spin model and height function, the Helffer-Sj{\"o}strand representation, and tools from elliptic and parabolic regularity theory. 

Other recent progress toward understanding the Goldstone theorem and massless fluctuations in three dimensions includes the work of Giuliani and Ott \cite{GO}, who extended the method of \cite{BFLLS} to $O(N)$ model with $N\ge 3$, and justified the asymptotic expansion for the Heisenberg magnetization at low temperature. Bauerschmidt, Crawford and Helmuth \cite{BCH} established the spin wave prediction for the aboreal gas in $d \ge 3$ via a spin representation. Massless fluctuations in the symmetric breaking phase have also been studied, in different forms, in the context of the Abelian Higgs model \cite{KK}, the Abelian gauge theory \cite{guth, FS4d, GS}, and quantum spin models \cite{DLS,CGS}. Related phenomena in two-dimensional settings, where massless behaviour arises from a different mechanism (the Kosterlitz-Thouless phase \cite{KT}), have been analysed, for instance, in the work of Falco on the 2D Coulomb gas \cite{Falco1, Falco2}, in the multiscale analysis by Fr\"{o}hlich, and Spencer \cite{FrSp}, and in the rigorous renormalization group analysis by Bauerschmidt, Park, and Rodriguez for the 2D integer valued height functions \cite{BPR1,BPR2}.

\subsection{Proof strategy and organisation}

The proof of Theorem~\ref{th:mainth} relies on a combination of duality between the spin model and the Coulomb gas, renormalisation techniques, the Helffer–Sj\"{o}strand representation formula, and regularity theory for parabolic systems with small ellipticity contrast. As in~\cite{DW}, the starting point is a duality transformation introduced by Fr\"{o}hlich and Spencer~\cite{FrSp, FS4d} (see also~\cite[Chapter 5]{Bau}) which, together with a cluster expansion, is used to map the Villain model on $\mathbb{Z}^d$, for $d \geq 3$, to a vector-valued random interface model (whose Gibbs measure is denoted by $\mu_\beta$ below; see Section~\ref{sec:randominterfaces}). This model features an infinite-range potential which is a small, smooth perturbation of the Gaussian free field on~$\Zd$.  The key observation is that this second model is more analytically tractable than the original Villain model. In particular, it belongs to the well-studied class of $\nabla \varphi$-random interface models with uniformly convex potentials (see~\cite{F05, V06, Sh} for an overview of the literature and more results on this model). A wide array of analytical tools is available for studying such models, including the Helffer–Sj\"{o}strand representation~\cite{HS, NS, GOS}, which plays a central role in the proofs of this article.

As a result, the general strategy of this article is to study the truncated parallel correlation by first using trigonometric identities to write
\begin{multline}
\label{e.1}
\langle \cos\theta(0) \cos\theta(x) \rangle_{\mu_{\mathrm{Vil}, \beta}} -\left( \langle \cos\theta(0) \rangle_{\mu_{\mathrm{Vil}, \beta}} \right)^2 \\ = \frac{1}{2} \langle \cos (\theta(0) -\theta(x)) \rangle_{\mu_{\mathrm{Vil}, \beta}}+
\frac{1}{2} \langle \cos (\theta(0) +\theta(x)) \rangle_{\mu_{\mathrm{Vil}, \beta}}
- \left( \langle \cos (\theta(0)) \rangle_{\mu_{\mathrm{Vil}, \beta}} \right)^2,
\end{multline}
by then using the duality transformation of~\cite{FS4d} to map each term on the right-hand side above to a non-linear and non-local observable in the interface model (see Proposition~\ref{p.dual} for the definitions of these observables). Understanding their behaviour requires a precise and quantitative theory that captures the large-scale behaviour of the random interface. This is achieved by making use of the Helffer-Sj\"{o}strand formula, following~\cite{HS, NS, GOS}, as explained in the following paragraphs.

To study the non-local observable in the random interface model, a main tool that we use is the Helffer-Sj{\"o}strand representation \cite{HS}, which has been implemented by Naddaf and Spencer \cite{NS} and Giacomin, Olla and Spohn \cite{GOS}, to represent the correlation of the random interface model to the solutions of an infinite-dimensional elliptic equation (see also \cite{Sj}). In order to define the elliptic equation, we let $\Omega$ be the infinite-dimensional vector space (N.B. this space is the probability space which is at the centre of this article; an element of $\Omega$ will be denoted by $\varphi$ and will be referred to as an interface)
$$
\Omega := \left\{ \varphi : \Zd \to \R^{d \choose 2} \right\}.
$$
The elliptic equation is defined via the Helffer-Sj{\"o}strand operator which acts on functions defined in the space $\Omega \times \Zd$ and valued in $\R^{d \choose 2}$ and takes the following form
\begin{equation*}
  \mathcal{L} := -\Delta_\varphi - \mathcal{L}_{\mathrm{spat}},
\end{equation*}
where:
\begin{itemize}
    \item The operator $\Delta_\varphi$ is the (infinite-dimensional) Laplacian computing derivatives with respect to the interfaces of $\Omega$.
    \item The operator $\mathcal{L}_{\mathrm{spat}}$ acts on the spatial variable $x \in \Zd$ and is a uniformly elliptic operator with infinite range on the discrete lattice $\Zd$ (N.B. All these operators act on functions valued in $\R^{d \choose 2}$ and we are dealing here with \textit{systems of equations}).
\end{itemize}
We refer to Section~\ref{sec:HSrepre} for a more detailed description of these operators. A key insight from~\cite{HS} is that the properties of the random interface model can be effectively understood by analysing the behaviour of solutions to the Helffer–Sj\"{o}strand equation. In particular, in~\cite{NS, GOS}, the scaling limit of the model was identified by applying techniques of stochastic homogenization to the Helffer-Sj\"{o}strand operator. This approach has proven to be a fruitful line of inquiry for the model with nearest neighbor interactions and uniformly convex potential, leading to numerous significant contributions~\cite{DD05, DGI00, DG00, Mil, CS14} (and we refer to~\cite{BK07, CDM09, BS11, CD12, AT21} for some extensions of the theory to non-uniformly convex potentials). We specifically mention the recent contribution of Deuschel and Rodriguez~\cite{DR24} who derived isomorphism theorems for the $\nabla \phi$-model and identified the scaling limit of the square of the field (N.B. the question studied there is related to the one studied in this article due to the expansion $\cos \theta \simeq 1 - \theta^2/2$ for $\theta$ small and to the heuristic~\eqref{eq:backoftheenvelop} above, so that the $\nabla\phi$-model can be seen as a simple version of our setting).

Recently, the development of a quantitative theory of stochastic homogenization—initiated through distinct approaches by Gloria and Otto~\cite{GO1, GO2} and by Armstrong and Smart~\cite{AS}, and subsequently extended by various contributors~\cite{GO15, GO115, GNO, AKM1, AKMInvent, AKM, AT2022}, among many other references—has stimulated renewed quantitative progress on the $\nabla \varphi$ interface model~\cite{AW, AD, WuLCLT, AWSOS, dario2024hydrodynamic}, and the derivation of asymptotic expansions for the two-point function in the Villain model in dimension 
$d \ge 3$ ~\cite{DW}.

To illustrate the main ideas behind the proof of Theorem~\ref{th:mainth}, we introduce a simplified setting and consider the following toy model: suppose $\mu_\beta$ is a Gaussian free field in $\Zd$, $d\ge 3$, with covariance matrix $G$ (i.e., the Green's function on the lattice), and consider the covariance $\cov_{\mu_\beta} \left[  \varphi(0)^2; \cosh \varphi(x)\right]$ (this is an oversimplification of one term in our non-local observable \eqref{eq:09071202}). Then, it follows from two Gaussian integrations by parts that 
\begin{align*}
    \cov_{\mu_\beta} \left[  \varphi(0)^2 ; \cosh \varphi(x)\right]
    &= 
     G(0,x)  \langle \varphi(0) \sinh \varphi(x)\rangle_{\mu_\beta} \\
     &= 
     G(0,x)^2  \langle  \cosh \varphi(x)\rangle_{\mu_\beta}
      \leq
      CG(0,x)^2
      \approx \frac{C}{
      |x|^{2(d-2)}}. \notag
\end{align*}
The general strategy of the proof is to make sense of this heuristics in the case of the Villain model and a number of difficulties arise. First, $\varphi(0)$ and $\varphi(x)$ should be replaced by the (exponential of the) non-local observables $U_0$ and $U_x$, and their variants, defined in \eqref{eq:def3.5}. This requires a precise analysis of the contribution of various terms, which we carry out in Section \ref{sec:section3}. Second, in the case of a non-Gaussian measure $\mu_\beta$, the first step of the computation must be adapted. Specifically, the Helffer–Sj\"{o}strand representation must be employed to replace the Gaussian integration by parts, and the Green's function $G$ should be replaced by the Green's matrix $\mathcal{G}$ associated with the Helffer–Sj\"{o}strand operator (see Definition~\ref{eq:defHSGREEN} below). In contrast to the lattice Green's function $G$, the matrix $\mathcal{G}$ is random and depends on the random interface $\varphi \in \Omega$. Therefore, in the second line of the heuristic computation above, there are additional covariance terms, which turn out to be non-negligible. To estimate the covariance, one needs to compute the derivative $\partial \mathcal{G}$ with respect to the interface (see Section~\ref{sec:defWitten}). To study this function, we rely on the observation of Conlon and Spencer~\cite{CS14} that the derivative $\partial \mathcal{G}$ is a solution of the \textit{second-order Helffer-Sj{\"o}strand equation} (see Section~\ref{sec:secsecondorderHS} for more details in this direction), which can be studied using elliptic regularity estimates. 
The third difficulty is that the Helffer-Sj{\"o}strand equation and the second-order equation we study are elliptic \emph{systems of equations}. A number of properties which are valid for elliptic equations, and used to study the random interface models, are known to be false for elliptic systems. These includes the maximum principle, which is used to obtain a random walk representation, the De Giorgi-Nash-Moser regularity theory for uniformly elliptic and parabolic PDE (see~\cite[Section 8]{GT01}) and the Nash-Aronson estimate on the heat kernel (see~\cite{Ar}).

To resolve this lack of regularity, we make use of ideas from \emph{Schauder theory} (see~\cite[Chapter 6]{GT01}) and \emph{Calder\'{o}n-Zygmund theory} (see~\cite[Chapter 9]{GT01}); we leverage on the fact that the inverse temperature $\beta$ is chosen very large so that the elliptic operator $\mathcal{L}_{\mathrm{spat}}$ has a small ellipticity contrast, i.e., it can be written
\begin{equation*}
  \mathcal{L}_{\mathrm{spat}} := - \frac{1}{2\beta} \Delta + \mathcal{L}_{\mathrm{pert}},
\end{equation*}
where the operator $\mathcal{L}_{\mathrm{pert}}$ is a perturbative term; its typical size is of order $\beta^{-\frac 32} \ll\beta^{-1}$. One can thus prove that any solution $u$ of the equation $\mathcal{L}_{\mathrm{spat}} u = 0$ is well-approximated on every scale by a harmonic function $\bar u$ for which the regularity can be easily established. It is then possible to borrow the strong regularity properties of the function $\bar u$ and transfer them to the solution $u$. In this paper, in order to obtain the (almost) optimal upper bound for $d=3$ in Theorem \ref{th:mainth}, we need to utilise precise estimates to obtain cancellations in \eqref{e.1}, and we require the following two statements of regularity theory:
\begin{itemize}
    \item A $C^{0,1-\varepsilon}$ regularity for the solution to the Helffer-Sj{\"o}strand equation, which holds in $L^\infty$ stochastic integrability. This idea has been implemented in \cite{DW} and is used to obtain (optimal) decay estimates on the Green's matrix associated with the Helffer-Sj\"{o}strand operator (see Section~\ref{sec:sec263}).

    \item An optimal decay estimate for the gradient and mixed derivative of this Green's matrix in the $L^{p}$-annealed sense for some exponent $p >1$ depending on $d$ and $\beta$ and tending to infinity as $\beta$ tends to infinity (see Proposition \ref{prop.prop4.11chap4}).
\end{itemize}
The first regularity estimate relies on the Schauder regularity (and is only very briefly discussed in this article as it was extensively used in~\cite{DW}). The second regularity estimate is obtained by combining the arguments of Delmotte and Deuschel~\cite{DD05} and the Calder\'{o}n-Zygmund regularity for solutions of parabolic equations with a small ellipticity contrast, adapted to the setting of an infinite-range elliptic operator and systems of equations (see Appendix~\ref{app.CZreg}). These estimates play a crucial role in Section~\ref{sec:section3} and Section~\ref{sec:newsec4} to derive the upper bound of Theorem~\ref{th:mainth}.

We finally note, while most of the article is devoted to the proof of the upper bound of Theorem~\ref{th:mainth}, Section~\ref{sec:shortsec5} contains the proof of the lower bound and is based on a different technique (N.B. the same lower bound was proved in~\cite{BFL} but in the case of the XY model). The approach combines a correlation inequality of Dunlop and Newman~\cite{DN} (which holds for the XY model) together with the observation that the Villain model can be expressed as a \emph{metric graph limit} of XY models (see Section~\ref{sec:shortsec5} for details).

\medskip
\noindent
\textbf{Acknowledgments.} Part of this work was done when the second author was appointed as a visiting professor at Labex B\'ezout. We thank Labex B\'ezout and Universit\'e Paris-Cr\'eteil for their hospitality. We also thank S\'ebastien Ott and Tom Spencer for useful discussions. The work of W.W. was partially supported by MOST grant 2021YFA1002700 and NSFC grant 20220903 NYTP.

\section{Preliminaries}

This section contains the preliminary tools and results used in the proof of Theorem~\ref{th:mainth}. The technique used in the argument requires to introduce a certain amount of notation and results, we therefore encourage the reader to initially skim this section and refer back to it as a reference. These ingredients can nevertheless be classified into three categories:
\begin{itemize}
    \item \textbf{Discrete differential forms:} As mentioned above, the proof starts from the observation of Fr\"{o}hlich and Spencer~\cite{FS4d} (see also~\cite[Section 5.5]{Bau}) that a duality transform can be used to rewrite the correlation of the Villain model (in particular the cosine-cosine correlation) as the expectation of a certain observable of a vector-valued random interface model. In dimension $d \geq 3$, the natural mathematical object one needs to use to perform this duality transformation is the notion of discrete differential form which is presented (together with some of the results and tools associated with this notion) in the first four subsections.
    \medskip
    \item \textbf{Random interfaces and the Helffer-Sj\"{o}strand representation:} One of the crucial advantage of working with random interfaces (rather than studying directly the Villain model) is that they can be studied (very) precisely by using a tool known as the Helffer-Sj\"{o}strand representation~\cite{HS, Sj, NS, GOS}. In Sections~\ref{sec:randominterfaces} and~\ref{sec:HSrepre} below, we present a brief introduction to this interface model (N.B. this model is a vector-valued version of the $\nabla \varphi$-interface model which has been widely studied in the mathematical physics literature, and we refer to~\cite{F05, V06} for a detailed presentation of the model) as well as the Helffer-Sj\"{o}strand representation.
    \medskip
    \item \textbf{Elliptic and parabolic regularity:} The Helffer-Sj\"{o}strand representation is a powerful tool to study random interface model because it can combined with results of elliptic and parabolic regularity. In Proposition~\ref{prop.prop4.11chap4}, we collect some applications of the theory by stating some upper bounds on the Green's matrix associated with the Helffer-Sj\"{o}strand equation.
\end{itemize}

\subsection{General notation} \label{sec:generalnotation}

We fix a dimension $d \geq 3$, denote by $\Zd$ the lattice and let $\{e_1 , \ldots, e_d\}$ be the canonical basis of $\Rd$. For $x , y \in \Zd$, we write $x \sim y$ to mean that $x$ and $y$ are nearest neighbour in $\Zd$.
For $L \in \N$, we denote by $\Lambda_L := \left\{ -L , \ldots, L \right\}^d$ the finite box centred at $0$ and of side length $(2L+1)$. We then define a box to be a set of the form $\Lambda := (x + \Lambda_L)$ for some $x \in \Zd$ and $L\in \N$, and denote by $\left| \Lambda \right| := (2L+1)^d$ the cardinality of the box $\Lambda$. For $k \in \N$, we denote by $\left| \cdot \right|$ the Euclidean norm on $\R^k$ and set $\left| \cdot \right|_+ := \left| \cdot \right| + 1.$

We next introduce $E(\Zd)$, $F(\Zd)$ and $C(\Zd)$ the sets of edges, faces and cubes of $\Zd$, i.e., 
\begin{equation*}
     \begin{aligned}
         E(\Zd) & := \left\{x + \lambda_1 e_{i_1}  \in \Rd \, : \, x \in \Zd, \, 0 \leq \lambda_1 \leq 1, 1 \leq i_1  \leq d \right\}, \\
         F(\Zd) & := \left\{x + \lambda_1 e_{i_1} + \lambda_2 e_{i_2} \in \Rd \, : \, x \in \Zd, \, 0 \leq \lambda_1 ,\lambda_2 \leq 1, 1 \leq i_1 < i_2 \leq d \right\}, \\
         C(\Zd) & := \left\{x + \lambda_1 e_{i_1} + \lambda_2 e_{i_2} + \lambda_3 e_{i_3}  \in \Rd \, :  \, x \in \Zd, \, 0 \leq \lambda_1 ,\lambda_2 , \lambda_3 \leq 1, 1 \leq i_1 < i_2 < i_3 \leq d  \right\}.
     \end{aligned}
\end{equation*}
We then equip these sets with an orientation induced by the canonical orientation of the lattice $\Zd$ and denote by $\vec{E}(\Zd)$, $\vec{F}(\Zd)$ and $\vec{C}(\Zd)$ the set of oriented edges, faces and cubes of the lattice $\Zd$. Given an oriented edge $\vec{e} \in \vec{E}(\Zd)$ (resp. an oriented face $\vec{f} \in \vec{F}(\Zd)$ and an oriented cube $\vec{c} \in \vec{C}(\Zd)$), we denote by $\partial \vec{e}$ (resp. $\partial \vec{f}$, $\partial \vec{c}$) the boundary of the edge $\vec{e} \in \vec{E}(\Zd)$; it can be decomposed into a disjoint union of vertices (resp. of edges and faces). In the case of an oriented face $\vec{f}$, the orientation of $\vec{f} \in \vec{F}(\Zd)$ induces an orientation on $\partial \vec{f}$, i.e., $\partial \vec{f}$ is a union of oriented edges. Similarly, given an oriented cube $\vec{c} \in \vec{C}(\Zd)$, the orientation of $\vec{c}$ induces an orientation on $\partial \vec{c}$, i.e., $\partial \vec{c}$ is a union of oriented faces.

\subsection{Discrete differential forms}
In this section, we introduce some basic definitions of discrete differential forms. Specifically, we will introduce $0$-forms, $1$-forms and $2$-forms on $\Zd$, as well as the discrete versions of the exterior derivative and the codifferential.

\subsubsection{$0$-form (or functions)}

A $0$-form is a function $u : \Zd \to \R$ (i.e.,  a function defined on the vertices of the lattice $\Zd$). For arbitrary $0$-forms (or functions) $u, v : \Zd \to \R $ with finite support, we define the scalar product $(\cdot , \cdot)$ by the formula
\begin{equation*}
  (u , v) = \sum_{x \in \Zd} u(x) v(x).
\end{equation*}
Given a function $u : \Zd \to \R$, we define the discrete gradient and Laplacian of $u : \Zd \to \R$ according to the identities
\begin{equation} \label{eq:def.grad}
    \nabla u (x) := \left( u(x + e_i) - u(x) \right)_{1 \leq i\leq d} \in \Rd  ~\mbox{and}~  \Delta u(x) := \sum_{y \sim x} \left( u(y)  - u(x)\right) \in \R
\end{equation}
and we will use the notation for the norm of the gradient, for any $x \in \Zd$
\begin{equation*}
    \left| \nabla u (x) \right| = \sqrt{\sum_{i = 1}^d \left| u(x + e_i) - u(x) \right|^2}.
\end{equation*}
For $x \in \Zd$, we denote by $\delta_x : \Zd \to \R$ the discrete Delta function, i.e., the one satisfying $\delta_x(x) = 1$ and $\delta_x(y)= 0$ for $y \in \Zd \setminus \{x\}.$

\subsubsection{$1$-form (or vector fields)}

A $1$-form (or a vector field) is a function $h : \vec{E}(\Zd) \to \R$ defined on the oriented edges of the lattice $\Zd$ and satisfying the identity, 
\begin{equation*}
    \forall \vec{e} \in \vec{E}(\Zd), ~~ h(\vec{e}) = - h(-\vec{e}),
\end{equation*}
where $- \vec{e}$ denotes the edge $\vec{e}$ with reversed orientation.
For $i \in \{1 , \ldots, d\}$, we denote by $\vec{e}_i$ the edge $\{\lambda e_i \, : \, 0 \leq \lambda \leq 1\}$ with positive orientation.
A $1$-form can be equivalently defined as a function defined on the vertices on $\Zd$ and valued in $\R^d$ (i.e., $h : \Zd \to \Rd$) by using the bijection
\begin{equation} \label{eq:adefforh}
    \forall x \in \Zd, ~ h(x) := \left( h(x + \vec{e}_i) \right)_{1 \leq i \leq d} \in \Rd ~\mbox{and}~ \Delta h(x) := \sum_{y \sim x} \left( h(y)  - h(x)\right) \in \R^d.
\end{equation}
A typical example of $1$-form is the gradient of a function (as defined in~\eqref{eq:def.grad}).

For arbitrary $1$-forms $h, g : \Zd \to \R^d$ with finite support, we define the $L^2$-scalar product $(\cdot , \cdot)$ and the norm $\left| \cdot \right|$ by the formulae
\begin{equation*} 
  (h , g) = \sum_{x \in \Zd} h(x) \cdot g(x) ~~\mbox{and}~~ |h(x)| = \sqrt{\sum_{i = 1}^d \left| h(x + \vec{e}_i) \right|^2}
\end{equation*}
where $h(x) \cdot g(x)$ denotes the Euclidean scalar product between $h(x) , g(x) \in \Rd$. We further define the $L^1$ and $L^\infty$-norms of a $1$-form $h$ by the formulae
\begin{equation*}
     \left\| h \right\|_1 = \sum_{x \in \Zd} |h(x)| ~~\mbox{and}~~ \left\| h  \right\|_\infty = \sup_{x \in \Zd} |h(x)|.
\end{equation*}
Finally, we define the discrete gradient of the $1$-form $h$ according to the formula (N.B. since for each $x \in \Zd$, $h(x) \in \Rd$, we have that $\nabla h(x) \in \R^{d \times d}$, i.e., it is a matrix-valued function; additionally $h(x + e_i)$ is the value of the function $h$ at the vertex $(x + e_i) \in \Zd$)
\begin{equation*}
    \nabla h(x) = \left( h(x + e_i) - h(x) \right)_{1 \leq i\leq d} \in \R^{d \times d}.
\end{equation*}

\subsubsection{2-forms (or functions defined on faces)}

We recall the notation $\vec{F}(\Zd)$ for the oriented faces of the lattice $\Zd$. A $2$-form is then a function $k : \vec{F}(\Zd) \to \R$ satisfying the identity
\begin{equation*}
    \forall \vec{f} \in \vec{F}(\Zd), ~ k(\vec{f}) = - k( - \vec{f}),
\end{equation*}
where $- \vec{f}$ denotes the face $\vec{f}$ with reversed orientation.
We next define the support of a $2$-form $k$ to be the collection of all the faces on which it takes a non-zero value, i.e.,
\begin{equation*}
    \supp k := \left\{ \vec{f} \in \vec{F}(\Zd) \, : \, k(\vec{f}) \neq 0 \right\}.
\end{equation*}
For $i , j \in \{1 , \ldots, d\}$ with $i < j$, we denote by $\vec{f}_{ij}$ the face $\{\lambda_1 e_i + \lambda_2 e_j \, : \, 0 \leq \lambda_1 , \lambda_2 \leq 1\}$ with positive orientation.
A $2$-form can be equivalently defined as a function defined on the vertices on $\Zd$ and valued in $\R^{\binom d2}$ (i.e., $k : \Zd \to \R^{\binom d2}$) by using the bijection
\begin{equation*}
    \forall x \in \Zd, ~ k(x) := \left( k(x + \vec{f}_{ij}) \right)_{1 \leq i < j \leq d} \in \Rd.
\end{equation*}
For arbitrary $2$-forms $k, l : \Zd \to \R^d$ with finite support, we define the $L^2$-scalar product $(\cdot , \cdot)$ and the norm $\left| \cdot \right|$ according to the formulae
\begin{equation} \label{eq:TV07319}
  (k , l) = \sum_{x \in \Zd} k(x) \cdot l(x) ~~\mbox{and}~~ |k(x)| = \sqrt{\sum_{1 \leq i < j \leq d} \left| k(x + \vec{f}_{ij}) \right|^2}
\end{equation}
where $k(x) \cdot l(x)$ denotes the Euclidean scalar product between $k(x) , l(x) \in \R^{\binom d2}$. We further define the $L^1$ and $L^\infty$-norms of a $1$-form $h$ by the formulae
\begin{equation*}
     \left\| k \right\|_1 = \sum_{x \in \Zd} |k(x)| ~~\mbox{and}~~ \left\| k  \right\|_\infty = \sup_{x \in \Zd} |k(x)|.
\end{equation*}
We then define the discrete gradient  and Laplacian of the $2$-form $k$ according to the formulae (N.B. since for each $x \in \Zd$, $k(x) \in \R^{\binom d2}$, we have that $\nabla k(x) \in \R^{d \times \binom d2}$)
\begin{equation*}
    \nabla k(x) = \left( k(x + e_i) - k(x) \right)_{1 \leq i\leq d} \in \R^{d \times \binom d2} ~\mbox{and}~  \Delta k(x) := \sum_{y \sim x} \left( k(y)  - k(x)\right) \in \R^{\binom d2}.
\end{equation*}
In Section~\ref{sec:HSrepre} (and specifically when we need to use the second-order Helffer-Sj\"{o}strand equation in Section~\ref{sec:newsec4}), we will need to use the notion of tensor product of $2$ forms. To this end, we will make use of the tensor notation $\otimes$ defined as follows: given two $2$-forms $k , l : \Zd \to \R^{\binom d2}$, we denote by
\begin{equation} \label{def:tensorproduct}
    k \otimes l: \left\{ \begin{aligned}
        \Zd \times \Zd & \to \R^{\binom d2 \times \binom d2} \\
        (x,y) & \mapsto (k_{ij}(x) l_{i'j'}(y) )_{1 \leq i < j \leq d, 1 \leq i' < j' \leq d}.
    \end{aligned} \right.
\end{equation}

\subsubsection{3-forms (or functions defined on cubes)}
We recall the notation $\vec{C}(\Zd)$ for the oriented cubes of the lattice $\Zd$. A $3$-form is then a function $m : \vec{C}(\Zd) \to \R$ satisfying the identity
\begin{equation*}
    \forall \vec{c} \in \vec{C}(\Zd), ~ m(\vec{c}) = - m( - \vec{c}),
\end{equation*}
where $- \vec{c}$ denotes the cube $\vec{c}$ with reversed orientation 
(N.B. we only need this definition for a $3$-form, but it would of course be possible to extend the definitions of the previous sections to the $3$-forms).

\subsubsection{Exterior derivative and codifferential}
In this section, we introduce two (fundamental) objects related to the (discrete) differential calculus: the exterior derivative and the codifferential. They are defined in this section in a somewhat restrictive setting (i.e., only for the $1$ and $2$ forms) but we mention that more general versions of these definitions and statements have been obtained in the literature.

\begin{definition}[Discrete exterior derivative for $1$ and $2$-forms]
    Given a $1$-form $h : \vec{E}(\Zd) \to \R$, we define its exterior derivative $\di h$ to be the $2$-form defined according to the formula, for each oriented face $\vec{f} \in \vec{F}(\Zd)$,
\begin{equation*}
\di h \left( \vec{f} \right) = \sum_{\vec{e} \in \partial \vec{f}} h(\vec{e}),
\end{equation*}
where we used the convention introduced in Section~\ref{sec:generalnotation} for the orientation of the edges on the right-hand side.
Similarly, given a $2$-form $k : \vec{E}(\Zd) \to \R$, we define its exterior derivative $\di k$ to be the $3$-form defined according to the formula, for each oriented cube $\vec{c} \in \vec{C}(\Zd)$,
\begin{equation*}
\di k \left( \vec{c} \right) = \sum_{\vec{f} \in \partial \vec{c}} k(\vec{f}).
\end{equation*}
\end{definition}

We similarly define the (discrete) codifferential of a $2$-form (N.B. As mentioned below, this operator is the adjoint of the exterior derivative with respect to the $L^2$-scalar product $(\cdot , \cdot)$).

\begin{definition}[Codifferential of a $2$-forms] \label{def:codifferential2form}
    Given a $2$-form $k : \vec{F}(\Zd) \to \R$, we define its codifferential $\di^* k$ to be the $1$-form given by the formula, for each oriented edge $\vec{e} \in \vec{E}(\Zd)$,
\begin{equation*}
\di^* k \left( \vec{e} \right) = \sum_{\partial \vec{f} \ni \vec{e}} k(\vec{f}),
\end{equation*}
where we used the convention introduced in Section~\ref{sec:generalnotation} for the orientation of the faces on the right-hand side.
\end{definition}

\begin{remark} \label{rem:remark25}
    Let us fix a $2$-form $k : \vec{F}(\Zd) \to \R$ and make two remarks about the previous definitions:
        \begin{itemize}
            \item The exterior derivative $\di k$ and the codifferential $\di^* k$ are both linear combinations of the components of the gradient $\nabla k$.
            \item If the form $k$ satisfies $\di k =0$, then we will say that it is \emph{closed}.
            \item The codifferential is the adjoint of the exterior derivative: for any $1$-form $h :  \vec{E}(\Zd) \to \R$ with finite support, one has the identity
\begin{equation} \label{identity:dandd*}
    \left( \di h , k \right) = \left( h , \di^* k \right).
\end{equation}
(N.B. Note that the $L^2$-scalar product on the left-hand side is the one on $2$-forms while the one on the right-hand side is the one on $1$-forms).
        \end{itemize}
\end{remark}

\subsection{The set of charges $\mathcal{Q}$}

In the rest of this article, an important role is played by the set of closed $2$-forms which are integer-valued and finitely supported with connected support (N.B. they will be referred to as charges). We denote this set by $\mathcal{Q}$ and formally define it below (N.B. for the connectedness of the support, we say that $2$ faces of $\Zd$ are connected if they are at distance less than $1$ from each other).
\begin{equation*}
    \mathcal{Q} := \left\{ q : \Zd \to \Z^{\binom d2} \, : \, \di q = 0 ~~\mbox{and}~~ \supp q \mbox{ is connected and finite}   \right\}.
\end{equation*}
Given a charge $q \in \mathcal{Q}$, we denote $z_q \in \Zd$ the vertex which is in the support of $q$ and which minimises the lexicographical order (N.B. any other arbitrary criterion to select a vertex in the support of $q$ is admissible). We also let $\Lambda(q)$ be the intersection of all the boxes containing the support of $q$.

\begin{lemma}[Discrete Poincar\'e Lemma, Lemma 1 of~\cite{FS4d} or Lemma 2.2 of~\cite{Ch}] \label{lem:lem2.4}
    There exists a constant $C := C(d) < \infty$ such that for each $q \in \mathcal{Q}$, there exists an integer-valued $1$-form $n_q : \vec{E}(\Zd) \to \Z$ satisfying the following properties 
    \begin{equation} \label{eq:critnq}
        \di n_q = q, ~~ \supp \, n_q \subseteq \Lambda(q)~~ \mbox{and}~~ \left\| n_q \right\|_\infty \leq C \left\| q \right\|_1.
    \end{equation}
\end{lemma}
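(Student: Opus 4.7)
The plan is to realise $n_q$ via a discrete Poincar\'e (cone) construction anchored at a vertex of the contractible box $\Lambda(q)$. Since $\Lambda(q)$ is a cubical box and therefore contractible, the cochain complex of integer-valued forms on $\Lambda(q)$ has vanishing cohomology in degree $2$, so the closure hypothesis $\di q = 0$ already guarantees the existence of an integer-valued primitive; the substance of the lemma lies in making that primitive explicit, extracting the $L^\infty$ bound with a constant independent of the size of $\Lambda(q)$, and ensuring the support condition.

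Concretely, I would fix a corner vertex $x_0$ of $\Lambda(q)$, and to each vertex $y \in \Lambda(q)$ associate the canonical axis-ordered rectilinear path $\pi_y$ from $x_0$ to $y$ (traversing coordinate directions in a fixed order, say $e_1$ then $e_2$ then $\ldots$ then $e_d$, all inside $\Lambda(q)$). For each oriented edge $\vec{e} = (y, y+e_i)$ with both endpoints in $\Lambda(q)$, the concatenation $\pi_y \cdot \vec{e} \cdot \pi_{y+e_i}^{-1}$ is a closed lattice loop in $\Lambda(q)$ bounding a canonical integer $2$-chain $S_{\vec{e}} \subset \vec{F}(\Lambda(q))$ built out of axis-aligned rectangular faces. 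I would then define
\[
n_q(\vec{e}) := \sum_{\vec{f} \in S_{\vec{e}}} \sigma_{\vec{e}}(\vec{f})\, q(\vec{f}),
\]
with $\sigma_{\vec{e}}(\vec{f}) \in \{-1,+1\}$ recording the orientation induced by $S_{\vec{e}}$, and set $n_q(\vec{e}) := 0$ on every edge with an endpoint outside $\Lambda(q)$. Integrality of $n_q$ is then immediate from the integrality of $q$.

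To verify $\di n_q = q$ at a face $\vec{f}_0$, I would expand $\di n_q(\vec{f}_0) = \sum_{\vec{e} \in \partial \vec{f}_0} n_q(\vec{e})$ and recognise the result as the evaluation of $q$ on the $2$-chain $\sum_{\vec{e} \in \partial \vec{f}_0} \sigma\, S_{\vec{e}}$; by construction of the cone, this chain equals $\vec{f}_0 + \partial \Sigma$ for a canonical integer $3$-chain $\Sigma \subset \Lambda(q)$, so closure $\di q = 0$ annihilates the $\partial \Sigma$ contribution and leaves exactly $q(\vec{f}_0)$. For a face $\vec{f}_0$ lying (partly) outside $\Lambda(q)$, one has $q(\vec{f}_0) = 0$, and the lexicographic choice of the paths $\pi_y$ is arranged so that $n_q$ vanishes on every edge of $\partial \vec{f}_0$ that belongs to $\Lambda(q)$ (i.e.\ the paths never traverse the "outer" layers of $\Lambda(q)$ in the directions transverse to that face), giving $\di n_q(\vec{f}_0) = 0$ as required. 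The $L^\infty$ bound then follows directly from the definition:
\[
|n_q(\vec{e})| \;\leq\; \sum_{\vec{f} \in S_{\vec{e}}} |q(\vec{f})| \;\leq\; \sum_{\vec{f} \in \vec{F}(\Lambda(q))} |q(\vec{f})| \;\leq\; C(d) \, \|q\|_1,
\]
where the last inequality passes between the face-wise $\ell^1$ norm and the vertex-wise Euclidean $\ell^1$ norm~\eqref{eq:TV07319} at the cost of a combinatorial constant $C(d) = \binom{d}{2}^{1/2}$.

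I expect the main obstacle of the proof to be a purely combinatorial one, namely the careful bookkeeping that makes the cone family $\{S_{\vec{e}}\}_{\vec{e}}$ consistent at the boundary of $\Lambda(q)$: the construction must simultaneously (i) keep each $S_{\vec{e}}$ inside $\Lambda(q)$, (ii) force $n_q$ to vanish on the outer-facing edges of $\Lambda(q)$ so that the extension by zero is compatible with the identity $\di n_q = q$ on every face of $\Zd$ (not merely on faces contained in $\Lambda(q)$), and (iii) do both while still satisfying the Stokes-type cancellation used to obtain $\di n_q = q$ on interior faces. The rectangular structure of the box $\Lambda(q)$ makes all three requirements simultaneously achievable via the lexicographic rectilinear cone described above; a similar construction is carried out in Lemma~1 of~\cite{FS4d} and Lemma~2.2 of~\cite{Ch}, and once the combinatorial consistency is pinned down, the $L^\infty$ bound $\|n_q\|_\infty \leq C\|q\|_1$ is a direct consequence of the summation above.
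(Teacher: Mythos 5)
Your cone construction is the natural starting point, and the integrality and the $L^\infty$ bound would indeed follow from it, but the boundary verification contains a genuine gap. You assert that for a face $\vec{f}_0$ sticking out of $\Lambda(q)$, the $1$-form $n_q$ vanishes on the unique edge of $\partial \vec{f}_0$ that lies inside $\Lambda(q)$, because ``the paths never traverse the outer layers.'' The premise is true but the conclusion does not follow: the paths $\pi_y$ stay inside $\Lambda(q)$, yet for a boundary-wall edge $\vec{e}$ the staircase $S_{\vec{e}}$ typically sweeps through the \emph{interior} of $\Lambda(q)$ where $q$ is supported, so that $n_q(\vec{e}) = q(S_{\vec{e}})$ need not vanish. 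Consequently the lexicographic cone, with extension by zero, does not in general satisfy $\di n_q = q$ on faces that stick out of $\Lambda(q)$, which is precisely the property you need.

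Here is a concrete counterexample in $d=3$. Let $m$ be the $1$-form equal to $1$ on the oriented edge from $(1,1,1)$ to $(1,1,2)$ and zero elsewhere, and set $q := \di m \in \mathcal{Q}$, so that $\Lambda(q) = \{0,1,2\} \times \{0,1,2\} \times \{1,2\}$ and $q$ equals $+1$ on the face spanning $e_1,e_3$ based at $(0,1,1)$. With base corner $x_0 = (0,0,1)$ and the order $e_1,e_2,e_3$, the boundary-wall edge $\vec{e}$ from $(0,1,2)$ to $(1,1,2)$ has staircase $S_{\vec{e}}$ consisting of the $(e_1,e_2)$-face based at $(0,0,1)$ and the $(e_1,e_3)$-face based at $(0,1,1)$, so $n_q(\vec{e}) = \pm 1 \neq 0$; but the face based at $(0,1,2)$ spanning $e_1,e_3$ sticks out of $\Lambda(q)$, has $q$-value $0$, and $\vec{e}$ is its only edge inside $\Lambda(q)$, forcing $n_q(\vec{e}) = 0$ for any admissible $n_q$. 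Changing the base corner or the coordinate ordering only relocates the obstruction (the corner $(2,2,2)$ produces the same failure at the edge from $(0,1,1)$ to $(1,1,1)$). The $1$-form $n_q = m$ itself satisfies the lemma for this $q$, so the statement is fine; it is the cone primitive that is wrong. One must use $\di q = 0$ explicitly near $\partial\Lambda(q)$ --- for instance, by a dimension-reducing integration with recursion on a boundary slice (which is essentially the argument of the cited Lemma 1 of~\cite{FS4d} / Lemma 2.2 of~\cite{Ch}), or by gauge-correcting the cone primitive by a suitable $\nabla\phi$ --- and then re-derive the $L^\infty$ bound for the corrected object.
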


\begin{remark}
Let us make a few remarks about the previous lemma:
\begin{itemize}
    \item As stated above, the form $n_q$ is not uniquely defined by the three criteria above. In the rest of this article, for each $2$-form $q \in \mathcal{Q}$, we select a form $n_q$ satisfying~\eqref{eq:critnq} (and break ties using an arbitrary criterion).
    \item It is important to note that both $q$ and $n_q$ are integer-valued.
    \item We will make use of the following inequality (which can be established using that the support of a $2$-form $q \in \mathcal{Q}$ is connected):
    \begin{equation} \label{eq:somepropofcharges}
        \diam q \leq C \left\| q \right\|_1.
    \end{equation}
\end{itemize}
\end{remark}
Given $\beta \in (0,\infty)$ and a $2$-form $q \in \mathcal{Q}$, we define the activity of $q$ according to the formula (see~\cite[(5.71)]{Bau})
\begin{equation*}
    z(\beta , q) = \sum_{n = 1}^\infty \frac{1}{n !} I(G (\supp q_1 , \ldots, \supp q_n)) \sum_{q_1 + \ldots + q_n = q} e^{- \frac 12  \sqrt{\beta} \sum_i (q_i , q_i)},
\end{equation*}
where the sum runs over all the charges $q_1 , \ldots , q_n$ with connected support satisfying $\di q_i =0$, and the combinatorial factor $I(G (\supp q_1 , \ldots, \supp q_n))$ is defined as follows: we let $G(\supp q_1 , \ldots, \supp q_n)$ be the connection graph of the sets $\supp q_1 , \ldots, \supp q_n$ (i.e., the graph whose vertices are $\supp q_1 , \ldots, \supp q_n$, and with an edge between $\supp q_i$ and $\supp q_j$ if and only the distance between the two sets is less or equal to $1$), and for a connected graph $G$, we define
\begin{equation*}
    I(G ) := \sum_{H \subseteq G } (-1)^{\left| E(H)\right|},
\end{equation*}
where the sum runs over all the connected spanning subgraphs of $G$. These definitions and formulae are the ones of~\cite[Section 5.5.3]{Bau}. By~\cite[Lemma 5.15]{Bau}, one has the estimate
\begin{equation*}
  |z(\beta , q)| \leq e^{-c \sqrt{\beta} \|q\|_1}, \quad \mbox{for some } c :=c(d) > 0.
\end{equation*}
(N.B. To be precise, this definition is not exactly the same as the one of~\cite[(5.71)]{Bau}: for a technical, and not particularly important, reason, we choose the small constant $c$ of~\cite[(5.71)]{Bau} to be equal to $1/\sqrt{\beta}$).

We next state and prove an important summation property which is used multiple times in the proofs below.

\begin{lemma}[Summation over the $2$-forms of $\mathcal{Q}$] \label{lemma.lemma2.5}
    Fix an integer $k \in \N$. There exists a constant $C := C(d , \beta , k) < \infty$ such that, for any function $h : \Zd \to [0, \infty)$,
    \begin{equation*}
        \sum_{q \in \mathcal{Q}} |z(\beta, q)| \left\| q \right\|_1^k h(z_q) \leq C \sum_{z \in \Zd} h(z).
    \end{equation*}
\end{lemma}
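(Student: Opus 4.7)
\medskip
\noindent
\textbf{Proof sketch.} The plan is to use translation invariance to factorise the sum, and then balance the exponential decay of $|z(\beta,q)|$ against a lattice-animal type entropy count of charges of a given $\ell^1$-norm.

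First I would observe that $\mathcal{Q}$, the activity $q \mapsto |z(\beta,q)|$, and the norm $\|q\|_1$ are all invariant under the translations $q \mapsto q(\cdot - y)$ for $y \in \Zd$ (this is immediate from the definition of $z(\beta,q)$ since the decomposition sum, the self-inner products $(q_i,q_i)$, and the connectivity graph $G(\supp q_1,\dots,\supp q_n)$ all commute with translation), while the selected vertex transforms as $z_{q(\cdot - y)} = z_q + y$. Consequently the partial sum
\begin{equation*}
A := \sum_{q \in \mathcal{Q},\, z_q = z} |z(\beta,q)| \|q\|_1^k
\end{equation*}
is independent of $z \in \Zd$, and the left-hand side of the claim factors as $A \cdot \sum_{z \in \Zd} h(z)$. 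It then suffices to prove $A < \infty$.

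Next I would stratify by $n := \|q\|_1 \in \N$ and apply the bound $|z(\beta,q)| \leq e^{-c\sqrt{\beta}\|q\|_1}$ recalled just above the lemma, giving
\begin{equation*}
A \leq \sum_{n=1}^\infty n^k e^{-c\sqrt{\beta} n} N(n), \qquad N(n) := \#\bigl\{ q \in \mathcal{Q} \,:\, z_q = 0,\ \|q\|_1 = n \bigr\}.
\end{equation*}
The heart of the argument is the combinatorial estimate $N(n) \leq C_1^n$ for some $C_1 := C_1(d)$. Since $q$ is integer-valued, its face-support has size $m \leq n$, is connected, and contains a face incident to the origin (because $z_q = 0 \in \supp q$). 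The number of such connected face-complexes of size $m$ rooted at the origin is at most $C_0^m$ by a standard lattice-animal enumeration on $\vec{F}(\Zd)$. Given the support, the number of integer assignments of $\ell^1$-mass exactly $n$ across $m$ nonzero faces is at most $2^m \binom{n-1}{m-1} \leq 4^n$ (signs times compositions of $n$ into $m$ positive parts), and the closedness constraint $\di q = 0$ only reduces this count. Summing over $m \leq n$ yields $N(n) \leq C_1^n$.

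For $\beta$ large enough that $C_1 e^{-c\sqrt{\beta}} < 1$ (which is the regime the paper operates in), the geometric series $\sum_n n^k (C_1 e^{-c\sqrt{\beta}})^n$ converges to a finite constant $C = C(d,\beta,k)$, completing the argument. The main quantitative input is the lattice-animal bound $C_0^m$: it is the exponential-in-$m$ growth of the number of connected face-clusters that the decay rate $c\sqrt{\beta}$ must dominate, which is precisely the reason the statement is a low-temperature one; the small-$m$ polynomial prefactor $n^k$ is absorbed trivially once the exponential competition is won.
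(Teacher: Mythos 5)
Your argument is correct and takes essentially the same approach as the paper: both proofs reduce to showing that the one-point sum $\sum_{q : z_q = 0} |z(\beta,q)| \|q\|_1^k$ is finite, and both do so by pitting the exponential decay $|z(\beta,q)| \le e^{-c\sqrt{\beta}\|q\|_1}$ against a lattice-animal count of connected face-supports rooted at the origin, with $\beta$ chosen large enough that the decay wins. The only organizational difference is that you stratify first by the total $\ell^1$-mass $n$ and count face-value assignments via compositions, whereas the paper stratifies first by the support $X$ and factorizes the sum over face-values as a product of geometric series $\prod_{x\in X}\sum_{k\geq 1}e^{-c'\sqrt{\beta}k}$; these are cosmetic variations of the same cluster-expansion estimate.
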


\begin{remark} \label{rem:remark2.6}
This inequality will be used to obtain the following statement:
   if we let $H_0, H_1 : \vec{E}(\Zd) \to \R$ be two $1$-forms which decay at most polynomially fast (with potentially a logarithmic correction), i.e., there exist a constant $C < \infty$, four exponents $k_0 , k_1, \kappa_0, \kappa_1 \in [0 , \infty)$ and a vertex $x \in \Zd$ such that
    \begin{equation} \label{assumption.polydecay}
        \forall z \in \Zd, ~ \left| H_0(z) \right|  \leq \frac{C \left(\ln |z|_+ \right)^{\kappa_0}}{|z|^{k_0}_+} ~~\mbox{and}~~ \left| H_1 (z) \right| \leq \frac{C \left(\ln |x - z|_+ \right)^{\kappa_1}}{|x - z|^{k_1}_+},
    \end{equation}
    then one has the inequality
    \begin{equation} \label{ineq:2.11technical}
        \sum_{q \in \mathcal{Q}} z(\beta, q) \left| \left(H_0 , n_q \right) \right| \left| \left(H_1 , n_q \right) \right| \leq C \sum_{z \in \Zd} \frac{\left(\ln |z|_+ \right)^{\kappa_0}}{|z|^{k_0}_+} \frac{\left(\ln |x - z|_+ \right)^{\kappa_1}}{|x - z|^{k_1}_+}.
    \end{equation}
    In words, the inequality~\eqref{ineq:2.11technical} says that if $H_0$ and $H_1$ are two functions which decay polynomially fast, then summing the scalar products $\left(H_0 , n_q \right)$ and $\left(H_1 , n_q \right)$ over all the charges $q \in \mathcal{Q}$ is (roughly) equivalent to summing the values $\left(\ln |z|_+ \right)^{\kappa_0} |z|^{-k_0} \left(\ln |x - z|_+ \right)^{\kappa_1} |x - z|^{-k_1}$ over all the vertices $z \in \Zd$. This is an important simplification (which will be used many times in the proof below) since the sum over the vertices of $\Zd$ is easier to estimate than the sum over all the $2$-forms of $\mathcal{Q}$.
\end{remark}

\begin{remark}
In the rest of this article, we may denote by $C_q$ a constant which depends on $d , \beta$ and $q$ but shall only grow polynomially fast in $\left\| q \right\|_1$ (i.e., we assume that there exists a constant $C := C(d , \beta)< \infty$ depending only on $d$ and $\beta$ and an exponent $k := k(d) < \infty$ depending only on $d$ such that $C_q \leq C \left\| q \right\|_1^k$). This notation will be typically used to write inequalities of the form
\begin{equation*}
    \left| \left(H_0 , n_q \right) \right| \leq \frac{C_q \left(\ln |z|_+\right)^{\kappa_0} }{|z|^{k_0}_+},
\end{equation*}
when $H_0$ is a $1$-form satisfying~\eqref{assumption.polydecay}.
\end{remark}

\begin{proof}[Proof of Lemma~\ref{lemma.lemma2.5}]
We first use the Cauchy-Schwarz inequality and write
\begin{align*}
 \sum_{q \in \mathcal{Q}} |z(\beta, q)| \left\| q \right\|_1^k h(z_q) \leq 
 \sum_{q \in \mathcal{Q}} e^{-c \sqrt{\beta} \left\| q \right\|_1}\left\| q \right\|_1^k h(z_q)  & = \sum_{y \in \Zd} \left( \sum_{\substack{q \in \mathcal{Q} \\ z_q = y}} e^{-c \sqrt{\beta} \left\| q \right\|_1}  \left\| q \right\|_1^k \right)  h(y)  .
\end{align*}
It is thus sufficient to show that there exists a constant $C := C(\beta, d , k) < \infty$ such that, for any $y \in \Zd$,
\begin{equation*}
    \sum_{\substack{q \in \mathcal{Q} \\ z_q = y}} e^{-c \sqrt{\beta} \left\| q \right\|_1} \left\| q \right\|_1^k \leq C.
\end{equation*}
The rest of the argument is devoted to the proof of this inequality. We first observe that, the polynomial factor $\left\| q \right\|_1^k$ can be absorbed in the exponential decay (at the cost of slightly reducing the exponent in the exponential). Specifically, we have that
\begin{equation*}
     e^{-c \sqrt{\beta} \left\| q \right\|_1} \left\| q \right\|_1^k \leq C e^{-\frac{c}{2} \sqrt{\beta} \left\| q \right\|_1}.
\end{equation*}
We then decompose over the supports of the charges. To this end, let us denote by $\mathcal{A}_y$ the set of the finite connected subsets of faces of $\Zd$ containing the vertex $y$. We then write, for some $c' < c$,
\begin{align*}
    \sum_{\substack{q \in \mathcal{Q} \\ z_q = y}} e^{-c' \sqrt{\beta} \left\| q \right\|_1} & \leq \sum_{X \in \mathcal{A}_y} \sum_{\substack{q \in \mathcal{Q} \\ \supp q = X}} e^{-c' \sqrt{\beta} \left\| q \right\|_1} = \sum_{X \in \mathcal{A}_y} \sum_{\substack{q \in \mathcal{Q} \\ \supp q = X}} \left( \prod_{x \in X } e^{-c' \sqrt{\beta} \left| q(x) \right|} \right).
\end{align*}
Exchanging the sum and the product, we see that
\begin{align*}
        \sum_{\substack{q \in \mathcal{Q} \\ \supp q = X}} \left( \prod_{x \in X } e^{-c' \sqrt{\beta} \left| q(x) \right|} \right) & \leq \prod_{x \in X } \left( \sum_{q(x) = 1}^\infty   e^{-c' \sqrt{\beta} \left| q(x) \right|} \right) = \left( \frac{e^{-c' \sqrt{\beta}}}{1- e^{-c' \sqrt{\beta}}} \right)^{|X|}.
\end{align*}
We thus obtain
\begin{equation*}
    \sum_{q \in \mathcal{Q}_y} e^{-c' \sqrt{\beta} \left\| q \right\|_1} \leq C \sum_{X \in \mathcal{A}_y} e^{-c' \sqrt{\beta} |X|} = C \sum_{n = 1}^\infty \left| \left\{ X \in \mathcal{A}_y \, : \, \left|  X \right| = n \right\} \right| e^{-c' \sqrt{\beta} n }.
\end{equation*}
We next note that
\begin{equation} \label{eq:110754}
    \left| \left\{ X \in \mathcal{A}_y \, : \, \left|  X \right| = n \right\} \right| \leq e^{ C n}.
\end{equation}
The inequality~\eqref{eq:110754} can be established by associating each connected set of $n$ faces with one of its spanning trees, and then bounding the number of such spanning trees.
Choosing the inverse temperature $\beta$ large enough (i.e., such that $\frac{c}{4} \sqrt{\beta} \geq 2C$), we deduce that
\begin{equation*}
    \sum_{q \in \mathcal{Q}_y} e^{-\frac{c}{2} \sqrt{\beta} \left\| q \right\|_1} \leq C e^{-\frac{c}{4} \sqrt{\beta}},
\end{equation*}
where we have reduced the value of the exponent $c$ in the right-hand side.
\end{proof}

We complete this section by giving a proof of the inequality~\eqref{ineq:2.11technical} based on Lemma~\ref{lemma.lemma2.5}.

\begin{proof}[Proof of~\eqref{ineq:2.11technical}] 
To simplify the notation in the proof, we only show the result in the case $\kappa_0 = \kappa_1 = 0$. Let us first fix a charge $q \in \mathcal{Q}$ and start from the observation that the support of $n_q$ is included in the box $\Lambda(q)$. This allows to write
    \begin{align*}
        \left| \left( H_0 , n_q \right) \right| \leq \sum_{y \in \Lambda(q)} \left| H_0(y)\right| \left| n_q(y) \right| \leq \sum_{y \in \Lambda(q)} \left| H_0(y)\right| \left\| n_q \right\|_\infty & \leq C \left\| q \right\|_1 \sum_{y \in \Lambda(q)} \left| H(y)\right| .
    \end{align*}
    Using the assumption~\eqref{assumption.polydecay} (and since $\Lambda(q)$ has a diameter which is comparable to the diameter of~$q$), we see that the following inequality holds
    \begin{equation*}
        \forall y \in \Lambda(q), ~~ \left| H_0(y)\right| \leq \frac{C (\diam q)^{k_0}}{\left| z_q \right|^{k_0}_+}.
    \end{equation*}
    Combining the two previous inequalities with~\eqref{eq:somepropofcharges} and noting that the cardinality of $\Lambda(q)$ is comparable to $(\diam q)^d$, we further deduce that 
    \begin{equation*}
        \left| \left( H_0 , n_q \right) \right| \leq \left\| q \right\|_1 \sum_{y \in \Lambda(q)} \left| H_0(y)\right| \leq  \frac{\left\| q \right\|_1 \left| \Lambda(q) \right| (\diam q)^{k_0}}{\left| z_q \right|^{k_0}_+} \leq \frac{C \left\| q \right\|_1^{d + 1  + k_0} }{\left| z_q \right|^{k_0}_+} \leq \frac{C_q}{\left| z_q \right|^{k_0}_+}.
    \end{equation*}
    Writing the same computation with the function $H_1$ instead of the function $H_0$, we deduce that
    \begin{equation*}
        \left| \left( H_1 , n_q \right) \right| \leq \frac{C \left\| q \right\|_1^{d + 1  + k_1} }{\left| x -  z_q \right|^{k_1}_+} \leq \frac{C_q}{\left| z_q \right|^{k_1}_+}.
    \end{equation*}
    We then combine the two previous inequalities with Lemma~\ref{lemma.lemma2.5} (with $k := 2d + 2  + k_0+k_1$) to obtain that
    \begin{equation*}
        \sum_{q \in \mathcal{Q}} |z(\beta, q)| \left| \left(H_0 , n_q \right) \right| \left| \left(H_1 , n_q \right) \right| \leq C \sum_{q \in \mathcal{Q}} z(\beta , q) \left\| q \right\|_1^{2d + 2  + k_1+k_2}  \frac{1}{\left| z_q \right|^{k_0}_+  |x - z_q|_+^{k_1}} \leq C \sum_{z \in \Zd} \frac{1}{|z|^{k_0}_+} \frac{1}{|x - z|^{k_1}_+}.
    \end{equation*}
\end{proof}

\subsection{The discrete Green's function}

In this section, we introduce the discrete Green's function on the lattice $\Zd$ as well as some of its elementary properties (regarding its decay and the one of its derivative).

\begin{definition}[Green's function on the lattice $\Zd$] \label{def:defgreenlattice}
    Fix a dimension $d \geq 3$, then there exists a unique function $G : \Zd \to \R$ that satisfies the two properties
    \begin{equation*}
        - \Delta G = \delta_0 ~~\mbox{on}~~\Zd ~~\mbox{and} ~~ G(x) \underset{|x| \to \infty}{\longrightarrow} 0.
    \end{equation*}
\end{definition}

\begin{proposition}[Chapter 4 of~\cite{lawler2010random}] \label{prop:greenfunction}
    The Green's function $G$ satisfies the properties: \begin{itemize}
        \item Decay of the Green's function: There exist two constants $c := c(d) > 0$ and $C := C(d) < \infty$ such that, for any $x \in \Zd$,
        \begin{equation*}
            \frac{c}{|x|^{d-2}_+} \leq G(x) \leq \frac{C}{|x|^{d-2}_+}.
        \end{equation*}
        \item Decay of the gradient of the Green's function: There exists a constant $C := C(d) < \infty$ such that, for any $x \in \Zd$,
        \begin{equation*}
            \left| \nabla G(x) \right| \leq \frac{C}{|x|^{d-1}_+}.
        \end{equation*}
    \end{itemize}
\end{proposition}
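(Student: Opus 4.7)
The plan is to prove both bounds via the random walk representation and standard heat-kernel asymptotics, which is the approach adopted in Chapter 4 of \cite{lawler2010random}. Denote by $(X_n)_{n \geq 0}$ the simple random walk on $\Zd$ with transition kernel $p_n(x) := \P_0[X_n = x]$. Since $G$ is the unique decaying solution of $-\Delta G = \delta_0$, a standard computation yields the representation
\[
G(x) \;=\; \frac{1}{2d} \sum_{n = 0}^\infty p_n(x),
\]
so that all the required decay estimates can be extracted from pointwise bounds on $p_n(x)$.

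For the two-sided bound on $G$, I would invoke the local central limit theorem, which states that in the diffusive regime $|x| \le \sqrt{n} \log n$,
\[
p_n(x) \;=\; \frac{C_d}{n^{d/2}} \exp\!\left(-\frac{d|x|^2}{2n}\right) \;+\; O\!\left(n^{-d/2-1}\right),
\]
together with a Gaussian (and in the ballistic range stretched-exponential) tail bound for $|x| \ge \sqrt{n} \log n$. Splitting the sum according to whether $n \ll |x|^2$, $n \simeq |x|^2$, or $n \gg |x|^2$, the first regime contributes a negligible term, while the remaining two regimes sum to a quantity comparable to $\int_{|x|^2}^\infty n^{-d/2} \, dn \simeq |x|^{2-d}$. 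This provides both the upper and lower bounds with matching constants $c, C$ in dimension $d \geq 3$. For bounded $|x|$, the statement is immediate by transience, which yields $G(x) \le G(0) < \infty$.

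For the gradient bound, a clean route is via Fourier analysis: on the torus $\T^d = (-\pi, \pi]^d$ one has
\[
\nabla_i G(x) \;=\; \int_{\T^d} \frac{(e^{i k_i} - 1)\, e^{i x \cdot k}}{2 \sum_{j=1}^d (1 - \cos k_j)} \, \frac{dk}{(2\pi)^d}.
\]
The multiplier $(e^{i k_i} - 1)/\bigl(2 \sum_{j} (1 - \cos k_j)\bigr)$ behaves like $i k_i / |k|^2$ near $k = 0$, i.e., it is smooth off the origin and homogeneous of degree $-1$ at the origin. A standard dyadic decomposition in frequency together with repeated integration by parts in $k$, using smooth cutoffs adapted to $|k| \simeq 2^{-j}$ and gaining one power of $|x|^{-1}$ per integration by parts, yields $|\nabla_i G(x)| \le C|x|^{1-d}$. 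Alternatively, the refined LCLT estimate $|p_n(x + e_i) - p_n(x)| \le C n^{-(d+1)/2} \exp(-c |x|^2 / n)$, which gains an extra factor $n^{-1/2}$ compared to the bound on $p_n(x)$ itself, together with the same summation over $n$ as above, produces the same conclusion.

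The only mildly delicate point is making the low-frequency analysis (equivalently the $n \simeq |x|^2$ regime of the walk) quantitative with constants uniform in $x$; once this is settled for large $|x|$, the complementary range of moderate $|x|$ is handled trivially by the finiteness of $G(0)$ and by a finite-range argument.
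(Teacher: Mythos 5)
Your proof reconstructs the standard argument of Lawler and Limic, which is precisely the reference the paper itself gives for this proposition: represent $G$ via the random-walk series $G(x)=\tfrac{1}{2d}\sum_n p_n(x)$, sum the local CLT asymptotics for $p_n(x)$ to get the two-sided bound, and use either the Fourier-multiplier dyadic decomposition or the discrete gradient heat-kernel estimate $|p_n(x+e_i)-p_n(x)|\lesssim n^{-(d+1)/2}e^{-c|x|^2/n}$ for the derivative bound; this is essentially the same approach. The one care point worth flagging is the period-$2$ parity of the simple random walk ($p_n(x)=0$ unless $n+\sum_i|x_i|$ is even), so the LCLT formula you quote must carry a factor of $2$ on the even-parity set or be applied to the lazy walk, although this leaves the summed estimates unchanged.
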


\subsection{The random interface measure $\mu_\beta$} \label{sec:randominterfaces}

This section is devoted to the introduction of the random interface model (see the measure $\mu_\beta$ below) which appears when the duality transform of~\cite{FS4d} is applied to the Villain model (at low temperature) together with the Helffer-Sj\"{o}strand representation formula. We mention that we will make use of the definitions of the previous sections.

\subsubsection{Definitions} \label{sec:defWitten}

Given an integer $L \in \N$, we introduce the finite dimensional vector space of $2$-forms 
\begin{equation*}
    \Omega_L := \left\{ \varphi : \Zd \to \R^{d \choose 2} \, : \, \varphi = 0 ~\mbox{in}~ \Zd \setminus \Lambda_L  \right\}.
\end{equation*}
We equip the finite-dimensional vector space $\Omega_L$ with the Borel $\sigma$-algebra, the scalar product~\eqref{eq:TV07319} and the Lebesgue measure associated with this scalar product. We then introduce the (infinite-dimensional) vector space of real-valued $2$-forms, i.e.,
\begin{equation*}
    \Omega := \left\{ \varphi : \Zd \to \R^{d \choose 2}  \right\},
\end{equation*}
and equip this space with the $\sigma$-algebra generated by the projections. We next introduce the set of smooth local and compactly supported functions of the set $\Omega$
\begin{multline*}
    C^\infty_c(\Omega) := \left\{ F : \Omega \to \R \, : \, \exists \, n \in \N, \exists \, x_1, \ldots, x_n \in \Zd ~\mbox{and}~ \exists \, f \in \C^\infty_c(\R^n) \right. \\ \left. ~\mbox{such that} ~~ F(\varphi) = f(\varphi (x_1) , \ldots , \varphi (x_n)) \right\}.
\end{multline*} 
Given a smooth function $F \in C^\infty_c(\Omega)$, a vertex $x \in \Zd$ and an integer $i \in \{1 , \ldots, d\}$, we define the partial derivative $\partial_{x , i}$ according to the identity: for any $\varphi \in \Omega$,
\begin{equation*}
    \partial_{x , i} F(\varphi) := \lim_{\ep \to 0} \frac{F(\varphi + \ep e_i \delta_{x} ) -  F(\varphi)}{\ep},
\end{equation*}
and further set
\begin{equation*}
    \partial_{x} F(\varphi) := \left( \partial_{x , 1} F(\varphi) , \ldots, \partial_{x , d} F(\varphi) \right) \in \Rd.
\end{equation*}
Given a function $F \in C^\infty_c(\Omega),$ we denote by $\partial F : \Zd \times \Omega \to \R^{\binom d2}$ the function defined by the formula $\partial F : (x , \varphi) \mapsto \partial_{x} F(\varphi) \in \R^{\binom d2}.$

For $k \in \N$, we extend the previous notation to vector-valued functions $F : \Omega \to \R^k$ and write $F \in C^\infty_c(\Omega \, ; \, \R^k)$ if all the components of $F$ belong to $C^\infty_c(\Omega)$.

We next introduce the space of smooth and compactly supported functions defined on $\Zd \times \Omega$
\begin{multline*} 
    C^\infty_c( \Zd \times \Omega \, ; \,  \R^{d \choose 2} ) \\
    := \left\{ F : \Zd \times \Omega \to \R^{\binom d2} \, : \, \forall x \in \Zd, \, F(x , \cdot) \in C^\infty_c(\Omega) ~\mbox{and}~ F(x , \cdot) = 0 ~\mbox{for all but finitely many}~ x \in \Zd\ \right\}.
\end{multline*}
This space is defined so that the following property is satisfied: for any $F \in C^\infty_c(\Omega)$, one has $\partial F \in C^\infty_c( \Zd \times \Omega \, ; \, \R^{d \choose 2}).$

We extend the definitions for the partial derivatives $\partial_{x , i}, \partial_x$ and $\partial$ to the functions of $C^\infty_c( \Zd \times \Omega \, ; \, \R^{d \choose 2} )$. Given a function $F = (F_1 , \ldots, F_{\binom d2}) \in C^\infty_c( \Zd \times \Omega \, ; \, \R^{d \choose 2} ),$ we denote by $\partial F : \Zd \times \Zd \times \Omega \to \R^{\binom d2 \times \binom d2}$ the function defined by the formula $\partial F : (x , y, \varphi) \mapsto \left( \partial_{x} F_1(y, \varphi) , \ldots, \partial_{x} F_{\binom d2}(y, \varphi) \right) \in \R^{\binom d2 \times \binom d2}.$ Finally, for any function $F \in C^\infty_c(\Omega)$, we let $\partial \partial F$ be the function defined by $\partial \partial F : (x , y, \varphi) \mapsto \left( \partial_{x , i} \partial_{y,j} F( \varphi) \right)_{1 \leq i,j \leq {d \choose 2}} \in \R^{\binom d2 \times \binom d2}$.

\subsubsection{The random interface measure $\mu_\beta$}

We first equip the measurable space $(\Omega_L , \mathcal{F}(\Omega_L))$ with a probability measure which plays an important role in the study of the low temperature Villain model in this article.

\begin{definition}[Finite-volume Gibbs measure]
    Given a (large) inverse temperature $\beta \in (1,\infty)$, we equip the space $\Omega_L$ with the following probability measure 
    \begin{equation} \label{def:finite-volumeGibbs}
        \mu_{L , \beta}(d \varphi) := \frac{1}{Z_{L , \beta}}  \exp \left( - \frac{1}{2 \beta} \sum_{x \in \Zd} \left| \nabla \varphi(x) \right|^2 -  \sum_{n \geq 2}  \sum_{x \in \Zd} \frac{1}{2\beta} \frac{1}{\beta^{\frac{n}{2}}} \left| \nabla^n \varphi(x) \right|^2 + \sum_{q \in \mathcal{Q}} z(\beta, q) \cos \left( 2\pi \left( \di^* \varphi , n_q \right) \right)  \right)
    \end{equation}
    where $Z_{L , \beta}$ is the constant chosen so that $\mu_{L , \beta}$ is a probability measure.
\end{definition}

\begin{remark} 
Let us introduce the three functions, for any $\varphi \in \Omega_L$,
        \begin{align*}
            H_1(\varphi) & := \frac{1}{\beta} \sum_{x \in \Zd} \left| \nabla \varphi(x) \right|^2, \\
            H_2(\varphi) & := \sum_{n \geq 2}  \sum_{x \in \Zd} \frac{1}{\beta} \frac{1}{\beta^{\frac{n}{2}}} \left| \nabla^n \varphi(x) \right|^2, \\
            H_3(\varphi) &:= \sum_{q \in \mathcal{Q}} z(\beta, q) \cos \left( 2\pi \left( \di^* \varphi , n_q \right) \right).
        \end{align*}
Despite the technical complexity of the formula~\eqref{def:finite-volumeGibbs}, a few important observations can be made:
    \begin{itemize}
        \item[(i)] When $\beta \gg 1$, the sums in the definitions of the terms $H_2$ and $H_3$ converge absolutely for any value of $\varphi\in \Omega_L$ (N.B. In fact the function $\varphi \mapsto H_2(\varphi)$ is quadratic and the function $\varphi \mapsto H_3(\varphi)$ can be shown to be smooth).
        \item[(ii)] The Hessians of the functions $H_2$ and $H_3$ are much smaller than the one of the function $H_1$. To be more specific, we have the identities: for any $\varphi , \psi \in \Omega_L$,
        \begin{align*}
             \left( \psi , \mathrm{Hess}\,  H_1(\varphi) \psi \right) & = \frac{1}{2\beta} \sum_{x \in \Zd} \left| \nabla \psi(x) \right|^2,  \\
            \left( \psi , \mathrm{Hess} \,H_2(\varphi) \psi \right) & = \sum_{n \geq 2}  \sum_{x \in \Zd} \frac{1}{2\beta} \frac{1}{\beta^{\frac{n}{2}}} \left| \nabla^n \psi(x) \right|^2,\\
            \left( \psi , \mathrm{Hess} \, H_3(\varphi) \psi \right) & = - (4 \pi)^2 \sum_{q \in \mathcal{Q}} z(\beta, q) \cos  \left( 2\pi \left( \di^* \varphi , n_q \right) \right) \left| \left( \di^* \psi , n_q \right) \right|^2,
        \end{align*}
        and thus, for $\beta \gg 1$ and any $\varphi , \psi \in \Omega_L$,
        \begin{align}
            \left( \psi , \mathrm{Hess} \,H_2(\varphi) \psi \right) & \leq \frac{C}{\sqrt{\beta}} \left( \psi , \mathrm{Hess}\,  H_1(\varphi) \psi \right), \label{H2small} \\
            \left| \left( \psi , \mathrm{Hess} \, H_3(\varphi) \psi \right) \right| & \leq C e^{-c \beta} \left( \psi , \mathrm{Hess}\,  H_1(\varphi) \psi \right) \label{H3small}.
        \end{align}
         \item[(iii)] A consequence of \eqref{H2small} and \eqref{H3small} is that the term inside the exponential in the definition~\eqref{def:finite-volumeGibbs} is a function which is uniformly convex in $\varphi$.
    \end{itemize}
    From all these observations, it can be deduced that the model~\eqref{def:finite-volumeGibbs} falls into the category of the (extensively) studied $\nabla \varphi$ interface model with uniformly convex interaction potential (see~\cite{F05, V06}) with two important features:
    \begin{itemize}
        \item The functions $\varphi \in \Omega_L$ are assumed to be valued in $\R^{d \choose 2}$ (while in the ``standard" $\nabla \varphi$-interface model they are usually assumed to be real-valued).
        \item The model has long-range interactions but with exponential decay of the strength of the interaction.
    \end{itemize} 
\end{remark}

These observations imply that the techniques and tools developed to study the $\nabla \varphi$-interface model (e.g., Brascamp-Lieb inequality, Helffer-Sj\"{o}strand representation formula) can be used to study the measure~\eqref{def:finite-volumeGibbs} and a number of results proved in the case the $\nabla \varphi$-interface model can be established for the measure~\eqref{def:finite-volumeGibbs}.

In this direction, a first important result (which will be needed in this article) is the existence of a thermodynamic limit, i.e., the convergence of the finite-volume measures $(\mu_{L , \beta})_{L \in \N}$ toward an infinite-volume measure. In the case of the $\nabla \varphi$-interface model, this question is usually answered using the Brascamp-Lieb inequality (see~\cite{BL, F05}) or elliptic regularity estimates (see~\cite{AW}), and the proof can be extended (without much difficulties) to the case of the measure $\mu_{L , \beta}$.

In order to state the property, we note that we always have the inclusion $\Omega_L \subseteq \Omega$. In particular, we can see the measure $\mu_{L , \beta}$ as a probability distribution on the space $(\Omega, \mathcal{F}(\Omega))$.

\begin{proposition}[Thermodynamic limit for $\mu_{L , \beta}$]\label{p.mubeta}
For $\beta \in (1 , \infty)$ sufficiently large, the series of measures $\left( \mu_{L, \beta} \right)_{L \in \N}$ converges as $L \to \infty$ to an infinite-volume Gibbs measure denoted by $\mu_\beta$. Additionally, the measure $\mu_\beta$ is invariant under translations and under the involution $\varphi \mapsto - \varphi$.
\end{proposition}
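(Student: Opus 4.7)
The plan is to follow the standard route for thermodynamic limits of uniformly log-concave $\nabla\varphi$-type interface models, as in~\cite{F05, AW}. I would combine the Brascamp--Lieb inequality with the Helffer--Sj\"ostrand representation to show that $(\mu_{L,\beta})_{L \in \N}$ is Cauchy on local observables, and then deduce the symmetries by passing to the limit in the corresponding symmetries of $\mu_{L,\beta}$.

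The first step is to note that the Hamiltonian $H := H_1 + H_2 - H_3$ in the exponent of~\eqref{def:finite-volumeGibbs} is uniformly convex on $\Omega_L$: combining the Hessian formula for $H_1$ in the remark with~\eqref{H2small} and~\eqref{H3small}, one has, for $\beta$ large enough and all $\varphi,\psi \in \Omega_L$,
\begin{equation*}
\bigl(\psi, \mathrm{Hess}\,H(\varphi)\,\psi\bigr)
\ge \frac{c}{\beta} \sum_{x \in \Zd} |\nabla\psi(x)|^2,
\end{equation*}
for some $c := c(d) > 0$. The Brascamp--Lieb inequality then yields, for every finitely supported $\xi : \Zd \to \R^{\binom d2}$,
\begin{equation*}
\mathrm{Var}_{\mu_{L,\beta}}\bigl((\xi,\varphi)\bigr) \le C\beta\,(\xi, G_L \xi),
\end{equation*}
where $G_L$ denotes the discrete Dirichlet Green's function on $\Lambda_L$. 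Since $d \ge 3$, Proposition~\ref{prop:greenfunction} together with the monotonicity $G_L \le G$ makes the right-hand side uniform in $L$. Log-concavity upgrades this to uniform moment bounds of all orders for $(\varphi(x))_{x \in \Zd}$; in particular, every finite-dimensional marginal of $\mu_{L,\beta}$ is tight, and a diagonal argument produces a subsequence $L_k \to \infty$ along which $\mu_{L_k, \beta}$ converges weakly (in the sense of finite-dimensional distributions) to some probability measure on $\Omega$.

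To upgrade subsequential convergence to genuine convergence, I would show that $(\mu_{L,\beta})_{L \in \N}$ is Cauchy on local observables. Given $F \in C^\infty_c(\Omega)$ depending only on the values of $\varphi$ in some fixed box $B$, I would use the Helffer--Sj\"ostrand representation introduced in Section~\ref{sec:HSrepre} to write the derivative of $\E_{\mu_{L,\beta}}[F]$ with respect to a perturbation of the Dirichlet condition on $\partial \Lambda_L$ as an integral involving the Green's matrix $\mathcal{G}$ associated with the Helffer--Sj\"ostrand operator. The annealed decay estimates on $\mathcal{G}$ collected in Proposition~\ref{prop.prop4.11chap4} then yield
\begin{equation*}
\bigl| \E_{\mu_{L,\beta}}[F] - \E_{\mu_{L',\beta}}[F] \bigr| \le C(F,\beta)\, \dist(B, \partial \Lambda_L)^{-\alpha}
\end{equation*}
for some $\alpha := \alpha(d,\beta) > 0$ and all $L' \ge L$. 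This implies that $(\mu_{L,\beta})$ converges to a unique limit $\mu_\beta$ in the topology of finite-dimensional distributions.

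The symmetries follow by passing to the limit in the corresponding symmetries of $\mu_{L,\beta}$. The involution $\varphi \mapsto -\varphi$ preserves $H_1, H_2$ (which are quadratic) and the sum defining $H_3$ (since $\cos$ is even), so $\mu_{L,\beta}$ is already invariant under this involution, and the invariance passes to the weak limit. For translation invariance, observe that for any $x \in \Zd$ the pushforward $\mu_{L,\beta} \circ \tau_x^{-1}$ is the zero-boundary Gibbs measure on the translated box $x + \Lambda_L$, which, by the uniqueness of the thermodynamic limit established above, also converges to $\mu_\beta$ as $L \to \infty$; hence $\mu_\beta = \mu_\beta \circ \tau_x^{-1}$. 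The main obstacle in this plan is the Cauchy step, which genuinely requires the quantitative Helffer--Sj\"ostrand/Green's matrix estimates stated later in Section~\ref{sec:HSrepre} and Proposition~\ref{prop.prop4.11chap4}; all other steps are direct adaptations of the classical $\nabla\varphi$-theory, made available here by the uniform convexity of $H$ and the exponential decay of the activities $z(\beta,q)$ (cf.\ Lemma~\ref{lemma.lemma2.5}).
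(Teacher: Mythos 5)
Your outline tracks the route the paper itself alludes to (Brascamp--Lieb plus elliptic regularity, following~\cite{F05, AW}), and the individual steps are reasonable. The uniform convexity estimate, the Brascamp--Lieb variance bound using $G_L \le G$, and the translation-invariance argument via the translation covariance of the Hamiltonian are all sound; for the last point it is worth noting explicitly that the potential $H_3$ only depends on $(\varphi,q)$ (by $(\di^*\varphi,n_q)=(\varphi,\di n_q)=(\varphi,q)$), so it is insensitive to the choice of $n_q$ and translation covariance is immediate.

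There is, however, a genuine circularity in your Cauchy step as written. Proposition~\ref{prop.prop4.11chap4} is stated for the Green's matrix of the Helffer--Sj\"ostrand operator associated with the \emph{infinite-volume} measure $\mu_\beta$: the $L^p(\Omega;\cdot)$-norms appearing there are taken against $\mu_\beta$, and the dynamical representation of Section~\ref{sec:sec264dynamic} and the annealed Calder\'on--Zygmund argument in Appendix~\ref{app.CZreg} both sample the Langevin dynamic from $\mu_\beta$. You cannot invoke these estimates to prove that $\mu_\beta$ exists. The fix is standard but must be made: establish the Green's matrix/heat kernel decay for the \emph{finite-volume} operators $\mathcal{L}^{\varphi}_{\mathrm{spat}}$ on $\Lambda_L$ with Dirichlet boundary, with constants uniform in $L$. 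Every ingredient used in~\cite[Proposition 3.17]{DW} and Appendix~\ref{app.CZreg} (small ellipticity contrast, Caccioppoli, deterministic Nash--Aronson on the perturbed Laplacian, stationarity of the finite-volume Langevin dynamic) is already available in finite volume, so this is routine, but it has to be stated in that form before it can be used. Once the finite-volume estimates are in place, your comparison of $\E_{\mu_{L,\beta}}[F]$ and $\E_{\mu_{L',\beta}}[F]$ goes through, and the same estimate (applied to a pair of boxes one of which is $x+\Lambda_L$, rather than only to centred boxes) also closes the small gap in your translation-invariance argument, where you implicitly used uniqueness of the limit along arbitrary increasing sequences of boxes rather than only along $(\Lambda_L)_{L\in\N}$.

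A final minor remark: one does not actually need the full strength of the Green's matrix decay here. The classical $\nabla\varphi$ arguments of~\cite{F05} deduce convergence from the Brascamp--Lieb inequality together with a monotonicity/FKG-type argument or a direct coupling of the Langevin dynamics on nested boxes, bypassing quantitative Green's function bounds entirely. Since the measure~\eqref{def:finite-volumeGibbs} is uniformly log-concave with exponentially decaying long-range interactions, that softer route is available and avoids the circularity altogether, at the cost of losing the quantitative rate in your displayed Cauchy bound (which is not needed for the statement of Proposition~\ref{p.mubeta}).
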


\begin{remark}
Let us make some remarks about the previous definition:
\begin{itemize}
    \item The (weak) convergence is defined as follows: for any function $\mathbf{f} \in C^\infty_c(\Omega)$, one has the convergence $$\left\langle \mathbf{f} \right\rangle_{\mu_{L , \beta}} \underset{L \to \infty}{\longrightarrow} \left\langle \mathbf{f} \right\rangle_{\mu_{ \beta}}.$$ We denote by $\left\langle \cdot \right\rangle_{\mu_\beta}$ and $\cov_{\mu_\beta}$ the expectation and covariance with respect to $\mu_\beta$.
    \item For $p \in (1, \infty]$ and $k \in \N$, we will make use of the following $L^p$ spaces:
    \begin{itemize}
        \item We let $L^p(\Omega \, ; \, \R^k)$ be the space of measurable functions $\mathbf{f}: \Omega \to \R^{k}$ satisfying (with $p = \infty$ being the essential supremum norm) $$\left\| \mathbf{f} \right\|_{L^p(\Omega \, ; \, \R^k)}^p := \left\langle \left| \mathbf{f} \right|^p \right\rangle_{\mu_\beta} < \infty;$$
        \item We let $L^p(\Zd \times \Omega \, ; \, \R^k)$ the space of measurable functions $G : \Zd \times \Omega \to \R^{k}$ satisfying (with $p = \infty$ being the essential supremum norm) $$\left\| G \right\|_{L^p(\Zd \times \Omega \, ; \, \R^k)}^p := \sum_{x \in \Zd} \left\langle \left| G(x , \cdot) \right|^p \right\rangle_{\mu_\beta} < \infty.$$
    \end{itemize}
    \item From now on, we will only work with the measure $\mu_\beta$.
\end{itemize}
\end{remark}

\subsection{The Helffer-Sj\"{o}strand representation} \label{sec:HSrepre}

This section is devoted to the Helffer-Sj\"{o}strand representation formula. This formula was introduced by Helffer and Sj\"{o}strand~\cite{HS} and then used by Naddaf-Spencer~\cite{NS} and Giacomin-Olla-Spohn~\cite{GOS} to identify the scaling limit of the $\nabla \varphi$-interface model. We will introduce it in this section in a few steps:
\begin{itemize}
    \item In Section~\ref{sec:wittenetHS}, we introduce the Helffer-Sj\"{o}strand operator.
    \item In Section~\ref{sec:HSrepresetnationformula}, we state the Helffer-Sj\"{o}strand representation formula.
    \item In Section~\ref{sec:sec263}, we collect some estimates on the decay of the fundamental solution (or Green's matrix) associated with the Helffer-Sj\"{o}strand operator.
\end{itemize}

\subsubsection{The Helffer-Sj\"{o}strand operator} \label{sec:wittenetHS}

We first introduce the Laplacian~$\Delta_\varphi$ defined on the set of functions $F \in C^\infty_c(\Omega)$ by the formula: for $\varphi \in \Omega$,
\begin{multline} \label{eq:TV13000102}
\Delta_\varphi F (\varphi) 
\\ : = 
\sum_{x\in \Zd} \partial_x^2 F(\varphi) 
+ 
\sum_{x\in \Zd} 
 \left[\frac{1}{2\beta} \Delta \varphi(x) - \sum_{n\geq 1} \frac{1}{2\beta} \frac{1}{\beta^{n/2}}(-\Delta)^{n+1} \varphi(x)-\sum_{q \in \mathcal{Q}} 2\pi z(\beta, q) q(x) \sin\left( 2\pi \left( \di^* \varphi , n_q \right)\right)  \right] \cdot \partial_xF(\varphi),
\end{multline}
where:
\begin{itemize}
    \item For the first term on the right-hand side, the notation $\partial_x^2$ means $\sum_{i=1}^{\binom d2} \partial_{x,i}^2$;
    \item The second term on the right-hand side is made of two terms: the (long) term inside the brackets and the partial derivative $\partial_xF$. We note that these two terms are both valued in $\R^{\binom d2}$ (since both the function $\varphi \in \Omega$, the charge $q \in \mathcal{Q}$ and the partial derivative  $\partial_xF$ are valued in $\R^{\binom d2}$) and that we take the scalar product between the two terms on the right side of~\eqref{eq:TV13000102}. 
\end{itemize} 
The operator $\Delta_\varphi$ is defined so as to satisfy the following identities
\begin{equation*}
\left\langle F \Delta_\varphi G \right \rangle_{\mu_{\beta}} 
=
\left\langle G \Delta_\varphi F \right \rangle_{\mu_{\beta}} 
=
- \sum_{x\in\Zd} \langle \partial_x F \cdot \partial_x G \rangle_{\mu_{\beta}},
\quad \forall F,G \in C^\infty_c(\Omega  )
\end{equation*}
 (N.B. there is a scalar product on the right-most term because both functions $\partial_x F$ and $\partial_x G$ are valued in~$\R^{d \choose 2}$).
We extend this definition to the functions of $C^\infty_c( \Zd \times \Omega \, ; \,  \R^{d \choose 2} )$ by setting, for $F = (F_1 , \ldots, F_{d \choose 2}) \in C^\infty_c (\Zd \times  \Omega \, ; \, \R^{d \choose 2} )$ and $(x , \varphi) \in \Zd \times \Omega$,
\begin{equation*}
    \Delta_\varphi F(x , \varphi) = \left( \Delta_\varphi F_1(x , \varphi) , \ldots, \Delta_\varphi F_{d \choose 2}(x , \varphi) \right) \in \R^{d \choose 2}.
\end{equation*}
We next introduce the Helffer-Sj\"{o}strand operator. To this end, we extend the definition of the discrete Laplacian $\Delta$ to functions of $C^\infty_c( \Zd \times \Omega )$ by setting, for any $(x,\varphi) \in \Zd \times \Omega$,
\begin{equation*}
     \Delta F( x , \varphi) := \sum_{y \sim x}  ( F( y , \varphi) - F( x , \varphi) ),
\end{equation*}
and define the iteration of the Laplacian $(-\Delta)^k$ by iterating the previous definition. For $q \in \mathcal{Q}$ and $\varphi \in \Omega$, we define the coefficient
\begin{equation*}
    \a_q (\varphi):=  4\pi^2 z\left( \beta , q \right) \cos \left(2\pi\left( \varphi , q \right)\right) \in \R.
\end{equation*}
Given a function $F \in C^\infty_c(\Zd \times \Omega \, ; \, \R^{d \choose 2})$ and $\varphi \in \Omega$, we introduce the notation (N.B. note that this quantity depends only on the gradient of the function $F$ since the codifferential $\di^*$ can be written as a linear combination of the components of the gradient of $F$)
\begin{equation*}
    \nabla_q F(\varphi) := (q , F(\cdot, \varphi)) = (n_q , \di^* F(\cdot, \varphi)) \in \R.
\end{equation*}
Combining the two previous definitions, we introduce the (long-range) operator 
\begin{align*}
 \nabla_q^* \cdot \a_q \nabla_q F(x, \varphi) & = 4\pi^2 z\left( \beta , q \right) \cos \left(2\pi\left( \varphi , q \right)\right) \left( \nabla_q F(\varphi) \right) q(x) \\
 & = 4\pi^2 z\left( \beta , q \right) \cos \left(2\pi\left( \varphi , q \right)\right) \left( \di^* F(\cdot , \varphi) , n_q \right) q(x) \in \R^{d \choose 2}. \notag
\end{align*}
The notation is motivated by the following identity: for any $F , G \in C^\infty_c( \Zd \times \Omega \, ; \, \R^{d \choose 2} )$ and any $q \in \mathcal{Q},$
\begin{align*}
   \sum_{x \in \Zd} \left( \nabla_q^* \cdot \a_q \nabla_q F \right)(x , \varphi) \cdot G(x, \varphi)  & = \sum_{x \in \Zd} \left(\nabla_q^* \cdot \a_q \nabla_q \right) G(x, \varphi) \cdot F(x, \varphi) \\
   & = \a_q(\varphi) \left( \di^* F(\cdot , \varphi)  , n_q \right) \left( \di^* G(\cdot , \varphi) , n_q \right) \in \R.
\end{align*}
Equipped with these definitions, we introduce the spatial operator $\mathcal{L}^{\varphi}$ and the Helffer-Sj{\"o}strand operator $\mathcal{L}$ acting on functions $F \in C^\infty_c( \Zd \times \Omega  \, ; \, \R^{d \choose 2})$
\begin{equation*}
    \mathcal{L}^{\varphi}_{\mathrm{spat}} F(x , \varphi) : =  \frac{1}{2\beta} (-\Delta) F(x , \varphi) + \frac{1}{2\beta}\sum_{n \geq 1} \frac{1}{\beta^{ \frac n2}} \left(-\Delta\right)^{n+1} F(x , \varphi) + \sum_{q \in \mathcal{Q}} \nabla_q^* \cdot \a_q \nabla_q F(x , \varphi)
\end{equation*}
and
\begin{equation*}
\L F(x , \varphi) : = -\Delta_\varphi F(x , \varphi) +  \mathcal{L}^{\varphi}_{\mathrm{spat}} F(x , \varphi).
\end{equation*}
We remark that the spatial operator $\mathcal{L}^{\varphi}_{\mathrm{spat}}$ can be applied to functions depending only on the spatial variable (i.e. to functions $F : \Zd \to \R^{d \choose 2}$) once a function $\varphi \in \Omega$ is specified (N.B. The function $\varphi \in \Omega$ appears in the coefficient $\mathbf{a}_q$).

In this article, we will be interested in the solutions of the Helffer-Sj\"{o}strand equation $\mathcal{L} F = G$ (for a given function $G : \Zd \times \Omega \to \R^{d \choose 2}$) as these functions can be used to obtain precise information on the random interface measure $\mu_\beta$ (see Proposition~\ref{prop:HSrepresetnation} below). We first introduce a definition of a solution for the Helffer-Sj\"{o}strand equation together with a statement ensuring the existence and uniqueness of solutions for a large class of functions $G : \Zd \times \Omega \to \R^{d \choose 2}$.

\begin{definition}[Solution of the Helffer-Sj\"{o}strand equation] \label{def.solHSeq}
Let $G:  \Zd \times \Omega \to \R^{\binom d2}$ be a function that satisfies $G(x , \cdot)\in L^1(\Omega ; \R^{\binom d2})$ for any $x \in \Zd$. A function $\mathcal{U}  : \Zd \times \Omega \to \R^{\binom d2}$ is called a weak solution of the Helffer-Sj\"{o}strand equation 
$$\mathcal{L} \mathcal{U} = G ~~\mbox{in}~~  \Zd \times \Omega,$$
if $\mathcal{U}(x , \cdot)\in L^1\left(\Omega; \R^{\binom d2}\right)$ for any $x \in \Zd$ and if, for any function $F \in C^\infty_c\left(\Zd \times \Omega \, ; \,  \R^{d \choose 2} \right)$,
\begin{equation} \label{eq:solHSeqdef}
 \sum_{x \in \Zd} \left\langle \mathcal{U}(x , \cdot) \cdot \mathcal{L} F(x , \cdot) \right\rangle_{\mu_\beta} 
 = \sum_{x\in \Zd} \left\langle  G(x , \cdot) \cdot F(x , \cdot) \right\rangle_{\mu_\beta}.
\end{equation}
\end{definition}

\begin{remark}
    On both sides of the identity~\eqref{eq:solHSeqdef}, the functions $\mathcal{U}$,  $\mathcal{L} F$, $G$ and $F$ are valued in $\R^{d \choose 2}$, there is thus a scalar product (in $\R^{d \choose 2}$) on both sides.
\end{remark}

The next proposition ensures the existence and uniqueness of solutions of the Helffer-Sj\"{o}strand equation. The proof of this result can be found, for instance, in the article of Naddaf-Spencer~\cite{NS} (using the Lax-Milgram Theorem).

\begin{proposition}[Solvability of the Helffer-Sj\"{o}strand equation, Section 2.1 of~\cite{NS} and Section 3.4.2 of~\cite{DW}]
For any $G \in L^2(\Zd \times  \Omega \, ; \, \R^{d \choose 2} )$, there exists a unique weak solution $\mathcal{U}: \Zd \times \Omega \to \R^{\binom d2}$ of the Helffer-Sj\"{o}strand equation
\begin{equation*}
 \mathcal{L} \mathcal{U}
 =  G
\quad \text{in } \Zd \times \Omega,
\end{equation*}
which satisfies, for some $C(d,\beta)<\infty$,
\begin{equation}
\label{e.HSsolest}
\sum_{x , y \in \Zd} \left\langle \left| \partial_y \mathcal{U}(x , \cdot) \right|^2\right\rangle_{\mu_\beta} + \sum_{x \in \Zd} \left\langle \left| \nabla \mathcal{U}(x , \cdot) \right|^2\right\rangle_{\mu_\beta} + \sum_{x \in \Zd} \left\langle \left| \mathcal{U}(x , \cdot) \right|^{\frac{2d}{d-2}}\right\rangle_{\mu_\beta}
\leq 
C \sum_{x \in \Zd} \left\langle \left| G(x , \cdot) \right|^2\right\rangle_{\mu_\beta}. 
\end{equation}
\end{proposition}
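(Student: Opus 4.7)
The proof will be a direct application of the Lax--Milgram theorem in an infinite-dimensional Hilbert space built from the ``random'' and spatial Dirichlet forms naturally associated with $\mathcal{L}$, followed by a Sobolev-type embedding to obtain the $L^{2d/(d-2)}$ estimate. The strategy mirrors that of Naddaf--Spencer~\cite{NS} for the usual nearest-neighbour $\nabla\varphi$-model, with the extra work of controlling the infinite-range perturbative terms in $\mathcal{L}^\varphi_{\mathrm{spat}}$ uniformly in $\beta \gg 1$.

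The first step is to define the Hilbert space $\mathcal{H}$ as the closure of $C^\infty_c(\Zd\times\Omega; \R^{\binom d2})$ under the inner product associated with
\[
\|F\|_\mathcal{H}^2 := \sum_{x,y\in\Zd} \langle |\partial_y F(x,\cdot)|^2 \rangle_{\mu_\beta}
+ \frac{1}{2\beta} \sum_{x\in\Zd} \langle |\nabla F(x,\cdot)|^2\rangle_{\mu_\beta},
\]
together with the bilinear form $a(\mathcal{U}, F) := \sum_x \langle \mathcal{L} \mathcal{U}(x,\cdot)\cdot F(x,\cdot)\rangle_{\mu_\beta}$. An integration by parts---in $\varphi$ via the identity for $\Delta_\varphi$ stated just after \eqref{eq:TV13000102}, and in the spatial variable using the adjointness of $\nabla_q^*$ and $\nabla_q$ along with the codifferential identity in Remark~\ref{rem:remark25}---produces the closed expression
\begin{align*}
a(F,F) &= \sum_{x,y}\langle|\partial_y F(x,\cdot)|^2\rangle_{\mu_\beta}
+ \frac{1}{2\beta}\sum_x \langle|\nabla F(x,\cdot)|^2\rangle_{\mu_\beta} \\
&\quad + \frac{1}{2\beta}\sum_{n\geq 1}\beta^{-n/2}\sum_x \langle|\nabla^{n+1}F(x,\cdot)|^2\rangle_{\mu_\beta}
+ \sum_{q\in\mathcal{Q}}\langle \a_q\, (\di^* F(\cdot,\varphi),n_q)^2\rangle_{\mu_\beta}.
\end{align*}
By \eqref{H2small} and \eqref{H3small} (applied to $F$ itself rather than a second variation) the last two terms are bounded by $C\beta^{-1/2}$ and $Ce^{-c\beta}$, respectively, times the main spatial term, with absolute convergence of the charge sum guaranteed by the bound on $|z(\beta, q)|$ and Lemma~\ref{lemma.lemma2.5}. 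For $\beta$ large these perturbations are absorbed, yielding coercivity $a(F,F) \geq \tfrac{1}{2}\|F\|_\mathcal{H}^2$, and a parallel Cauchy--Schwarz argument gives continuity of $a$ on $\mathcal{H}\times\mathcal{H}$.

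Next, I would show that the linear functional $F \mapsto \sum_x\langle G(x,\cdot)\cdot F(x,\cdot)\rangle_{\mu_\beta}$ is bounded on $\mathcal{H}$ by $C\|G\|_{L^2(\Zd\times\Omega)}\|F\|_\mathcal{H}$, using the discrete Sobolev inequality on $\Zd$ (valid because $d\geq 3$) pointwise in $\varphi$ together with H\"older under $\mu_\beta$. The Lax--Milgram theorem then produces a unique $\mathcal{U}\in\mathcal{H}$ with $a(\mathcal{U}, F) = \sum_x\langle G\cdot F\rangle_{\mu_\beta}$ for all $F\in\mathcal{H}$, which is precisely the weak formulation of Definition~\ref{def.solHSeq}. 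Testing the equation against $F=\mathcal{U}$ yields $\|\mathcal{U}\|_\mathcal{H}^2\leq C\|G\|^2_{L^2(\Zd\times\Omega)}$, which controls the first two terms of \eqref{e.HSsolest}; the $L^{2d/(d-2)}$ term then follows by applying the discrete Sobolev inequality to $\mathcal{U}(\cdot,\varphi)$ and taking $\mu_\beta$-expectation.

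The principal obstacle is the infinite-range, infinite-dimensional nature of $\mathcal{L}$: one must rigorously justify all integrations by parts under $\mu_\beta$, verify that the Dirichlet form defined by $a$ is closable with core $C^\infty_c(\Zd\times\Omega;\R^{\binom d2})$, and handle the absolute convergence of $\sum_{q\in\mathcal{Q}}$ when expanding $a(F,F)$. The charge-sum issue is directly dispatched by Lemma~\ref{lemma.lemma2.5} and the exponential decay of $z(\beta, q)$; the closability and the core property rely on the uniform convexity of the Hamiltonian of $\mu_\beta$ noted after \eqref{H3small}, which provides a Brascamp--Lieb-type control letting one approximate general $F\in\mathcal{H}$ by smooth cylindrical functions and carry the integration by parts through the limit.
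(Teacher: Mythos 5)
Your proposal takes essentially the same route as the one the paper refers to: it closes $C^\infty_c(\Zd\times\Omega;\R^{\binom d2})$ under the energy norm associated with $-\Delta_\varphi$ and $\mathcal{L}^{\varphi}_{\mathrm{spat}}$, expands $a(F,F)$, absorbs the infinite-range perturbations via \eqref{H2small} and \eqref{H3small} to get coercivity for $\beta$ large, applies Lax--Milgram for existence, uniqueness and the first two bounds of \eqref{e.HSsolest}, and then invokes the discrete Gagliardo--Nirenberg--Sobolev inequality for the third. This matches the hint the paper gives (``Lax--Milgram'' for the first two terms, ``GNS'' for the third) and the references to \cite{NS} and \cite{DW}.

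There is one step, however, that does not go through as you have stated it: the claim that the source functional $\ell(F)=\sum_{x}\langle G(x,\cdot)\cdot F(x,\cdot)\rangle_{\mu_\beta}$ is bounded by $C\|G\|_{L^2(\Zd\times\Omega)}\,\|F\|_{\mathcal{H}}$ ``using the discrete Sobolev inequality pointwise in $\varphi$ together with H\"older.'' GNS on $\Zd$ gives $\|F(\cdot,\varphi)\|_{\ell^{2d/(d-2)}(\Zd)}\lesssim \|\nabla F(\cdot,\varphi)\|_{\ell^2(\Zd)}$, and the dual H\"older exponent of $2d/(d-2)$ is $2d/(d+2)$. So this route controls $|\ell(F)|$ by $\|G\|_{L^2_\varphi \ell^{2d/(d+2)}_x}\,\|F\|_{\mathcal{H}}$, not by $\|G\|_{L^2(\Zd\times\Omega)}\,\|F\|_{\mathcal{H}}$. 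On the infinite lattice $\ell^{2d/(d+2)}(\Zd)\subsetneq \ell^2(\Zd)$ (the norms are nondecreasing in the exponent on a counting measure), so $G\in L^2(\Zd\times\Omega)$ does \emph{not} imply $G\in L^2_\varphi\ell^{2d/(d+2)}_x$, and there is no Poincar\'e inequality on all of $\Zd$ to close the gap. The standard repairs are: (i) observe that every $G$ actually used downstream has finite spatial support (it is $\mathbf{f}\,\delta_y$ in Definition~\ref{eq:defHSGREEN} or $\mathbf{f}\,q(\cdot)$ with $q\in\mathcal{Q}$), in which case the two norms are comparable and your argument gives the a~priori bound $\|\mathcal{U}\|_{\mathcal{H}}\lesssim\|G\|_{L^2_\varphi\ell^{2d/(d+2)}_x}$; or (ii) first run Lax--Milgram in finite volume $\Lambda_L$, where a Poincar\'e inequality makes $\ell$ trivially bounded, derive the $L$-uniform bound by testing against $\mathcal{U}_L$ and using GNS, and pass to the limit, which is closer to the original Naddaf--Spencer treatment. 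You should make one of these explicit; otherwise the Lax--Milgram hypotheses are not verified under the stated assumption $G\in L^2(\Zd\times\Omega)$.
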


\begin{remark}
    We will omit the proof of this result and refer to~\cite[Section 2.1]{NS} (N.B. The proof of~\cite{NS} is written in the case of the $\nabla \varphi$ interface model, some minor adaptations are needed to treat the case the random interface measure $\mu_\beta$, they can be found in~\cite[Section 3.4.2]{DW}). We mention that the upper bound on the first two terms on the left-hand side of~\eqref{e.HSsolest} is a consequence of the Lax-Milgram Theorem and the bound on the third term on the left-hand side of~\eqref{e.HSsolest} is a consequence of the Gagliardo-Nirenberg-Sobolev inequality (and only holds in dimension $d \geq 3$).
\end{remark}

\subsubsection{The Helffer-Sj\"{o}strand representation formula} \label{sec:HSrepresetnationformula}

This section is devoted to the presentation of the Helffer-Sj\"{o}strand representation formula which provides an identity relating the covariance of two functions $\mathbf{f} , \mathbf{g} : \Omega \to \R$ under the measure $\mu_\beta$ to the solutions of the Helffer-Sj\"{o}strand equation.

\begin{proposition}[Helffer-Sj\"{o}strand representation~\cite{HS, Sj, NS, GOS}] \label{prop:HSrepresetnation}
    Let $\mathbf{f}, \mathbf{g} \in \C_c^\infty(\Omega)$, then one has the identity
    \begin{equation} \label{eq:HSrep}
        \cov_{\mu_\beta} \left[ \mathbf{f}, \mathbf{g} \right] = \sum_{x \in \Zd} \left\langle \partial_x \mathbf{f} \cdot \mathcal{H}(x , \cdot)  \right\rangle_{\mu_\beta},
    \end{equation}
    where the function $\mathcal{H} : \Zd \times \Omega \to \R^{d \choose 2}$ is the solution of the Helffer-Sj\"{o}strand equation
    \begin{equation*}
        \mathcal{L} \mathcal{H}(x , \varphi) = \partial_x \mathbf{g}(\varphi) \hspace{3mm}\mbox{for} \hspace{3mm} (x , \varphi) \in \Zd \times \Omega.
    \end{equation*}
\end{proposition}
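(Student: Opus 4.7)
The plan rests on two ingredients: a commutation identity between the spatial part of the Helffer--Sj\"{o}strand operator and the partial derivative $\partial$, and the solvability of a Poisson-type equation for $\Delta_\varphi$ on mean-zero observables. Both are standard in the literature on the $\nabla \varphi$-model (see~\cite{HS, NS, GOS}), but they need to be adapted to the infinite-range setting of $\mu_\beta$.

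\emph{Step 1: commutation identity.} I first verify that
$$
\mathcal{L} \circ \partial = - \partial \circ \Delta_\varphi
$$
as operators from $C_c^\infty(\Omega)$ to $C_c^\infty(\Zd \times \Omega; \R^{\binom d2})$. Writing the Hamiltonian $H$ of $\mu_\beta$ from~\eqref{def:finite-volumeGibbs} and using $\Delta_\varphi F = \sum_x \partial_x^2 F - \sum_x \partial_x H \cdot \partial_x F$, differentiation in $\varphi$ at vertex $y$ yields
$$
\partial_y (\Delta_\varphi F) \;=\; \Delta_\varphi (\partial_y F) \,-\, \sum_{x \in \Zd} \bigl(\partial_x \partial_y H\bigr) \cdot \partial_x F.
$$
The Hessian of $H$ is precisely the coefficient of $\mathcal{L}_{\mathrm{spat}}^{\varphi}$: the Gaussian term $\frac{1}{2\beta}|\nabla \varphi|^2$ produces $\frac{1}{2\beta}(-\Delta)$, the higher gradient corrections produce the $\frac{1}{2\beta}\sum_{n\ge 1}\beta^{-n/2}(-\Delta)^{n+1}$ piece, and the $\cos$ terms produce the $\sum_{q}\nabla_q^*\cdot \a_q\nabla_q$ piece. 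Collecting gives the commutation identity. The absolute convergence of the three series under $\beta \gg 1$ (a consequence of~\eqref{H2small}--\eqref{H3small} and of the exponential decay of $z(\beta,q)$) justifies the term-by-term manipulation.

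\emph{Step 2: construction of $\Psi$ and conclusion.} Using the Poincar\'{e} inequality for $\mu_\beta$ (a consequence of Brascamp--Lieb applied to the uniformly convex Hamiltonian of $\mu_\beta$), one obtains a function $\Psi \in H^1(\mu_\beta)$ solving
$$
- \Delta_\varphi \Psi \,=\, \mathbf{g} - \langle \mathbf{g}\rangle_{\mu_\beta}.
$$
Applying Step~1 to $\Psi$ gives $\mathcal{L}(\partial \Psi) = -\partial(\Delta_\varphi \Psi) = \partial \mathbf{g}$; because $\mathbf{g} \in C_c^\infty(\Omega)$ the source $\partial \mathbf{g}$ lies in $L^2(\Zd \times \Omega; \R^{\binom d2})$, so the uniqueness of weak solutions established in the previous proposition yields $\mathcal{H} = \partial \Psi$. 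The representation formula~\eqref{eq:HSrep} then follows from the defining integration-by-parts property of $\Delta_\varphi$:
\begin{equation*}
\cov_{\mu_\beta}\bigl[\mathbf{f},\mathbf{g}\bigr]
\,=\, \bigl\langle \mathbf{f}\,(\mathbf{g} - \langle \mathbf{g}\rangle)\bigr\rangle_{\mu_\beta}
\,=\, \bigl\langle \mathbf{f}\,(-\Delta_\varphi \Psi)\bigr\rangle_{\mu_\beta}
\,=\, \sum_{x\in\Zd}\bigl\langle \partial_x \mathbf{f}\cdot \partial_x \Psi\bigr\rangle_{\mu_\beta}
\,=\, \sum_{x\in\Zd}\bigl\langle \partial_x \mathbf{f}\cdot \mathcal{H}(x,\cdot)\bigr\rangle_{\mu_\beta}.
\end{equation*}

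\emph{Main obstacle.} The technical delicacy is entirely in Step~2: establishing the existence of $\Psi$ and the validity of the integrations by parts in the infinite-volume setting with infinite-range (though exponentially decaying) interactions. The cleanest route is to carry out the whole argument first in finite volume $\Lambda_L$, where $\Psi_L$ is produced by finite-dimensional linear algebra on $\Omega_L$ (restricting $\Delta_\varphi$ to mean-zero functions, which is invertible by uniform convexity), and then pass to the limit $L \to \infty$ using Proposition~\ref{p.mubeta} together with the uniform $H^1(\mu_{L,\beta})$-bounds on $\Psi_L$ provided by the Brascamp--Lieb inequality. Once this is in place, the commutation identity from Step~1 and the uniqueness of the weak solution of the Helffer--Sj\"{o}strand equation close the argument.
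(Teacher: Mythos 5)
Your proof is correct and follows the standard derivation of the Helffer--Sj\"{o}strand representation from the literature that the paper itself cites (\cite{HS, Sj, NS, GOS}) without reproducing a proof. The two ingredients you isolate — the commutation identity $\mathcal{L}\circ\partial = -\partial\circ\Delta_\varphi$ obtained by differentiating the drift, with the Hessian of the Hamiltonian recovering $\mathcal{L}_{\mathrm{spat}}^\varphi$, and the solvability of $-\Delta_\varphi\Psi = \mathbf{g} - \langle\mathbf{g}\rangle_{\mu_\beta}$ on mean-zero observables via Brascamp--Lieb, followed by the integration-by-parts identity $\langle F\,\Delta_\varphi G\rangle_{\mu_\beta} = -\sum_x\langle\partial_x F\cdot\partial_x G\rangle_{\mu_\beta}$ — are exactly the steps in Naddaf--Spencer and Giacomin--Olla--Spohn, and your finite-volume-then-limit route around the infinite-range interactions is the appropriate adaptation to $\mu_\beta$.
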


\begin{remark}
    In the previous statement, we assumed that the functions $\mathbf{f}$ and $\mathbf{g}$ belong to the space $\C_c^\infty(\Omega)$, but the formula can be extended (using approximation arguments) to more general functions; in particular to the functions appearing in~\eqref{eq:def3.5} of Section~\ref{sec:section3}.
\end{remark}

The identity~\eqref{eq:HSrep} is one in the key tools of this article (and in the study of the $\nabla \varphi$-interface model in general). In particular, it will be interesting to us to obtain upper bounds on the solutions of the Helffer-Sj\"{o}strand equation, as they can be used to derive upper bounds on the covariance of two random variables under the measure $\mu_\beta$ (N.B. This has already been observed and used in the literature, see for instance~\cite{NS, GOS}). Such upper bounds are usually obtained by using tools of elliptic regularity and the result in the case of the measure $\mu_\beta$ are collected in the following section.

\subsubsection{The Green's matrix for the Helffer-Sj{\"o}strand equation}
\label{sec:sec263}
In order to obtain (flexible) upper bounds on the solution of the Helffer-Sj\"{o}strand equation, we will introduce the fundamental solution of the Helffer-Sj\"{o}strand equation and state some inequalities on its decay (Proposition~\ref{prop.prop4.11chap4}). From these results, it is possible to derive upper bounds on general solutions of the Helffer-Sj\"{o}strand equation.

Before giving the precise results and statements, we mention that, since the solutions of the Helffer-Sj\"{o}strand equation are functions valued in $\R^{\binom d2}$, we are working with a system of equations. As a consequence the fundamental solution is not a function but a matrix valued in the space $\R^{\binom d2 \times \binom d2}$.

\begin{definition}[Green's matrix for the Helffer-Sj\"{o}strand equation] \label{eq:defHSGREEN}
For any $y \in \Zd$, we let $\delta_{y} : \Zd \to \R^{\binom d2 \times \binom d2}$ be the discrete Dirac mass defined by the formula
$$\delta_{y}(x) := \left( \indc_{\{x = y\}} \indc_{\{i=j\}} \right)_{1 \leq i, j \leq \binom d2} \in \R^{{d \choose 2 }  \times \binom d2}.$$
   For any function $\f : \Omega \to \R$ satisfying $\f \in L^2 \left( \Omega \, ; \, \R \right)$ and any $y \in \Zd$, we let $\G_\f(\cdot ; y):= \left( \G_{\f, ij}(\cdot ; y) \right)_{1\leq i,j\leq \binom d2} : \Zd \times \Omega \to \R^{\binom d2 \times \binom d2}$ be the solution of the Helffer-Sj\"{o}strand equation
\begin{equation} \label{eq:defGreensmatrix}
    \mathcal{L} \G_\f(\cdot ; y) = \mathbf{f} \delta_{y} ~~\mbox{in}~~ \Zd \times \Omega.
\end{equation}
\end{definition}

\begin{remark} \label{rem:remark2.21}
    Let us make a few remarks on the previous definition:
    \begin{itemize}
    \item The identity~\eqref{eq:defGreensmatrix} is understood as follows: for any $j \in \{ 1 , \ldots, {d \choose 2} \}$, the function $\G_{\f, \cdot j}(\cdot ; y):= \left( \G_{\f, ij}(\cdot ; y) \right)_{1\leq i \leq \binom d2} : \Zd \times \Omega \to \R^{\binom d2}$ solves
    \begin{equation*}
        \mathcal{L} \mathcal{G}_{\mathbf{f}, \cdot j}(x , \varphi; y) = 
        \begin{pmatrix}
           0 \\
           \vdots \\
           \textbf{f} (\varphi)  \indc_{\{x = y\}} \\
           \vdots \\
           0
         \end{pmatrix} 
         \hspace{3mm} \mbox{for}~ (x , \varphi) \in \Zd \times \Omega,
    \end{equation*}
    where the non-zero entry is in the $j$-th position.
    \item The mapping $\mathbf{f} \mapsto \mathcal{G}_{\mathbf{f}}$ is linear.
    \item Using the notation of Definition~\ref{def.solHSeq} and writing $G :=  (G_j)_{1 \leq j \leq {d \choose 2}} : \Zd \times \Omega \to \R^{d \choose 2}$, one has the identity
    \begin{equation} \label{id:decomgeneralsolutions}
        \mathcal{U}(x , \varphi) := \sum_{y \in \Zd} \sum_{j = 1}^{d \choose 2} \G_{G_j(y , \cdot), \cdot j}(x , \varphi ; y).
    \end{equation}
    \item In this article, we will make extensive use of the codifferential of the Green's matrix. It is formally defined as follows. For $\mathbf{f} : \Omega \to \R$, we define the codifferential in the first variable, and denote it by $\di^*_1 \mathcal{G}_{\mathbf{f}}$, as follows: for each fixed index $j \in \{1 , \ldots, \binom d2 \}$, vertex $y \in \Zd$ and each $\varphi \in \Omega$, we consider the function $x \mapsto \mathcal{G}_{\mathbf{f}, \cdot j}(x , \varphi ; y) \in \R^{d \choose 2}$. This function can be seen as a $2$-form and one can apply the codifferential~$\di^*$ as in Definition~\ref{def:codifferential2form}. We thus obtain a $1$-form $x \mapsto \di^*_1 \mathcal{G}_{\mathbf{f}, \cdot j}(x , \varphi ; y) \in \R^{d}$. We then define the function 
    $$\di^*_1 \mathcal{G}_{\mathbf{f}} : (x , \varphi ; y) \mapsto \left( \di^*_1 \mathcal{G}_{\mathbf{f}, i j}(x , \varphi ; y) \right)_{1 \leq i \leq d, 1 \leq j \leq \binom d2 } \in \R^{d \times \binom d2}.$$
    We may similarly define the codifferential in the second variable, denoted by $\di^*_2 \mathcal{G}_{\mathbf{f}}$, by freezing the first variable and the first component $i \in \{1 , \ldots, {d \choose 2} \}$, and the mixed codifferential, denoted by $\di^*_1 \di^*_2 \mathcal{G}_{\mathbf{f}}$, by applying this procedure to both variables. We have in particular
    \begin{equation*}
        \begin{aligned}
            \di_2^* \mathcal{G}_{\mathbf{f}} & := \left( \di_2^* \mathcal{G}_{\mathbf{f}, i j} \right)_{1 \leq i \leq \binom d2, 1 \leq j \leq  d} : \Zd \times \Omega \times \Zd \to \R^{{d \choose 2} \times d}, \\
            \di^*_1 \di_2^* \mathcal{G}_{\mathbf{f}} & := \left( \di^*_1 \di_2^*   \mathcal{G}_{\mathbf{f}, i j} \right)_{1 \leq i \leq d, 1 \leq j \leq  d} : \Zd \times \Omega \times \Zd \to \R^{d \times d}.
        \end{aligned}
    \end{equation*}
    \item Even though this will be used less frequently in the proofs below, we may also similarly define the exterior derivative with respect to the first and second variables (and denote them by $\di_1$ and $\di_2$).
    \item We will frequently use the identity~\eqref{id:decomgeneralsolutions} when the function $G$ takes the form $G(x , \varphi) = \mathbf{f}(\varphi) q(x)$ for some function $\mathbf{f} : \Omega \to \R$ and some charge $q := (q_j)_{1 \leq j \leq \binom d2} \in \mathcal{Q}$. In that case, we first introduce the two pieces of notation, for each pair $(x , \varphi) \in \Zd \times \Omega$,
    \begin{equation*}
        \begin{aligned}
            \left( \G_{\mathbf{f}}(x , \varphi ; \cdot),  q \right) & =  \sum_{y \in \Zd} \sum_{j = 1}^{d \choose 2} \G_{\mathbf{f}, \cdot j}(x , \varphi ; y) q_j(y) \in \R^{\binom d2} \\
            \left( \di_2^* \G_{\mathbf{f}}(x , \varphi ; \cdot),  n_q \right) & =  \sum_{y \in \Zd} \sum_{j = 1}^{d} \di^*_2 \G_{\mathbf{f}, \cdot j}(x , \varphi ; y) n_{q,j}(y) \in \R^{\binom d2}
        \end{aligned}
    \end{equation*}
    (N.B. The sums are in fact finite because both $q$ and $n_q$ have finite support and these two terms are equal due to Lemma~\ref{lem:lem2.4} and~\eqref{identity:dandd*}). The identity~\eqref{id:decomgeneralsolutions} then becomes
    \begin{equation*}
    \mathcal{U}(x , \varphi) = \left( \G_{\mathbf{f}}(x , \varphi ; \cdot),  q \right) = \left( \di_2^* \G_{\mathbf{f}}(x , \varphi ; \cdot),  n_q \right) \in \Rd.
    \end{equation*}
    It will finally be useful to consider (and estimate) the codifferential of the function $\mathcal{U}$. To this end, we introduce two additional pieces of notation
    \begin{equation*}
        \begin{aligned}
            \left( \di^*_1 \G_{\mathbf{f}}(x , \varphi ; \cdot),  q \right) & =  \sum_{y \in \Zd} \sum_{j = 1}^{d \choose 2} \di^*_1 \G_{\mathbf{f}, \cdot j}(x , \varphi ; y) q_j(y) \in \R^{d}, \\
            \left( \di_1^* \di_2^* \G_{\mathbf{f}}(x , \varphi ; \cdot),  n_q \right) & =  \sum_{y \in \Zd} \sum_{j = 1}^{d} \di^*_1 \di^*_2 \G_{\mathbf{f}, \cdot j}(x , \varphi ; y) n_{q,j}(y) \in \R^{d},
        \end{aligned}
    \end{equation*}
    so that we have
    \begin{equation*}
        \di^* \mathcal{U}(x , \varphi) = \left( \di^*_1 \G_{\mathbf{f}}(x , \varphi ; \cdot),  q \right) = \left(\di^*_1 \di_2^* \G_{\mathbf{f}}(x , \varphi ; \cdot),  n_q \right) \in \R^d.
    \end{equation*}
    \end{itemize}
\end{remark}

The following proposition collects some decay estimates on the Green's matrix and its codifferential.

\begin{proposition}[Upper bounds on the Green's matrix and its codifferential, Proposition 3.17 of \cite{DW} and Appendix~\ref{app.CZreg}] \label{prop.prop4.11chap4}

For any exponent $p \in (1 , \infty)$, there exists an inverse temperature $\beta_1 (d , p) < \infty$ such that for any $\beta > \beta_1$ the following result holds. There exists a constant $C(d, \beta, p ) < \infty$ such that for any $x,y \in \Zd$, one has the inequality
\begin{equation} \label{eq:estimateGreensfunction}
\left\| \G_\f (x , \cdot;y ) \right\|_{L^p \left( \Omega \, ; \, \R^{{\binom d2} \times {\binom d2}} \right)} \leq \frac{C \left\| \f \right\|_{L^{p} \left( \Omega \, ; \, \R \right)}}{|x-y|_+^{d-2}},
\end{equation}
together with the regularity estimates
\begin{equation} \label{eq:decaygradgreen}
\left\| \di^*_1 \G_\f (x, \cdot ; y ) \right\|_{L^p\left( \Omega \, ; \, \R^{d \times {\binom d2}} \right)} + \left\| \di_2^* \G_\f (x, \cdot ; y ) \right\|_{L^p\left( \Omega \, ; \, \R^{  {\binom d2}\times d} \right)}  \leq \frac{C (\ln |x-y|_+)^{d+2} \left\| \f \right\|_{L^{2p} \left( \Omega \, ; \, \R \right)} }{|x-y|^{d - 1}_+}
\end{equation}
and
\begin{equation}  \label{eq:decaygradgradgreen}
    \left\| \di^*_1 \di^*_2 \G_\f (x ,\cdot ; y) \right\|_{L^p\left( \Omega \, ; \, \R^{d \times d} \right)} \leq \frac{C (\ln |x-y|_+)^{d+2} \left\| \f \right\|_{L^{2p} \left( \Omega \, ; \, \R \right)}}{|x-y|^{d}_+}.
\end{equation}
\end{proposition}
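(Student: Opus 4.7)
The first estimate~\eqref{eq:estimateGreensfunction} is already established in~\cite[Proposition 3.17]{DW}, so the plan is to derive the codifferential bounds~\eqref{eq:decaygradgreen} and~\eqref{eq:decaygradgradgreen} from it, by combining a parabolic representation of the Green's matrix with a Calder\'{o}n--Zygmund regularity theory for the elliptic system $\mathcal{L}$, exploiting the small ellipticity contrast provided by $\beta \gg 1$. This is the content of Appendix~\ref{app.CZreg}, and adapts the approach of Delmotte and Deuschel~\cite{DD05} to the setting of a vector-valued, infinite-range operator.

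The starting point is the decomposition
\begin{equation*}
    \mathcal{L}^{\varphi}_{\mathrm{spat}} = -\tfrac{1}{2\beta}\Delta + \mathcal{L}_{\mathrm{pert}}^{\varphi},
\end{equation*}
where $\mathcal{L}_{\mathrm{pert}}^{\varphi}$ has typical operator norm of order $\beta^{-3/2}$. For the pure Laplacian, the heat semigroup $e^{t\Delta/(2\beta)}$ enjoys sharp Gaussian bounds on its kernel and its derivatives:
\begin{equation*}
    \bigl|\nabla^k p_t(x,y)\bigr| \leq \frac{C}{t^{(d+k)/2}} \exp\!\left(-\frac{c\beta |x-y|^2}{t}\right), \quad k \in \{0,1,2\}.
\end{equation*}
Applying Duhamel's formula and iterating to absorb the perturbation $\mathcal{L}_{\mathrm{pert}}^{\varphi}$, one transfers analogous bounds to the heat kernel associated with $\mathcal{L}^{\varphi}_{\mathrm{spat}}$, but in an $L^{p}$-annealed sense; the loss from $L^p$ to $L^{2p}$ on the right-hand side comes from a Cauchy--Schwarz step used to decouple $\f$ from the random coefficients $\a_q(\varphi)$ of $\mathcal{L}_{\mathrm{pert}}^{\varphi}$. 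Writing $\G_{\f}(x,\cdot; y) = \int_0^{\infty} P_t(\f \delta_y)(x,\cdot)\, dt$ and noting that applying $\di^*_1$ (resp.\ $\di^*_2$) commutes with the semigroup in the first (resp.\ second) variable, integration in time then yields~\eqref{eq:decaygradgreen}, which gains one power of $|x-y|^{-1}$ over~\eqref{eq:estimateGreensfunction}, and~\eqref{eq:decaygradgradgreen}, which gains two. The logarithmic factors $(\ln |x-y|_+)^{d+2}$ arise from the infinite-range nature of $\mathcal{L}_{\mathrm{pert}}^{\varphi}$: each Duhamel iteration over a dyadic scale requires a sum over the charges $q \in \mathcal{Q}$ controlled by Lemma~\ref{lemma.lemma2.5}, producing a logarithmic loss per scale that compounds across the $O(\log |x-y|)$ relevant scales.

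The main obstacle is that $\mathcal{L}$ is an elliptic \emph{system}: solutions are valued in $\R^{\binom d2}$, so the classical maximum principle, the De Giorgi--Nash--Moser theory, and the Nash--Aronson estimates used in the scalar setting (as in~\cite{DD05}) are not available and cannot directly supply the gradient heat-kernel bound above. The remedy, carried out in Appendix~\ref{app.CZreg}, is to treat $\mathcal{L}^{\varphi}_{\mathrm{spat}}$ as a perturbation of the constant-coefficient, componentwise-decoupled operator $-\tfrac{1}{2\beta}\Delta$, for which the classical estimates reduce to the scalar case. Calder\'{o}n--Zygmund regularity (in the parabolic form, to recover derivative estimates) is then transferred to $\mathcal{L}^{\varphi}_{\mathrm{spat}}$ by a fixed-point/iteration argument whose contraction constant is of order $\beta^{-1/2}$; this forces one to take $\beta$ sufficiently large in a way that depends on $p$, which is the origin of the threshold $\beta_1(d,p)$ with $\beta_1 \to \infty$ as $p \to \infty$.
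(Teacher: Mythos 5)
Your high-level plan is correct and matches the paper's intent: the pointwise decay~\eqref{eq:estimateGreensfunction} comes from~\cite[Proposition 3.17]{DW}, and the two codifferential bounds are proved in Appendix~\ref{app.CZreg} by exploiting the small ellipticity contrast at large $\beta$ together with a Delmotte--Deuschel-type annealed argument. You also correctly identify where the $\beta_1(d,p)$ threshold and the $L^{2p}$-norm of $\f$ come from. But the actual mechanism you describe---an iterated Duhamel expansion around $e^{t\Delta/(2\beta)}$ that directly yields annealed Gaussian gradient bounds---is not what the appendix does, and it is not clear that it would close as stated.

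Concretely, the paper proceeds in two separate stages. In Section~\ref{sec:CZsec} it first shows a \emph{deterministic, per-realization} Calder\'on--Zygmund estimate on $\nabla P^{\varphi_\cdot}$ (Proposition~\ref{prop:CalderonZygmund}). This is not a Neumann-series/Duhamel iteration: for each parabolic cylinder one constructs a comparison function $\bar u$ solving the \emph{constant-coefficient} heat equation with the same parabolic boundary data, proves an excess-decay inequality of the form~\eqref{ineq:wellapproxcalderon} with defect $\delta \sim \beta^{-1/4}$ plus an exponentially small error coming from the infinite range, and feeds this into the abstract harmonic-approximation lemma (Proposition~\ref{prop:CalderonZygmundLpestimate}), seeded by the Caccioppoli estimate of Proposition~\ref{prop:Caccioppoli}. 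The exponentially small error is absorbed by coarsening the heat kernel at scale $\ln t_+$, and it is the \emph{volume} of the coarsening cylinder, $(\ln t_+)^{d+2}$, that produces the logarithmic factor---not a compounding of small losses across dyadic scales as you suggest. Your Duhamel approach would have to contend with the fact that the right-hand side of the Duhamel formula involves $\mathcal{L}_{\mathrm{pert}}^{\varphi_s}P^{\varphi_\cdot}(s,\cdot)$, whose singularity near $s=0$ is as bad as that of $P$ itself, so the iteration does not obviously close with the stated gradient gain; and even if it converges it yields a quenched bound, not an annealed one.

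The annealed $L^p$ bound is then obtained in Section~\ref{sec:DDannealedregsec} by the Delmotte--Deuschel device, which is the piece your argument is missing: one writes $\di^*_1\di^*_2 P^{\varphi_\cdot}(t,x;0) = \sum_y \di^*_1 P^{\varphi_\cdot}(t,x;s,y)\,\di^*_2 P^{\varphi_\cdot}(s,y;0)$ via Chapman--Kolmogorov, uses $ab\le a^2+b^2$ and Jensen, invokes the space- and time-stationarity of the Langevin dynamic (and its reversibility, via Remark~\ref{rem:reversinghtetime}, to put both factors into the form covered by Proposition~\ref{prop:CalderonZygmund}), and finally averages over the intermediate time $s\in(t/4,3t/4)$. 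This averaging is what converts the quenched $\underline L^{2p}$ estimate on $\nabla P^{\varphi_\cdot}$ into the pointwise-in-$(t,x)$ $L^p$-annealed bound of Proposition~\ref{prop:Lpannealedreg}, from which Proposition~\ref{prop.prop4.11chap4} follows via Proposition~\ref{prop:prop2.26}. You invoke Delmotte--Deuschel in your opening line, but the body of your argument replaces their stationarity-and-time-averaging trick with a Duhamel iteration, which cannot substitute for it. In short: your decomposition $\mathcal{L}_{\mathrm{spat}}=-\tfrac{1}{2\beta}\Delta + \mathcal{L}_{\mathrm{pert}}$ and the role of small ellipticity contrast are right, but the paper's route is harmonic approximation plus an abstract CZ lemma (quenched), followed by the Delmotte--Deuschel stationarity argument (annealed), and neither stage is a Duhamel iteration.
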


\begin{remark}
Let us make a few remarks about this statement:
\begin{itemize}
    \item An estimate of the form of~\eqref{eq:estimateGreensfunction} was originally proved by Naddaf and Spencer~\cite[Section 2.2.2]{NS} in the case of the $\nabla \varphi$-interface model and the argument relies on elliptic regularity estimates (specifically the off-diagonal Nash-Aronson upper bound for heat kernels). In the case of the measure $\mu_\beta$, the proof can be found in~\cite[Proposition 3.17]{DW}. It essentially follows the same strategy as in~\cite[Section 2.2.2]{NS} with an important difference: since we are dealing here with a system of equations, the off-diagonal Nash-Aronson estimate cannot be applied directly. This obstruction is fixed by noticing that, due to the assumption $\beta \gg 1$, the Helffer-Sj\"{o}strand operator has a small ellipticity contrast (of order $(1 + e^{-c\sqrt{\beta}})$). This allows to make use of the Schauder regularity theory (see, e.g.,~\cite[Chapter 6]{GT01}) to recover the results of elliptic regularity needed to derive the upper bound~\eqref{eq:estimateGreensfunction}. 
    \item Regarding the upper bound on the codifferential~\eqref{eq:decaygradgreen} and mixed codifferential~\eqref{eq:decaygradgradgreen}, we rely once again on the small ellipticity contrast of the Helffer-Sj\"{o}strand operator but make use this time of the \emph{Calder\'{o}n-Zygmund regularity theory} (see~\cite[Chapter 9]{GT01}) and combine it with the ideas of Delmotte-Deuschel~\cite{DD05} on the annealed regularity of heat kernels associated with parabolic equations with random stationary coefficients. The details of the proofs can be found in Appendix~\ref{app.CZreg}. We remark that the result would be optimal if the logarithm on the right-hand side were removed. We believe that reaching optimality would be possible with a more careful analysis but decided not to do it to simplify the technical nature of the argument (and because, even with the optimal result at hand, the proof in this article would still yield a logarithmic correction on the right-hand side of Theorem~\ref{th:mainth}).
\end{itemize}
\end{remark}

\section{Upper bound for the parallel correlation function} \label{sec:section3}

This section contains (most of) the proof of Theorem~\ref{th:mainth} and is structured as follows. In Section~\ref{sec:sec3.1}, we recall the duality transform of~\cite{FS4d} which allows to rewrite the cosine-cosine correlation as the expectation of a certain random variable under the measure $\mu_\beta$ (see Proposition~\ref{p.dual} below). We then study this random variable and split the argument into several sections. In Section~\ref{sec:sec3.2}, we decompose this random variable into a collection of six random variables and then show, in Sections~\ref{sec:3.3} and~\ref{sec:section3.4}, that four of them are lower-order terms which can be estimated fairly easily. The core of the analysis is then in Section~\ref{sec:section3.4} where the Helffer-Sj\"{o}strand representation formula is applied to estimate the contributions of the remaining two random variables (i.e., the random variables $U_0$ and $U_{\mathrm{cos},0}$). The estimate of one of the terms of Section~\ref{sec:section3.4} (specifically, the identity~\eqref{eq:identitydstarGHW}) requires some specific attention and the details of this computation are postponed to Section~\ref{sec:newsec4}.

Throughout this section (and the next one), we will make use of a number of sums on the lattice which are collected in Appendix~\ref{App.sumonlattice}, and we will frequently bound the sines and the cosines using the inequalities $\left| \sin a \right| \leq |a|$ and $1- \cos a \leq \frac{a^2}{2}$ for $a \in \R$.

\subsection{Duality formulae} \label{sec:sec3.1}

The starting points of the proof are the following three identities relating the Villain model, the discrete Green's function and a (technically involved) random variable defined in terms of the random interface measure $\mu_\beta$ (N.B. it would be possible to derive other identities of a similar nature, but we will only need these three identities in the proofs below).

We denote by $\mu_\beta$ the infinite volume Gibbs measure obtained in Proposition \ref{p.mubeta}, $G= (-\Delta)^{-1}$ be the lattice Green's function in $\Zd$ (see Definition~\ref{def:defgreenlattice}), and $\mu_{\mathrm{Vil}, \beta}$ the thermodynamic limit of the Villain model with zero boundary conditions (i.e., the ``+" state constructed in \cite{BFLLS}). 

\begin{proposition}
\label{p.dual}
    There exists an inverse temperature $\beta_1:= \beta_1(d)<\infty$, such that for all $\beta\geq \beta_1$,
    \begin{align} \label{eq:10541102}
      \left\langle \cos \left( \theta(x) \right) \right\rangle_{\mu_{\mathrm{Vil}, \beta}}
  \exp \left(\frac{1}{2\beta}  G (0) \right)
  & = \left\langle \exp \left( \sum_{q \in \mathcal{Q}} z(\beta , q) \sin \left(2\pi (\di^* \varphi , n_q)\right) \sin \left(2\pi(\nabla G , n_q)\right)  \right) \right.  \\ 
  & \qquad  \times \left.  \exp \left( \sum_{q \in \mathcal{Q}} z(\beta , q) \cos \left(2\pi(\di^* \varphi , n_q)\right) \left( \cos  \left(2\pi(\nabla G , n_q)\right) - 1 \right) \right) \right\rangle_{\mu_{\beta}}, \notag
\end{align}
\begin{multline} \label{eq:10541103}
      \left\langle \cos \left( \theta(x) - \theta(0) \right) \right\rangle_{\mu_{\mathrm{Vil}, \beta}}
  \exp \left(\frac{1}{\beta} \left( G (0) - G(x) \right)\right) \\
  = \left\langle \exp \left( \sum_{q \in \mathcal{Q}} z(\beta , q) \sin \left(2\pi (\di^* \varphi , n_q)\right) \sin \left(2\pi(\nabla G - \nabla G_x , n_q)\right)  \right) \right. \\  \times \left.  \exp \left( \sum_{q \in \mathcal{Q}} z(\beta , q) \cos \left(2\pi(\di^* \varphi , n_q)\right) \left( \cos  \left(2\pi(\nabla G - \nabla G_x , n_q)\right) - 1 \right) \right) \right\rangle_{\mu_{\beta}},
\end{multline}
and 
\begin{multline} \label{eq:10541104}
       \left\langle \cos \left( \theta(x) + \theta(0) \right) \right\rangle_{\mu_{\mathrm{Vil}, \beta}}
  \exp \left(\frac{1}{\beta} \left( G (0) + G(x) \right)\right) \\
  = \left\langle \exp \left( \sum_{q \in \mathcal{Q}} z(\beta , q) \sin \left(2\pi (\di^* \varphi , n_q)\right) \sin \left(2\pi(\nabla G + \nabla G_x , n_q)\right)  \right) \right. \\  \times \left.  \exp \left( \sum_{q \in \mathcal{Q}} z(\beta , q) \cos \left(2\pi(\di^* \varphi , n_q)\right) \left( \cos  \left(2\pi(\nabla G + \nabla G_x , n_q)\right) - 1 \right) \right) \right\rangle_{\mu_{\beta}}.
\end{multline}
\end{proposition}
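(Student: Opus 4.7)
All three identities come from one computation: the Fröhlich--Spencer duality transform mapping the Villain model onto the integer--valued Coulomb gas, followed by the cluster expansion of~\cite{FS4d} (or~\cite[Section~5.5]{Bau}) that produces the random interface measure $\mu_\beta$. I will sketch the argument for~\eqref{eq:10541102} (which by the translation invariance of $\mu_{\mathrm{Vil},\beta}$ reduces to the identity for $\cos\theta(0)$, so that the $\nabla G$ on the right-hand side is the gradient of the Green's function with pole at $0$); the formulas~\eqref{eq:10541103} and~\eqref{eq:10541104} follow identically with two sources $\delta_0 \mp \delta_x$ rather than a single source $\delta_0$.

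Starting from the finite-volume measure~\eqref{e.finitedirichlet}, I apply Poisson summation on every edge,
\begin{equation*}
  v_\beta(\theta) \;=\; \sqrt{\tfrac{2\pi}{\beta}} \sum_{m \in \Z} e^{i m \theta - m^2/(2\beta)},
\end{equation*}
together with the representation $\cos\theta(0) = \mathrm{Re}\, e^{i \theta(0)}$. Integrating out the interior variables $\theta(z)$, $z \in \Lambda_L^\circ$, forces the integer $1$-form $m \colon E(\Lambda_L) \to \Z$ to satisfy $\di^* m = \delta_0$ in the numerator (and $\di^* m = 0$ in the denominator). At this stage $\langle\cos\theta(0)\rangle_{\mu_{\mathrm{Vil},\beta,L}}$ is a ratio of two constrained integer Gaussian sums over $1$-forms.

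Since $d \geq 3$, I then Hodge-decompose $m = \nabla G_0 + m'$ with $\di^* m' = 0$. The quadratic Gaussian weight splits as $\exp(-\tfrac{1}{2\beta}(\nabla G_0, \nabla G_0)) \cdot \exp(-\tfrac{1}{2\beta}(m', m')) \cdot \exp(-\tfrac{1}{\beta}(\nabla G_0, m'))$; the first factor equals $\exp(-G(0)/(2\beta))$ and is the self-energy that explains the prefactor $\exp(G(0)/(2\beta))$ appearing on the left-hand side of~\eqref{eq:10541102}, while the cross term becomes a complex phase attached to each divergence-free $m'$. Writing $m' = \di^* \omega$ for an integer $2$-form $\omega$ and introducing the continuous dual $2$-form $\varphi$ via another Poisson summation, the integer constraint decouples and the source enters as a shift of the cosine potentials,
\begin{equation*}
    \cos\bigl(2\pi(\di^* \varphi, n_q)\bigr) \;\longmapsto\; \cos\bigl(2\pi(\di^* \varphi - \nabla G, n_q)\bigr).
\end{equation*}
Expanding the shifted cosine via the addition formula splits it into a $\cos\cdot\cos$ piece and a $\sin\cdot\sin$ piece; the partition function in the denominator absorbs the unshifted $\cos(2\pi(\di^* \varphi, n_q))$, which is precisely why the combination $\cos(2\pi(\nabla G, n_q)) - 1$ appears in the first exponential on the right-hand side of~\eqref{eq:10541102}, while the $\sin\cdot\sin$ piece contributes the second exponential.

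The main obstacle is the combinatorial bookkeeping of the cluster expansion with the extra source phase present: one has to verify that the activities produced coincide with the $z(\beta, q)$ defining $\mu_\beta$ in~\eqref{def:finite-volumeGibbs}, so that the right-hand side is indeed an expectation with respect to $\mu_\beta$. This is carried out as in~\cite[Section~5.5.3]{Bau}, using the bound $|z(\beta,q)| \leq e^{-c\sqrt\beta\|q\|_1}$ to ensure absolute convergence of all sums for $\beta \geq \beta_1(d)$ and to pass to the thermodynamic limit $L \to \infty$ via the convergence of Proposition~\ref{p.mubeta}. For~\eqref{eq:10541103} and~\eqref{eq:10541104}, the same scheme with sources $\delta_0 - \delta_x$ and $\delta_0 + \delta_x$ produces $\nabla G - \nabla G_x$ and $\nabla G + \nabla G_x$ in the shifted cosines; the self-energy becomes $\tfrac{1}{2\beta}(G_0 \mp G_x, \delta_0 \mp \delta_x) = \tfrac{1}{\beta}(G(0) \mp G(x))$, which explains the corresponding prefactor on the left-hand side.
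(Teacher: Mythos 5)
Your high-level strategy is the right one and agrees with the paper's: apply Poisson summation on each edge to pass to the integer Coulomb gas, integrate out $\theta$ to impose a divergence constraint on the dual integer $1$-form, dualize again and run the Fr\"ohlich--Spencer cluster expansion of~\cite[Section~5.5]{Bau} to produce $\mu_\beta$, and finish by passing to the thermodynamic limit. The paper indeed deals with this proposition by citing~\cite{FS4d},~\cite[Section~5.5]{Bau}, and~\cite[Proposition~3.3.2]{DW}, so identifying that pipeline is essentially the whole proof.

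However, the middle of your sketch contains a genuine inconsistency. You write $m = \nabla G_0 + m'$ with $\di^* m' = 0$ and then claim the cross term $\exp\bigl(-\tfrac{1}{\beta}(\nabla G_0, m')\bigr)$ survives and ``becomes a complex phase.'' But $\nabla G_0 = \di G_0$ is exact, so $(\nabla G_0, m') = (G_0, \di^* m') = 0$: the cross term vanishes identically, which is precisely why this is the orthogonal decomposition and why the self-energy factors off cleanly. Moreover, with this choice of shift the residual $m' = m - \nabla G_0$ is \emph{not} integer-valued (it lives in a shifted lattice $\Z^{E}-\nabla G_0$), so the next step ``Writing $m' = \di^* \omega$ for an integer $2$-form $\omega$'' is not available. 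The shift of the cosine potentials $\cos(2\pi(\di^*\varphi,n_q)) \mapsto \cos(2\pi(\di^*\varphi - \nabla G, n_q))$ does not originate from a cross term; it arises because the Poisson summation in the dual variable is performed over a shifted lattice (equivalently, one shifts by a fixed integer solution $m_0$ with $\di^* m_0 = \rho$, in which case the cross term $(m_0,k)$ survives, and only after the second Poisson summation and a gauge argument is $m_0$ replaced by the continuous $\nabla G$). Your exposition tries to have the benefits of both decompositions simultaneously, which is not consistent; to make the proof precise you need to choose one and track the phase carefully, exactly as done in~\cite[Section~5.5]{Bau} or~\cite[Chapter~3]{DW}. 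The self-energy prefactor computation and the use of the addition formula to split the shifted cosine into the $\sin\cdot\sin$ and $\cos\cdot(\cos -1)$ pieces are correct.
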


\begin{remark}
    The infinite sums on the right-hand sides are well-defined random variables and we refer to Appendix~\ref{app.notation} for additional justification.
\end{remark}

The second and the third identities are obtained using the techniques of~\cite{FS4d} and they appear in~\cite[Section 5.5]{Bau} or~\cite[Proposition 3.3.2]{DW}. The first identity follows from the same argument: one starts with a finite volume identity (see the line below (3.1.6) of \cite{DW}), and apply the cluster expansion and thermodynamic limit along the line of Chapter 3 of \cite{DW}.

The strategy of the proof is then to rewrite the covariance of the cosine using the trigonometric identity,
\begin{align} \label{eq:14141102}
    \cov_{\mu_{\mathrm{Vil}, \beta}} \left[ \cos \theta(0) , \cos \theta(x) \right] & =  \langle \cos \theta(0) \cos\theta(x) \rangle_{\mu_{\mathrm{Vil}, \beta}} - \langle \cos \theta(0)\rangle\langle \cos\theta(x) \rangle_{\mu_{\mathrm{Vil}, \beta}}
    \\
    & = 
    \frac{1}{2}  \langle \cos (\theta(0) -\theta(x))  \rangle_{\mu_{\mathrm{Vil}, \beta}}+
    \frac{1}{2}  \langle  \cos (\theta(0) +\theta(x))   \rangle_{\mu_{\mathrm{Vil}, \beta}}
    - \left( \langle \cos (\theta(0)) \rangle_{\mu_{\mathrm{Vil}, \beta}} \right)^2, \notag
\end{align}
and to use the three identities~\eqref{eq:10541102},~\eqref{eq:10541103} and~\eqref{eq:10541104} to reformulate the statement of Theorem~\ref{th:mainth} into a problem involving the discrete Green's function $G$ and a random variable defined in terms of the random interface measure $\mu_\beta$. We may then study these two quantities using the properties of the discrete Green's function (Proposition~\ref{prop:greenfunction}) and the Helffer-Sj\"{o}strand representation formula.

\subsection{Setting up the problem} \label{sec:sec3.2}
In order to ease the formalism, we introduce the following notation: for $x \in \Zd$ and $\varphi \in \Omega$,
\begin{equation} \label{eq:def3.5}
  \begin{aligned}
  U_{x}(\varphi) &:=  \sum_{q \in \mathcal{Q}} z(\beta , q)  \sin \left(2\pi(\di^* \varphi , n_q)\right)  \sin \left(2\pi(\nabla G_x , n_q)\right),  \\ 
  U_{\cos , x}(\varphi) &:=  \sum_{q \in \mathcal{Q}} z(\beta , q) \cos \left(2\pi(\di^* \varphi , n_q)\right) \left( \cos \left(2\pi(\nabla G_x , n_q)\right) - 1 \right), \\
  U_{\sin \cos, x}(\varphi) & :=  \sum_{q \in \mathcal{Q}} z(\beta , q) \sin \left(2\pi(\di^* \varphi , n_q)\right) \sin\left(2\pi( \nabla G , n_q)\right) \left( \cos \left(2\pi(\nabla G_x , n_q)\right) - 1 \right),  \\  
  U_{\cos \sin,x}(\varphi) & := \sum_{q \in \mathcal{Q}} z(\beta , q) \sin \left(2\pi(\di^* \varphi , n_q)\right) \left( \cos \left(2\pi(\nabla G , n_q)\right) - 1 \right) \sin \left(2\pi(\nabla G_x , n_q)\right)  ,\\
  U_{\cos \cos,x}(\varphi) & := \sum_{q \in \mathcal{Q}} z(\beta , q) \cos \left(2\pi(\di^* \varphi , n_q)\right)  \left( \cos \left(2\pi (\nabla G , n_q)\right) - 1 \right) \left(\cos \left(2\pi(\nabla G_x , n_q)\right) -1 \right) ,\\
  U_{\sin \sin,x}(\varphi) & := \sum_{q \in \mathcal{Q}} z(\beta , q) \cos \left(2\pi(\di^* \varphi , n_q)\right) \sin \left(2\pi(\nabla G , n_q)\right) \sin \left(2\pi(\nabla G_x , n_q)\right) .
  \end{aligned}
\end{equation}

\begin{remark} \label{rem:remark3.3}
Let us state some remarks and properties of these random variables (and refer to Appendix~\ref{app.notation} for the proofs): 
\begin{itemize}
\item All these random variables are real-valued.
\item To ease the notation, we may drop the dependency in the variable $\varphi$ in the proof below (e.g., we may write $U_{x}$ instead of $U_{x}(\varphi)$).
\item Among these variables, $U_x$ is the only one for which the sum does not converge absolutely for every value of $\varphi \in \Omega$ but the sum converges in $L^2(\Omega \, ; \, \R)$.
\item For the five other random variables, the sum over the $2$-forms of $\mathcal{Q}$ converges absolutely for any realisation of the function $\varphi \in \Omega$ (and in fact uniformly over $\varphi \in \Omega$, they are thus in the space $L^\infty(\Omega \, ; \, \R)$).
\item The exponentials of these random variables belong to the space $L^{p}(\Omega \, ; \, \R)$ for some exponent $p := p(d , \beta) \in [1 , \infty)$ satisfying $p \to \infty$ as $\beta \to \infty$ (we will assume that $\beta$ is sufficiently large so that this estimate holds with an exponent $p$ sufficiently large, e.g., larger than $8$).  Additionally, the bound is uniform over $x \in \Zd$ (N.B. this result is not formally proved in the article but can be obtained by a generalisation of the proof of Appendix~\ref{app.notation} for which we refer to~\cite[Theorem 4.9]{F05} or to~\cite[Proposition 3.4]{DW}).
\item For later purposes, we note that we have the following formula for the derivative of $U_x$ (N.B. due to the term $q(y)$, which is equal to $0$ for all the forms whose support does not include the vertex $y$, one can show that the sum on the right-hand side converges absolutely for any $\varphi \in \Omega$), for any $y \in \Zd$,
\begin{equation} \label{formuladerUx}
    \partial_y U_x(\varphi) = 2 \pi \sum_{q \in \mathcal{Q}} z(\beta , q) \cos \left( 2\pi (\di^* \varphi , n_q) \right) \sin \left( 2\pi(\nabla G_x , n_q) \right) q(y) \in \R^{d \choose{2}}
\end{equation}
and that similar formulae hold for the five other random variables.
\end{itemize}
\end{remark}

Using this formalism, we have
\begin{equation*}
    \left\langle \cos \left( \theta(x) \right) \right\rangle_{\mu_{\mathrm{Vil}, \beta}} \exp \left(\frac{1}{2\beta}  G (0) \right) = \left\langle \exp \left( U_0 + U_{\cos , 0} \right) \right\rangle_{\mu_\beta}.
\end{equation*}
Regarding the identity~\eqref{eq:10541103}, we first expand the sine and cosine on the right-hand sides of~\eqref{eq:10541103} using the trigonometric identities:
    \begin{multline*}
  \sin \left(2\pi(\nabla G - \nabla G_x , n_q)\right)= \sin \left(2\pi(\nabla G , n_q)\right) - \sin \left(2\pi(\nabla G_x , n_q)\right) \\ + \left( \cos \left(2\pi(\nabla G_x , n_q)\right) -1 \right) \sin \left(2\pi(\nabla G , n_q)\right) - \left( \cos \left(2\pi(\nabla G , n_q)\right) -1 \right) \sin \left(2\pi(\nabla G_x , n_q)\right),
    \end{multline*}
and
    \begin{multline*}
  \cos \left(2\pi(\nabla G - \nabla G_x , n_q)\right) - 1 = \left( \cos \left(2\pi(\nabla G , n_q)\right) - 1 \right) \left( \cos \left(2\pi(\nabla G_x , n_q)\right) - 1 \right) \\ + \left( \cos \left(2\pi(\nabla G , n_q)\right) - 1 \right) + \left(\cos \left(2\pi(\nabla G_x , n_q)\right) - 1 \right)+ \sin \left(2\pi(\nabla G , n_q)\right) \sin \left(2\pi(\nabla G_x , n_q)\right)
    \end{multline*}
so as to obtain the equality
\begin{multline} \label{eq:twopointsimplified}
    \left\langle \cos \left( \theta(x) - \theta(0) \right) \right\rangle_{\mu_{\mathrm{Vil}, \beta}}
  \exp \left(\frac{1}{\beta} \left( G (0) - G(x) \right)\right) \\ 
  = \left\langle \exp \left( (U_0+ U_{\cos , 0}) - (U_x - U_{\cos , x}) + U_{\sin , \cos , x} - U_{\cos , \sin , x} + U_{\cos \cos , x} +  U_{\sin \sin , x}\right) \right\rangle_{\mu_\beta}.
\end{multline}
Similarly for the identity~\eqref{eq:10541104}, we may expand the sine and the cosine so as to obtain
\begin{multline*}
\left\langle \cos \left( \theta(x) + \theta(0) \right) \right\rangle_{\mu_{\mathrm{Vil}, \beta}}
  \exp \left(\frac{1}{\beta} \left( G (0) + G(x) \right)\right) \\ 
  = \left\langle \exp \left( (U_0+ U_{\cos , 0}) + (U_x + U_{\cos , x}) + U_{\sin , \cos , x} + U_{\cos , \sin , x} + U_{\cos \cos , x} -  U_{\sin \sin , x}\right) \right\rangle_{\mu_\beta}.
\end{multline*}
We next perform a series of simplifications: in Section~\ref{sec:3.3}, we show that the terms $U_{\sin , \cos , x}, U_{\cos , \sin , x}$ and $U_{\cos \cos , x}$ are lower order terms whose contribution can be removed, and in Section~\ref{sec:section3.4}, we treat the terms involving the Green's function and the term $U_{\sin \sin , x}$.

\subsection{Removing the lower order terms} \label{sec:3.3}

As mentioned above, we show in this section that the terms $U_{\sin , \cos , x}, U_{\cos , \sin , x}$ and $U_{\cos \cos , x}$ are lower order error terms. The specific statement we will use is the following upper bound on the supremum of these random variables:
\begin{equation} \label{eq:estlowerorderterms}
    \left\| U_{\sin , \cos , x} \right\|_{L^\infty(\Omega \, ; \, \R)} + \left\| U_{\cos , \sin , x} \right\|_{L^\infty(\Omega \, ; \, \R)} + \left\| U_{\cos \cos , x} \right\|_{L^\infty(\Omega \, ; \, \R)} \leq \frac{C}{|x|^{d-1}_+}.
\end{equation}
We refer to Appendix~\ref{app.notation} for the proof of these results. The typical size of these terms is smaller than the upper bound we wish to obtain in Theorem~\ref{th:mainth}. We thus show in this section that they can be discarded from the analysis. Combining the identity~\eqref{eq:twopointsimplified} with the upper bound~\eqref{eq:estlowerorderterms}, we see that
\begin{align*}
    \left\langle \cos \left( \theta(x) - \theta(0) \right) \right\rangle_{\mu_{\mathrm{Vil}, \beta}}
   & \leq \left(1+ \frac{C}{|x|^{d-1}} \right) \exp \left(-\frac{1}{\beta} \left( G (0) - G(x) \right)\right) \left\langle \exp \left( (U_0+ U_{\cos , 0}) - (U_x - U_{\cos , x}) +  U_{\sin \sin , x}\right) \right\rangle_{\mu_\beta} \\
   & \leq \exp \left(-\frac{1}{\beta} \left( G (0) - G(x) \right)\right) \left\langle \exp \left( (U_0+ U_{\cos , 0}) - (U_x - U_{\cos , x}) +  U_{\sin \sin , x}\right) \right\rangle_{\mu_\beta} + \frac{C}{|x|^{d-1}_+},
\end{align*}
and similarly
\begin{equation*}
    \left\langle \cos \left( \theta(x) + \theta(0) \right) \right\rangle_{\mu_{\mathrm{Vil}, \beta}} \leq  \exp \left(-\frac{1}{\beta} \left( G (0) + G(x) \right)\right) \left\langle \exp \left( (U_0+ U_{\cos , 0}) + (U_x + U_{\cos , x}) -  U_{\sin \sin , x}\right) \right\rangle_{\mu_\beta} + \frac{C}{|x|^{d-1}_+}.
\end{equation*}
A combination of the two previous inequalities with~\eqref{eq:14141102} implies that 
\begin{align} \label{eq:0949}
     \cov_{\mu_{\mathrm{Vil}, \beta}} \left[ \cos \theta(0) , \cos \theta(x) \right] \exp \left(\frac{G (0)}{\beta}  \right) & \leq \frac12 \exp \left(\frac{G(x)}{\beta} \right) \left\langle \exp \left( (U_0+ U_{\cos , 0}) - (U_x - U_{\cos , x}) +  U_{\sin \sin , x}\right) \right\rangle_{\mu_\beta} \\
     & \quad + \frac12 \exp \left(-\frac{G(x)}{\beta} \right) \left\langle \exp \left( (U_0+ U_{\cos , 0}) + (U_x + U_{\cos , x}) -  U_{\sin \sin , x}\right) \right\rangle_{\mu_\beta} \notag \\
     & \quad  -  \left( \left\langle \exp \left( U_0 + U_{\cos , 0} \right) \right\rangle_{\mu_\beta}\right)^2 + \frac{C}{|x|^{d-1}_+}. \notag
\end{align}

\subsection{Removing the exponential of the Green's function and the term $U_{\sin \sin}$} \label{sec:section3.4}

In this section, we further simplify the right-hand side of~\eqref{eq:0949} by removing the term involving the exponential of the Green's function and the random variable $U_{\sin \sin, x}$. Specifically, we start from the identity: for any $a ,  a' , b ,  b' \in \R$,
\begin{equation} \label{id.doubleproduct}
    a b +  a'  b' = \frac{1}{2}(a +  a') (b +  b') + \frac{1}{2} (a - a') (b - b'),
\end{equation}
together with the upper bound (proved in Appendix~\ref{app.notation})
\begin{equation} \label{ineq:varXsinsin}
    \left\| U_{\sin \sin, x} \right\|_{L^{\infty}(\Omega \, : \, \R)} \leq \frac{C}{|x|^{d-2}_+} \hspace{5mm} \mbox{and} \hspace{5mm} \mathrm{var}_{\mu_\beta}\left[ U_{\mathrm{sin \, sin}, x} \right] \leq \frac{C}{|x|_+^{2d-2}}.
\end{equation}
Let us note that a consequence of the first inequality is the estimate on the expectation
\begin{equation*}
    \left| \left\langle U_{\sin \sin, x} \right\rangle_{\mu_\beta} \right| \leq \frac{C}{|x|^{d-2}}.
\end{equation*}
The strategy is then to apply the identity~\eqref{id.doubleproduct} with (N.B. with this choice $a , a' , b, b'$ depend on $\varphi \in \Omega$ and are thus random variables and we note that they are non-negative and bounded in $L^2(\Omega \, ; \, \R)$ uniformly over $x \in \Zd$)
\begin{equation*}
    a = \exp \left(\frac{G(x)}{\beta} + U_{\sin \sin , x}\right) ~~\mbox{and}~~ b =  \exp \left( (U_0+ U_{\cos , 0}) - (U_x - U_{\cos , x}) \right),
\end{equation*}
as well as
\begin{equation*}
    a' = \exp \left(-\frac{G(x)}{\beta} -  U_{\sin \sin , x} \right)  ~~\mbox{and}~~  b' =  \exp \left( (U_0+ U_{\cos , 0}) + (U_x + U_{\cos , x}) \right).
\end{equation*}
Let us note that, by performing a Taylor expansion of the exponential to the second order, we have the inequality
\begin{equation*}
    a + a' = \exp \left(\frac{G(x)}{\beta} + U_{\sin \sin , x}\right) + \exp \left( - \frac{G(x)}{\beta} - U_{\sin \sin , x}\right) \leq 2 + \frac{C G(x)^2}{\beta^2} + C \left\| U_{\sin \sin} \right\|_{L^\infty(\Omega \, : \, \R)}^2 \leq 2 + \frac{C}{|x|^{2d-4}_+}.
\end{equation*}
Taking the expectation with respect to the measure $\mu_\beta$ in the previous inequalities, we deduce that
\begin{align*}
     \lefteqn{\frac{1}{2} \left\langle(a +  a') (b +  b') \right\rangle_{\mu_\beta}} \qquad & \\ & \leq \left(1 + \frac{C}{|x|^{2d-4}_+} \right) \left(\left\langle \exp \left( (U_0+ U_{\cos , 0}) - (U_x - U_{\cos , x})\right) \right\rangle_{\mu_\beta} + \left\langle \exp \left( (U_0+ U_{\cos , 0}) + (U_x + U_{\cos , x}) \right) \right\rangle_{\mu_\beta} \right) \\
     & \leq \left\langle \exp \left( (U_0+ U_{\cos , 0}) - (U_x - U_{\cos , x})\right) \right\rangle_{\mu_\beta} + \left\langle \exp \left( (U_0+ U_{\cos , 0}) + (U_x + U_{\cos , x}) \right) \right\rangle_{\mu_\beta} + \frac{C}{|x|^{2d-4}_+}.
\end{align*}
Similarly, we have, using a second Taylor expansion of the exponential,
\begin{align*}
    a - a' & = \exp \left(\frac{G(x)}{\beta} + U_{\sin \sin , x}\right) - \exp \left( - \frac{G(x)}{\beta} - U_{\sin \sin , x}\right) \\
    & =\frac{2 G(x)}{\beta} + 2 U_{\sin \sin} + O \left( \frac{1}{|x|^{2d-4}_+} \right) \\
    & = \underset{\mathrm{Deterministic \, part}}{\underbrace{\frac{2 G(x)}{\beta} + 2 \left\langle U_{\sin \sin} \right\rangle_{\mu_\beta}}} + \underset{\mathrm{Random \, part \, with \, small \,} L^2(\Omega \, ; \, \R) \, \mathrm{norm}}{\underbrace{ 2 \left( U_{\sin \sin}  -  \left\langle U_{\sin \sin} \right\rangle_{\mu_\beta} \right) + O \left( \frac{1}{|x|^{2d-4}_+} \right).}}
\end{align*}
Applying the Cauchy-Schwarz inequality, the upper bound on the variance~\eqref{ineq:varXsinsin} and the inequality $2d-4 \geq d-1$ (since $d \geq 3$), we may estimate
\begin{align*}
    \frac{1}{2} \left\langle(a -  a') (b -  b') \right\rangle_{\mu_\beta} & \leq \left(  \frac{2 G(x)}{\beta} + 2 \left\langle U_{\sin \sin} \right\rangle_{\mu_\beta} \right) \left\langle b - b' \right\rangle_{\mu_\beta} + C \sqrt{\mathrm{var}_{\mu_\beta}\left[ U_{\mathrm{sin \, sin}, x} \right]} + \frac{C}{|x|_+^{2d-4}} \\
    & \leq \left(  \frac{2 G(x)}{\beta} + 2 \left\langle U_{\sin \sin} \right\rangle_{\mu_\beta} \right) \left\langle b - b' \right\rangle_{\mu_\beta} +\frac{C}{|x|^{d-1}_+}.
\end{align*}
Using that the Green's function $G$ and the expectation of the random variable $U_{\sin \sin}$ decay like $|x|^{2-d}_+$, we deduce that
\begin{multline*}
    \frac{1}{2} \left\langle(a -  a') (b -  b') \right\rangle_{\mu_\beta} \\ \leq \frac{C}{|x|^{d-2}_+} \left| \left\langle \exp \left( (U_0+ U_{\cos , 0}) - (U_x - U_{\cos , x})\right) \right\rangle_{\mu_\beta} - \left\langle \exp \left( (U_0+ U_{\cos , 0}) + (U_x + U_{\cos , x}) \right) \right\rangle_{\mu_\beta} \right| + \frac{C}{|x|^{d-1}_+}.
\end{multline*}
A combination of the two previous displays implies that
\begin{multline*}
    \exp \left(\frac{G(x)}{\beta} \right) \left\langle \exp \left( (U_0+ U_{\cos , 0}) - (U_x - U_{\cos , x}) +  U_{\sin \sin , x}\right) \right\rangle_{\mu_\beta} \\ + \exp \left(-\frac{G(x)}{\beta} \right) \left\langle \exp \left( (U_0+ U_{\cos , 0}) + (U_x + U_{\cos , x}) -  U_{\sin \sin , x}\right) \right\rangle_{\mu_\beta} \\
    \leq \left\langle \exp \left( (U_0+ U_{\cos , 0}) - (U_x - U_{\cos , x})\right) \right\rangle_{\mu_\beta} + \left\langle \exp \left( (U_0+ U_{\cos , 0}) + (U_x + U_{\cos , x}) \right) \right\rangle_{\mu_\beta}  \\
    + \frac{C}{|x|^{d-2}_+} \left| \left\langle \exp \left( (U_0+ U_{\cos , 0}) - (U_x - U_{\cos , x})\right) \right\rangle_{\mu_\beta} - \left\langle \exp \left( (U_0+ U_{\cos , 0}) + (U_x + U_{\cos , x}) \right) \right\rangle_{\mu_\beta} \right| + \frac{C}{|x|_+^{d-1}}.
\end{multline*}
Combining the previous inequality with~\eqref{eq:0949}, we obtain that
\begin{align*}
\lefteqn{\cov_{\mu_{\mathrm{Vil}, \beta}} \left[ \cos \theta(0) , \cos \theta(x) \right] \exp \left(\frac{G (0)}{\beta}  \right)} \qquad &\\ &
\leq \frac12 \left\langle \exp \left( (U_0+ U_{\cos , 0}) - (U_x - U_{\cos , x})\right) \right\rangle_{\mu_\beta} + \frac12 \left\langle \exp \left( (U_0+ U_{\cos , 0}) + (U_x + U_{\cos , x}) \right) \right\rangle_{\mu_\beta} -  \left\langle \exp \left( U_0 + U_{\cos , 0} \right) \right\rangle_{\mu_\beta}^2 \\
& \quad + \frac{C}{|x|^{d-2}_+} \left| \left\langle \exp \left( (U_0+ U_{\cos , 0}) - (U_x - U_{\cos , x})\right) \right\rangle_{\mu_\beta} - \left\langle \exp \left( (U_0+ U_{\cos , 0}) + (U_x + U_{\cos , x}) \right) \right\rangle_{\mu_\beta} \right| \\
& \quad + \frac{C}{|x|^{d-1}_+}.
\end{align*}
From this inequality, we see that the upper bound in Theorem~\ref{th:mainth} is a consequence of the following two inequalities (to which the rest of the article is devoted):
\begin{multline} \label{eq:untreatedcov}
    \frac12 \left\langle \exp \left( (U_0+ U_{\cos , 0}) - (U_x - U_{\cos , x})\right) \right\rangle_{\mu_\beta} + \frac12 \left\langle \exp \left( (U_0+ U_{\cos , 0}) + (U_x + U_{\cos , x}) \right) \right\rangle_{\mu_\beta} - \left\langle \exp \left( U_0 + U_{\cos , 0} \right) \right\rangle_{\mu_\beta}^2  \\ \leq \frac{C \left(\ln  |x|_+\right)^{4d+10}}{ |x|^{d-1}_+}
\end{multline}
and
\begin{equation} \label{eq:untreatedremaining}
    \left| \left\langle \exp \left( (U_0+ U_{\cos , 0}) - (U_x - U_{\cos , x})\right) \right\rangle_{\mu_\beta} - \left\langle \exp \left( (U_0+ U_{\cos , 0}) + (U_x + U_{\cos , x}) \right) \right\rangle_{\mu_\beta} \right| \leq \frac{C(\ln |x|_+)^{d+3}}{|x|^{d-2}_+}.
\end{equation}
Throughout the following computations, we keep track of the exponent on the logarithmic term for completeness. However, we do not attempt to optimise its precise value, and the reader is encouraged to regard it simply as ``a logarithm raised to some power,” without placing too much emphasis on the specific exponent.

\subsection{Applying the Helffer-Sj{\"o}strand representation formula}
The next step of the argument is to prove the inequalities~\eqref{eq:untreatedcov} and~\eqref{eq:untreatedremaining}. These two inequalities only involve the random interface measure $\mu_\beta$, and they can be established by using the Helffer-Sj\"{o}strand representation formula introduced in Section~\ref{sec:HSrepre}. Let us first note that
\begin{multline*}
    \left\langle \exp \left( (U_0+ U_{\cos , 0}) - (U_x - U_{\cos , x})\right) \right\rangle_{\mu_\beta} + \left\langle \exp \left( (U_0+ U_{\cos , 0}) + (U_x + U_{\cos , x}) \right) \right\rangle_{\mu_\beta} \\
    = 2 \left\langle \exp \left( U_0+ U_{\cos , 0} \right) \mathrm{ch}(U_x) \exp(U_{\cos , x})) \right\rangle_{\mu_\beta}
\end{multline*}
and that, due to the $\varphi \mapsto - \varphi$ symmetry of the measure $\mu_\beta$ (since $U_x$ is an odd function of $\varphi$ and $U_{\cos , 0}$ is an even function of $\varphi$) together with the translation invariance of the measure $\mu_\beta$, we have the identities
\begin{equation} \label{eq:3.10}
    \left\langle \exp \left( - U_x + U_{\cos , x})\right) \right\rangle_{\mu_\beta} = \left\langle \exp \left( U_x + U_{\cos , x})\right) \right\rangle_{\mu_\beta}  = \left\langle \exp \left( U_0 + U_{\cos , 0})\right) \right\rangle_{\mu_\beta}.
\end{equation}
From the two previous identities, we see that~\eqref{eq:untreatedcov} is equivalent to
\begin{equation} \label{eq:09071202}
    \cov_{\mu_\beta} \left[ \exp \left( U_0 + U_{\cos , 0} \right),  \cosh{(U_x)} \exp \left( U_{\cos , x} \right) \right] \leq \frac{C \left(\ln  |x|_+\right)^{4d+10}}{ |x|^{d-1}_+}.
\end{equation}
Similarly (and using~\eqref{eq:3.10} a second time), we see that~\eqref{eq:untreatedremaining} is implied by the two upper bounds
\begin{align} \label{eq:09081202}
    \left| \cov_{\mu_\beta}  \left[ \exp \left( U_0+ U_{\cos , 0} \right) ,  \exp \left( - U_x + U_{\cos , x} \right) \right] \right| & \leq \frac{C(\ln |x|_+)^{d+3}}{|x|^{d-2}_+},  \\\
    \left| \cov_{\mu_\beta}  \left[ \exp \left( U_0+ U_{\cos , 0} \right) ,  \exp \left(  U_x + U_{\cos , x} \right) \right] \right| & \leq \frac{C(\ln |x|_+)^{d+3}}{|x|^{d-2}_+}. \notag 
\end{align}

Therefore, the upper bound in Theorem ~\ref{th:mainth} follows from the three upper bounds stated in~\eqref{eq:09071202} and~\eqref{eq:09081202}.
We devote the rest of this section to the proof of these upper bounds using the Helffer-Sj\"{o}strand representation formula, from which we conclude the  upper bound in Theorem ~\ref{th:mainth}. The inequalities~\eqref{eq:09081202} are strictly simpler to show than the inequality~\eqref{eq:09071202}, and we thus start with their proofs.

\subsubsection{Proof of the upper bound~\eqref{eq:09081202}}

We will apply the Helffer-Sj\"{o}strand representation formula (Proposition~\ref{prop:HSrepresetnation}) with the functions:
\begin{equation*}
    \mathbf{f}(\varphi) := \exp \left( U_0 (\varphi) + U_{\cos , 0} (\varphi) \right) ~~\mbox{and}~~ \mathbf{h}(\varphi) := \exp \left( - U_x(\varphi) + U_{\cos , x}(\varphi) \right).
\end{equation*}
Let us note that the partial derivative of the functions $\mathbf{f}$ and $\mathbf{g}$ can be explicitly computed, and that we have the identities, for any $y \in \Zd$,
\begin{equation*}
    \begin{aligned}
    \partial_y \mathbf{f}(\varphi) & = 2\pi \exp \left( U_0(\varphi) + U_{\cos , 0}(\varphi)\right)\sum_{q \in \mathcal{Q}} z(\beta , q)  \cos \left( 2\pi (\di^* \varphi , n_q)\right) \sin \left( 2\pi(\nabla G , n_q) \right)  q(y) \\
    &  \quad -  2\pi \exp \left( U_0(\varphi) + U_{\cos , 0}(\varphi)\right) \sum_{q \in \mathcal{Q}} z(\beta , q)  \sin \left( 2\pi (\di^* \varphi , n_q)\right) (\cos \left( 2\pi(\nabla G , n_q) \right) - 1 ) q(y), \\
    \partial_y \mathbf{h}(\varphi) &  = - 2\pi \exp \left( - U_x(\varphi) + U_{\cos , x}(\varphi)\right)\sum_{q \in \mathcal{Q}} z(\beta , q)  \cos \left( 2\pi (\di^* \varphi , n_q)\right) \sin \left( 2\pi(\nabla G_x , n_q) \right)  q(y) \\
    &  \quad -  2\pi \exp \left( U_x(\varphi) + U_{\cos , x}(\varphi)\right) \sum_{q \in \mathcal{Q}} z(\beta , q)  \sin \left( 2\pi (\di^* \varphi , n_q)\right) (\cos \left( 2\pi(\nabla G_x , n_q) \right) - 1 ) q(y).
    \end{aligned}
\end{equation*}
Applying the Helffer-Sj\"{o}strand representation formula (Proposition~\ref{prop:HSrepresetnation}), we thus obtain that
\begin{align} \label{eq:102412021}
    \lefteqn{\cov_{\mu_\beta}  \left[ \exp \left( U_0+ U_{\cos , 0} \right) ,  \exp \left( - U_x + U_{\cos , x} \right) \right]} \qquad & \\ &
    = - 2\pi \sum_{q \in \mathcal{Q}}   z(\beta, q) \sin \left( 2\pi(\nabla G_x , n_q) \right) \langle \exp {(-U_x(\varphi) + U_{\cos , x}(\varphi))} \cos \left( 2\pi(\di^* \varphi , n_q) \right)  ((\mathcal{G}_0 + \mathcal{G}_1)(\cdot ,\varphi) , q) \rangle_{\mu_\beta}  \notag \\
    & \quad  - 2\pi \sum_{q \in \mathcal{Q}}   z(\beta, q) (\cos \left( 2\pi(\nabla G_x , n_q) \right) -1) \langle \exp{(-U_x(\varphi) + U_{\cos , x}(\varphi))} \sin \left( 2\pi(\di^* \varphi , n_q) \right)  ((\mathcal{G}_0 + \mathcal{G}_1)(\cdot ,\varphi) ,q) \rangle_{\mu_\beta}, \notag
\end{align}
where $\mathcal{G}_0, \mathcal{G}_1 : \Zd \times \Omega \to \R^{2 \choose d}$ are the solutions of the Helffer-Sj\"{o}strand equations
\begin{align} \label{def.G0andG1}
    \mathcal{L} \mathcal{G}_0 & = 2\pi \exp \left( U_0(\varphi) + U_{\cos , 0}(\varphi)\right)\sum_{q \in \mathcal{Q}} z(\beta , q)  \cos \left( 2\pi (\di^* \varphi , n_q)\right) \sin \left( 2\pi(\nabla G , n_q) \right)  q, \\
    \mathcal{L} \mathcal{G}_1 & =  - 2\pi \exp \left( U_0(\varphi) + U_{\cos , 0}(\varphi) \right)\sum_{q \in \mathcal{Q}} z(\beta , q)  \sin \left( 2\pi (\di^* \varphi , n_q)\right) (\cos \left( 2\pi(\nabla G , n_q) \right) - 1 )  q. \notag 
\end{align}
Using the identity $q = \di n_q$ (see Lemma~\ref{lem:lem2.4}) together with~\eqref{identity:dandd*}, we may rewrite~\eqref{eq:102412021} as follows
\begin{align} \label{eq:10241202}
    \lefteqn{\cov_{\mu_\beta}  \left[ \exp \left( U_0+ U_{\cos , 0} \right) ,  \exp \left( - U_x + U_{\cos , x} \right) \right]} \qquad & \\ &
    = \underset{\eqref{eq:10241202}-(i)}{\underbrace{-2\pi \sum_{q \in \mathcal{Q}}   z(\beta, q) \sin \left( 2\pi(\nabla G_x , n_q) \right) \langle \exp {(-U_x(\varphi) + U_{\cos , x}(\varphi))} \cos \left( 2\pi(\di^* \varphi , n_q) \right)  (\di^* (\mathcal{G}_0 + \mathcal{G}_1)(\cdot ,\varphi) , n_q) \rangle_{\mu_\beta}}}  \notag \\
    & \quad  \underset{\eqref{eq:10241202}-(ii)}{\underbrace{- 2\pi \sum_{q \in \mathcal{Q}}   z(\beta, q) (\cos \left( 2\pi(\nabla G_x , n_q) \right) -1) \langle \exp{(-U_x(\varphi) + U_{\cos , x}(\varphi))} \sin \left( 2\pi(\di^* \varphi , n_q) \right)  (\di^*  (\mathcal{G}_0 + \mathcal{G}_1)(\cdot ,\varphi) , n_q) \rangle_{\mu_\beta}}}. \notag
\end{align}
The rest of this section is decomposed in two steps. In the first one, we prove the following inequalities on the $L^2(\Omega \, ; \, \R^d)$-norm of the codifferential of the functions $\mathcal{G}_0 $ and $\mathcal{G}_1$, for any $y \in \Zd$,
\begin{equation}
\label{e.gradG0G1}
    \left\| \di^* \mathcal{G}_0(y , \cdot) \right\|_{L^2(\Omega \, ; \, \R^d)} \leq \frac{C ( \ln |y|_+ )^{d+3}}{|y|^{d-1}_+} ~~ \mbox{and}~~ \left\| \di^* \mathcal{G}_1(y , \cdot) \right\|_{L^2(\Omega \, ; \, \R^d)} \leq \frac{C ( \ln |y|_+ )^{d+2} }{|y|^{d}_+},
\end{equation}
and, in the second step, we deduce from these inequalities an upper bound on the terms~\eqref{eq:10241202}-(i) and~\eqref{eq:10241202}-(ii) (which then implies the inequalities~\eqref{eq:09081202}).

\medskip

\textit{Step 1. Proof of the inequality~\eqref{e.gradG0G1}.}

\medskip

Let us first set, for $q \in \mathcal{Q}$,
\begin{equation*}
    \mathbf{f}_q(\varphi) := \exp \left( U_0(\varphi) + U_{\cos , 0}(\varphi)\right) \cos \left( 2\pi (\di^* \varphi , n_q)\right) \in \R.
\end{equation*}
Note that this random variable belongs to $L^2(\Omega \, ; \, \R)$ (see Remark~\ref{rem:remark3.3}).
We next write an identity relating the function $\mathcal{G}_0$ and the Green's matrix associated with the Helffer-Sj\"{o}strand operator introduced in Definition~\ref{eq:defHSGREEN}. Specifically, we use the identity~\eqref{id:decomgeneralsolutions} of Remark~\ref{rem:remark2.21} (and the linearity of the map $\mathbf{f} \mapsto \mathcal{G}_{\mathbf{f}}$) which in this case implies  
\begin{align*}
    \mathcal{G}_0(x , \varphi)   & =  2\pi \sum_{q \in \mathcal{Q}} z(\beta , q)  \sin \left( 2\pi(\nabla G , n_q) \right) \left( \mathcal{G}_{\mathbf{f}_q}(x , \varphi ; \cdot), q \right) \\
    & =  2\pi \sum_{q \in \mathcal{Q}} z(\beta , q)  \sin \left( 2\pi(\nabla G , n_q) \right) \left( \di^*_2 \mathcal{G}_{\mathbf{f}_q}(x , \varphi ; \cdot), n_q \right) \in \R^{{d \choose 2}}.
\end{align*}
We further deduce that
\begin{equation*}
    \di^* \mathcal{G}_0(x , \varphi) =  2\pi \sum_{q \in \mathcal{Q}} z(\beta , q)  \sin \left( 2\pi(\nabla G , n_q) \right) \left( \di^*_1 \di^*_2 \mathcal{G}_{\mathbf{f}_q}(x , \varphi ; \cdot), n_q \right) \in \Rd.
\end{equation*}
Using the regularity estimate on the Green's matrix stated in Proposition~\ref{prop.prop4.11chap4} together with Lemma~\ref{lemma.lemma2.5}, the first inequality of Proposition~\ref{prop:appCdiscretesum} and the inequality
$$|\sin \left( 2\pi(\nabla G , n_q) \right) |\leq C \left| (\nabla G , n_q) \right|, $$ we deduce that
\begin{align*}
    \left\| \di^* \mathcal{G}_0(x , \cdot) \right\|_{L^2(\Omega \, ; \, \R^d)} \leq C \sum_{q \in \mathcal{Q}} |z(\beta , q)| \left|(\nabla G , n_q) \right| \left\| \left( \di^*_1 \di^*_2 \mathcal{G}_{\mathbf{f}_q}(x , \varphi ; \cdot), n_q \right) \right\|_{L^2 \left( \Omega \, ; \, \R \right)} & \leq C \sum_{z \in \Zd} \frac{C \left( \ln |x - z|_+ \right)^{d+2} }{|z|^{d-1}_+ |x-z|^{d}_+} \\
    & \leq \frac{C  \left( \ln |x|_+ \right)^{d+3}}{|x|^{d-1}_+}. \notag
\end{align*}
We proceed similarly to estimate the function $\mathcal{G}_1$, this time setting
\begin{equation*}
    \mathbf{g}_q(\varphi) :=  \exp \left( U_0(\varphi) + U_{\cos , 0}(\varphi)\right) \sin \left( 2\pi (\di^* \varphi , n_q)\right) 
\end{equation*}
so that 
\begin{equation*}
    \di^* \mathcal{G}_1(x , \varphi) = - 2\pi \sum_{q \in \mathcal{Q}} z(\beta , q)  \left( \cos \left( 2\pi(\nabla G , n_q) \right) - 1\right) \left( \di^*_1 \di^*_2 \mathcal{G}_{\mathbf{g}_q}(x , \varphi ; \cdot), n_q \right) \in \Rd.
\end{equation*}
Using the regularity estimate on the Green's matrix associated with the Helffer-Sj\"{o}strand operator (Proposition~\ref{prop.prop4.11chap4}) together with Lemma~\ref{lemma.lemma2.5} and the upper bound $$1-\cos \left( 2\pi(\nabla G , n_q) \right) \leq C \left|(\nabla G , n_q) \right|^2,$$ we deduce that
\begin{align*}
    \left\| \di^* \mathcal{G}_1(x , \cdot) \right\|_{L^2(\Omega \, ; \, \Rd)} \leq C \sum_{q \in \mathcal{Q}} |z(\beta , q)| \left|(\nabla G , n_q) \right|^2 \left\| \left( \di^*_1 \di^*_2 \mathcal{G}_{\mathbf{g}_q}(x , \cdot ; \cdot), n_q \right) \right\|_{L^2(\Omega \, ; \, \Rd)} & \leq C \sum_{z \in \Zd} \frac{C (\ln |x - z|_+)^{d+2} }{|z|^{2d-2}_+ |x-z|^{d}_+} \\
    &  \leq \frac{C(\ln |x|_+)^{d+2}}{|x|^{d}_+}.
\end{align*}

\medskip

\textit{Step 2. Upper bounds on the terms~\eqref{eq:10241202}-(i) and~\eqref{eq:10241202}-(ii).}

\medskip

Combining this inequality with Lemma~\ref{lemma.lemma2.5}, we may estimate the first term on the right-hand side of~\eqref{eq:10241202}
\begin{align*}
   \left| \eqref{eq:10241202}-(i) \right| 
    \leq C \sum_{q \in \mathcal{Q}} z(\beta , q) \left| (\nabla G_x , n_q) \right| \left\| (\di^* \mathcal{G}_0 + \di^* \mathcal{G}_1 , n_q)  \right\|_{L^2(\Omega \, ; \, \R)}  & \leq C \sum_{z \in \Zd} \frac{1}{|x - z|^{d-1}_+} \frac{(\ln |z|_+)^{d+3}}{|z|^{d-1}_+} \\ 
    & \leq \frac{C(\ln |x|_+)^{d+3} }{|x|^{d-2}_+}.
\end{align*}
The second term on the right-hand side of~\eqref{eq:10241202} is estimated in a similar manner. We obtain
\begin{align*}
   \left| \eqref{eq:10241202}-(ii) \right|  
    \leq C \sum_{q \in \mathcal{Q}} |z(\beta , q)| \left| (\nabla G_x , n_q) \right|^2 \left\| (\di^* \mathcal{G}_0 + \di^* \mathcal{G}_1 , n_q)  \right\|_{L^2(\Omega \, ; \, \R)}  & \leq C \sum_{z \in \Zd} \frac{1}{|x - z|^{2d-2}_+} \frac{(\ln |z|_+)^{d+3}}{|z|^{d-1}_+} \\
    & \leq \frac{C(\ln |x|_+)^{d+3}}{|x|^{d-1}_+}.
\end{align*}
Combining the two previous inequalities with~\eqref{eq:10241202}, we deduce that
\begin{equation*}
    \cov_{\mu_\beta}  \left[ \exp \left( U_0+ U_{\cos , 0} \right) ,  \exp \left( - U_x + U_{\cos , x} \right) \right] \leq \frac{C(\ln |x|_+)^{d+3} }{|x|^{d-2}_+}
\end{equation*}
which is the first inequality of~\eqref{eq:09081202}. The proof of the second inequality of~\eqref{eq:09081202} is essentially identical (N.B. the only difference is that we need to replace $-U_x$ by $U_x$ in the expressions above and add a minus sign in front of the term~\eqref{eq:10241202}-(i)) and the details are thus omitted.

\subsubsection{Proof of the upper bound~\eqref{eq:09071202}- Part I: A first application of the Helffer-Sj\"{o}strand representation}

We first apply the Helffer-Sj{\"o}strand representation (exactly as in~\eqref{eq:102412021}) to obtain
    \begin{align*}
& \cov_{\mu_\beta} \left[ \exp \left( U_0 + U_{\cos , 0} \right),  \cosh{(U_x)} \exp \left( U_{\cos , x} \right) \right]
   \\ & \qquad = 
    2\pi \sum_{q \in \mathcal{Q}}   z(\beta, q) \sin \left( 2\pi(\nabla G_x , n_q) \right) \langle \sinh{(U_x)}  \exp \left( U_{\cos , x} \right) \cos \left( 2\pi(\di^* \varphi , n_q) \right)  (\di^* (\mathcal{G}_0 + \mathcal{G}_1)(\cdot ,\varphi) , n_q) \rangle_{\mu_\beta} \notag \\
    & - 2\pi \sum_{q \in \mathcal{Q}}   z(\beta, q) (\cos \left( 2\pi(\nabla G_x , n_q) \right) -1) \langle \cosh{(U_x)} \exp \left( U_{\cos , x} \right) \sin \left( 2\pi(\di^* \varphi , n_q) \right)  (\di^*  (\mathcal{G}_0 + \mathcal{G}_1)(\cdot ,\varphi) , n_q) \rangle_{\mu_\beta} \notag
    \end{align*}
where $\mathcal{G}_0$ and $\mathcal{G}_1$ are the two functions defined in~\eqref{def.G0andG1}. For later purposes, we split the first line of the previous inequality into two (isolating two terms: one involving the function $\mathcal{G}_0$ and one involving the function~$\mathcal{G}_1$)
\begin{align}
        \label{e.2pt1.5}
& \cov_{\mu_\beta} \left[ \exp \left( U_0 + U_{\cos , 0} \right),  \cosh{(U_x)} \exp \left( U_{\cos , x} \right) \right]
   \\ & \qquad = 
    2\pi \underset{\eqref{e.2pt1.5}-(i)}{\underbrace{\sum_{q \in \mathcal{Q}}   z(\beta, q) \sin \left( 2\pi(\nabla G_x , n_q) \right) \langle \sinh{(U_x)}  \exp \left( U_{\cos , x} \right) \cos \left( 2\pi(\di^* \varphi , n_q) \right)   (\di^* \mathcal{G}_0(\cdot ,\varphi) , n_q) \rangle_{\mu_\beta}}} \notag \\
    & + 2\pi \underset{\eqref{e.2pt1.5}-(ii)}{\underbrace{\sum_{q \in \mathcal{Q}}   z(\beta, q) \sin \left( 2\pi(\nabla G_x , n_q) \right) \langle \sinh{(U_x)}  \exp \left( U_{\cos , x} \right) \cos \left( 2\pi(\di^* \varphi , n_q) \right)  (\di^* \mathcal{G}_1(\cdot ,\varphi) , n_q) \rangle_{\mu_\beta}}} \notag \\
    & + 2\pi \underset{\eqref{e.2pt1.5}-(iii)}{\underbrace{\sum_{q \in \mathcal{Q}}   z(\beta, q) (\cos \left( 2\pi(\nabla G_x , n_q) \right) -1) \langle \cosh{(U_x)} \exp \left( U_{\cos , x} \right) \sin \left( 2\pi(\di^* \varphi , n_q) \right)  (\di^*  (\mathcal{G}_0 + \mathcal{G}_1)(\cdot ,\varphi) , n_q) \rangle_{\mu_\beta}}.} \notag
    \end{align}
Let us then fix a 2-form $q \in \mathcal{Q}$ and study the term (which appears in~\eqref{e.2pt1.5}-(i))
\begin{equation} \label{eq:eq5.4}
    \langle \sinh{(U_x)} \exp \left( U_{\cos , x} \right) \cos \left( 2\pi(\di^* \varphi , n_q) \right)  (\di^* \mathcal{G}_0(\cdot ,\varphi) , n_q) \rangle_{\mu_\beta}.
\end{equation}
We will prove in Section~\ref{subsec5.3} below the upper bound: for any $q \in \mathcal{Q}$, 
\begin{multline} \label{ineq:maintechnical}
    \left| \langle \sinh{(U_x)} \exp \left( U_{\cos , x} \right) \cos \left( 2\pi(\di^* \varphi , n_q) \right)  (\di^* \mathcal{G}_0(\cdot ,\varphi) , n_q) \rangle_{\mu_\beta} \right| \\ 
    \leq C_q \frac{\left(\ln  |z_q|_+\right)^{3d+7}}{ |z_q|^{d-1}_+} \frac{\left( \ln |x - z_q|_+ \right)^{d+2}}{|x - z_q|^{d-2}_+} +  C_q \frac{\left( \ln |z_q|_+ \right)^{d+7/2}}{|z_q|^{d-1}_+} \frac{\left( \ln |x|_+ \right)^{2d+13/2}}{|x|^{d-2}_+},
\end{multline}
where $C_q:= C_q(d , \beta , q) < \infty$ is a constant which depends on $d , \beta$ and $q$ and shall only grow polynomially fast in $\left\| q \right\|_1$ (i.e., there exist a constant $C := (d , \beta)< \infty$ depending only on $d$ and $\beta$ and an exponent $k := k(d) < \infty$ depending only on $d$ such that $C_q \leq C \left\| q \right\|_1^k$) and $z_q \in \Zd$ is a vertex of the support of $q$ (minimising the lexicographical order).

The inequality~\eqref{ineq:maintechnical} is the core of the argument, and we first show how the inequality \eqref{eq:09071202} can be deduced from it. We first note that
\begin{align*}
    \left| \eqref{e.2pt1.5}-(i) \right| &
    \leq  \sum_{q \in \mathcal{Q}} C_q |z(\beta, q)| \frac{1}{|x - z_q|^{d-1}_+} \frac{\left(\ln  |z_q|_+\right)^{3d+7}}{ |z_q|^{d-1}_+} \frac{\left( \ln |x - z_q|_+ \right)^{d+2}}{|x - z_q|^{d-2}_+}  \\
    & \quad +  \sum_{q \in \mathcal{Q}} C_q |z(\beta, q)| \frac{1}{|x - z_q|^{d-1}_+} \frac{\left( \ln |z_q|_+ \right)^{d+7/2}}{|z_q|^{d-1}_+} \frac{\left( \ln |x|_+ \right)^{2d+13/2}}{|x|^{d-2}_+} \\
    & = \sum_{q \in \mathcal{Q}} C_q |z(\beta, q)| \frac{\left(\ln  |z_q|_+\right)^{3d+7}}{ |z_q|^{d-1}_+} \frac{\left( \ln |x - z_q|_+ \right)^{d+2}}{|x - z_q|^{2d-3}_+} \\
    & \quad + \frac{\left( \ln |x|_+ \right)^{2d+13/2}}{|x|^{d-2}_+}\sum_{q \in \mathcal{Q}}  C_q |z(\beta, q)| \frac{1}{|x - z_q|^{d-1}_+} \frac{\left( \ln |z_q|_+ \right)^{d+7/2}}{|z_q|^{d-1}_+} .
\end{align*}
Using Lemma~\ref{lemma.lemma2.5} together with the inequality $2d - 3 \geq d$ (since $d \geq 3$), we deduce that
\begin{align*}
        \left| \eqref{e.2pt1.5}-(i) \right| & \leq C \sum_{z \in \Zd}\frac{\left(\ln  |z|_+\right)^{3d+7}}{ |z|^{d-1}_+} \frac{\left( \ln |x - z|_+ \right)^{d+2}}{|x - z|^{2d-3}_+} + \frac{C \left( \ln |x|_+ \right)^{2d+13/2}}{|x|^{d-2}_+}\sum_{z \in \Zd} \frac{1}{|x - z|^{d-1}_+} \frac{\left( \ln |z|_+ \right)^{d+7/2}}{|z|^{d-1}_+} \\
    & \leq C \frac{\left(\ln  |x|_+\right)^{4d+10}}{ |x|^{d-1}_+} + C \frac{\left( \ln |x|_+ \right)^{3d+10}}{|x|^{2d-4}_+} \\
    & \leq C \frac{\left(\ln  |x|_+\right)^{4d+10}}{ |x|^{d-1}_+} .
\end{align*}
We next estimate the simpler terms~\eqref{e.2pt1.5}-(ii) and \eqref{e.2pt1.5}-(iii). For \eqref{e.2pt1.5}-(ii), we apply \eqref{e.gradG0G1} and Lemma~\ref{lemma.lemma2.5} to obtain
\begin{equation*}
    \left| \eqref{e.2pt1.5}-(ii) \right| \leq C \sum_{q \in \mathcal{Q}} |z(\beta , q)| \left| (\nabla G_x , n_q) \right| \left\| (\di^* \mathcal{G}_1 , n_q)  \right\|_{L^2(\Omega \, ; \, \R)}  \leq \sum_{z \in \Zd} \frac{C(\ln |z|_+)^{d+2}}{|x - z|^{d-1}_+|z|^{d}_+} \leq \frac{C(\ln |x|_+)^{d+3} }{|x|^{d-1}_+}.
\end{equation*}
Likewise, for the term~\eqref{e.2pt1.5}-(iii), we apply \eqref{e.gradG0G1}
and Lemma~\ref{lemma.lemma2.5} to obtain
\begin{equation*}
    \left| \eqref{e.2pt1.5}-(iii) \right| \leq C \sum_{q \in \mathcal{Q}} |z(\beta , q)| \left| (\nabla G_x , n_q) \right|^2 \left\| (\di^* \mathcal{G}_0+ \di^* \mathcal{G}_1 , n_q)  \right\|_{L^2(\Omega \, ; \, \R)}  \leq \sum_{z \in \Zd} \frac{C (\ln |z|_+)^{d+3}}{|x - z|^{2d-2}_+ |z|^{d-1}_+} \leq \frac{C(\ln |x|_+)^{d+3} }{|x|^{d-1}_+}.
\end{equation*}
A combination of the previous inequalities with~\eqref{e.2pt1.5} implies
\begin{equation*}
    \left| \cov_{\mu_\beta} \left[ \exp \left( U_0 + U_{\cos , 0} \right),  \cosh{(U_x)} \exp \left( U_{\cos , x} \right) \right] \right| \leq  C \frac{\left(\ln  |x|_+\right)^{4d+10}}{ |x|^{d-1}_+}.
\end{equation*}
This completes the proof of~\eqref{eq:09071202} conditionally on the inequality~\eqref{ineq:maintechnical}.

\subsubsection{Proof of the upper bound~\eqref{eq:09071202}- Part II: Proof of the inequality~\eqref{ineq:maintechnical} using a second application of the Helffer-Sj{\"o}strand representation} \label{subsec5.3}

In order to estimate the term~\eqref{eq:eq5.4}, we apply the Helffer-Sj{\"o}strand representation (Proposition~\ref{prop:HSrepresetnation}) with the functions
\begin{equation*}
    \mathbf{f}(\varphi) := \sinh{\left(U_x(\varphi)\right)} \exp \left( U_{\cos , x}(\varphi) \right) ~~\mbox{and}~~\mathbf{g}(\varphi) := \cos \left( 2\pi(\di^* \varphi , n_q) \right)  (\di^* \mathcal{G}_0(\cdot ,\varphi) , n_q).
\end{equation*}
We obtain (N.B. due to the $\varphi \mapsto -\varphi$ symmetry of the law $\mu_\beta$, the expectation of the function $\mathbf{f}$ is equal to $0$ and the covariance between $\mathbf{f}$ and $\mathbf{g}$ is equal to the expectation of their product)
\begin{align}
    \label{e.2pt2}
    \lefteqn{\langle \sinh{(U_x)} \exp \left( U_{\cos , x} \right)\cos \left( 2\pi(\di^* \varphi , n_q) \right)  (\di^* \mathcal{G}_0(\cdot ,\varphi) , n_q) \rangle_{\mu_\beta}} \qquad & 
    \\ &
    = 
    \sum_{y \in \Zd} \langle  \mathcal{G}_2(y,\varphi) \cdot \partial_{y} \left( (\di^* \mathcal{G}_0(\cdot ,\varphi) , n_q)  \cos \left(2\pi(\di^* \varphi , n_q)\right) \right) \rangle_{\mu_\beta} \notag
    \\
    & =  \underset{\eqref{e.2pt2}-(i)}{\underbrace{\sum_{y \in \Zd} \langle  \mathcal{G}_2(y,\varphi) \cdot  (\di^* \partial_{y} \mathcal{G}_0(\cdot ,\varphi) , n_q)  \cos \left(2\pi(\di^* \varphi , n_q) \right) \rangle_{\mu_\beta}}} \notag \\
    &
    \quad -2 \pi \underset{\eqref{e.2pt2}-(ii)}{\underbrace{\langle  \left( \di^* \mathcal{G}_2(\cdot,\varphi), n_q \right) (\di^* \mathcal{G}_0(\cdot ,\varphi) , n_q)  \sin \left(2\pi(\di^* \varphi , n_q) \right) \rangle_{\mu_\beta}}} \notag 
\end{align}
where $\mathcal{G}_2$ is the solution of the Helffer-Sj{\"o}strand equation, for $(y , \varphi) \in \Zd \times \Omega$,
\begin{align*}
    \mathcal{L}\mathcal{G}_2(y , \varphi) & = \partial_y \left( \sinh{\left(U_x\right)} \exp \left( U_{\cos , x} \right)  \right)
    \\
    & = 2\pi  \cosh{(U_x)}  \exp \left( U_{\cos , x} \right)
    \sum_{q' \in \mathcal{Q}} z(\beta , q')  \cos \left( 2\pi ( \di^* \varphi , n_{q'})\right) \sin \left( 2\pi(\nabla G_{x} , n_{q'})\right)  q'(y) \notag \\
    & \quad - 2\pi  \sinh{(U_x)}  \exp \left( U_{\cos , x} \right)  \sum_{q' \in \mathcal{Q}} z(\beta , q')  \sin \left( 2\pi ( \di^* \varphi , n_{q'})\right) \left( \cos \left( 2\pi(\nabla G_{x} , n_{q'})\right) - 1 \right) q'(y). \notag
\end{align*} 
In order to estimate the right-hand side of~\eqref{e.2pt2}, we collect two upper bounds. The first one is an upper bound on the partial derivative of the function $\mathcal{G}_0$ and reads as follows: there exist two functions $\mathcal{H} : \Zd \times \Zd \times \Omega \to \R^{\binom d2 \times \binom d2 }$ and $\mathcal{W} : \Zd \times \Zd \times \Omega \to \R^{d \times \binom d2}$ such that, for any $y_1 , y_2 \in \Zd$ and $\varphi \in \Omega$,
\begin{equation} \label{eq:identitydstarGHW}
    \partial_{y_2} \mathcal{G}_0(y_1 ,\varphi) = \mathcal{H}(y_1 , y_2 , \varphi) + \di_{2} \mathcal{W} (y_1 , y_2 , \varphi)
\end{equation}
(N.B. for the function $\mathcal{W}$, we consider the exterior derivative in the second variable) together with the estimates
\begin{equation} \label{eq:estverticalderivative}
    \left\| \di^*_{1} \mathcal{H}(y_1 , y_2 , \cdot)  \right\|_{L^2\left(\Omega \, ; \, \R^{d \times \binom d2} \right)} \leq C \frac{\left(\ln (|y_1|_+ + |y_2|_+) \right)^{3d+7} }{|y_1|_+^{d-1} + |y_2|_+^{d-1}}\frac{1}{|y_1 -y_2|_+^{d + 1}}
\end{equation}
and
\begin{equation} \label{eq:estverticalderivativebis}
    \left\| \di^*_{1} \mathcal{W}(y_1 , y_2 , \cdot)  \right\|_{L^2\left(\Omega \, ; \, \R^{d \times d} \right)} \leq  C \frac{(\ln |y_1|_+)^{d + 7/2}}{|y_1|_+^{d-1}} \times \frac{(\ln |y_2|_+)^{d + 7/2}}{|y_2|_+^{d-1}}.
\end{equation}
This inequality is the core of the argument, and its proof can be found in Section~\ref{sec:newsec4}. The second one is an upper bound on the functions $\mathcal{G}_0$ and $\mathcal{G}_2$ and their codifferential: for any $y \in \Zd$,
\begin{equation} \label{eq:boundonG0} 
    \left\| \di^* \mathcal{G}_0 \left( y , \cdot \right) \right\|_{L^2 \left(\Omega \, ; \, \R^{d} \right) } \leq \frac{C(\ln |y|_+)^{d+3}}{|y|^{d - 1}_+} 
\end{equation}
and
\begin{equation} \label{eq:boundonG2} 
     \left\| \mathcal{G}_2 \left( y , \cdot \right) \right\|_{L^2 \left(\Omega \, ; \, \R^{{d \choose 2}} \right) } \leq \frac{C(\ln |y - x|_+)^{d+2}}{|y - x|^{d - 2}_+} \hspace{5mm} \mbox{and} \hspace{5mm} \left\| \di^* \mathcal{G}_2 \left( y , \cdot \right) \right\|_{L^2 \left(\Omega \, ; \, \R^{d} \right)} \leq \frac{C (\ln |y - x|_+)^{d+3}}{|y - x|^{d - 1 }_+}.
\end{equation}
The inequality~\eqref{eq:boundonG0} has already been stated (and proved) in~\eqref{e.gradG0G1} above. In the rest of this section, we prove the inequalities~\eqref{eq:boundonG2} (in Step 1 below) and estimate the two terms~\eqref{e.2pt2}-(i) and~\eqref{e.2pt2}-(ii) by assuming that the four previous inequalities hold (in Steps 2 and 3). As mentioned above, the proof of the identity~\eqref{eq:identitydstarGHW} together with the inequalities~\eqref{eq:estverticalderivative} and~\eqref{eq:estverticalderivativebis} is the most challenging part of the argument and is postponed to Section~\ref{sec:newsec4}.

\medskip

\textit{Step 1. Proof of the inequalities~\eqref{eq:boundonG2}.}

\medskip

To estimate the function $\mathcal{G}_2$, we introduce the two functions
\begin{equation*}
    \mathbf{h}_q(\varphi) = \cosh{(U_x(\varphi))}  \exp \left( U_{\cos , x}(\varphi) \right)
   \cos \left( 2\pi ( \di^* \varphi , n_{q})\right)
\end{equation*}
and 
\begin{equation*}
    \mathbf{k}_q(\varphi)  = \sinh{(U_x(\varphi))}  \exp \left( U_{\cos , x} (\varphi)\right)  \sin \left( 2\pi ( \di^* \varphi , n_{q})\right),
\end{equation*}
so as to have the identities
\begin{align*}
     \mathcal{G}_2(y , \varphi) & = 2\pi
    \sum_{q' \in \mathcal{Q}} z(\beta , q')   \sin \left( 2\pi(\nabla G_{x} , n_{q'})\right)  \left( \di^*_2 \mathcal{G}_{\mathbf{h}_q}(y , \varphi ; \cdot) , n_{q'} \right) \notag \\
    & \quad - 2\pi \sum_{q' \in \mathcal{Q}} z(\beta , q') \left( \cos \left( 2\pi(\nabla G_{x} , n_{q'})\right) - 1 \right) \left( \di^*_2 \mathcal{G}_{\mathbf{k}_q}(y , \varphi ; \cdot) , n_{q'} \right), \notag
\end{align*}
and
\begin{align*}
    \di^* \mathcal{G}_2(y , \varphi) & = 2\pi
    \sum_{q' \in \mathcal{Q}} z(\beta , q')   \sin \left( 2\pi(\nabla G_{x} , n_{q'})\right)  \left( \di^*_1 \di^*_2 \mathcal{G}_{\mathbf{h}_q}(y , \varphi ; \cdot) , n_{q'} \right) \notag \\
    & \quad - 2\pi \sum_{q' \in \mathcal{Q}} z(\beta , q') \left( \cos \left( 2\pi(\nabla G_{x} , n_{q'})\right) - 1 \right) \left( \di^*_1 \di^*_2 \mathcal{G}_{\mathbf{k}_q}(y , \varphi ; \cdot) , n_{q'} \right). \notag
\end{align*}
Using the regularity estimate on the Green's matrix associated with the Helffer-Sj\"{o}strand operator (Proposition~\ref{prop.prop4.11chap4}) together with Lemma~\ref{lemma.lemma2.5}, we deduce that
\begin{align*}
     \left\| \mathcal{G}_2(y , \cdot) \right\|_{L^2\left(\Omega \, ; \, \R^{d \choose 2}\right)} & \leq \sum_{q' \in \mathcal{Q}}  |z(\beta , q')| \left|(\nabla G_x , n_{q'}) \right| \left\| \left( \di^*_2 \mathcal{G}_{\mathbf{h}_q}(y , \cdot ; \cdot), n_{q'} \right) \right\|_{L^2\left(\Omega \, ; \, \R^{d \choose 2}\right)}  
     \\& \quad + \sum_{q' \in \mathcal{Q}}  |z(\beta , q')| \left|(\nabla G_x , n_{q'}) \right|^2 \left\| \left( \di^*_2 \mathcal{G}_{\mathbf{k}_q}(y , \cdot ; \cdot), n_{q'} \right) \right\|_{L^2\left(\Omega \, ; \, \R^{d \choose 2}\right)}  \\
     & \leq  \sum_{z \in \Zd} \frac{C (\ln |y - z|_+)^{d+2}}{|x-z|^{d-1}_+ |y-z|^{d-1}_+} +  \sum_{z \in \Zd} \frac{C  (\ln |y - z|_+)^{d+2}}{|x-z|^{2d-2}_+ |y-z|^{d-1}_+} \\
     & \leq \frac{C  (\ln |x - y|_+)^{d+2}}{|x-y|^{d-2}_+} + \frac{C  (\ln |x - y|_+)^{d+2}}{|x-y|^{d-1}_+} \\
     & \leq \frac{C  (\ln |x - y|_+)^{d+2}}{|x-y|^{d - 2}_+}.
\end{align*}
Similarly for the codifferential of the function $\mathcal{G}_2$
\begin{align*}
     \left\| \di^* \mathcal{G}_2(y , \cdot) \right\|_{L^2(\Omega \, ; \, \Rd)} & \leq \sum_{q' \in \mathcal{Q}}  |z(\beta , q')| \left|(\nabla G_x , n_{q'}) \right| \left\| \left( \di^*_1 \di^*_2 \mathcal{G}_{\mathbf{h}_q}(y , \cdot ; \cdot), n_{q'} \right) \right\|_{L^2(\Omega \, ; \, \Rd)}  
     \\& \quad + \sum_{q' \in \mathcal{Q}}  |z(\beta , q')| \left|(\nabla G_x , n_{q'}) \right|^2 \left\| \left( \di^*_1 \di^*_2 \mathcal{G}_{\mathbf{k}_q}(y , \cdot ; \cdot), n_{q'} \right) \right\|_{L^2(\Omega \, ; \, \Rd)}  \\
     & \leq C \sum_{z \in \Zd} \frac{C (\ln |y - z|_+)^{d+2}}{|x-z|^{d-1}_+ |y-z|^{d}_+} +  \sum_{z \in \Zd} \frac{C (\ln |y - z|_+)^{d+2}}{|x-z|^{2d-2}_+ |y-z|^{d}_+} \\
     & \leq \frac{C (\ln |x - y|_+)^{d+3}}{|x-y|^{d-1}_+} + \frac{C (\ln |x- y|_+)^{d+2}}{|x-y|^{d}_+} \\
     & \leq \frac{C(\ln |x - y|_+)^{d+3}}{|x-y|^{d - 1}_+}.
\end{align*}
The proof of the estimates~\eqref{eq:boundonG2} is complete.

\medskip

\textit{Step 2. Estimating the term~\eqref{e.2pt2}-(i).}

\medskip

We first estimate the term~\eqref{e.2pt2}-(i) by using the identity~\eqref{eq:identitydstarGHW} together with an integration by parts in the second variable (i.e., we use~\eqref{identity:dandd*}) to write (N.B. since the operators $\di^*_1$ and $\di_2$ act on different variables, it can be proved that they commute)
\begin{align*}
     \eqref{e.2pt2}-(i) & = \sum_{y \in \Zd} \langle \mathcal{G}_2(y,\varphi) \cdot  ( \di^*_{1} \mathcal{H}( \cdot , y ,\varphi) , n_q)  \sin \left(2\pi(\di^* \varphi , n_q) \right) \rangle_{\mu_\beta} 
    \\ & \quad + \sum_{y \in \Zd} \langle \mathcal{G}_2(y,\varphi) \cdot  \left( \di_{2} \di^*_{1} \mathcal{W}( \cdot , y ,\varphi), n_q \right)  \sin \left(2\pi(\di^* \varphi , n_q) \right) \rangle_{\mu_\beta} \\
    &  = \sum_{y \in \Zd} \langle \mathcal{G}_2(y,\varphi) \cdot  (\di^*_{1} \mathcal{H}(\cdot , y ,\varphi) , n_q)  \sin \left(2\pi(\di^* \varphi , n_q) \right) \rangle_{\mu_\beta} 
    \\ & \quad + \sum_{y \in \Zd} \langle \di^* \mathcal{G}_2(y,\varphi) \cdot  \left( \di^*_{1} \mathcal{W}( \cdot , y ,\varphi), n_q \right)  \sin \left(2\pi(\di^* \varphi , n_q) \right) \rangle_{\mu_\beta}.
\end{align*}
From the inequalities~\eqref{eq:estverticalderivative},~\eqref{eq:estverticalderivativebis},~\eqref{eq:boundonG2}, we see that there exists a constant $C_q := C_q(d , \beta , q) < \infty$ which grows polynomially fast in $\left\| q\right\|_1$ such that
\begin{align*}
    \left| \eqref{e.2pt2}-(i) \right| & \leq \sum_{y \in \Zd} C_q \frac{\left( \ln |y - x|_+ \right)^{d+2}}{|y - x|^{d-2}_+} \frac{\left(\ln (|y|_+ + |z_q|_+)\right)^{3d+7}}{|y|^{d-1}_+ + |z_q|^{d-1}_+} \frac{1}{|y -z_q|^{d+1}_+} \\
    & \quad + \sum_{y \in \Zd} C_q \frac{\left( \ln |y - x|_+ \right)^{d+3}}{|y - x|^{d - 1}_+} \frac{\left( \ln |z_q|_+ \right)^{d+7/2}}{|z_q|^{d-1}_+} \frac{\left( \ln |y|_+ \right)^{d+7/2}}{|y|^{d-1}_+ }.
\end{align*}
We simplify the previous inequality by using the suboptimal (but sufficient for our purposes) inequality 
$$\frac{\left(\ln (|y|_+ + |z_q|_+)\right)^{3d+7}}{|y|_+^{d-1} + |z_q|_+^{d-1}} \leq C \frac{\left(\ln |z_q|_+\right)^{3d+7}}{|z_q|_+^{d-1}}.$$
 We obtain this way
\begin{align*}
     \left| \eqref{e.2pt2}-(i) \right|  & \leq C_q \sum_{y \in \Zd}  \frac{\left( \ln |y - x|_+ \right)^{d+2}}{|y - x|^{d-2}_+} \frac{\left(\ln  |z_q|_+\right)^{3d+7}}{ |z_q|^{d-1}_+} \frac{1}{|y -z_q|^{d+1}_+} \\
     & \qquad + C_q \sum_{y \in \Zd} \frac{\left( \ln |y - x|_+ \right)^{d+3}}{|y - x|^{d - 1}_+} \frac{\left( \ln |z_q|_+ \right)^{d+7/2}}{|z_q|^{d-1}_+} \frac{\left( \ln |y|_+ \right)^{d+7/2}}{|y|^{d-1}_+ }.
\end{align*}
We next estimate the two terms on the right-hand side. For the first one, we have
\begin{align*}
\sum_{y \in \Zd}\frac{\left( \ln |y - x|_+ \right)^{d+2}}{|y - x|^{d-2}_+} \frac{\left(\ln  |z_q|_+\right)^{3d+7}}{ |z_q|^{d-1}_+}  \frac{1}{|y -z_q|^{d+1}_+}  & = \frac{\left(\ln  |z_q|_+\right)^{3d+7}}{ |z_q|^{d-1}_+}  \sum_{y \in \Zd} \frac{\left( \ln |y - x|_+ \right)^{d+2}}{|y - x|^{d-2}_+} \frac{1}{|y -z_q|^{d+1}_+}  \\
& \leq C \frac{\left(\ln  |z_q|_+\right)^{3d+7}}{ |z_q|^{d-1}_+} \frac{\left( \ln |x- z_q|_+ \right)^{d+2}}{|x - z_q|^{d-2}_+}.
\end{align*}
For the second one, we have
\begin{align*}
     \sum_{y \in \Zd} \frac{\left( \ln |y - x|_+ \right)^{d+3}}{|y - x|^{d - 1}_+} \frac{\left( \ln |z_q|_+ \right)^{d+7/2}}{|z_q|^{d-1}_+} \frac{\left( \ln |y|_+ \right)^{d+7/2}}{|y|^{d-1}_+ } & = \frac{\left( \ln |z_q|_+ \right)^{d+7/2}}{|z_q|^{d-1}_+}  \sum_{y \in \Zd}  \frac{\left( \ln |y - x|_+ \right)^{d+3}}{|y - x|^{d - 1}_+}  \frac{\left( \ln |y|_+ \right)^{d+7/2}}{|y|^{d-1}_+ } 
     \\ & \leq  C \frac{\left( \ln |z_q|_+ \right)^{d+7/2}}{|z_q|^{d-1}_+} \frac{\left( \ln |x|_+ \right)^{2d+13/2}}{|x|^{d-2}_+} .
\end{align*}
A combination of the three previous displays imply the upper bound
\begin{align} \label{eq:estimate57i}
    \left| \eqref{e.2pt2}-(i) \right| & \leq  C_q \frac{\left(\ln  |z_q|_+\right)^{3d+7}}{ |z_q|^{d-1}_+} \frac{\left( \ln |x - z_q|_+ \right)^{d+2}}{|x - z_q|^{d-2}_+} +  C_q \frac{\left( \ln |z_q|_+ \right)^{d+7/2}}{|z_q|^{d-1}_+} \frac{\left( \ln |x|_+ \right)^{2d+13/2}}{|x|^{d-2}_+}.
\end{align}

\medskip

\textit{Step 3. Estimating the term~\eqref{e.2pt2}-(ii).}

\medskip

This term is simpler to estimate. Using the inequalities~\eqref{eq:boundonG0} and \eqref{eq:boundonG2}, we have
\begin{equation} \label{eq:estimate57ii}
    \left| \eqref{e.2pt2}-(ii) \right| = \left| \langle  \left( \di^* \mathcal{G}_2(\cdot,\varphi), n_q \right) (\di^* \mathcal{G}_0(\cdot ,\varphi) , n_q)  \sin \left(2\pi(\di^* \varphi , n_q) \right) \rangle_{\mu_\beta} \right| \leq C_q \frac{\left( \ln |x - z_q| \right)^{d+3}}{|x - z_q|_+^{d-1}} \frac{\left( \ln |z_q| \right)^{d+3}}{ |z_q|_+^{d-1}}.
\end{equation}
A combination of~\eqref{eq:estimate57i} and~\eqref{eq:estimate57ii} completes the proof of the inequality~\eqref{ineq:maintechnical}.

\section{Bounding the vertical derivative using the second-order Helffer-Sj\"{o}strand equation} \label{sec:newsec4}

This section is devoted to the proof of the identity~\eqref{eq:identitydstarGHW} and the inequalities~\eqref{eq:estverticalderivative} and~\eqref{eq:estverticalderivativebis}. Let us first recall the statement, we want to show that 
the following identity
\begin{equation*} 
    \partial_{y_2} \mathcal{G}_0(y_1 ,\varphi) = \mathcal{H}(y_1 , y_2 , \varphi) + \di_{2} \mathcal{W} (y_1 , y_2 , \varphi)
\end{equation*}
together with the inequalities
\begin{equation*}
    \left\| \di^*_{1} \mathcal{H}(y_1 , y_2 , \cdot)  \right\|_{L^2\left(\Omega \, ; \, \R^{d \times \binom d2} \right)} \leq  \frac{\left(\ln (|y_1|_+ + |y_2|_+) \right)^{3d+7} }{|y_1|_+^{d-1} + |y_2|_+^{d-1}}\frac{1}{|y_1 -y_2|_+^{d + 1}}
\end{equation*}
and
\begin{equation*} 
    \left\| \di^*_{1} \mathcal{W}(y_1 , y_2 , \cdot)  \right\|_{L^2\left(\Omega \, ; \, \R^{d \times d} \right)} \leq    C \frac{(\ln |y_1|_+)^{d + 7/2}}{|y_1|_+^{d-1}} \times \frac{(\ln |y_2|_+)^{d + 7/2}}{|y_2|_+^{d-1}}.
\end{equation*}
In order to establish this result, we first present two additional tools pertaining to the Helffer-Sj\"{o}strand equation: a dynamical interpretation of the Helffer-Sj\"{o}strand equation, and the second-order Helffer-Sj\"{o}strand equation. Once equipped with these two tools, the strategy of the proof is straightforward but fairly technical to implement: the idea is to note that the map $(y_1 , y_2 , \varphi) \mapsto \partial_{y_2} \mathcal{G}_0(y_1 ,\varphi)$ solves a second-order Helffer-Sj\"{o}strand equation, and to analyse this function using the results of Section~\ref{sec:additionaltools}.

\subsection{Two additional tools} \label{sec:additionaltools}

\subsubsection{Dynamical interpretation of the Helffer-Sj\"{o}strand equation} \label{sec:sec264dynamic} 
Beside the definition of the Helffer-Sj\"{o}strand operator, there is a second (more dynamical) approach to the Helffer-Sj\"ostrand equation which originally appeared in~\cite{NS} and is one of the key tools of~\cite{GOS} to identify the scaling limit of the model. We provide in this section a brief account of the approach by stating the main definitions and result, and refer to~\cite{GOS} for the proofs. We note that the results of this section are used in Section~\ref{sec:newsec4} (and specifically in Section~\ref{subsecB23}).

We first introduce the Langevin dynamic associated with the measure $\mu_\beta$. 

\begin{definition}[Langevin dynamic]
    Let $\left\{ B_t(x) \, : \, t \geq 0, \, x \in \Zd \right\}$ be a collection of independent Brownian motions valued in $\R^{{d \choose 2}}$ and let $\varphi \in \Omega$ be an initial condition. We define the Langevin dynamic to be the solution of the system of stochastic differential equations
 \begin{equation*}
 \left\{ \begin{aligned}
 d\varphi_t(x) 
& = -\sum_{n\geq 0} \frac{1}{2\beta} \frac{1}{\beta^{n/2}}(-\Delta)^{n+1} \varphi_t(x) dt -\sum_{q \in \mathcal{Q}} 2\pi z(\beta, q) \sin{ \left( 2\pi \left( q, \varphi_t \right) \right)} q(x) \,dt + \sqrt{2} \,dB_t(x) & \mbox{in} ~ (0 , \infty) \times \Zd, \\
\varphi_0(x) & = \varphi(x) & \mbox{in} ~ \Zd.
\end{aligned} \right.
\end{equation*}
Given an initial condition $\varphi \in \Omega$, we denote by $\mathbb{P}_{\varphi}$ the law of the Langevin dynamic started from $\varphi \in \Omega$ and by $\E_\varphi$ the corresponding expectation.
\end{definition}

\begin{remark}
    Let us make two remarks about this definition:
    \begin{itemize}
    \item The existence and uniqueness of a solution to the system of stochastic differential equations when the initial condition $\varphi \in \Omega$ satisfies the assumption $\sum_{x \in \Zd} \left| \varphi(x) \right|^2 e^{- |x|} < \infty$ (which holds $\mu_\beta$ almost surely) are guaranteed by standard arguments (see, e.g.,~\cite[Section 2.1.3]{GOS} and~\cite[Section 2.2]{FS}).
    \item The Langevin dynamic is stationary and reversible with respect to the measure $\mu_\beta$.
    \item Given an exponent $p \in [1, \infty)$ and a function $F$ which depends on the Langevin dynamic, i.e., $F : \Zd \times C([0, \infty], \R) \to \R^k$ where $C([0, \infty], \R)$ is the set of continuous functions defined on $[0, \infty]$ and valued in $\R$ (N.B. this will typically be used with the heat kernel and its codifferential defined below), we denote by
    \begin{equation*}
        \left\| F \right\|_{L^p(\Omega \, ; \, \R^k)} := \left| \left\langle \E_\varphi \left[ \left| F \right|^p \right] \right\rangle_{\mu_\beta} \right|^{1/p}
    \end{equation*}
    the $L^p$-norm with respect to the Langevin dynamic when the initial condition is sampled according to the Gibbs measure $\mu_\beta$.
    \end{itemize}
\end{remark}

The Langevin dynamic can be used to solve the Helffer-Sj\"{o}strand equation. Specifically, we have the following proposition for which we refer to~\cite{NS, GOS}.

\begin{proposition}[Dynamical formula for the Helffer-Sj\"{o}strand Green's matrix] \label{prop:prop2.26}
   For any $\mathbf{f} \in L^2(\Omega \, ; \, \R )$, we have the identity, for $(x , y , \varphi) \in \Zd \times \Zd \times \Omega$,
    \begin{equation*} 
        \mathcal{G}_{\mathbf{f}}(x , \varphi ; y) := \E_{\varphi} \left[ \int_0^\infty \mathbf{f}(\varphi_t) ( P^{\varphi_\cdot} (t , y ; x))^T \, dt \right],
    \end{equation*}
    where, given a realisation of the Langevin dynamic $(\varphi_t)_{t \geq 0}$, $P^{\varphi_\cdot} : (0 , \infty) \times \Zd \times \Zd \to \R^{\binom d2 \times \binom d2}$ is the solution of the parabolic system of equations
    \begin{equation} \label{eq:eqdefheqtkernel}
        \left\{ \begin{aligned}
        \partial_t P^{\varphi_\cdot} (\cdot , \cdot ; y) + \mathcal{L}_{\mathrm{spat}}^{\varphi_\cdot}  P^{\varphi_\cdot}(\cdot , \cdot ; y) & =0 ~\mbox{in}~ (0 , \infty) \times \Zd , \\
        P^{\varphi_\cdot} \left(0,\cdot ; y \right) & = \delta_{y} ~\mbox{in}~ \Zd,
        \end{aligned}
        \right.
    \end{equation}
    and $( P^{\varphi_\cdot} (t , y ; x))^T$ is the transposed of the matrix $ P^{\varphi_\cdot} (t , y ; x)$.
\end{proposition}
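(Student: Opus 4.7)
The plan is to set $\mathcal{U}(x,\varphi) := \E_{\varphi}[\int_{0}^{\infty} \mathbf{f}(\varphi_{t})(P^{\varphi_{\cdot}}(t,y;x))^{T}\,dt]$ and verify the two properties that characterize the unique Helffer-Sj\"{o}strand Green's matrix from the preceding solvability proposition: that $\mathcal{U} \in L^{2}(\Zd \times \Omega)$ and that $\mathcal{L}\,\mathcal{U} = \mathbf{f}\,\delta_{y}$ in the weak sense of Definition~\ref{def.solHSeq}. Together with uniqueness this forces $\mathcal{U} = \mathcal{G}_{\mathbf{f}}(\cdot;y)$.

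For the $L^{2}$-bound, I would invoke an annealed off-diagonal Nash-Aronson estimate
$\|P^{\varphi_{\cdot}}(t,y;x)\|_{L^{2}(\Omega)} \leq C t^{-d/2} e^{-c|x-y|^{2}/t}$
for the inhomogeneous parabolic system $\partial_{t} + \mathcal{L}_{\mathrm{spat}}^{\varphi_{\cdot}}$. For $\beta \gg 1$ this is available thanks to the small ellipticity contrast of $\mathcal{L}_{\mathrm{spat}}^{\varphi}$ combined with the Schauder / Calder\'{o}n--Zygmund arguments used in Appendix~\ref{app.CZreg} (the same inputs that yield Proposition~\ref{prop.prop4.11chap4}). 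Integrating in $t$ gives $\|\mathcal{U}(x,\cdot;y)\|_{L^{2}(\Omega)} \leq C|x-y|_{+}^{-(d-2)}\|\mathbf{f}\|_{L^{2}(\Omega)}$, which is square-summable in $x$ for $d \geq 3$ after an initial reduction to bounded $\mathbf{f}$ by density.

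For the equation, combining the semigroup property of the inhomogeneous heat kernel,
\[
P^{\varphi_{\cdot}}(t,y;x) = \sum_{z} P^{\varphi_{\cdot}}(s,y;z)\,P^{\tau_{s}\varphi_{\cdot}}(t-s,z;x), \qquad 0 \leq s \leq t,
\]
with the Markov property of the Langevin dynamics at time $s$, the variable $s$ drops out of $\mathcal{U}$ and one obtains the functional identity
\[
\mathcal{U}(x,\varphi;y) = \E_{\varphi}\!\left[\int_{0}^{s}\mathbf{f}(\varphi_{t})P^{\varphi_{\cdot}}(t,y;x)^{T}\,dt\right] + \E_{\varphi}\!\left[\sum_{z} \mathcal{U}(x,\varphi_{s};z)\,P^{\varphi_{\cdot}}(s,y;z)^{T}\right].
\]
Differentiating at $s = 0$ and using that (i) $\Delta_{\varphi}$ is the generator of the Langevin dynamics so $\tfrac{d}{ds}|_{s=0}\E_{\varphi}[G(\varphi_{s})] = \Delta_{\varphi}G(\varphi)$, (ii) $\partial_{s}P^{\varphi_{\cdot}}(s,y;z) = -\mathcal{L}_{\mathrm{spat}}^{\varphi_{s}}P^{\varphi_{\cdot}}(s,y;z)$ with $P^{\varphi_{\cdot}}(0,y;z) = \delta_{y}(z)\mathrm{Id}$, and (iii) the self-adjointness of $\mathcal{L}_{\mathrm{spat}}^{\varphi}$ in the spatial variable (which, via the transpose in the integrand, converts the spatial operator acting on the second argument of the heat kernel into the spatial operator acting on the first argument of $\mathcal{U}$), the It\^{o}/product-rule computation collapses to
\[
0 = \mathbf{f}(\varphi)\,\delta_{y}(x) + \Delta_{\varphi}\mathcal{U}(x,\varphi;y) - \mathcal{L}_{\mathrm{spat}}^{\varphi}\mathcal{U}(\cdot,\varphi;y)\big|_{x},
\]
i.e.\ $\mathcal{L}\mathcal{U} = \mathbf{f}\delta_{y}$ in $\Zd \times \Omega$, as required.

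The main obstacle is the rigorous justification of this differentiation at $s = 0$: we are dealing with infinite-dimensional It\^{o} calculus on the non-smooth functional $\mathcal{U}$ coupled with a time-inhomogeneous random matrix-valued heat kernel, and interchanging $\tfrac{d}{ds}|_{s=0}$ with $\E_{\varphi}$ and with $\sum_{z}$ requires care. I would proceed by first establishing the identity for the finite-volume analogue of the dynamics on $\Lambda_{L}$ with zero boundary condition, where all the computations above reduce to classical finite-dimensional Feynman-Kac arguments exactly as in~\cite{NS, GOS}, and then passing to the limit $L \to \infty$ using the uniform $L^{2}$-bound from the first step together with the convergence $\mu_{L,\beta} \to \mu_{\beta}$ of Proposition~\ref{p.mubeta}.
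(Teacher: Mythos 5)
The paper does not prove this proposition itself; it cites~\cite{NS, GOS}, and your strategy (identify $\mathcal{U}$ as the unique solution via the solvability proposition, use a quenched Nash--Aronson estimate for the $L^2$ bound, verify $\mathcal{L}\mathcal{U} = \mathbf{f}\delta_y$ by Markov/semigroup differentiation at $s=0$, rigorize via finite volume and a limit) is precisely the standard Feynman--Kac argument of those references, adapted to the present vector-valued, infinite-range setting with small ellipticity contrast. So the overall route is right, and you have correctly isolated the delicate point, namely the justification of differentiating at $s=0$.

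One detail is off and worth fixing, because as written it propagates into the final step. Your Chapman--Kolmogorov identity
$P^{\varphi_\cdot}(t,y;x) = \sum_z P^{\varphi_\cdot}(s,y;z)\,P^{\tau_s\varphi_\cdot}(t-s,z;x)$
has the time ordering reversed: the right-hand side first propagates from $x$ using the coefficients $(\varphi_r)_{s\le r\le t}$ and then from $z$ using $(\varphi_r)_{0\le r\le s}$. The correct flow property is
$P^{\varphi_\cdot}(t,y;x) = \sum_z P^{\tau_s\varphi_\cdot}(t-s,y;z)\,P^{\varphi_\cdot}(s,z;x)$,
and after transposing and applying the Markov property at time $s$ one gets
$\mathcal{U}(x,\varphi;y) = \E_\varphi\bigl[\int_0^s \mathbf{f}(\varphi_t)(P^{\varphi_\cdot}(t,y;x))^T\,dt\bigr] + \E_\varphi\bigl[\sum_z (P^{\varphi_\cdot}(s,z;x))^T\,\mathcal{U}(z,\varphi_s;y)\bigr]$,
with the heat-kernel factor carrying the base point $x$, not $y$. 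With this corrected version, differentiating at $s=0$ and using the self-adjointness of $\mathcal{L}_{\mathrm{spat}}^\varphi$ produces $-\mathcal{L}_{\mathrm{spat}}^\varphi$ acting on the \emph{first} spatial slot of $\mathcal{U}$, which is what the Helffer--Sj\"ostrand equation requires; with your version one would instead obtain the equation in the $y$ variable and would need an extra symmetry argument. A second, purely cosmetic point: the Gaussian tail $e^{-c|x-y|^2/t}$ you invoke is stronger than what is actually available on the lattice and than what the paper states in Proposition~\ref{prop.prop4.11chap4HK}; the Carlen--Kusuoka--Stroock form $\exp(-|x-y|/(C\sqrt{t_+}))$ is what holds and is enough for the $L^2$ bound.
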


\begin{remark} \label{rem:reversinghtetime}
For later purposes, let us note that:
\begin{itemize}
    \item If, given a realisation of the Langevin dynamic $(\varphi_t)_{t \geq 0}$ and a time $t \in (0 , \infty)$, we introduce the time reversed dynamic
$$(\bar \varphi^t_s)_{0 \leq s \leq t} : = (\varphi_{t-s})_{0 \leq s \leq t}$$
then we have the identity, for any $x , y \in \Zd$
\begin{equation*}
    P^{\varphi_\cdot} (t , y ; x) = (P^{\bar \varphi_\cdot^t} (t , x ; y) )^T
\end{equation*}
where the right-hand side is the transposed matrix (N.B. Let us remark that the heat kernel at time $t$ on the right-hand side can be rigorously defined even though the dynamic $\bar \varphi^t$ has only been defined for the times $s \in [0,t]$).
    \item Using a computation similar to the one of Lemma~\ref{lemma.lemma2.5}, one can show that
    \begin{equation} \label{eq:PisLipschitz}
        \left| \partial_t P^{\varphi_\cdot}(t , x , y)  \right| = \left| \mathcal{L}_{\mathrm{spat}}^{\varphi_\cdot}  P^{\varphi_\cdot}(t , x , y)  \right| \leq C \sum_{z \in \Zd} e^{-c (\ln \beta) |z - x|} \left| \nabla P^{\varphi_\cdot}(t , z , y) \right|.
    \end{equation}
    (N.B. There are two contributions in this upper bound: a term coming from the iteration of the Laplacian yielding a decay of the form $e^{- c (\ln \beta) |x - y|}$ and another one coming from the charges which would yield a faster decay of the form $e^{- c \sqrt{\beta} |x - y|}$).
\end{itemize}
\end{remark}

The following statement quantifies the decay of the heat kernel and its codifferential. The first inequality is the celebrated Nash-Aronson estimate and the second and third ones are regularity estimates. We recall the notation $t_+ := (t+1)$, refer to~\cite[Proposition 3.10]{DW} for a proof of the inequality~\eqref{eq:estimateheat} and to Appendix~\ref{app.CZreg} for a proof of~\eqref{eq:decaygradheat} and~\eqref{eq:decaygradgradheat}.

\begin{proposition}[Upper bound on the heat kernel and its codifferential] \label{prop.prop4.11chap4HK}
For any exponent $p \in (1 , \infty)$, there exists an inverse temperature $\beta_1 (d , p) < \infty$ such that for any $\beta > \beta_1$ the following result holds. There exists a constant $C(d, \beta, p ) < \infty$ such that, for any realisation of the Langevin dynamic $(\varphi_t)_{t \geq 0}$ and any $x,y \in \Zd$,
\begin{equation} \label{eq:estimateheat}
\left| P^{\varphi_\cdot} (t , x ; y) \right| \leq \frac{C}{t_+^{d/2}} \exp \left( - \frac{|x - y|}{C \sqrt{t_+}}\right).
\end{equation}
Additionally, for any $x , y \in \Zd$,
\begin{equation} \label{eq:decaygradheat}
 \left\| \di^*_1 P^{\cdot} (t , x ; y) \right\|_{L^p \left(\Omega \, ; \, \R^{d \times {d \choose 2}} \right)} + \left\| \di^*_2 P^{\cdot} (t , x ; y) \right\|_{L^p\left(\Omega \, ; \, \R^{{d \choose 2} \times d}\right)} \leq \frac{C \left( \ln t_+ \right)^{d+2}}{t_+^{d/2 + 1/2}} \exp \left( - \frac{|x - y|}{C \sqrt{t_+}}\right)
\end{equation}
and
\begin{equation}  \label{eq:decaygradgradheat}
    \left\| \di^*_1 \di^*_2 P^{\cdot} (t , x ; y) \right\|_{L^p(\Omega \, ; \, \R^{d \times d })} \leq \frac{C \left( \ln t_+ \right)^{d+2} }{t_+^{d/2 + 1}} \exp \left( - \frac{|x - y|}{C \sqrt{t_+}}\right).
\end{equation}
\end{proposition}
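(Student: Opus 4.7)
The Nash--Aronson estimate~\eqref{eq:estimateheat} is already proved in~\cite[Proposition 3.10]{DW}, so the task I would focus on is the regularity estimates~\eqref{eq:decaygradheat} and~\eqref{eq:decaygradgradheat}. The guiding principle is that, for $\beta \gg 1$, the spatial operator admits the splitting
\begin{equation*}
    \mathcal{L}_{\mathrm{spat}}^{\varphi_\cdot} = -\frac{1}{2\beta}\Delta + \mathcal{L}_{\mathrm{pert}}^{\varphi_\cdot},
\end{equation*}
in which $\mathcal{L}_{\mathrm{pert}}^{\varphi_\cdot}$ (gathering the iterated Laplacians $(-\Delta)^{n+1}$ and the charge-induced sums) is perturbative of typical size $\beta^{-3/2}$ relative to the principal part. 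The plan is to combine Calder\'on--Zygmund theory on the explicit constant-coefficient heat kernel of $-\frac{1}{2\beta}\Delta$ with the annealed regularity strategy of Delmotte--Deuschel~\cite{DD05}, adapted to the present infinite-range, system-valued setting along the lines previewed just before the statement of the proposition.

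Concretely, I would start from the Duhamel identity
\begin{equation*}
    P^{\varphi_\cdot}(t,x;y) = \bar P_t(x-y) - \int_0^t \sum_{z \in \Zd} \bar P_{t-s}(x-z)\, \mathcal{L}_{\mathrm{pert}}^{\varphi_s} P^{\varphi_\cdot}(s,z;y)\, ds,
\end{equation*}
where $\bar P$ is the heat kernel of $-\frac{1}{2\beta}\Delta$. One or two discrete codifferentials can be moved onto $\bar P$, and the pointwise Gaussian bounds for the derivatives of $\bar P$ then yield the clean factor $t_+^{-(d+k)/2}\exp(-|x-y|/(C\sqrt{t_+}))$ for $k \in \{1,2\}$ derivatives. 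The residual task, and the main content of the argument, is to control the source $\mathcal{L}_{\mathrm{pert}}^{\varphi_s} P^{\varphi_\cdot}(s,\cdot;y)$ in the annealed $L^p$ norm. Here the Calder\'on--Zygmund estimate for the parabolic singular integral associated with $-\tfrac{1}{2\beta}\Delta$ enters: for any fixed $p \in (1,\infty)$, choosing $\beta$ sufficiently large makes the associated Calder\'on--Zygmund constant arbitrarily close to the constant-coefficient one, so that $L^p$ bounds on the codifferentials of $P^{\varphi_\cdot}$ can be bootstrapped from the $L^p$ bound on $P^{\varphi_\cdot}$ itself already provided by~\eqref{eq:estimateheat}. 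Iterating across dyadic time scales and collecting the sub-Gaussian off-diagonal factors would produce the logarithmic loss $(\ln t_+)^{d+2}$.

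The main obstacle is that the classical parabolic Calder\'on--Zygmund machinery is developed for scalar equations with finite-range operators, whereas we are dealing simultaneously with a \emph{system} of equations and an \emph{infinite-range} operator. The first difficulty is circumvented by working from the outset in the annealed $L^p$ framework: pointwise tools such as the maximum principle or De Giorgi--Nash--Moser estimates fail for systems, but the Calder\'on--Zygmund estimates, being $L^p$ in nature, survive and pair naturally with the Delmotte--Deuschel annealed approach, where the randomness from both $\mu_\beta$ and the Langevin dynamic is absorbed into the norm. The second difficulty is handled by exploiting the exponential decay in range of every contribution to $\mathcal{L}_{\mathrm{pert}}^{\varphi_\cdot}$ (at rates $c\ln\beta$ for the iterated Laplacians and $c\sqrt\beta$ for the charges, cf.~\eqref{eq:PisLipschitz}), which comfortably suffices to verify the H\"ormander-type kernel hypotheses underlying the Calder\'on--Zygmund bound. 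I expect that the verification of these hypotheses uniformly in $\varphi$, together with the annealed bootstrapping step, will be the most technical part, which is why these details are deferred to Appendix~\ref{app.CZreg}.
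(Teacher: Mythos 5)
Your proposal correctly identifies the governing philosophy (small ellipticity contrast plus Calder\'on--Zygmund plus Delmotte--Deuschel), but the route you sketch is genuinely different from the one in the appendix, and I think the paper's choice is worth contrasting.

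What the appendix actually does (Propositions~\ref{prop:Caccioppoli}--\ref{prop:CalderonZygmund} and Section~\ref{sec:DDannealedregsec}) is the \emph{local caloric approximation} method rather than a global Duhamel expansion. On each parabolic cylinder $Q_{2r}(t,x)$ the heat kernel is compared to the solution $\bar u$ of the constant-coefficient heat equation with matching boundary data (cf.~\eqref{eq:defbaruheat}); an energy estimate on $w = P^{\varphi_\cdot} - \bar u$ shows that $\|\nabla w\|_{\underline{L}^2(Q_{2r})} \leq \delta \|\nabla P^{\varphi_\cdot}\|_{\underline{L}^2(Q_{2r})} + (\text{tail error})$ once $\beta$ is large. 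The abstract lemma (Proposition~\ref{prop:CalderonZygmundLpestimate}, in the style of Armstrong--Kuusi--Mourrat) then upgrades this $L^2$ comparability on all scales into an $\underline{L}^p$ bound on $\nabla P^{\varphi_\cdot}$, and crucially this bound is \emph{deterministic}: it holds for every realisation of the dynamic (Proposition~\ref{prop:CalderonZygmund}). This avoids verifying H\"ormander-type kernel hypotheses for an infinite-range, system-valued singular integral, and it avoids the a priori-integrability issues a Duhamel bootstrap would face (to absorb the perturbation you would already need to know $\|\nabla P\|_{L^p}<\infty$ with the right $t$-scaling).

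Your proposal also elides the key structural decision in the paper: the deterministic Calder\'on--Zygmund step only delivers a \emph{one-sided} gradient estimate, in an integrated (over space-time) $L^p$ sense, not the pointwise-in-$(t,x,y)$ annealed bound asserted in the proposition. The conversion to the stated form, and the upgrade from one codifferential to the mixed codifferential, are both done in Section~\ref{sec:DDannealedregsec} via the semigroup identity $\di^*_1\di^*_2 P^{\varphi_\cdot}(t,x;0) = \sum_y \di^*_1 P^{\varphi_\cdot}(t,x;s,y)\,\di^*_2 P^{\varphi_\cdot}(s,y;0)$, Cauchy--Schwarz, Jensen's inequality, and the space- and time-stationarity of the Langevin dynamic. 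You mention Delmotte--Deuschel, but the concrete mechanism (Chapman--Kolmogorov splitting at an intermediate time, time reversal, and stationarity to collapse the $y$-sum) is the content that makes the mixed-derivative estimate fall out of the single-derivative one; your plan of hitting $\di^*_1\di^*_2$ directly with a parabolic $W^{2,p}$ singular-integral estimate would require a separate and more delicate argument. Finally, the logarithmic loss $(\ln t_+)^{d+2}$ in the paper comes from a spatial coarsening at scale $\ln t_+$ introduced in Step 2 of Proposition~\ref{prop:CalderonZygmund} to tame the infinite range of $\mathcal{L}^{\varphi_\cdot}_{\mathrm{spat}}$, not from dyadic-time iteration as you suggest; this is worth getting right, since the exponent propagates all the way into Theorem~\ref{th:mainth}.
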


\begin{remark}
    Proposition~\ref{prop.prop4.11chap4} is a direct consequence of Proposition~\ref{prop:prop2.26} and Proposition~\ref{prop.prop4.11chap4HK}.
\end{remark}

\subsubsection{The second order Helffer-Sj{\"o}strand  equation} \label{sec:secsecondorderHS}

The final ingredient which we will need in this article is the second-order Helffer-Sj\"{o}strand equation (following Conlon  and Spencer~\cite{CS14}). The motivation underlying the introduction of this second equation is as follows: in the proof of Theorem ~\ref{th:mainth}, it will be needed to estimate a covariance of the form 
\begin{equation} \label{eq:covdoubleHS}
    \cov_{\mu_\beta} \left[ \mathcal{U}_i(x , \cdot) , \mathbf{f} \right],
\end{equation}
where:
\begin{itemize}
    \item $x$ is a vertex of $\Zd$ and $i \in \{1 , \ldots, {d \choose 2}\}$ is an index;
    \item $\mathbf{f}: \Omega \to \R$ is an explicit function;
    \item $\mathcal{U} : \Zd \times \Omega \to \R^{d \choose 2}$ is the solution of the Hellfer-Sj\"{o}strand equation $\mathcal{L}\mathcal{U} = G$ in $\Zd \times \Omega$ for an explicit function $G : \Zd \times \Omega \to \R^{d \choose 2}$.
\end{itemize}
To estimate the covariance~\eqref{eq:covdoubleHS}, we will use the Helffer-Sj\"{o}strand representation formula (Proposition~\ref{prop:HSrepresetnation}), which yields the identity
\begin{equation*}
    \cov_{\mu_\beta} \left[ \mathcal{U}_i(x , \cdot) , \mathbf{f} \right] = \sum_{y \in \Zd} \left\langle \partial_y \mathcal{U}_i(x , \cdot) \cdot \mathcal{H}(x , \cdot) \right\rangle_{\mu_\beta} ~~\mbox{with}~~ \mathcal{L} \mathcal{H} = \partial \mathbf{f} ~\mbox{in}~ \Zd \times \Omega.
\end{equation*}
(N.B. On the right-hand side, both functions are valued in $\R^{\binom d2}$ and the dot represents the Euclidean scalar product on this space). We will thus have to study the properties of the function
$$\partial \mathcal{U} : (x , y , \varphi) \to \partial_y \mathcal{U}(x , \varphi) \in \R^{\binom d2 \times \binom d2}.$$
The strategy is then to find an equation satisfied by the map $\partial \mathcal{U}$. In this direction, we may apply (formally) the operator $\partial_{x}$ to both the left- and right-hand sides of the identity $\mathcal{L} \mathcal{U} = G$. We obtain the identity
\begin{equation} \label{eq:derivHS1}
 \partial_y \mathcal{L} \mathcal{U} (x , \varphi) = \partial_y G (x , \varphi)  ~\mbox{for}~ (x , y , \varphi) \in \Zd \times \Zd \times \Omega 
\end{equation}
(N.B. Both sides of this identity are valued in $\R^{{d \choose 2} \times {d \choose 2}}$).

Following the insight of Conlon and Spencer~\cite{CS14}, one may deduce from~\eqref{eq:derivHS1} that the map $\partial \mathcal{U}$ is the solution of a second-order Helffer-Sj\"{o}strand equation defined as follows
\begin{equation*}
    \mathcal{L}_{\mathrm{sec}} \partial \mathcal{U} (x , y , \varphi) = \partial_y G(x , \varphi) +  8\pi^3 z\left( \beta , q \right) \sin \left(2\pi\left( \varphi , q \right)\right) \left( \di^* \mathcal{U}(\cdot , \varphi) , n_q \right) \left( q \otimes q\right) (x, y) ~~\mbox{for}~~ (x , y , \varphi) \in \Zd \times \Zd \times \Omega
\end{equation*}
where:
\begin{itemize}
    \item $\mathcal{L}_{\mathrm{sec}}$ is an operator, called the second-order Helffer-Sj\"{o}strand operator, which acts on functions defined on $\Zd \times \Zd \times \Omega$ and valued in $\R^{{d \choose 2} \times {d \choose 2}}$. This operator takes the following form
    \begin{equation*}
\mathcal{L}_{\mathrm{sec}} := -\Delta_{\varphi} + \mathcal{L}_{\mathrm{spat}, x }^{\varphi} + \mathcal{L}_{\mathrm{spat}, y}^{\varphi}
\end{equation*}
    where $\Delta_{\varphi}$ is the Laplacian defined in~\eqref{eq:TV13000102} and $\mathcal{L}_{\mathrm{spat}, x }$ and $\mathcal{L}_{\mathrm{spat}, y}$ are
    extensions of the operator $\mathcal{L}_{\mathrm{spat}}^{\varphi}$ to functions $\mathcal{H} : \Zd \times \Zd \times \Omega \to \R^{{d \choose 2} \times {d \choose 2}}$. Specifically given such a function, we define the function $\mathcal{L}_{\mathrm{spat}, x }^{\varphi} \mathcal{H} : \Zd \times \Zd \times \Omega \to \R^{{d \choose 2} \times {d \choose 2}}$ as follows:
        \begin{itemize}
            \item  We first set a convention: we will use two indices $i , j \in \left\{ 1 , \ldots, {d \choose 2}\right\}$ to refer to the components of a vector of $\R^{{d \choose 2} \times {d \choose 2}}$.
            \item We next fix a vertex $y \in \Zd$, an index $j \in \left\{ 1 , \ldots, {d \choose 2}\right\}$ and consider the function $\mathcal{H}_{\cdot j} (\cdot , y , \cdot) : \Zd \times \Omega \to \R^{{d \choose 2}}$. We will denote this function by $\mathcal{H}_{\cdot j}^y$ below.
            \item We may then apply the operator $\mathcal{L}_{\mathrm{spat}}^{\varphi}$ to the function $\mathcal{H}_{\cdot j}^y$ to obtain the function $\mathcal{L}_{\mathrm{spat}, x }^{\varphi} \mathcal{H}$. More precisely, for $i , j \in \left\{ 1 , \ldots, {d \choose 2}\right\}$, we define
            \begin{equation*}
                \left( \mathcal{L}_{\mathrm{spat}, x }^{\varphi} \mathcal{H} (x , y , \varphi)\right)_{ij} :=  \left( \mathcal{L}_{\mathrm{spat}}^{\varphi} \mathcal{H}_{\cdot j}^y(x , \varphi)\right)_{i}.
            \end{equation*}
        \end{itemize}
        For the operator $\mathcal{L}_{\mathrm{spat}, x }^{\varphi}$ we proceed similarly but freeze the first index $i \in \{ 1 , \ldots , {d \choose 2}\}$ and the variable~$x$.
    \item We note that the operators $\mathcal{L}_{\mathrm{spat}, x }^{\varphi}$ and $\mathcal{L}_{\mathrm{spat}, y}^{\varphi}$ commute (because they act on distinct variables and distinct coordinates). This property will play an important role in Proposition~\ref{prop:dynamicalsecondorderHS} below.
    \item The two terms in the sum on the right-hand side are both valued in $\R^{{d \choose 2} \times {d \choose 2}}$: the partial derivative $\partial G$ is defined in Section~\ref{sec:defWitten} and the tensor product $(q \otimes q)$ is defined in~\eqref{def:tensorproduct}.
\end{itemize}

The existence and uniqueness of solutions of the second-order Helffer-Sj\"{o}strand equation can be obtained as a consequence of the Lax-Milgram Theorem, using the same techniques as the ones (briefly) discussed in Section~\ref{sec:wittenetHS}.

As it was the case for the Helffer-Sj\"{o}strand operator, we will be interested in the fundamental solution (or Green's matrix) associated with the second-order Helffer-Sj\"{o}strand operator. It is introduced in the following definition.

\begin{definition}[Green's matrix for the second-order Helffer-Sj\"{o}strand equation]
For any $( x_1, y_1) \in \Zd \times \Zd$, we let $\delta_{(x_1 , y_1)} : \Zd \times \Zd \to \R^{\binom d2^2 \times \binom d2^2}$ be the discrete Dirac mass defined by the formula
$$\delta_{(x_1,y_1)}((x,y)) := \left( \indc_{\{(x, y) = (x_1,y_1)\}} \indc_{\{(i,j)=(k,l)\}} \right)_{1 \leq i, j , k , l \leq \binom d2} \in \R^{{d \choose 2}^2 \times {d \choose 2}^2}.$$
For any function $\f : \Omega \to \R$ satisfying $\f \in L^2 \left( \Omega \, ; \, \R \right)$ and any $(x_1 , y_1) \in \Zd \times \Zd$, we let 
$$\G_{\mathrm{sec},\f}(\cdot ; x_1 , y_1 ):= \left( \G_{\mathrm{sec}, \f, ijkl}(\cdot ; x_1 , y_1) \right)_{1\leq i,j, k , l\leq {\binom d2}} : \Zd \times \Zd \times \Omega \to \R^{\binom d2^2 \times \binom d2^2}$$
be the solution of the second-order Helffer-Sj\"{o}strand equation
\begin{equation}
\label{e.Greender}
\mathcal{L}_{\mathrm{sec}} \G_{\mathrm{sec},\f}(\cdot , \cdot,  \cdot; x_1 , y_1) =\f \delta_{(x_1,y_1)} ~~\mbox{in}~~ \Omega \times \Zd \times \Zd.
\end{equation}
\end{definition}

\begin{remark}
    Let us make a few remarks on the previous definition:
    \begin{itemize}
    \item Since the second-order Helffer-Sj\"{o}strand operator acts on functions valued in $\R^{{d \choose 2}^2}$, the Green's matrix is valued in $\R^{\binom d2^2 \times \binom d2^2}$, i.e., it is a matrix of size ${d \choose 2}^2 \times {d \choose 2}^2$. We identify the space $\R^{{d \choose 2}^2}$ with the space of matrices of size ${d \choose 2} \times {d \choose 2}$ and will use two indices $i , j \in \{ 1 , \ldots, {d \choose 2}\}$ to refer to the components of a vector of $\R^{{d \choose 2}^2}$ (and thus four indices $i , j , k , l\in \{ 1 , \ldots, {d \choose 2}\}$ to refer to the components of a vector of $\R^{{d \choose 2}^2 \times {d \choose 2}^2}$).
    \item The identity~\eqref{e.Greender} is understood as follows: for any $k , l \in \{ 1 , \ldots, {d \choose 2} \}$, the function $\G_{\mathrm{sec} , \f, \cdot kl}(\cdot ; x_1 , y_1):= \left( \G_{\mathrm{sec},\f, ijkl}(\cdot ; x_1 , y_1) \right)_{1\leq i, j \leq \binom d2} : \Zd \times \Zd \times \Omega \to \R^{{\binom d2}^2}$ solves
    \begin{equation*}
    \mathcal{L}_{\mathrm{sec}} \mathcal{G}_{\mathrm{sec} , \f, \cdot kl}(x , y, \varphi; x_1 , y_1) = \begin{pmatrix}
           0 & \ldots & 0 & \ldots & 0 \\
           \vdots & & \vdots &  &\vdots \\
           0 & \ldots &   \textbf{f} (\varphi)   \indc_{\{(x,y) = (x_1, y_1)\}} & \ldots & 0 \\
           \vdots && \vdots && \vdots\\
           0 & \ldots & 0 & \ldots &0
         \end{pmatrix} \hspace{3mm} \mbox{for}~ (x , y , \varphi) \in \Zd \times \Zd \times \Omega,
    \end{equation*}
    where the right-hand side is a ${d \choose 2} \times {d \choose 2}$ matrix whose only non-zero entry is on the $k$th-row and $l$-th column.
    \item We define the codifferential in the first variable, and denote it by $\di^*_1 \mathcal{G}_{\mathrm{sec},\mathbf{f}}$, as follows: for each fixed triplet $j , k , l \in \{1 , \ldots, \binom d2 \}$, vertices $y , x_1 , y_1 \in \Zd$ and $\varphi \in \Omega$, we consider the function $x \mapsto \mathcal{G}_{\mathrm{sec}, \mathbf{f}, \cdot jkl}(x , y , \varphi ; x_1 , y_1) \in \R^{d \choose 2}$. This function can be seen as a $2$-form and one can apply the codifferential $\di^*$ as in Definition~\ref{def:codifferential2form}. We thus obtain a $1$-form $x \mapsto \di^*_1 \mathcal{G}_{\mathrm{sec}, \mathbf{f}, \cdot jkl}(x , y, \varphi ; x_1 ,  y_1) \in \R^{d}$. We then define the function 
    $$\di^*_1 \mathcal{G}_{\mathrm{sec},\mathbf{f}} : (x , y, \varphi ; x_1 , y_1) \mapsto \left( \di^*_1 \mathcal{G}_{\mathrm{sec},\mathbf{f}, i jkl}(x , y , \varphi ; x_1 , y_1) \right)_{1 \leq i \leq d, 1 \leq j , k , l \leq \binom d2 } \in \R^{d \times {\binom d2}^3}.$$
    We may similarly define the codifferential in the second variable, denoted by $\di^*_2 \mathcal{G}_{\mathrm{sec},\mathbf{f}}$, by freezing the first, third and fourth variables and the components $i , k , l \in \{1 , \ldots, {d \choose 2} \}$. 
    We may similarly define the codifferential in the  third variable and in the fourth variable as well as the mixed codifferentials by applying this procedure to more than one variable.
    
    More specifically, we will have to use the mixed codifferential with respect to the first, third and fourth variables
    \begin{equation*}
        \begin{aligned}
        \di^*_1  \di^*_{3}  \di^*_{4} \G_{\mathrm{sec}, \f} := \left( \di^*_1  \di^*_{3}  \di^*_{4}  \mathcal{G}_{\mathrm{sec} , \mathbf{f}, i j k l} \right)_{1 \leq i , k , l \leq d, 1 \leq j \leq   \binom d2} : \Zd \times \Zd \times \Omega \times \Zd \times \Zd \to \R^{ d \times {d \choose 2} \times d \times d}.
        \end{aligned}
    \end{equation*}
    \item We will use the previous definitions in conjunction with the following definition, for each pair $(x , y , \varphi) \in \Zd \times \Zd \times \Omega$ and each pair of charges $q , q' \in \mathcal{Q}$,
    \begin{equation*}
        \begin{aligned}
        \left( \mathcal{G}_{\mathrm{sec}, \mathbf{f}} (x, y, \varphi ; \cdot, \cdot), q \otimes q' \right) & = \sum_{x_1 , y_1 \in \Zd}  \sum_{k , l = 1}^{d \choose 2}\mathcal{G}_{\mathrm{sec}, \mathbf{f}, \cdot kl}(x, y, \varphi ; x_1, y_2) q_{k} (x_1) q'_l(y_1) \in \R^{{d \choose 2} \times {d \choose 2}}, \\
        \left( \di^*_{3} \di^*_{4} \mathcal{G}_{\mathrm{sec}, \mathbf{f}} (x, y, \varphi ; \cdot, \cdot), n_q \otimes n_{q'} \right) & = \sum_{x_1 , y_1 \in \Zd}  \sum_{k , l = 1}^{d}\di^*_{3} \di^*_{4} \mathcal{G}_{\mathrm{sec}, \mathbf{f}, \cdot kl}(x, y, \varphi ; x_1, y_2) n_{q,k} (x_1) n_{q',l}(y_1) \in \R^{{d \choose 2} \times {d \choose 2}},
        \end{aligned}
    \end{equation*}
    (N.B. The sums are finite because both $q$ and $n_q$ have finite support and these two terms are equal due to Lemma~\ref{lem:lem2.4} and~\eqref{identity:dandd*}). We finally define the codifferential in the first variable of the above quantity
    \begin{equation*}
        \begin{aligned}
        \left( \di^*_1 \mathcal{G}_{\mathrm{sec}, \mathbf{f}} (x, y, \varphi ; \cdot, \cdot), q \otimes q' \right) & = \sum_{x_1 , y_1 \in \Zd}  \sum_{k , l = 1}^{d \choose 2} \di^*_1  \mathcal{G}_{\mathrm{sec}, \mathbf{f}, \cdot kl}(x, y, \varphi ; x_1, y_2) q_{k} (x_1) q'_l(y_1) \in \R^{d\times {d \choose 2}}, \\
        \left( \di^*_1 \di^*_{3} \di^*_{4} \mathcal{G}_{\mathrm{sec}, \mathbf{f}} (x, y, \varphi ; \cdot, \cdot), n_q \otimes n_{q'} \right) & = \sum_{x_1 , y_1 \in \Zd}  \sum_{k , l = 1}^{d} \di^*_1  \di^*_{3} \di^*_{4} \mathcal{G}_{\mathrm{sec}, \mathbf{f}, \cdot kl}(x, y, \varphi ; x_1, y_2) n_{q,k} (x_1) n_{q',l}(y_1) \in \R^{d  \times {d \choose 2}}.
        \end{aligned}
    \end{equation*}
    \end{itemize}
\end{remark}

In the following proposition, we let $P^{\varphi_\cdot}$ be the heat kernel defined in~\eqref{eq:eqdefheqtkernel}.

\begin{proposition}[Dynamical formula for the second-order Helffer-Sj\"{o}strand Green's matrix] \label{prop:dynamicalsecondorderHS}
    One has the identity, for $(x , y , \varphi , x_1 , y_1) \in \Zd \times \Zd \times \Omega \times \Zd \times \Zd$,
    \begin{equation} \label{eq:dynamicalidentityGreender}
        \mathcal{G}_{\mathrm{sec},\mathbf{f}}(x , y , \varphi ; x_1 , y_1) := \E_{\varphi} \left[ \int_0^\infty \mathbf{f}(\varphi_t) (P^{\varphi_\cdot} (t , x_1  , x ))^T \otimes (P^{\varphi_\cdot} (t , y_1  , y ))^T \, dt \right].
    \end{equation}
\end{proposition}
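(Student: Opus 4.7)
The plan is to adapt the proof of the first-order dynamical formula of Proposition~\ref{prop:prop2.26} (which is obtained in~\cite{NS, GOS}), exploiting the commutation of the two spatial operators $\mathcal{L}_{\mathrm{spat}, x}^{\varphi}$ and $\mathcal{L}_{\mathrm{spat}, y}^{\varphi}$ and the resulting tensor-product structure of the fundamental solution of the spatial part of $\mathcal{L}_{\mathrm{sec}}$. Denote by $\mathcal{U}(x, y, \varphi; x_1, y_1)$ the right-hand side of~\eqref{eq:dynamicalidentityGreender}. The strategy is to verify that $\mathcal{U}$ is a weak solution of~\eqref{e.Greender} and then invoke uniqueness (which follows, as for the first-order equation, from the Lax--Milgram theorem applied to the coercive bilinear form associated with $\mathcal{L}_{\mathrm{sec}}$).

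The first step is to establish the key spatial identity: for any fixed realisation $(\varphi_t)_{t \geq 0}$ of the Langevin dynamic, the matrix-valued map
\begin{equation*}
K_t(x, y ; x_1, y_1) := (P^{\varphi_\cdot}(t, x_1; x))^T \otimes (P^{\varphi_\cdot}(t, y_1; y))^T
\end{equation*}
satisfies the parabolic system
\begin{equation*}
\partial_t K_t + \bigl( \mathcal{L}_{\mathrm{spat}, x}^{\varphi_t} + \mathcal{L}_{\mathrm{spat}, y}^{\varphi_t} \bigr) K_t = 0 \quad \mbox{for } t > 0,
\end{equation*}
with initial condition $K_0 = \delta_{(x_1, y_1)}$. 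This is a direct consequence of the Leibniz rule together with the defining equation~\eqref{eq:eqdefheqtkernel} of $P^{\varphi_\cdot}$: since $\mathcal{L}_{\mathrm{spat}, x}^{\varphi_t}$ only affects the first tensor factor while $\mathcal{L}_{\mathrm{spat}, y}^{\varphi_t}$ only affects the second, the two operators commute and their sum is the natural generator of the tensor-product evolution. The initial condition comes from the identity $P^{\varphi_\cdot}(0, x_1; x) \otimes P^{\varphi_\cdot}(0, y_1; y) = \delta_{x_1}(x) \otimes \delta_{y_1}(y)$ and the fact that $\delta_{(x_1, y_1)}$ coincides with this tensor product in the sense defined above~\eqref{e.Greender}.

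The second step is the standard Itô/Feynman--Kac argument, applied with $\mathcal{L}_{\mathrm{spat}, x}^{\varphi} + \mathcal{L}_{\mathrm{spat}, y}^{\varphi}$ in place of $\mathcal{L}_{\mathrm{spat}}^{\varphi}$: differentiating the time integral in~\eqref{eq:dynamicalidentityGreender} at $t = 0$ produces the source $\mathbf{f}(\varphi) \delta_{(x_1, y_1)}(x, y)$ (from the initial condition $K_0$), while the Markov property of the Langevin dynamic, combined with the stationarity and reversibility of $\mu_\beta$, converts the remaining contribution into $-\Delta_\varphi \mathcal{U} + (\mathcal{L}_{\mathrm{spat}, x}^{\varphi} + \mathcal{L}_{\mathrm{spat}, y}^{\varphi}) \mathcal{U}$ (tested against a smooth function of $C^\infty_c(\Omega \times \Zd \times \Zd; \R^{\binom d 2 \times \binom d 2})$). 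Putting these together yields the identity $\mathcal{L}_{\mathrm{sec}} \mathcal{U} = \mathbf{f} \delta_{(x_1, y_1)}$ in the weak sense.

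The main technical point is the second step: one needs to justify rigorously the differentiation under the expectation and the integration by parts in $\varphi$ for the tensor-valued integrand, since the heat kernels $P^{\varphi_\cdot}$ depend on the entire trajectory of the Langevin dynamic. This is handled, exactly as in~\cite{NS, GOS} for the first-order case, by combining the Nash--Aronson bound~\eqref{eq:estimateheat} (which gives $L^2$-integrability of the tensor product $K_t$ in $t$ after summation in the spatial variables, using that $d \geq 3$) with an approximation by smooth, compactly supported test functions. Once this is done, uniqueness of the weak solution in the appropriate functional space concludes the proof.
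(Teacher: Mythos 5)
Your proposal is correct and matches the paper's intended approach exactly: the paper does not give a detailed proof of this proposition, but the accompanying remark (the tensor-product structure is a consequence of the commutation of $\mathcal{L}_{\mathrm{spat},x}^{\varphi}$ and $\mathcal{L}_{\mathrm{spat},y}^{\varphi}$) together with the earlier remark that uniqueness for the second-order equation follows from Lax--Milgram, and the reference to~\cite{NS, GOS} for the analogous first-order argument, are precisely the three ingredients you assemble. Your verification that $K_t$ solves the tensorised parabolic system with initial datum $\delta_{(x_1,y_1)}$, followed by the Feynman--Kac/reversibility argument and the Lax--Milgram uniqueness, fills in exactly what the paper leaves implicit.
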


\begin{remark}
    To be precise, the tensor product and the transposition on the right-hand side of~\eqref{eq:dynamicalidentityGreender} can be written more explicitly by the formula, for any $i , j , k , l \in \{ 1 , \ldots , {d \choose 2} \}$,
    \begin{equation*}
        \mathcal{G}_{\mathrm{sec},\mathbf{f}, ijkl}(x , y , \varphi ; x_1 , y_1) := \E_{\varphi} \left[ \int_0^\infty \mathbf{f}(\varphi_t) P^{\varphi_\cdot}_{ki} (t , x_1  , x ) P^{\varphi_\cdot}_{lj} (t , y_1  , y ) \, dt \right].
    \end{equation*}
    The fact that the right-hand side of~\eqref{eq:dynamicalidentityGreender} displays a tensor product of two heat kernels (which solve the simpler equation~\eqref{eq:eqdefheqtkernel}) is an important property which is a consequence of the observation that the operators $\mathcal{L}_{\mathrm{spat}, x }^{\varphi}$ and $\mathcal{L}_{\mathrm{spat}, y }^{\varphi}$ defined above act on different variables and commute.
\end{remark}

We finally complete this section by stating two upper bounds on the Green's matrix associated with the second-order Helffer-Sj\"{o}strand operator and its mixed codifferential. The proof of this result is omitted here but can be obtained by combining Proposition~\ref{prop.prop4.11chap4HK} together with Proposition~\ref{prop:dynamicalsecondorderHS}.

\begin{proposition} \label{cor:corollary4.14}
For any exponent $p \in (1 , \infty)$, there exist an inverse temperature $\beta_0\left(d , p \right) < \infty$ and a constant $C(d , \beta, p) <\infty$ such that the following statement holds. For any inverse temperature $\beta > \beta_0$ and each $ (x , y , x_1 , y_1) \in \left(\Zd\right)^4$, one has the estimate
\begin{equation*}
\left\| \G_{\mathrm{sec}, \f} (x , y, \cdot ; x_1 , y_1)\right\|_{L^p \left(\Omega \, ; \, \R^{{d \choose 2}^2 \times {d \choose 2}^2} \right)} \leq \frac{C \left\|\f \right\|_{L^p \left( \Omega \, ; \, \R \right)}}{|x - x_1|^{2d-2}_+ + |y - y_1|^{2d-2}_+}.
\end{equation*}
Additionally, one has the estimate
\begin{equation*}
\left\| \di^*_1  \di^*_{3}  \di^*_{4} \G_{\mathrm{sec}, \f} (x , y, \cdot ; x_1 , y_1) \right\|_{L^p \left( \Omega \, ; \, \R^{d \times {d \choose 2} \times d \times d} \right)} \leq \frac{C\left( \ln \left( |x - x_1|_+ + |y - y_1|_+ \right) \right)^{2d+4} \left\|\f \right\|_{L^{2p} \left( \Omega \, ; \, \R  \right)}}{|x - x_1|^{2d + 1}_+ + |y - y_1|^{2d + 1}_+}.
\end{equation*} 
\end{proposition}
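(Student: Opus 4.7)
The plan is to combine the dynamical representation of Proposition~\ref{prop:dynamicalsecondorderHS} with the pointwise Nash--Aronson bound~\eqref{eq:estimateheat} and the annealed gradient estimates~\eqref{eq:decaygradheat}--\eqref{eq:decaygradgradheat} on the heat kernel. Writing $R := |x-x_1|_+ + |y-y_1|_+$ and using that $R^{k} \asymp |x-x_1|^{k}_+ + |y-y_1|^{k}_+$ for any $k \geq 1$, it suffices to produce the claimed decay in $R$.

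For the first inequality, I would start from
\begin{equation*}
\mathcal{G}_{\mathrm{sec},\mathbf{f}}(x,y,\varphi;x_1,y_1) = \E_{\varphi}\!\left[\int_0^\infty \mathbf{f}(\varphi_t)\,\bigl(P^{\varphi_\cdot}(t,x_1,x)\bigr)^{T} \otimes \bigl(P^{\varphi_\cdot}(t,y_1,y)\bigr)^{T}\,dt\right]
\end{equation*}
and apply~\eqref{eq:estimateheat} to each heat kernel factor. Since~\eqref{eq:estimateheat} is pathwise in the Langevin dynamic, only $\mathbf{f}$ carries an $L^p$-norm. Taking this norm, using Minkowski's inequality to bring it inside the time integral, and invoking stationarity of the dynamic to replace $\|\mathbf{f}(\varphi_t)\|_{L^p}$ by $\|\mathbf{f}\|_{L^p}$, I reduce matters to the deterministic integral $\int_0^\infty t_+^{-d}\exp\bigl(-R/(C\sqrt{t_+})\bigr)\,dt$. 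The substitution $u = R^2/t_+$ (handling $t \in [0,1]$ trivially) converts this into $R^{-(2d-2)}\int_0^\infty u^{d-2}e^{-\sqrt{u}/C}du$, yielding the advertised rate.

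For the second inequality, the codifferentials $\di^*_1,\di^*_3,\di^*_4$ act on mutually distinct arguments of $\mathcal{G}_{\mathrm{sec},\mathbf{f}}$ and hence commute with $\E_\varphi$ and with the time integral. After differentiation, $\di^*_3$ becomes a codifferential on the first (initial) argument of $P^{\varphi_\cdot}(t,x_1,x)$, $\di^*_4$ the same on $P^{\varphi_\cdot}(t,y_1,y)$, and $\di^*_1$ a codifferential on the second (terminal) argument of $P^{\varphi_\cdot}(t,x_1,x)$. The gradient bounds~\eqref{eq:decaygradheat}--\eqref{eq:decaygradgradheat} are only annealed, so after applying Minkowski I would use H\"older's inequality with exponents $(2p,4p,4p)$ satisfying $\tfrac{1}{2p}+\tfrac{1}{4p}+\tfrac{1}{4p}=\tfrac{1}{p}$ to distribute the norm across the three factors. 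Stationarity handles $\mathbf{f}$, and the heat-kernel estimates at exponent $4p$, which apply provided $\beta_0(d,p)$ is taken larger than $\beta_1(d,4p)$, produce
\begin{equation*}
\bigl\|\di^*_1\di^*_3\di^*_4 \mathcal{G}_{\mathrm{sec},\mathbf{f}}\bigr\|_{L^p} \leq C\|\mathbf{f}\|_{L^{2p}}\int_0^\infty \frac{(\ln t_+)^{2(d+2)}}{t_+^{d+3/2}}\exp\!\left(-\tfrac{R}{C\sqrt{t_+}}\right)dt,
\end{equation*}
and the same substitution evaluates the integral to $C(\ln R)^{2d+4}/R^{2d+1}$.

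The main subtlety I expect is bookkeeping rather than analysis: one must verify that the three codifferentials really do act on distinct arguments of the tensor product and translate cleanly into the codifferentials of the heat kernels, possibly invoking the time-reversal identity of Remark~\ref{rem:reversinghtetime} to recast $\di^*_2 P^{\varphi_\cdot}$ in a form where the annealed estimate~\eqref{eq:decaygradheat} applies directly. Once this is settled, the choice of H\"older exponents is forced by the $L^{2p}$-norm of $\mathbf{f}$ on the right-hand side of the statement, and the remaining heat-kernel time integrals are standard.
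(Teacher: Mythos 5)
Your proof is correct and follows precisely the route the paper indicates: the paper omits the proof but explicitly says it follows from combining the dynamical formula (Proposition~\ref{prop:dynamicalsecondorderHS}) with the annealed heat-kernel estimates (Proposition~\ref{prop.prop4.11chap4HK}), and your Minkowski--H\"older--stationarity argument together with the $u=R^2/t_+$ substitution is the expected implementation. Two small remarks on the bookkeeping you flag: first, the labeling of $x_1$ as the ``initial'' and $x$ as the ``terminal'' argument of $P^{\varphi_\cdot}(t,x_1,x)$ is reversed relative to the paper's convention (the heat kernel is defined with the \emph{third} slot carrying the Dirac initial datum, so in the dynamical formula $x$ is the initial condition), but this does not affect your conclusion that $\di^*_1\di^*_3$ land on the two slots of the first heat-kernel factor and $\di^*_4$ on the second; and second, the time-reversal identity of Remark~\ref{rem:reversinghtetime} is not needed here, since~\eqref{eq:decaygradheat} already bounds $\di^*_1 P$ and $\di^*_2 P$ symmetrically.
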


\subsection{Notation}

The rest of this section is devoted to the proof of the identity~\eqref{eq:identitydstarGHW} and the inequalities~\eqref{eq:estverticalderivative} and~\eqref{eq:estverticalderivativebis}, and we first set some notation. We let
\begin{equation*}
    \mathcal{K} : 
    \left\{ \begin{aligned}
        \Zd \times \Zd \times \Omega & \to  \R^{{d \choose 2} \times {d \choose 2}} \\
        (y_1 , y_2 , \varphi) & \mapsto   \partial_{y_2} \mathcal{G}_0 (y_1, \varphi).
    \end{aligned} \right.
\end{equation*}
The proof of these results starts from the observation that the map $\mathcal{K}$ solves the second-order Helffer-Sj\"{o}strand equation
\begin{align*}
\mathcal{L}_{sec} \mathcal{K} (y_1, y_2, \varphi) & = - \sum_{q \in \mathcal{Q}} \left( \nabla_q^* \cdot \left( \partial_{y_2} \a_q \right) \nabla_q \mathcal{G}_0 \right)(y_1,\varphi) + 
 \exp \left( U_0 +  U_{\cos , 0} \right) \left( \partial_{y_1} U_0 \right) \otimes \left( \partial_{y_2} U_0 \right) \\ 
 & \quad  + \exp \left( U_0 +  U_{\cos , 0} \right) \left( \partial_{y_1} U_0 \right) \otimes \left( \partial_{y_2} U_{\cos , 0} \right) + \exp \left( U_0 + U_{\cos , 0} \right) \left( \partial_{y_1} \partial_{y_2} U_0 \right)
\end{align*}
where we used the following notation
\begin{equation*}
    - \sum_{q \in \mathcal{Q}} \left(\nabla_q^* \cdot (\partial_{y_2} \a_q) \nabla_q \mathcal{G}_0\right)(y_1,\varphi)
    =
   2\pi \sum_{q \in \mathcal{Q}}  z\left( \beta , q \right) \sin \left( 2\pi\left( \di^* \varphi , n_q \right) \right) \left( \di^* \mathcal{G}_0 , n_q \right) \left( q \otimes q \right) (y_1, y_2) \in \R^{{d \choose 2} \times {d \choose 2}},
\end{equation*}
and we recall from \eqref{formuladerUx} that the following identities hold
\begin{align*}
    \partial_{y_1} U_0(\varphi) & = 2\pi \sum_{q \in \mathcal{Q}}  z(\beta , q)  \cos (2\pi (\di^* \varphi , n_q)) \sin (2\pi(\nabla G, n_q))  q(y_1) \in \R^{d \choose 2}, \\
    \partial_{y_2} U_0(\varphi) & = 2\pi  \sum_{q \in \mathcal{Q}} z(\beta , q)  \cos (2\pi (\di^* \varphi , n_q)) \sin (2\pi(\nabla G , n_q))  q(y_2) \in \R^{d \choose 2}, \notag \\
    \partial_{y_2} U_{\cos, 0}(\varphi) & = - 2\pi  \sum_{q \in \mathcal{Q}} z(\beta , q)  \sin (2\pi (\di^* \varphi , n_q)) \left( \cos (2\pi(\nabla G , n_q)) - 1 \right)  q(y_2) \in \R^{d \choose 2}, \notag \\
 \partial_{y_1} \partial_{y_2} U_0(\varphi) & = - (2\pi)^2 \sum_{q \in \mathcal{Q}}  z(\beta , q)  \sin (2\pi (\di^* \varphi , n_q)) \sin (2\pi(\nabla G , n_q))  \left( q \otimes q \right) (y_1, y_2) \in \R^{{d \choose 2} \times {d \choose 2}}. \notag
\end{align*}
Using linearity,  we write $\mathcal{K} = \mathcal{H}_1 + \mathcal{H}_2 + \mathcal{H}_3 + \mathcal{H}_4$,  where 
\begin{align}
\label{e.v1}
\mathcal{L}_{sec}  \mathcal{H}_1 (y_1, y_2, \varphi) & = - \sum_{q \in \mathcal{Q}} \left(\nabla_q^* \cdot (\partial_{y_2} \a_q) \nabla_q \mathcal{G}_0 \right)(y_1, \varphi), \\
\mathcal{L}_{sec} \mathcal{H}_2(y_1, y_2, \varphi) & = \exp \left( U_0(\varphi) +  U_{\cos , 0}(\varphi)  \right) \left( \partial_{y_1} \partial_{y_2} U_0(\varphi) \right) ,\notag \\
\mathcal{L}_{sec} \mathcal{H}_3(y_1, y_2, \varphi) & =  
 \exp \left( U_0(\varphi) +   U_{\cos , 0}(\varphi)  \right) \left( \partial_{y_1} U_0(\varphi) \right) \otimes \left( \partial_{y_2} U_0 (\varphi) \right). \notag \\
 \mathcal{L}_{sec} \mathcal{H}_4(y_1, y_2, \varphi) & =  
 \exp \left( U_0(\varphi)  +  U_{\cos , 0}(\varphi)  \right) \left( \partial_{y_1} U_0(\varphi) \right) \otimes \left( \partial_{y_2} U_{\cos,0} (\varphi) \right). \notag
\end{align}
We will prove the following estimates on the functions $\mathcal{H}_1$, and $\mathcal{H}_2$: for any pair $y_1 , y_2 \in \Zd$,
\begin{align*}
     \| \di^*_{1}\mathcal{H}_1(y_1, y_2, \cdot) \| _{L^2\left(\Omega \, ; \, \R^{d \times \binom d2}\right)} & \leq C \frac{\left(\ln (|y_1|_+ + |y_2|_+) \right)^{3d+7} }{|y_1|_+^{d-1} + |y_2|_+^{d-1}}\frac{1}{|y_1 -y_2|_+^{d + 1}}, \\
     \| \di^*_{1} \mathcal{H}_2(y_1, y_2, \cdot) \|_{L^2\left(\Omega \, ; \, \R^{d \times \binom d2}\right)}
    & \leq
     C \frac{\left( \ln (|y_1|_+ + |y_2|_+) \right)^{2d+4} }{|y_1|_+^{d-1} + |y_2|_+^{d-1}} \frac{1}{|y_1 -y_2|_+^{d +1}}.
\end{align*}
The term $\mathcal{H}_3$ and $\mathcal{H}_4$ are the most complex to estimate, and we will prove the following result: there exist two functions $\mathcal{W}_3 , \mathcal{W}_4 :  \Zd \times \Zd \times \Omega \to \R^{d \times d}$ such that
\begin{equation*}
     \mathcal{H}_3(y_1 , y_2 , \varphi)  = \di_{2} \mathcal{W}_3(y_1 , y_2 , \varphi) \hspace{4mm} \mbox{and} \hspace{4mm} \mathcal{H}_4 (y_1 , y_2 , \varphi) = \di_{2} \mathcal{W}_4(y_1 , y_2 , \varphi).
\end{equation*}
together with the inequality
\begin{equation*}
     \| \di^*_{1}\mathcal{W}_3(y_1, y_2, \cdot) \|_{L^2(\Omega \, ; \, \R^{d \times d})} + \| \di^*_{1}\mathcal{W}_4(y_1, y_2, \cdot) \|_{L^2(\Omega \, ; \, \R^{d \times d})} \leq C \frac{(\ln |y_1|_+)^{d + 7/2}}{|y_1|_+^{d-1}} \times \frac{(\ln |y_2|_+)^{d + 7/2}}{|y_2|_+^{d-1}}.
\end{equation*}
The identity~\eqref{eq:identitydstarGHW} follows by setting
\begin{equation*}
    \mathcal{H} := \mathcal{H}_1 + \mathcal{H}_2 \hspace{4mm} \mbox{and}  \hspace{4mm} \mathcal{W} := \mathcal{W}_3 + \mathcal{W}_4.
\end{equation*}

\subsection{Estimating the term $\mathcal{H}_1$}

We first solve the equation~\eqref{e.v1} using the second-order Green's matrix. Specifically, we have the identity
\begin{align*}
    \mathcal{H}_1(y_1, y_2, \varphi) & = (2 \pi) \sum_{q \in \mathcal{Q}} z\left( \beta , q \right)  \left( \mathcal{G}_{\mathrm{sec}, \mathbf{f}_q } \left( y_1 , y_2 , \varphi ; \cdot , \cdot \right) , q \otimes q \right) \\
    & = (2 \pi) \sum_{q \in \mathcal{Q}} z\left( \beta , q \right)  \left( \di_{3}^*  \di_{4}^*  \mathcal{G}_{\mathrm{sec}, \mathbf{f}_q } \left( y_1 , y_2 , \varphi ; \cdot , \cdot \right) , n_q \otimes n_q \right)
\end{align*}
with
\begin{align*}
\f_{q} (\varphi) =  \sin (2\pi\left(  \di^* \varphi , n_q  \right))  \left( \di^* \mathcal{G}_0 (\cdot , \varphi), n_q \right).
\end{align*}
We apply the decay estimates for the second order Green's matrix stated Proposition~\ref{cor:corollary4.14}, the inequality~\eqref{eq:boundonG0} on the map $\di^* \mathcal{G}_0$ (N.B. to be fully rigorous, the inequality~\eqref{eq:boundonG0} should be established for the $L^4(\Omega \, ; \, \R^{d})$-norm instead of the $L^2(\Omega \, ; \, \R^{d})$-norm and this can be achieved by increasing the value of $\beta$ if necessary) and Lemma~\ref{lemma.lemma2.5} to obtain
\begin{align*}
   \|  \di^*_{1} \mathcal{H}_1(y_1, y_2, \cdot) \| _{L^2\left(\Omega \, ; \, \R^{d \times \binom d2}\right)}
    & \leq \sum_{q \in \mathcal{Q}}\frac{C_q  |z\left( \beta , q \right)|  \left( \ln (|y_1-z_q|_+ + |y_2 - z_q|_+) \right)^{2d+4} }{|y_1-z_q|_+^{2d + 1} + |y_2 - z_q|_+^{2d + 1}} \frac{\left( \ln | z_q|_+ \right)^{d+3}}{|z_q|^{d-1}_+} \\
    & \leq \sum_{z \in \Zd} \frac{C \left( \ln (|y_1-z|_+ + |y_2 - z|_+) \right)^{2d+4}}{|y_1-z|_+^{2d+1} + |y_2 - z|_+^{2d+1}}  \frac{\left( \ln | z|_+ \right)^{d+3}}{|z|^{d-1}_+}.
\end{align*}
Using Proposition~\ref{prop:appCdiscretesum} of Appendix~\ref{App.sumonlattice} (to estimate the sum on the lattice), we obtain
\begin{align*}
     \|  \di^*_{1} \mathcal{H}_1(y_1, y_2, \cdot) \| _{L^2\left(\Omega \, ; \, \R^{d \times \binom d2}\right)}
    & \leq
     C \frac{\left(\ln (|y_1|_+ + |y_2|_+) \right)^{3d+7} }{|y_1|_+^{d-1} + |y_2|_+^{d-1}}\frac{1}{|y_1 -y_2|_+^{d + 1}}.  \notag
\end{align*}

\subsection{Estimating the term $\mathcal{H}_2$}

We first start with the definition of $\mathcal{H}_2$
\begin{equation*}
    \mathcal{L}_{sec} \mathcal{H}_2(y_1, y_2, \varphi) =  
 - (2\pi)^2 \exp \left( U_0 + U_{\cos , 0} \right) \sum_{q \in \mathcal{Q}}  z(\beta , q)  \sin (2\pi (\di^* \varphi , n_q)) \sin (2\pi(\nabla G , n_q)) ( q \otimes q) (y_1 , y_2).
\end{equation*}
We then deduce the following formula
\begin{align*}
    \mathcal{H}_2(y_1, y_2, \varphi) & = - (2 \pi)^2 \sum_{q \in \mathcal{Q}} z(\beta , q) \sin (2\pi(\nabla G , n_q)) \left( \mathcal{G}_{sec, \mathbf{f}_{q}}(y_1, y_2, \varphi ; \cdot, \cdot), q \otimes q \right) \\
    & = - (2 \pi)^2 \sum_{q \in \mathcal{Q}} z(\beta , q) \sin (2\pi(\nabla G , n_q)) \left( \di^*_{w_1} \di^*_{w_2} \mathcal{G}_{sec, \mathbf{f}_{q}}(y_1, y_2, \varphi ; \cdot, \cdot), n_q \otimes n_q \right)
\end{align*}
with
\begin{equation*}
    \mathbf{f}_{q}(\varphi) := \exp(U_0(\varphi) + U_{\cos , 0}(\varphi)) \sin (2\pi (\di^* \varphi , n_q)).
\end{equation*}
Applying the regularity estimate for the second-order Helffer-Sj\"{o}strand equation together with Proposition~\ref{prop:appCdiscretesum} of Appendix~\ref{App.sumonlattice}, we deduce that
\begin{align*}
     \|  \di^*_{1} \mathcal{H}_2(y_1, y_2, \cdot) \|_{L^2\left(\Omega \, ; \, \R^{d \times \binom d2}\right)}
   & \leq
     \sum_{z \in \Zd} \frac{C\left( \ln (|y_1-z|_+ + |y_2 - z|_+) \right)^{2d+4} }{|y_1-z|_+^{2d + 1} + |y_2 - z|_+^{2d+1}} \frac{1}{|z|^{d-1}_+} \\
   & \leq
     C \frac{\left( \ln (|y_1|_+ + |y_2|_+) \right)^{2d+4} }{|y_1|_+^{d-1} + |y_2|_+^{d-1}} \frac{1}{|y_1 -y_2|_+^{d +1}}.
\end{align*}

\subsection{Estimating the term $\mathcal{H}_3$} \label{subsecB23}
This term is the most intricate to estimate and we first recall the definition of the function $\mathcal{H}_3$
\begin{multline*}
    \mathcal{L}_{sec} \mathcal{H}_3(y_1, y_2, \varphi) =  
 \exp \left( U_0 (\varphi) + U_{\cos , 0} (\varphi) \right) \left( \sum_{q \in \mathcal{Q}} 2\pi z(\beta , q)  \cos (2\pi (\di^* \varphi , n_q)) \sin (2\pi(\nabla G , n_q))  q(y_1) \right) \\
 \otimes \left( \sum_{q \in \mathcal{Q}} 2\pi z(\beta , q)  \cos (2\pi (\di^* \varphi , n_q)) \sin (2\pi(\nabla G , n_q))  q(y_2) \right).
\end{multline*}
We will prove that there exists a function $\mathcal{W}_3 : \Zd \times \Zd \times \Omega \to \R^{{d \choose 2} \times d }$ such that, for any $y_1 , y_2 \in \Zd$ and $\varphi \in \Omega$,
\begin{equation} \label{eq:defWdd}
    \mathcal{H}_3(y_1, y_2, \varphi) = \di_{2} \mathcal{W}_3 (y_1 , y_2 , \varphi)
\end{equation}
and
\begin{equation} \label{eq:boundWdd}
\left\| \di^*_{1} \mathcal{W}_3(y_1 , y_2 , \cdot)  \right\|_{L^2\left(\Omega \, ; \, \R^{d \times d} \right)} \leq C \frac{(\ln |y_1|_+)^{d + 7/2}}{|y_1|_+^{d-1}} \times \frac{(\ln |y_2|_+)^{d + 7/2}}{|y_2|_+^{d-1}}.
\end{equation}
We first introduce some notation. For each pair of charges $q , q' \in \mathcal{Q}$, let us set
\begin{equation*}
    \mathbf{f}_{q , q'}(\varphi) := (2 \pi)^2 \exp \left( U_0(\varphi) + U_{\cos , 0}(\varphi)\right) \cos (2\pi (\di^* \varphi , n_q)) \cos (2\pi (\di^* \varphi , n_{q'})).
\end{equation*}
We note that, if the inverse temperature $\beta$ is chosen sufficiently large (see Remark~\ref{rem:remark3.3}), then we have the bound, for any pair of charges $q , q' \in \mathcal{Q}$,
\begin{equation} \label{eq:boundonfqq'}
    \left\| \mathbf{f}_{q , q'} \right\|_{L^4(\Omega \, ; \, \R)} \leq C.
\end{equation}
We additionally let $\mathcal{H}_{3, q , q'} : \Zd \times \Zd \times \Omega \to \R^{\binom d2 \times \binom d2}$ be the solution of the equation 
\begin{equation*}
    \mathcal{L}_{sec} \mathcal{H}_{3, q , q'}(y_1, y_2, \varphi) = \mathbf{f}_{q , q'}(\varphi) (q \otimes q')(y_1 , y_2).
\end{equation*}
Let us note that these functions are defined so as to have the identity
\begin{equation*}
    \mathcal{H}_3 = \sum_{q , q' \in \mathcal{Q}} z(\beta , q) z(\beta , q') \sin (2\pi(\nabla G , n_q)) \sin (2\pi(\nabla G , n_{q'})) \mathcal{H}_{3, q , q'}.
\end{equation*}
We will show that, for each pair of charges $q , q' \in \mathcal{Q}$, there exists a function 
\begin{equation} \label{eq:defWqq'obj}
    \mathcal{H}_{3, q , q'}(y_1, y_2, \varphi) = \di_{2} \mathcal{W}_{q , q'} (y_1 , y_2 , \varphi)
\end{equation}
together with the inequality (where $C_q$ and $C_{q'}$ are two constants which may depend on $q$ and $q'$ but may only grow polynomially fast in $\| q \|_1$ and $\| q' \|_1$)
\begin{equation} \label{eq:boundWqq'obj}
\left\| \di^*_{1} \mathcal{W}_{q,q'}(y_1 , y_2 , \cdot)  \right\|_{L^2\left(\Omega \, ; \, \R^{d \times d} \right)} \leq  C_q C_{q'} \frac{(\ln (|y_1 - z_q |_+)^{d + 5/2}}{|y_1 - z_q |_+^{d} } \times \frac{  ( \ln |y_2 - z_{q'}|_+))^{d+5/2} }{|y_2 - z_{q'}|_+^{d}}.
\end{equation}
We first show how to complete the proof of the identity~\eqref{eq:defWdd} and the bound~\eqref{eq:boundWdd} using~\eqref{eq:defWqq'obj} and~\eqref{eq:boundWqq'obj}. We set
\begin{equation*}
    \mathcal{W}_3 := \sum_{q , q' \in \mathcal{Q}} z(\beta , q) z(\beta , q') \sin (2\pi(\nabla G , n_q)) \sin (2\pi(\nabla G , n_{q'})) \mathcal{W}_{ q , q'}
\end{equation*}
and compute, using~\eqref{eq:boundWqq'obj} and Lemma~\ref{lemma.lemma2.5},
\begin{align*}
    \lefteqn{\left\|  \di^*_{1} \mathcal{W}_3(y_1 , y_2 , \cdot)  \right\|_{L^2\left(\Omega \, ; \, \R^{d \times d} \right)} } \qquad & \\ & \leq \sum_{q , q' \in \mathcal{Q}} |z(\beta , q)| |z(\beta , q')| C_q C_{q'} \left| (\nabla G , n_{q}) \right| \left| (\nabla G , n_{q'}) \right|  \frac{(\ln (|y_1 - z_q |_+)^{d + 5/2}}{|y_1 - z_q |_+^{d} } \times \frac{  ( \ln |y_2 - z_{q'}|_+))^{d+5/2} }{|y_2 - z_{q'}|_+^{d}} \\
    & \leq C \sum_{z , z' \in \Zd} \frac{1}{|z|_+^{d-1}} \frac{1}{|z'|_+^{d-1}} \frac{(\ln (|y_1 - z |_+)^{d + 5/2}}{|y_1 - z |_+^{d} } \times \frac{  ( \ln |y_2 - z'|_+))^{d+5/2} }{|y_2 - z'|_+^{d}} \\
    & = C \left(  \sum_{z \in \Zd} \frac{1}{|z|_+^{d-1}} \frac{(\ln (|y_1 - z |_+)^{d + 5/2}}{|y_1 - z |_+^{d}} \right) \left(  \sum_{ z' \in \Zd} \frac{1}{|z'|_+^{d-1}} \frac{(\ln (|y_2- z'|_+)^{d + 5/2}}{|y_2 - z'|_+^{d}} \right) \\
    & \leq C \frac{(\ln |y_1|_+)^{d + 7/2}}{|y_1|_+^{d-1}} \times \frac{(\ln |y_2|_+)^{d + 7/2}}{|y_2|_+^{d-1}}.
\end{align*}
This completes the proofs of~\eqref{eq:defWdd} and~\eqref{eq:boundWdd}. 

The rest of the proof is devoted to the identity~\eqref{eq:defWqq'obj} and the bound~\eqref{eq:boundWqq'obj}. We appeal to the dynamical interpretation of the Helffer-Sj\"{o}strand equation presented in Section~\ref{sec:sec264dynamic}. Applying Proposition~\ref{prop:prop2.26} and Remark~\ref{rem:reversinghtetime} (together with the linearity of the Helffer-Sj\"{o}strand equation), we obtain the identity
\begin{equation*}
   \mathcal{H}_{3, q , q'}(y_1, y_2, \varphi) = \int_{0}^\infty  \E_{\varphi} \left[   \mathbf{f}_{q , q'}(\varphi_t)   P^{\bar  \varphi_\cdot^t}_{q_1 , q_2} (t , y_1 , y_2)\right] \, dt,  
\end{equation*}
where, given a trajectory $(\varphi_t)_{t \geq 0}$ of the Langevin dynamic, we recall the definition of the time reversed dynamic
$$(\bar \varphi^t_s)_{0 \leq s \leq t} : = (\varphi_{t-s})_{0 \leq s \leq t}$$
and let $P^{\bar \varphi_\cdot^t}_{q_1, q_2}(\cdot , \cdot , \cdot) : [0 , t] \times \Zd \times \Zd \to \R^{\binom d2 \times \binom d2}$ be the solution of the parabolic system
\begin{equation} \label{eq:TV20240}
        \left\{ \begin{aligned}
        \partial_t P^{\bar  \varphi_\cdot^t}_{q_1 , q_2}+ \left(\mathcal{L}_{\mathrm{spat} , x}^{\bar  \varphi_\cdot^t} + \mathcal{L}_{\mathrm{spat} , y}^{\bar  \varphi_\cdot^t}\right)  P^{\bar  \varphi_\cdot^t}_{ q_1 , q_2} & =0 &~\mbox{in}~ &[0 , t] \times \Zd \times \Zd, \\
        P^{\bar  \varphi_\cdot^t}_{ q_1 , q_2} \left(0,y , z\right) & = (q_1\otimes q_2)(y,z) &~\mbox{for}~ &y , z \in \Zd.
        \end{aligned} \right.
\end{equation}
We note that from the definition of the operators $\mathcal{L}_{\mathrm{spat} , x}^{\bar  \varphi_\cdot^t}$  and $\mathcal{L}_{\mathrm{spat} , y}^{\bar  \varphi_\cdot^t}$ (and since they commute), the solution of the equation~\eqref{eq:TV20240} factorizes and one can write 
$$ P^{\bar  \varphi_\cdot^t}_{ q_1 , q_2}(t , y , z) = P^{\bar  \varphi_\cdot^t}_{ q_1} (t , y ) \otimes  P^{\bar  \varphi_\cdot^t}_{ q_2} (t , z), $$ 
where the maps $ P^{\bar  \varphi_\cdot^t}_{ q_1} : [0 , t] \times \Zd \to \R^{\binom d2}$ and $ P^{\bar  \varphi_\cdot^t}_{ q_2} : [0 , t] \times \Zd \to \R^{\binom d2}$ are the solutions of the parabolic systems
\begin{equation*}
     \left\{ \begin{aligned}
        \partial_t P^{\bar  \varphi_\cdot^t}_{q_1}+ \mathcal{L}_{\mathrm{spat}}^{\bar  \varphi_\cdot^t}  P^{\bar  \varphi_\cdot^t}_{ q_1} & =0 &~\mbox{in}~& [0 , t] \times \Zd , \\
        P^{\bar  \varphi_\cdot^t}_{ q_1 } \left(0,\cdot\right) & = q_1 &~\mbox{in}~& \Zd,
        \end{aligned} \right.
        \hspace{5mm}\mbox{and}  \hspace{5mm}
         \left\{ \begin{aligned}
        \partial_t P^{\bar  \varphi_\cdot^t}_{q_2}+ \mathcal{L}_{\mathrm{spat}}^{\bar  \varphi_\cdot^t} P^{\bar  \varphi_\cdot^t}_{ q_2} & =0 &~\mbox{in}~& [0 , t] \times \Zd, \\
        P^{\bar  \varphi_\cdot^t}_{ q_2 } \left(0,\cdot\right) & = q_2 &~\mbox{in}~& \Zd,
        \end{aligned} \right.
\end{equation*}
where we have set, for each charge $q \in \mathcal{Q}$,
    \begin{equation*}
     \left\{ \begin{aligned}
        \partial_t P^{\bar  \varphi_\cdot^t}_{q_1}+ \mathcal{L}_{\mathrm{spat}}^{\bar  \varphi_\cdot^t}  P^{\bar  \varphi_\cdot^t}_{q} & =0 ~\mbox{in}~ [0 , t] \times \Zd , \\
        P^{\bar  \varphi_\cdot^t}_{q} \left(0,\cdot\right) & = q ~\mbox{in}~ \Zd.
        \end{aligned} \right.
\end{equation*}
    Let us note that, by the definition of the heat kernel (see Proposition~\ref{prop:prop2.26}), we have the identity
    \begin{equation*}
        P^{\bar  \varphi_\cdot^t}_{q} \left(t,x\right) = \left( P^{\bar  \varphi_\cdot^t}(t , x ; \cdot) , q \right) = \left( \di^*_2 P^{\bar  \varphi_\cdot^t}(t , x ; \cdot) , n_q \right).
    \end{equation*}
    Using the upper bound on the heat kernel stated in Proposition~\ref{prop.prop4.11chap4HK} (with the exponent $p = 8$), we have the upper bounds
    \begin{equation} \label{eq:ineqonPq}
        t^{1/2}\left\|  P^{\bar  \varphi_\cdot^t}_{q} \left(t,y\right) \right\|_{L^8\left(\Omega \, ; \, \R^{{d \choose 2}}\right)} + t \left\|  \di^* P^{\bar  \varphi_\cdot^t}_{q} \left(t,y\right) \right\|_{L^8(\Omega \, ; \, \R^{d})} \leq \frac{C_q  (\ln t_+)^{d+2}}{t_+^{\frac{d}{2}}} \exp \left( - \frac{|y|}{C \sqrt{t_+}} \right).
    \end{equation}
    As a consequence, we have the inequality
    \begin{equation} \label{eq:ineqonPqbis2}
        \left\| \nabla_{q'} P^{\bar  \varphi_\cdot^t}_{q}(t)  \right\|_{L^8(\Omega \, ; \, \R)} = \left\|  \left( \di^* P^{\bar  \varphi_\cdot^t}_{q}(t , \cdot) , n_{q'} \right)  \right\|_{L^8(\Omega \, ; \, \R)} \leq \frac{C_q C_{q'}(\ln t_+)^{d+2} }{t_+^{\frac{d}{2} + 1}} \exp \left( - \frac{|z_{q'} - z_{q}|}{C\sqrt{t_+}} \right).
    \end{equation}
    The strategy is then to define the function $Q^{\varphi_\cdot}_{q}$ to be the solution of the parabolic system
\begin{equation} \label{eq:TV20150}
    \left\{ \begin{aligned}
        \partial_t Q^{\bar  \varphi_\cdot^t}_{q} - \left( \frac{1}{2\beta} \Delta - \frac{1}{2\beta}\sum_{n \geq 1} \frac{1}{\beta^{ \frac n2}} (-\Delta)^{n+1} \right) Q^{\bar  \varphi_\cdot^t}_{ q} & = - \sum_{q' \in \mathcal{Q}}z \left( \beta , q' \right) \cos \left( 2\pi\left( \bar \varphi_t, q' \right) \right) \nabla_{q'} P^{\bar  \varphi_\cdot^t}_{q} n_{q'}  &\mbox{in}~ [0,t] \times \Zd,\\
        Q^{\bar  \varphi_\cdot^t}_{ q} \left(0,\cdot\right) & =  n_{q} &\mbox{in}~\Zd.
        \end{aligned} \right.
\end{equation}
By applying the exterior derivative to both sides of the identities above, one sees that, for each $y \in \Zd$ and each time $t \geq 1$,
    \begin{equation} \label{eq:TV16500}
        P^{\bar  \varphi_\cdot^t}_{q}(t,y) = \di Q^{\bar  \varphi_\cdot^t}_{q}(t,y).
    \end{equation}
We will then prove the inequality
\begin{equation} \label{eq:functionQ_q}
     \left\| Q^{\bar  \varphi_\cdot^t}_{q}(t , y) \right\|_{L^8(\Omega \, ; \, \R^d)} \leq \frac{C_q(\ln t_+)^{d+2}}{t_+^{\frac d2}} \exp \left( - \frac{|y - z_{q}|}{C \sqrt{t_+}} \right).
\end{equation}
In order to simplify the argument, we assume, without loss of generality, that $z_{q} = 0$. We will first use the Duhamel principle to solve the equation~\eqref{eq:TV20150}. To this end, we introduce the fundamental solution associated with the system~\eqref{eq:TV20150}, i.e., the matrix valued function $\mathbf{Q} : (0 , \infty) \times \Zd \to \R^{ d\times d }$ which solves the equation
\begin{equation*} 
    \left\{ \begin{aligned}
        \partial_t \mathbf{Q} - \left( \frac{1}{2\beta} \Delta - \frac{1}{2\beta}\sum_{n \geq 1} \frac{1}{\beta^{ \frac n2}} (-\Delta)^{n+1} \right) \mathbf{Q} & = 0 &\mbox{in}~ (0 , \infty) \times \Zd,\\
        \mathbf{Q} \left(0,\cdot \right) & =  \delta_0 &\mbox{in}~\Zd.
        \end{aligned} \right.
\end{equation*}
We note that the operator on the left-hand side (i.e., the Laplacian and the sum of the iterations of the Laplacian) is a deterministic operator with constant coefficient. In this case, the matrix $\mathbf{Q}$ is always a multiple of the identity matrix, and its coefficients can be upper bounded as follows (N.B. this is the same estimate as for the standard heat kernel as the iterations of the Laplacian can be proved to be higher-order terms which have a negligible contribution for large times $t$)
\begin{equation} \label{upperboundheatkernel}
    \forall (t , y) \in (0, \infty) \times \Zd \times \Zd, \hspace{5mm} \left| \mathbf{Q}(t ,y) \right| + t_+^{\frac12} \left| \nabla \mathbf{Q}(t ,y) \right| \leq \frac{C}{t_+^{\frac{d}{2}}} \exp \left( - \frac{|y|}{C \sqrt{t_+}} \right).
\end{equation}
Using the Duhamel principle, we have the identity
\begin{equation*}
    Q^{\bar  \varphi_\cdot^t}_{q}(t , y) = \left( \mathbf{Q}(t , y - \cdot) ,  n_q \right) - \sum_{q' \in \mathcal{Q}} z \left( \beta , q' \right) \int_0^t  \cos \left( 2\pi\left( \bar \varphi_s^t, q' \right) \right) \left( \di^* P^{\bar  \varphi_\cdot^t}_{q}(s , \cdot) , n_{q'} \right)  \left( \mathbf{Q}(t-s , y - \cdot) , n_{q'} \right) \, ds.
\end{equation*}
We then estimate the two terms on the right-hand side by using the upper bound~\eqref{eq:ineqonPqbis2} on the map $P_q^{\bar \varphi^t_\cdot}$ and the upper bound~\eqref{upperboundheatkernel}. We thus obtain
\begin{multline*}
    \left\| Q^{\bar  \varphi_\cdot^t}_{q}(t , y) \right\|_{L^8(\Omega \, ; \, \R^d)} \\
    \leq \frac{C_q}{t_+^{\frac{d}{2}}} \exp \left( - \frac{|y|}{C\sqrt{t_+}} \right) + \sum_{q' \in \mathcal{Q}} C_{q'} z \left( \beta , q' \right) \int_0^t \frac{(\ln s_+)^{d+2} }{s_+^{\frac{d}{2} + 1}} \exp \left( - \frac{|z_{q'}|}{C\sqrt{s_+}}\right) \times \frac{1}{(t-s)_+^{\frac{d}{2}}} \exp \left( - \frac{|y- z_{q'}|}{C\sqrt{(t-s)_+}}\right) \, ds.
\end{multline*}
We next estimate the second term on the right-hand side. Using the same computation as in Lemma~\ref{lemma.lemma2.5}, we may first write
\begin{align*}
    \lefteqn{\sum_{q' \in \mathcal{Q}} z \left( \beta , q' \right) \frac{C_{q'}}{s_+^{\frac{d}{2}}} \exp \left( - \frac{|z_{q'}|}{C\sqrt{s_+}}\right)  \frac{C_{q'}}{(t-s)_+^{\frac{d}{2}}} \exp \left( - \frac{|y- z_{q'}|}{C\sqrt{(t-s)_+}}\right) } \qquad & \\ & 
     \leq C  \sum_{z \in \Zd} \frac{1}{s_+^{\frac{d}{2}}} \exp \left( - \frac{|z|}{C\sqrt{s_+}}\right)  \frac{1}{(t-s)_+^{\frac{d}{2}}} \exp \left( - \frac{|y - z|}{C\sqrt{(t-s)_+}}\right) \\ 
    & \leq \frac{C}{t_+^{\frac{d}{2}}}  \exp \left( - \frac{|y|}{C \sqrt{t_+}} \right).
\end{align*}
We may then combine the two previous displays so as to obtain 
\begin{align*}
    \left\| Q^{\bar  \varphi_\cdot^t}_{q}(t , y) \right\|_{L^8(\Omega \, ; \, \R^d)} & \leq \frac{C_q}{t_+^{\frac{d}{2}}} \exp \left( - \frac{|y|}{C\sqrt{t_+}} \right) +  \left( \int_0^t  \frac{(\ln s_+)^{d+2}}{s_+} \, ds \right) \frac{C}{t_+^{\frac{d}{2}}}  \exp \left( - \frac{|y|}{C \sqrt{t_+}} \right) \\
    & \leq \frac{C_q}{t_+^{\frac{d}{2}}} \exp \left( - \frac{|y|}{C\sqrt{t}} \right) + \frac{C (\ln t_+)^{d+3}}{t_+^{\frac{d}{2}}} \exp \left( - \frac{|y|}{C \sqrt{t_+}} \right) \\
    & \leq \frac{C_q (\ln t_+)^{d+3}}{t_+^{\frac{d}{2}}} \exp \left( - \frac{|y|}{C\sqrt{t_+}} \right) .
\end{align*}
This completes the proof of the inequality~\eqref{eq:functionQ_q}.

Equipped with the identities~\eqref{eq:TV16500} and~\eqref{eq:functionQ_q}, we may set
\begin{equation*}
    \mathcal{W}_{q , q'} (y_1, y_2, \varphi) := \int_{0}^\infty  \E_{\varphi} \left[   \mathbf{f}_{q , q'}(\varphi_t)  P^{\bar  \varphi_\cdot^t}_{q} (t , y_1) \otimes Q^{\bar  \varphi_\cdot^t}_{q'} (t , y_2) \right] \, dt.
\end{equation*}
We next verify that this function satisfies the identity~\eqref{eq:defWqq'obj} and the bound~\eqref{eq:boundWqq'obj}. For~\eqref{eq:defWqq'obj}, we note that
\begin{align*}
    \di_{2} \mathcal{W}_{q , q'} (y_1 , y_2 , \varphi) & = \int_{0}^\infty  \E_{\varphi} \left[   \mathbf{f}_{q , q'}(\varphi_t)   P^{\bar  \varphi_\cdot^t}_{q}  (t , y_1 ) \otimes  \di Q^{\bar  \varphi_\cdot^t}_{q'} (t , y_2 ) \right] \, dt \\
    & =  \int_{0}^\infty  \E_{\varphi} \left[   \mathbf{f}_{q , q'}(\varphi_t)   P^{\bar  \varphi_\cdot^t}_{q} (t , y_1 ) \otimes   P^{\bar  \varphi_\cdot^t}_{q'} (t , y_2 ) \right] \, dt \\
    & = \mathcal{H}_{3, q , q'}(y_1, y_2, \varphi).
\end{align*}
For the upper bound~\eqref{eq:boundWqq'obj}, we use the bounds~\eqref{eq:boundonfqq'},~\eqref{eq:ineqonPq} and~\eqref{eq:functionQ_q} and the Jensen inequality to deduce that
\begin{align*}
    \left\| \di^*_{1} \mathcal{W}_{q,q'}(y_1 , y_2 , \cdot)  \right\|_{L^2\left(\Omega \, ; \, \R^{d \times d} \right)} & = 
    \left\| \int_{0}^\infty  \E_{\varphi} \left[   \mathbf{f}_{q , q'}(\varphi_t)   \di^* P^{\bar  \varphi_\cdot^t}_{q}(t , y_1 ) \otimes  Q^{\bar  \varphi_\cdot^t}_{q'} (t , y_2 ) \right] \, dt \right\|_{L^2\left(\Omega \, ; \, \R^{d \times d} \right)} \\
    & \leq \int_0^\infty \frac{C_q(\ln t_+)^{d+2}}{t_+^{\frac{d}{2} + 1}} \exp \left( - \frac{|y_1 - z_q |}{C \sqrt{t_+}} \right)  \frac{C_{q'}(\ln t_+)^{d+3}}{t_+^{\frac d2}} \exp \left( - \frac{|y_2 - z_{q'}|}{C \sqrt{t_+}} \right) \, dt.
\end{align*}
The right-hand side can be estimated as follows
\begin{align*}
    \left\| \di^*_{1} \mathcal{W}_{q,q'}(y_1 , y_2 , \cdot)  \right\|_{L^2\left(\Omega \, ; \, \R^{d \times d} \right)} 
    & \leq \int_0^\infty \frac{C_q C_{q'} (\ln t_+)^{2d+5}}{t_+^{d + 1}} \exp \left( - \frac{|y_1 - z_q | + |y_2 - z_{q'}|}{C \sqrt{t_+}}\right) \, dt \\
    & \leq \frac{C_q C_{q'}(\ln (|y_1 - z_q |_+ + |y_2 - z_{q'}|_+))^{2d+5} }{\left( |y_1 - z_q |_+ + |y_2 - z_{q'}|_+ \right)^{2d}} \\
    & \leq C_q C_{q'} \frac{(\ln (|y_1 - z_q |_+)^{d + 5/2}}{|y_1 - z_q |_+^{d} } \times \frac{  ( \ln |y_2 - z_{q'}|_+))^{d+5/2} }{|y_2 - z_{q'}|_+^{d}},
\end{align*}
where in the last inequality we used the lower bound: there exists $c > 0$ such that for all $a , b \geq 1$,
$$(\ln (a+b) )^{2d+5} (a + b)^{2d} \geq c (\ln a )^{d+5/2} a^{d} \times (\ln b )^{d+5/2} b^{d}.$$

\subsection{Estimating the term $\mathcal{H}_4$}
The estimation of this term is essentially identical to (and in fact simpler than) the one of the term $\mathcal{H}_3$. We first recall the definition of the function $\mathcal{H}_4$
\begin{multline*}
    \mathcal{L}_{sec} \mathcal{H}_4(y_1, y_2, \varphi) =  
 \exp \left( U_0 (\varphi) + U_{\cos , 0} (\varphi) \right) \left( \sum_{q \in \mathcal{Q}} 2\pi z(\beta , q)  \cos (2\pi (\di^* \varphi , n_q)) \sin (2\pi(\nabla G , n_q))  q(y_1) \right) \\
 \otimes \left( - \sum_{q \in \mathcal{Q}} 2\pi z(\beta , q)  \sin (2\pi (\di^* \varphi , n_q)) \left( \cos (2\pi(\nabla G , n_q)) -1 \right)  q(y_2) \right).
\end{multline*}
We may then show that there exists a function $\mathcal{W}_4 : \Zd \times \Zd \times \Omega \to \R^{{d \choose 2} \times d }$ such that, for any $y_1 , y_2 \in \Zd$ and $\varphi \in \Omega$,
\begin{equation*} 
    \mathcal{H}_4(y_1, y_2, \varphi) = \di_{2} \mathcal{W}_4 (y_1 , y_2 , \varphi)
\end{equation*}
and
\begin{equation*} 
\left\| \di^*_{1} \mathcal{W}_4(y_1 , y_2 , \cdot)  \right\|_{L^2\left(\Omega \, ; \, \R^{d \times d} \right)} \leq C \frac{(\ln |y_1|_+)^{d + 7/2}}{|y_1|_+^{d-1}} \times \frac{(\ln |y_2|_+)^{d + 7/2}}{|y_2|_+^{d-1}}.
\end{equation*}
The proof of this identity and inequality follows exactly the same reasoning as that of~\eqref{eq:defWdd} and~\eqref{eq:boundWdd}, and is actually simpler since we can use the bound $1- \cos a \sim a^2/2 \ll a \sim \sin a$ for $a$ small in the term involving the gradient of the Green’s function. Therefore, we omit further technical details here.

\section{Proof of the lower bound} \label{sec:shortsec5}

The lower bound in Theorem \ref{th:mainth} follows from the asymptotics of the transversal two-point functions established by the authors in \cite{DW}, and a correlation inequality of Dunlop-Newman type. We first state the Dunlop-Newman inequality for the XY model on finite graphs. For any finite rooted graph $G = (V , E, \rho)$ and $\beta>0$, let $\mu_{\mathrm{XY}, \beta,G}$ and $\mu_{\mathrm{Vil}, \beta,G}$ be the Gibbs measure of the XY and Villain models in $G$ with inverse temperature $\beta$ and Dirichlet boundary condition at $\rho$. They are defined to be the probability distributions on the space $\Omega_G := \left\{ \theta : V \to (- \pi , \pi] \, : \, \theta(\rho) = 0 \right\}$ given by
\begin{equation*} 
        \mu_{ \mathrm{XY},\beta,G}(d \theta) := \frac{1}{Z_{\mathrm{XY},\beta,G}}  \exp \left( \beta \sum_{(x,y) \in E}  \cos \left( \theta_x -\theta_y \right) \right)\prod_{x \in V \setminus \rho} d\theta(x)
\end{equation*}
and
\begin{equation*}
    \mu_{ \mathrm{Vil}, \beta,G}(d \theta) := \frac{1}{Z_{\mathrm{Vil},\beta,G}}  \prod_{(x,y) \in E} v_\beta \left( \theta_x -\theta_y \right)  \prod_{x \in V \setminus \rho} d\theta(x).
\end{equation*}
We denote by $\langle \cdot \rangle_{\mathrm{XY}, \beta,G}$ and $\langle \cdot \rangle_{ \mathrm{Vil}, \beta,G}$ the expectation with respect to these measures and note that this formalism is sufficiently general to contain the Villain model on a box of $\mathbb{Z}^d$ with zero boundary condition introduced in~\eqref{e.finitedirichlet}.

\begin{theorem}[Theorem 13 of \cite{DN}]
\label{t.dunlopnewman}
Let $G$ be a finite rooted graph, then for any $\beta>0$ and any $x , y \in V$,
    \begin{multline*}
        \langle \sin \theta_x \sin \theta_y \rangle_{\mathrm{XY}, \beta,G, }^2
        \\ \leq
       \left( \langle \cos \theta_x \cos \theta_y \rangle_{\mathrm{XY}, \beta, G} - \langle \cos \theta_x \rangle_{\mathrm{XY}, \beta,G} \langle \cos \theta_y \rangle_{\mathrm{XY}, \beta,G} \right)
        \left( \langle \cos \theta_x \cos \theta_y \rangle_{\mathrm{XY}, \beta,G} + \langle \cos \theta_x \rangle_{\mathrm{XY},\beta,G} \langle \cos \theta_y \rangle_{\mathrm{XY}, \beta, G} \right).
    \end{multline*}
\end{theorem}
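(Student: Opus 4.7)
The plan is to recast the inequality as a Ginibre correlation inequality on a duplicated system. Set $m_x := \langle\cos\theta_x\rangle_{\mathrm{XY},\beta,G}$ and $G_\pm(x,y):=\langle\cos(\theta_x\pm\theta_y)\rangle_{\mathrm{XY},\beta,G}$. Using the product-to-sum identities $\cos\theta_x\cos\theta_y = \tfrac12[\cos(\theta_x-\theta_y)+\cos(\theta_x+\theta_y)]$ and $\sin\theta_x\sin\theta_y = \tfrac12[\cos(\theta_x-\theta_y)-\cos(\theta_x+\theta_y)]$, the asserted inequality is algebraically equivalent to
\begin{equation*}
G_-(x,y)\,G_+(x,y)\;\ge\;m_x^{\,2}\,m_y^{\,2}.
\end{equation*}

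To prove this equivalent form I would introduce two independent replicas $\theta^{(1)},\theta^{(2)}$ with joint measure $\mu_2=\mu_{\mathrm{XY},\beta,G}\otimes\mu_{\mathrm{XY},\beta,G}$ and denote expectations by $\langle\cdot\rangle_2$. By independence together with the $\theta\mapsto-\theta$ symmetry of the Dirichlet measure (which gives $\langle\sin\theta_x\rangle=0$), a short computation yields
\begin{equation*}
m_x^{\,2} = \langle\cos(\theta_x^{(1)}+\theta_x^{(2)})\rangle_2, \qquad m_y^{\,2} = \langle\cos(\theta_y^{(1)}-\theta_y^{(2)})\rangle_2.
\end{equation*}
A parallel product-to-sum computation, together with the $\theta^{(2)}\mapsto-\theta^{(2)}$ symmetry of $\mu_2$ (which kills a pair of sine-sine cross terms), gives the key identity
\begin{equation*}
G_-(x,y)\,G_+(x,y) \;=\; \langle\cos(\theta_x^{(1)}-\theta_y^{(1)})\cos(\theta_x^{(2)}+\theta_y^{(2)})\rangle_2 \;=\; \langle\cos(\theta_x^{(1)}+\theta_x^{(2)})\cos(\theta_y^{(1)}-\theta_y^{(2)})\rangle_2.
\end{equation*}
The required bound therefore reduces to the non-negativity of $\cov_{\mu_2}\!\bigl(\cos(\theta_x^{(1)}+\theta_x^{(2)}),\,\cos(\theta_y^{(1)}-\theta_y^{(2)})\bigr)$.

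The measure $\mu_2$ is itself a ferromagnetic XY Gibbs measure on the disjoint union $G\sqcup G$, and $\theta_x^{(1)}+\theta_x^{(2)}$ and $\theta_y^{(1)}-\theta_y^{(2)}$ are integer linear combinations of the angle variables. The desired non-negativity is therefore exactly the classical Ginibre inequality \cite{Gi} in the form $\langle\cos(\langle n,\theta\rangle)\cos(\langle m,\theta\rangle)\rangle\ge\langle\cos(\langle n,\theta\rangle)\rangle\langle\cos(\langle m,\theta\rangle)\rangle$ for integer sources, applied to $n=\delta_{x^{(1)}}+\delta_{x^{(2)}}$ and $m=\delta_{y^{(1)}}-\delta_{y^{(2)}}$. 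The only mildly delicate point is the trigonometric bookkeeping showing that $G_-G_+$ rewrites as the stated two-replica expectation, where one must keep careful track of which sine-sine cross terms are eliminated by the replica-flip symmetry; once this is done the rest is purely an invocation of a classical correlation inequality, so no serious obstacle is expected.
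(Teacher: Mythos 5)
The paper does not prove this statement itself; it cites it directly from Dunlop--Newman~\cite{DN}. Your argument is correct and is, to the best of my knowledge, essentially the duplication argument used there: the algebraic reduction of the inequality to $G_-G_+\ge m_x^2m_y^2$ is exact (since $C-S=G_+$, $C+S=G_-$), the replica identities $m_x^2=\langle\cos(\theta_x^{(1)}+\theta_x^{(2)})\rangle_2$, $m_y^2=\langle\cos(\theta_y^{(1)}-\theta_y^{(2)})\rangle_2$ and $G_-G_+=\langle\cos(\theta_x^{(1)}+\theta_x^{(2)})\cos(\theta_y^{(1)}-\theta_y^{(2)})\rangle_2$ are all verified by expanding and using $\langle\sin\theta_z\rangle=0$ together with $\langle\sin\theta_x\cos\theta_y\rangle=\langle\cos\theta_x\sin\theta_y\rangle=0$, and Ginibre's correlation inequality on the disjoint union $G\sqcup G$ applies since both $\cos(\theta_x^{(1)}+\theta_x^{(2)})$ and $\cos(\theta_y^{(1)}-\theta_y^{(2)})$ are of the form $\cos\bigl(\sum_z n_z\theta_z\bigr)$ with integer $n$, which lie in the Ginibre cone for the ferromagnetic XY measure $\mu\otimes\mu$. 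The only cosmetic remark is that your phrase ``kills a pair of sine-sine cross terms by the $\theta^{(2)}\mapsto-\theta^{(2)}$ symmetry'' is slightly misleading: what one actually uses, after expanding both two-replica expectations, is the single-replica reflection symmetry making the $\langle\sin(\cdot)\rangle$ factors vanish; the two expressions then both collapse to $C^2-S^2$ rather than being related by a replica flip.
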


We also notice that the Villain model on a finite graph $G$ can be written as the distribution limit of the rescaled limit of the XY models on metric graphs $G_n$. Given a finite graph $G= (\mathcal{V}(G), \mathcal{E}(G))$, we replace each edge $e=(v,w) \in  \mathcal{E}(G)$ by $n-1$ new vertices denoted as $v(e,1), \ldots, v(e,n-1)$ (we also denote by $v(e,0)= v$ and $v(e,n) =w$), and a collection of $n$ new edges $\{v, v(e,1)\}, \ldots, \{v(e,n-1),w\}$. This defines a new graph $G_n = (\mathcal{V}(G_n), \mathcal{E}(G_n))$. 

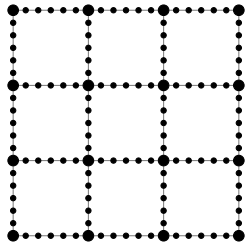
\begin{figure}[h]
\begin{center}
\begin{tikzpicture}
    \def\N{3} 
    \def\subdiv{6} 
    \def\step{1/\subdiv} 
    
    \foreach \x in {0,...,\N} {
        \foreach \y in {0,...,\N} {
            \ifnum \x<\N
                \foreach \i in {1,...,\subdiv} {
                    \draw[gray] (\x+\i*\step,\y) -- (\x+\i*\step-\step,\y);
                }
            \fi
            \ifnum \y<\N
                \foreach \i in {1,...,\subdiv} {
                    \draw[gray] (\x,\y+\i*\step) -- (\x,\y+\i*\step-\step);
                }
            \fi
        }
    }
    
    \foreach \x in {0,...,\N} {
        \foreach \y in {0,...,\N} {
            \filldraw[black] (\x,\y) circle (2pt);
        }
    }
    
    \foreach \x in {0,...,\N} {
        \foreach \y in {0,...,\N} {
            \ifnum \x<\N
                \foreach \i in {1,...,\subdiv} {
                    \filldraw[black] (\x+\i*\step,\y) circle (1pt);
                }
            \fi
            \ifnum \y<\N
                \foreach \i in {1,...,\subdiv} {
                    \filldraw[black] (\x,\y+\i*\step) circle (1pt);
                }
            \fi
        }
    }
    \end{tikzpicture}
    \end{center}
    \caption{The metric graph $G_6$, where $G$ is a $3 \times 3$ cube in $\Z^2$}

\end{figure}

The following result, obtained in \cite{NW} and \cite{AHPS}, shows that the correlation function on a rescaled sequence of the XY model on metric graphs converges to that of the Villain model.

\begin{theorem}
Let $G$ be a finite graph, $\beta>0$ and $n\in\N$, let $\beta_n = n\beta$ and $G_n$ be the metric graph obtained by $G$. Then we have 
    \begin{equation*}
      \lim_{n\to \infty}  \langle \cos \theta_x \cos \theta_y \rangle_{\mathrm{XY},\beta_n,G_n}
      =  \langle \cos \theta_x \cos \theta_y \rangle_{ \mathrm{Vil}, \beta,G}.
    \end{equation*}
    Likewise,
     \begin{equation*}
      \lim_{n\to \infty}  \langle \sin \theta_x \sin \theta_y \rangle_{\mathrm{XY}, \beta_n,G_n}
      =  \langle \sin \theta_x \sin \theta_y \rangle_{ \mathrm{Vil}, \beta,G}.
    \end{equation*}
\end{theorem}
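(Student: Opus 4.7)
The plan is to integrate out the interior vertices of each subdivided edge in $G_n$ and show that the resulting effective single-edge weight converges, after normalisation, to the Villain kernel $v_\beta$. First I would observe that on each edge $e = (v,w) \in \mathcal{E}(G)$ the auxiliary variables $\theta(e,1), \ldots, \theta(e,n-1)$ appear in the Hamiltonian of $\mu_{\mathrm{XY}, \beta_n, G_n}$ only through the chain $\prod_{i=0}^{n-1} \exp(\beta_n \cos(\theta(e,i+1) - \theta(e,i)))$, so by Fubini they can be integrated over $(-\pi, \pi]^{n-1}$ independently edge by edge. Setting $K_n(\phi) := \exp(\beta_n \cos\phi)$, one obtains
\[
\int \prod_{i=0}^{n-1} K_n(\theta(e,i+1) - \theta(e,i))\, \prod_{i=1}^{n-1} d\theta(e,i) \;=\; K_n^{\ast n}(\theta_v - \theta_w),
\]
where $K_n^{\ast n}$ denotes the $n$-fold convolution on $S^1$. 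Since $\cos\theta_x \cos\theta_y$ and $\sin\theta_x\sin\theta_y$ depend only on the values at $x, y \in \mathcal{V}(G)$, the marginal of $\mu_{\mathrm{XY}, \beta_n, G_n}$ on $\Omega_G$ has density proportional to $\prod_{(v,w) \in \mathcal{E}(G)} K_n^{\ast n}(\theta_v - \theta_w)$, and it suffices to show that this marginal converges weakly to $\mu_{\mathrm{Vil}, \beta, G}$.

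Second, I would prove uniform convergence of $K_n^{\ast n}/K_n^{\ast n}(0)$ to $v_\beta/v_\beta(0)$ on $(-\pi,\pi]$ via Fourier analysis on $S^1$. The Fourier coefficients of $K_n$ are the modified Bessel functions $I_k(\beta_n)$, and the large-argument asymptotic
\[
\frac{I_k(z)}{I_0(z)} \;=\; 1 - \frac{k^2}{2z} + O(z^{-2}) \qquad (z \to \infty, \, k \text{ fixed})
\]
yields $\left( I_k(\beta_n)/I_0(\beta_n) \right)^n \to e^{-k^2/(2\beta)}$, which are precisely the Fourier coefficients of the heat kernel on $S^1$ at time $1/\beta$; by Poisson summation this heat kernel agrees with $v_\beta$ up to a multiplicative constant. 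To upgrade pointwise convergence in $k$ to convergence of the full Fourier series, I would establish a Gaussian-type dominating bound $\left| I_k(\beta_n)/I_0(\beta_n) \right|^n \leq e^{-c k^2/\beta}$ for $|k| \lesssim \beta_n$, combined with an exponential bound for $|k| \gg \beta_n$ coming from the Bessel integral representation, providing an $n$-independent $\ell^1$ majorant that allows one to pass to the limit term by term.

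Finally, once $K_n^{\ast n}$ converges uniformly to (a constant multiple of) $v_\beta$ on $(-\pi, \pi]$, the reduced Boltzmann weights on the compact space $\Omega_G$ and their partition functions all converge. Since $\cos\theta_x \cos\theta_y$ and $\sin\theta_x \sin\theta_y$ are bounded continuous observables, the stated limits follow by a standard weak convergence argument. The main obstacle is the uniform control of the Bessel ratios $(I_k(\beta_n)/I_0(\beta_n))^n$ across the regimes $|k| \ll \beta_n$ and $|k| \gtrsim \beta_n$, where different asymptotic expansions are needed. A more conceptual alternative would bypass Bessel analysis altogether by viewing $(\theta(e,i))_{0 \leq i \leq n}$ as a discrete-time random walk on $S^1$ with step variance $\asymp \beta_n^{-1}$ and invoking a local central limit theorem on the circle, which directly identifies the limit of its $n$-step transition density with the Villain heat kernel $v_\beta$.
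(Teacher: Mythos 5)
The paper does not actually prove this theorem: it is stated as a known result and attributed to the references \cite{NW} and \cite{AHPS}, so there is no argument in the paper to compare against. Your proposed proof is a correct and self-contained argument, and it is essentially the standard one for this kind of metric-graph convergence statement.

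To summarize your route and confirm its soundness: integrating out the interior vertices of each subdivided edge produces the $n$-fold convolution $K_n^{\ast n}$ as the effective edge weight, so the marginal of $\mu_{\mathrm{XY},\beta_n,G_n}$ on $\Omega_G$ has density proportional to $\prod_{(v,w)} K_n^{\ast n}(\theta_v - \theta_w)$; this step is exact. Passing to Fourier coefficients, the normalized ratio $(I_k(\beta_n)/I_0(\beta_n))^n$ indeed converges to $e^{-k^2/(2\beta)}$ pointwise in $k$ by the uniform large-argument asymptotic $I_k(z)/I_0(z) = 1 - k^2/(2z) + O(z^{-2})$, and by Poisson summation these are (up to the common normalization $\sqrt{2\pi/\beta}$) the Fourier coefficients of $v_\beta$. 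The one place where your argument is deliberately sketchy is the $n$-independent $\ell^1$ majorant for $(I_k(\beta_n)/I_0(\beta_n))^n$; this is the genuine technical content, and it does require stitching together a Gaussian-type bound $I_k(z)/I_0(z) \le \exp(-c\,k^2/z)$ valid for $|k| \lesssim z$ (obtainable e.g.\ from the representation $I_k(z)/I_0(z) = \mathbb{E}[\cos(k\Theta)]$ for the von Mises density $\propto e^{z\cos\theta}$, or from known Bessel-ratio inequalities) with a crude exponential bound for $|k| \gg z$ from the series representation of $I_k$. Once uniform convergence of $K_n^{\ast n}/K_n^{\ast n}(0)$ to $v_\beta/v_\beta(0)$ is established, the reduced partition functions converge to the strictly positive Villain partition function, and weak convergence of the Gibbs measures on the compact space $\Omega_G$ gives the stated limits for the bounded continuous observables $\cos\theta_x\cos\theta_y$ and $\sin\theta_x\sin\theta_y$. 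The alternative route you mention — a local CLT on $S^1$ for the random walk with step variance $\asymp 1/\beta_n$ — is an equally valid and arguably more conceptual way to identify the limit transition density with $v_\beta$; the two are essentially Fourier duals of each other. In short: correct, filling a gap the paper leaves to the literature, with the one acknowledged technicality (the dominating bound) being where the real work resides.
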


Combined with Theorem \ref{t.dunlopnewman}, and the Ginibre inequality which gives the infinite volume limit of the Villain model, we obtain the corresponding Dunlop-Newman inequality for the Villain model in the thermodynamic limit. 
\begin{theorem}[Dunlop-Newman inequality for Villain model]
    For any $\beta>0$, we have
    \begin{multline*}
        \langle \sin \theta_x \sin \theta_y \rangle_{\mu_{\mathrm{Vil}, \beta}}^2
        \leq \\
       \left( \langle \cos \theta_x \cos \theta_y \rangle_{\mu_{\mathrm{Vil}, \beta}} - \langle \cos \theta_x \rangle_{\mu_{\mathrm{Vil}, \beta}} \langle \cos \theta_y \rangle_{\mu_{\mathrm{Vil}, \beta}} \right)
        \left( \langle \cos \theta_x \cos \theta_y \rangle_{\mu_{\mathrm{Vil}, \beta}} + \langle \cos \theta_x \rangle_{\mu_{\mathrm{Vil}, \beta}} \langle \cos \theta_y \rangle_{\mu_{\mathrm{Vil}, \beta}} \right).
    \end{multline*}
\end{theorem}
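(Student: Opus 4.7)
The plan is to combine the three ingredients stated just above the theorem: the Dunlop-Newman correlation inequality (Theorem \ref{t.dunlopnewman}) for the XY model on a finite rooted graph, the metric graph limit that identifies the Villain model on $G$ as the $n\to\infty$ limit of the XY model on $G_n$ at inverse temperature $\beta_n = n\beta$, and the Ginibre correlation inequality (as used to define $\mu_{\mathrm{Vil},\beta}$) to pass from finite to infinite volume.

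First, I would fix $x,y \in \Zd$ and a box $\Lambda_L \subset \Zd$ containing $\{x,y\}$, viewed as a finite rooted graph with an arbitrary boundary vertex chosen as root (so that $\mu_{\mathrm{Vil},\beta,\Lambda_L}$ coincides with the measure~\eqref{e.finitedirichlet} up to a harmless relabelling). Applying Theorem~\ref{t.dunlopnewman} to the metric graph $\Lambda_{L,n}$ at inverse temperature $\beta_n=n\beta$ gives
\begin{multline*}
\langle \sin \theta_x \sin \theta_y \rangle_{\mathrm{XY},\beta_n,\Lambda_{L,n}}^2 \\
\leq \bigl( \langle \cos \theta_x \cos \theta_y \rangle_{\mathrm{XY},\beta_n,\Lambda_{L,n}} - \langle \cos \theta_x \rangle_{\mathrm{XY},\beta_n,\Lambda_{L,n}} \langle \cos \theta_y \rangle_{\mathrm{XY},\beta_n,\Lambda_{L,n}} \bigr) \\
\times \bigl( \langle \cos \theta_x \cos \theta_y \rangle_{\mathrm{XY},\beta_n,\Lambda_{L,n}} + \langle \cos \theta_x \rangle_{\mathrm{XY},\beta_n,\Lambda_{L,n}} \langle \cos \theta_y \rangle_{\mathrm{XY},\beta_n,\Lambda_{L,n}} \bigr).
\end{multline*}

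Second, I would send $n\to \infty$ and invoke the convergence theorem just stated, which yields the corresponding identities for the cosine-cosine and sine-sine correlations (the one-point expectations $\langle \cos \theta_x \rangle$ are a special case of these limits, obtained by taking $y=x$ or a reference vertex). Since all the quantities involved are bounded in $[-1,1]$ uniformly in $n$, the passage to the limit inside the products is immediate, and we obtain the Dunlop-Newman inequality for the finite-volume Villain measure $\mu_{\mathrm{Vil},\beta,\Lambda_L}$ with arbitrary $L$.

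Third, I would let $L\to \infty$. By the Ginibre inequality (which was used in the introduction to construct $\mu_{\mathrm{Vil},\beta}$ as the thermodynamic limit of $\mu_{\mathrm{Vil},\beta,\Lambda_L}$), the correlations $\langle \cos\theta_x\rangle$, $\langle \cos\theta_x\cos\theta_y\rangle$ and $\langle \sin\theta_x\sin\theta_y\rangle$ converge to the corresponding expectations under $\mu_{\mathrm{Vil},\beta}$. Since the inequality is preserved under pointwise limits of bounded quantities, this yields the claimed inequality for $\mu_{\mathrm{Vil},\beta}$. The only mild subtlety is to make sure that the metric graph limit and the thermodynamic limit can be taken in succession (rather than requiring a joint limit); this is not an obstacle because for each fixed $L$ the inequality already holds at the level of $\mu_{\mathrm{Vil},\beta,\Lambda_L}$, and only the thermodynamic limit remains. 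No additional work is required beyond invoking the quoted results.
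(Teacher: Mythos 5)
Your proposal is correct and follows essentially the same route as the paper: apply the Dunlop--Newman inequality for the XY model on the metric graph $\Lambda_{L,n}$, pass to $n\to\infty$ via the metric graph convergence theorem to obtain the inequality for the finite-volume Villain measure, then take $L\to\infty$ using the Ginibre inequality. The paper states this in a single sentence, whereas you spell out the order of limits and the boundedness that justifies passing inequalities through the limits, which is a reasonable elaboration of the same argument.
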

Together with the asymptotics of the transversal correlation $\langle \sin \theta_x \sin \theta_y \rangle_{\mu_{\mathrm{Vil}, \beta}}$ obtained in \eqref{e.2ptfirstfirst}, we obtain the desired lower bound 
\begin{equation*}
    \langle \cos \theta_x \cos \theta_y \rangle_{\mu_{\mathrm{Vil}, \beta}} - \langle \cos \theta_x \rangle_{\mu_{\mathrm{Vil}, \beta}} \langle \cos \theta_y \rangle_{\mu_{\mathrm{Vil}, \beta}}
    \ge
    \frac{c}{|x-y|^{2(d-2)}}.
\end{equation*}

\appendix

\section{Notation} \label{app.notation}

In this appendix, we show that the random variables introduced  Sections~\ref{sec:sec3.1} and~\ref{sec:sec3.2} are well-defined random variables (N.B. This is not necessarily obvious due to the sum over the charges $q \in \mathcal{Q}$). Specifically, we prove that:
\begin{itemize}
    \item The random variable $U_x$ is well-defined in $L^2(\Omega \, ; \, \R)$. While we will not prove it to minimise the technicality of this section, we note that the argument written below can be refined so as to show that the random variable $\exp (8U_x)$ is integrable (N.B. In fact one can show that the random variable $\exp ( p_\beta U_x)$ for a large exponent $p_\beta$ which depends on $\beta$, and tends to infinity as $\beta \to \infty$, is integrable).
    \item The sum over the charges in the definitions of the five other random variables (introduced in~\eqref{eq:def3.5}) converge absolutely for any value of $\varphi \in \Omega$. This implies that these random variables are well-defined (and in fact are bounded uniformly over $\varphi \in \Omega$).
\end{itemize}
Throughout this section, we will make use of the results of Appendix~\ref{App.sumonlattice} (specifically, the upper bound~\eqref{eq:truineqB1}).

\subsection{Showing that $U_{x}$ is a well-defined random variable in $L^2(\Omega \, ; \, \R)$.}

Given a large integer $L \in \N$, let us denote by
\begin{equation*}
    \mathcal{Q}_L := \left\{ q \in \mathcal{Q} \, : \, \supp q \subseteq  \Lambda_L\right\}.
\end{equation*}
We then let $U_L$ be the random variable
\begin{equation*}
     U_L(\varphi) := \sum_{q \in \mathcal{Q}_L} z(\beta , q)  \sin \left(2\pi(\di^* \varphi , n_q)\right)  \sin \left(2\pi(\nabla G_x , n_q)\right) \in \R.
\end{equation*}
Let us note that the sum on the right-hand side of the previous display is absolutely convergent for any realisation of the field $\varphi \in \Omega$ and that this is in fact a smooth function (depending on finitely many coordinates) of $\varphi \in \Omega$ (due to the same argument as in the proof of Lemma~\ref{lemma.lemma2.5}). We will then show the inequality
\begin{equation*} 
    \left\| U_{2L} - U_L \right\|_{L^2(\Omega \, ; \, \R)} \leq \frac{C}{L}.
\end{equation*}
This result implies that, for any pair of integers $k , l \in \N$ with $l \leq k$,
\begin{equation*}
    \left\| U_{2^k} - U_{2^l} \right\|_{L^2(\Omega \, ; \, \R)} \leq \frac{C}{2^l}.
\end{equation*}
The sequence of random variables $\left( U_{2^k} \right)_{k \in \N}$ is thus Cauchy in $L^2(\Omega \, ; \, \R)$; it thus converges in this space (and in fact almost surely) toward a random variable. This limiting random variable is our definition for $U_x$.

Applying the Helffer-Sj\"{o}strand representation formula (noting that $U_L$ is a smooth function of $\varphi$), we have that 
\begin{align*}
    \mathrm{Var}_{\mu_{\beta}} \left[ U_{2L} - U_L \right] & = \sum_{x \in \Zd} \left\langle \partial_x \left( U_{2L} - U_L \right)(\varphi) \cdot \mathcal{G}(x , \varphi) \right\rangle_{\mu_\beta} \\
    & = 2 \pi \sum_{q \in \mathcal{Q}_{2L} \setminus \mathcal{Q}_L} z(\beta , q)  \sin \left(2\pi(\nabla G , n_q)\right)  \left\langle \cos \left(2\pi(\di^* \varphi , n_q)\right)  \left( \sum_{x \in \Zd} \mathcal{G}(x, \varphi) \cdot q(x) \right) \right\rangle_{\mu_\beta} \\
    & = 2 \pi \sum_{q \in \mathcal{Q}_{2L} \setminus \mathcal{Q}_L} z(\beta , q)  \sin \left(2\pi(\nabla G , n_q)\right)  \left\langle \cos \left(2\pi(\di^* \varphi , n_q)\right)  \left(  \mathcal{G}(\cdot, \varphi) , q \right) \right\rangle_{\mu_\beta} \\
    & = 2 \pi \sum_{q \in \mathcal{Q}_{2L} \setminus \mathcal{Q}_L} z(\beta , q)  \sin \left(2\pi(\nabla G , n_q)\right)  \left\langle \cos \left(2\pi(\di^* \varphi , n_q)\right)  \left(  \di^* \mathcal{G}(\cdot, \varphi) , n_q \right) \right\rangle_{\mu_\beta}
\end{align*}
where 
\begin{align*}
    \mathcal{L} \mathcal{G}(x , \varphi) & =  \partial_x (U_{2L} - U_L) = 2 \pi \sum_{q \in \mathcal{Q}} z(\beta , q)  \sin \left(2\pi(\nabla G , n_q)\right) \cos \left(2\pi(\di^* \varphi , n_q)\right) q(x) \in \R^{d \choose 2}.
\end{align*}
Using the definition of the Green's matrix for the Helffer-Sj\"{o}strand equation, we have the identity
\begin{equation*}
    \mathcal{G}(x , \varphi) = 2 \pi \sum_{q \in \mathcal{Q}} z(\beta , q)  \sin \left(2\pi(\nabla G , n_q)\right)  (\di^*_2 \mathcal{G}_{\mathbf{f}_q}(x , \varphi; \cdot), n_q) \hspace{3mm} \mbox{with} \hspace{3mm} \mathbf{f}_q(\varphi) = \cos \left(2\pi(\di^* \varphi , n_q)\right).
\end{equation*}
This leads to the upper bound (which is uniform over $x \in \Zd$)
\begin{equation*}
    \left\| \di^* \mathcal{G}(x , \cdot) \right\|_{L^2(\Omega \, ; \, \R)} \leq \sum_{z \in \Lambda_{2L} \setminus \Lambda_L} \frac{C}{|z|_+^{d-1}} \frac{\left(\ln |x - z|_+\right)^{d+2}}{|x-z|_+^{d}} \leq \frac{C \left(\ln L \right)^{d+2}}{L^{d-1}}.
\end{equation*}
Combining the previous inequality with the Cauchy-Schwarz inequality and the same argument as in the proof of Lemma~\ref{lemma.lemma2.5}, we deduce that (denoting by $C_q$ a constant which may grow polynomially fast in $\left\| q \right\|_1$)
\begin{align*}
    \mathrm{Var}_{\mu_{\beta}} \left[ U_{2L} - U_L \right] \leq C \sum_{q \in \mathcal{Q}_{2L} \setminus \mathcal{Q}_L} |z(\beta,q)| \left| (\nabla G , n_q) \right|  \left| \left\langle \left( \di^* \mathcal{G}(\cdot, \varphi) , n_q \right) \right\rangle_{\mu_\beta} \right| & \leq  \sum_{q \in \mathcal{Q}_{2L} \setminus \mathcal{Q}_L} |z(\beta,q)| \frac{C_q}{L^{d-1}}  \frac{C_q (\ln L)^{d+2}}{L^{d-1}}  \\
    & \leq \frac{(\ln L)^{d+2}}{L^{2d-2}} \underset{\leq C L^d}{\underbrace{\sum_{q \in \mathcal{Q}_{2L} \setminus \mathcal{Q}_L} z(\beta,q) C_q}} \\
    & \leq \frac{C (\ln L)^{d+2}}{L^{d-2}}.
\end{align*}

\subsection{Estimating the $L^\infty(\Omega \, ; \, \R)$-norm of $U_{\cos , x}$}
Using the upper bounds $\left| \cos (a) \right| \leq 1$, $1 - \cos (a) \leq a^2/2$ for $a \in \R$, the observation $2d - 2 > d$ (since $d \geq 3$) and the same argument as in Lemma~\ref{lemma.lemma2.5}, we may write: for any $x \in \Zd$ and any $\varphi \in \Omega$,
\begin{align*}
    \left|  U_{\cos , x}(\varphi) \right|  \leq \sum_{q \in \mathcal{Q}} |z(\beta , q)| \left|\cos \left(2\pi(\di^* \varphi , n_q)\right)\right|  \left| \cos \left(2\pi(\nabla G , n_q)\right) - 1 \right| \leq C \sum_{q \in \mathcal{Q}} |z(\beta , q)| \left| (\nabla G , n_q) \right|^2
    & \leq   \sum_{q \in \mathcal{Q}} |z(\beta , q)| \frac{C_q}{|z_q|_+^{2d-2}} \\
    & \leq C \sum_{z \in \Zd} \frac{1}{|z|_+^{2d-2}} \\
    & \leq C.
\end{align*}

\subsection{Estimating the $L^\infty(\Omega \, ; \, \R)$-norm of $U_{\sin \cos , x}$}
Using the upper bounds $\left| \cos (a) \right| \leq 1$, $\left| \sin (a) \right| \leq |a|$, $1 - \cos (a) \leq a^2/2$ for $a \in \R$ and the same argument as in Lemma~\ref{lemma.lemma2.5}, we may write: for any $x \in \Zd$ and any $\varphi \in \Omega$,
\begin{align*}
    \left|  U_{\sin \cos , x}(\varphi) \right| & \leq \sum_{q \in \mathcal{Q}} |z(\beta , q)| \left|\cos \left(2\pi(\di^* \varphi , n_q)\right)\right| \left| \sin \left(2\pi(\nabla G_x , n_q)\right) \right|  \left| \cos \left(2\pi(\nabla G , n_q)\right) - 1 \right| \\
    & \leq C \sum_{q \in \mathcal{Q}} |z(\beta , q)| \left| (\nabla G_x , n_q) \right| \left| (\nabla G , n_q) \right|^2 \\
    & \leq  C \sum_{q \in \mathcal{Q}} |z(\beta , q)|\frac{C_q}{|x - z_q|_+^{d-1} |z_q|_+^{2d-2}} \\
    & \leq C \sum_{z \in \Zd} \frac{1}{|x - z|_+^{d-1}|z|_+^{2d-2}} \\
    & \leq \frac{C}{|x|_+^{d-1}}.
\end{align*}

\subsection{Estimating the $L^\infty(\Omega \, ; \, \R)$-norm of $U_{\cos \sin , x}$}
Using the same computation as for the term $U_{\cos \sin}$, we have that: for any $x \in \Zd$ and any $\varphi \in \Omega$,
\begin{align*}
    \left|  U_{\cos \sin , x}(\varphi) \right|  \leq C \sum_{q \in \mathcal{Q}} |z(\beta , q)| \left| (\nabla G_x , n_q) \right|^2 \left| (\nabla G , n_q) \right| \leq  C \sum_{q \in \mathcal{Q}} |z(\beta , q)|\frac{C_q}{|x - z_q|_+^{2d-2} |z_q|_+^{d-1}}
    & \leq C \sum_{z \in \Zd} \frac{1}{|x - z|_+^{2d-2}|z|_+^{d-1}} \\
    & \leq \frac{C}{|x|_+^{d-1}}.
\end{align*}

\subsection{Estimating the $L^\infty(\Omega \, ; \, \R)$-norm of $U_{\cos \cos , x}$}
Using the upper bounds $\left| \cos (a) \right| \leq 1$ and $1 - \cos (a) \leq a^2/2$ for $a \in \R$, we may write: for any $x \in \Zd$ and any $\varphi \in \Omega$,
\begin{align*}
    \left|  U_{\cos \cos , x} (\varphi) \right|  \leq C \sum_{q \in \mathcal{Q}} |z(\beta , q)| \left| (\nabla G_x , n_q) \right|^2 \left| (\nabla G , n_q) \right|^2
     \leq  C \sum_{q \in \mathcal{Q}} |z(\beta , q)|\frac{C_q}{|x - z_q|_+^{2d-2} |z_q|_+^{2d-2}}
    & \leq C \sum_{z \in \Zd} \frac{1}{|x - z|_+^{2d-2}|z|_+^{2d-2}} \\
    & \leq \frac{C}{|x|_+^{2d-2}}.
\end{align*}

\subsection{Estimating the $L^\infty(\Omega \, ; \, \R)$-norm and the variance of $U_{\sin \sin , x}$}
Using the upper bound $\left| \sin (a) \right| \leq |a|$ for $a \in \R$, we may write: for any $x \in \Zd$ and any $\varphi \in \Omega$,
\begin{align*}
    \left|  U_{\sin \sin , x} (\varphi) \right|  \leq C \sum_{q \in \mathcal{Q}} |z(\beta , q)| \left| (\nabla G_x , n_q) \right| \left| (\nabla G , n_q) \right| \leq  C \sum_{q \in \mathcal{Q}} |z(\beta , q)|\frac{C_q}{|x - z_q|_+^{d-1} |z_q|_+^{d-1}}
    & \leq C \sum_{z \in \Zd} \frac{1}{|x - z|_+^{d-1}|z|_+^{d-1}} \\
    & \leq \frac{C}{|x|_+^{d-2}}.
\end{align*}
The variance of the random variable $U_{\sin \sin , x}$ is more technical to estimate. We first collect the identity, for each $y \in \Zd$,
\begin{equation*}
  \partial_y U_{\sin \sin , x} (\varphi) = - \sum_{q \in \mathcal{Q}}2\pi z(\beta , q) \sin \left(2\pi(\di^* \varphi , n_q)\right) \sin \left(2\pi(\nabla G , q)\right) \sin \left( 2\pi(\nabla G_x , q)\right) q(y) \in \R^{d \choose 2}.
\end{equation*}
By applying the Helffer-Sj\"{o}strand representation formula, we have the identity
\begin{align*}
     \mathrm{Var}_{\mu_\beta} \left[ U_{\sin \sin , x} \right] & = \sum_{y \in \Zd} \left\langle \partial_y U_{\sin \sin , x} (\varphi)  \cdot \mathcal{G}(y , \varphi) \right\rangle_{\mu_\beta} \\
     & = 2\pi \sum_{q \in \mathcal{Q}}  z(\beta , q)  \sin \left( 2\pi(\nabla G , n_q)\right) \sin \left( 2\pi(\nabla G_x , q) \right)\left\langle  \sin \left( 2\pi(\di^*\varphi , n_q) \right) \left( \di^* \mathcal{G}(\cdot , \phi) , n_q \right) \right\rangle_{\mu_\beta}
\end{align*}
with
\begin{equation*}
    \mathcal{L} \mathcal{G}(z , \varphi) = 2 \pi \sum_{q \in \mathcal{Q}} z(\beta , q) \sin\left( 2\pi(\di^* \varphi , q) \right) \sin \left(2\pi(\nabla G , n_q) \right) \sin \left(2\pi(\nabla G_x , n_q)\right) q(z).
\end{equation*}
This function can be decomposed using the Green's matrix as follows
\begin{equation*}
     \mathcal{G}(z , \varphi) = 2 \pi \sum_{q \in \mathcal{Q}} z(\beta , q)  \sin \left(2\pi(\nabla G , n_q) \right) \sin \left(2\pi(\nabla G_x , n_q)\right) \left( \di^*_2 \mathcal{G}_{\mathbf{f}_q}(z , \varphi , \cdot ) ,  n_q \right)
\end{equation*}
with $\mathbf{f}_q(\varphi) = \sin\left( 2\pi(\di^* \varphi , q) \right)$. By Lemma~\ref{lemma.lemma2.5} and Proposition~\ref{prop.prop4.11chap4}, this leads to the upper bound
\begin{equation*}
    \left\| \di^* \mathcal{G} (z , \cdot) \right\|_{L^2 \left( \Omega \, ; \, \R^{d} \right)} \leq \sum_{z' \in \Zd} \frac{C}{|z'|_+^{d-1} |z' - x|_+^{d-1}} \frac{(\ln |z - z'|_+)^{d+2}}{|z' - z|_+^{d}}.
\end{equation*}
We thus obtain, using Lemma~\ref{lemma.lemma2.5} again together with the inequality~\eqref{ineqB4} of Appendix~\ref{App.sumonlattice}
\begin{equation*}
    \mathrm{Var}_{\mu_\beta} \left[ U_{\sin \sin , x} \right]  \leq C \sum_{z , z' \in \Zd} \frac{1}{|z|_+^{d-1} |z - x|_+^{d-1}}  \frac{1}{|z'|_+^{d-1} |z' - x|_+^{d-1}} \frac{(\ln |z - z'|_+)^{d+2}}{|z - z'|_+^{d}}  \leq \frac{C}{|x|^{2d-2}_+}.
\end{equation*}

\section{Annealed regularity estimates for the heat kernel} \label{app.CZreg}

This section contains the proof of the inequalities~\eqref{eq:decaygradheat} and~\eqref{eq:decaygradgradheat} of Proposition~\ref{prop.prop4.11chap4HK}. The result is restated below.

\begin{proposition}[$L^p$-annealed regularity] \label{prop:Lpannealedreg}
    For any exponent $p \in (1 , \infty),$ there exists an inverse temperature $\beta_1 := \beta_1(p , d) < \infty$ such that for any $\beta \geq \beta_1$ and any pair of vertices $x , y \in \Zd$,
    \begin{equation*}
        \left\| \di^*_1 P^{\cdot}(t , x  , y ) \right\|_{L^p\left(\Omega \, ; \, \R^{d \times {d \choose 2}} \right)} +  \left\| \di^*_2 P^{\cdot}(t , x  , y ) \right\|_{L^p\left(\Omega \, ; \, \R^{{d \choose 2} \times d} \right)} \leq \frac{C (\ln t_+)^{(d+2)}}{t^{\frac d2 + \frac{1}{2}}} \exp \left( - \frac{|x-y|}{C\sqrt{t_+}} \right)
    \end{equation*}
    and
    \begin{equation*}
        \left\| \di^*_1 \di^*_2 P^{\cdot}(t , x  , y ) \right\|_{L^p\left(\Omega \, ; \, \R^{d \times d } \right)} \leq \frac{C (\ln t_+)^{(d+2)}}{t^{\frac d2 + 1}} \exp \left( - \frac{|x-y|}{C\sqrt{t_+}} \right).
    \end{equation*}
\end{proposition}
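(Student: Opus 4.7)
\medskip

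\noindent\textbf{Proof proposal.} The plan is to combine three ingredients: (i) a perturbative splitting exploiting the small ellipticity contrast of $\mathcal{L}_{\mathrm{spat}}^{\varphi_\cdot}$, (ii) the classical Calder\'on-Zygmund regularity for the constant coefficient parabolic heat equation, and (iii) annealing via the $\mu_\beta$-stationarity of the Langevin dynamic, following Delmotte--Deuschel~\cite{DD05}. First I would write $\mathcal{L}_{\mathrm{spat}}^{\varphi_\cdot} = \bar L + \mathcal{V}^{\varphi_\cdot}$, where $\bar L := -\tfrac{1}{2\beta}\Delta$ is the deterministic leading part and
\begin{equation*}
  \mathcal{V}^{\varphi_\cdot} F := \frac{1}{2\beta}\sum_{n\geq 1}\frac{1}{\beta^{n/2}}(-\Delta)^{n+1} F + \sum_{q \in \mathcal{Q}} \nabla_q^* \cdot \mathbf{a}_q \nabla_q F
\end{equation*}
is a perturbation whose operator norm (as a map between natural $L^2$-based Sobolev spaces on $\Zd$) is of order $\beta^{-3/2}$, uniformly in $\varphi$ for the first term, and after summing over charges via Lemma~\ref{lemma.lemma2.5} for the second. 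Let $\bar P(t,x-y)$ denote the (deterministic) heat kernel associated with $\bar L$; standard Fourier or semigroup arguments yield pointwise Gaussian estimates on all derivatives up to second order,
$
|\nabla^k \bar P(t,x)| \leq C t_+^{-(d+k)/2} \exp(-|x|/(C\sqrt{t_+}))
$
for $k=0,1,2$, and in particular $\di_1^*\bar P$ and $\di_1^*\di_2^* \bar P$ already satisfy the target bounds with no logarithmic factor.

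Next I would apply Duhamel's principle in the Langevin-trajectory frozen setting:
\begin{equation*}
    P^{\varphi_\cdot}(t,x,y) = \bar P(t,x-y)\,\mathrm{Id} - \int_0^t \bar P(t-s) \ast \bigl[\mathcal{V}^{\varphi_s} P^{\varphi_\cdot}(s,\cdot,y)\bigr](x)\,ds,
\end{equation*}
differentiate (in either or both variables) and use the commutation of $\di_1^*$, $\di_2^*$ with convolution in $x$ to move the codifferentials onto $\bar P$. The resulting representation expresses $\di_1^* P^{\varphi_\cdot}$ (and $\di_1^*\di_2^* P^{\varphi_\cdot}$) as a sum of an explicit Gaussian term and a \emph{parabolic singular integral} applied to $\mathcal{V}^{\varphi_s} P^{\varphi_\cdot}$. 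The Calder\'on-Zygmund theory (cf.~\cite[Chapter 9]{GT01}) guarantees that, for every $p\in(1,\infty)$, the operator $f \mapsto \int_0^t (\di_1^*\bar P(t-s))\ast f(s,\cdot)\,ds$ is bounded on the weighted parabolic $L^p$-spaces adapted to Gaussian decay. Iterating Duhamel produces a Neumann series whose general term involves $k$-fold applications of $\mathcal{V}^{\varphi_{s_k}}\cdots \mathcal{V}^{\varphi_{s_1}}$ interleaved with gradients of $\bar P$; since each $\mathcal{V}$ contributes a factor $O(\beta^{-3/2})$ in the relevant norms while each CZ step only costs an $O(1)$ constant, the series converges absolutely for $\beta$ large. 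Taking expectation in $\varphi$ and using the stationarity of the Langevin dynamic under $\mu_\beta$ converts the pathwise bounds into the annealed $L^p$ bounds of the statement.

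The main obstacle lies in two places. First, in contrast to the scalar, nearest-neighbour setting of Delmotte--Deuschel, here one deals with a \emph{system} (components indexed by $2$-forms valued in $\R^{\binom d2}$) with \emph{infinite-range} interactions through the charges. The systems structure precludes the use of the De Giorgi--Nash--Moser theory or the parabolic maximum principle, so all pointwise regularity must be imported via CZ from the constant-coefficient operator, and the infinite range is tamed by controlling the charge sums using the exponential decay $|z(\beta,q)|\leq e^{-c\sqrt{\beta}\|q\|_1}$ and Lemma~\ref{lemma.lemma2.5}. Second, one must upgrade the purely $L^p(dt\,dx)$ CZ bound to a pointwise Gaussian-decay bound of the form appearing in the proposition. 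This is where the technical core of the proof lies: by covering the space-time point $(t,x,y)$ with appropriate dyadic parabolic cubes, running the CZ estimate on each, and summing the resulting geometric series, one recovers the Gaussian envelope at the price of a logarithmic factor. A careful accounting of these dyadic contributions, summed over the $O(d)$ relevant scales, yields exactly the exponent $(d+2)$ of the logarithm; as noted in the remark following Proposition~\ref{prop.prop4.11chap4}, a more delicate analysis would remove this log, but the current bound is already sufficient for the proof of Theorem~\ref{th:mainth}.
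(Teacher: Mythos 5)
Your overall architecture is the correct one and matches the paper's: exploit the small ellipticity contrast to reduce to the constant-coefficient heat kernel, run a Calder\'on--Zygmund estimate pathwise, then anneal via the stationarity of the Langevin dynamic in the spirit of Delmotte--Deuschel. You also correctly identify the two structural obstacles (the systems structure, which kills pointwise De Giorgi--Nash--Moser regularity and forces CZ methods, and the infinite range of the coupling). However, the proposal has three genuine gaps.

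First, the annealing step is the serious one. Calder\'on--Zygmund theory, whether implemented via Neumann series or harmonic approximation, produces a \emph{space-time-averaged} $L^p$ bound of the form $\int_{t/4}^t\sum_y |\di^* P(s,y;0)|^p e^{|y|/(C\sqrt{t_+})}\,ds \lesssim (\ln t_+)^{\ldots} t_+^{\ldots}$, pathwise. To pass from this to the pointwise-in-$(x,y)$ \emph{annealed} $L^p$ bound in the statement, it does not suffice to ``take expectation in $\varphi$ and use stationarity,'' nor can a dyadic covering of $(t,x,y)$ do the job (CZ estimates do not improve as the cubes shrink). The essential Delmotte--Deuschel mechanism is the Chapman--Kolmogorov splitting $\di^*_1\di^*_2 P(t,x;0)=\sum_y \di^*_1 P(t,x;s,y)\,\di^*_2 P(s,y;0)$ at an intermediate time $s$, followed by distributing the Gaussian weight between the two factors, a weighted Cauchy--Schwarz plus Jensen to exchange the probabilistic $L^p$-norm with the spatial sum, and finally \emph{both space and time} stationarity to translate each factor to the origin and to the time window $(t/4,3t/4)$, where the CZ bound applies. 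The single-codifferential case additionally requires the pointwise Nash--Aronson bound for the undifferentiated kernel (which does hold pathwise). None of this appears in your outline.

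Second, the source and value of the logarithm are misattributed. In the paper, $(\ln t_+)^{d+2}$ arises because the infinite range of $\mathcal{L}^\varphi_{\mathrm{spat}}$ forces a coarsening of the heat kernel on scale $r\approx \ln t_+$: the caloric approximation error contains a term $e^{-c(\ln\beta)r}$, which is negligible only once $r \gtrsim \ln t_+$, and the resulting coarsening picks up the parabolic volume $|Q_{\ln t_+}|\asymp (\ln t_+)^{d+2}$. Your explanation via ``$O(d)$ dyadic scales'' does not produce this exponent and does not engage with the infinite range, which is the actual culprit.

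Third, the Neumann series itself would require substantial justification not given here. The perturbation $\mathcal{V}^{\varphi_\cdot}$ is not a bounded operator on $L^p$ in the usual sense (it involves arbitrarily high iterates of the Laplacian), so ``operator norm of order $\beta^{-3/2}$'' is only correct after absorbing the smoothing of $\bar P$, and the weighted-parabolic CZ boundedness of $f\mapsto\int_0^t\di^*_1\bar P(t-s)*f(s)\,ds$ on Gaussian-weighted spaces is asserted without proof. The paper sidesteps both issues by working instead with a caloric-approximation lemma (well-approximation at every scale $r$ in $\underline{L}^2$ by a constant-coefficient heat solution) combined with an abstract CZ interpolation lemma, plus the Caccioppoli $L^2$ estimate. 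This avoids any iterated expansion and, crucially, keeps the perturbative input entirely at the $L^2$ level where the small ellipticity contrast is directly usable.
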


\begin{remark}
Let us make two remarks about the previous statement:
\begin{itemize}
    \item More generally, the result holds for the gradients and mixed derivative of the heat kernel (instead on the codifferentials and mixed codifferential).
    \item To prove Theorem~\ref{th:mainth}, we need to choose the inverse temperature $\beta$ sufficiently large so that this result holds with the exponent $p = 8$.
    \item Since the Langevin dynamic is reversible, it is equivalent to prove the result for the dynamic or the time-reversed dynamic (and we will only prove it in the first case).
\end{itemize}
\end{remark}

The proof of Proposition~\ref{prop:Lpannealedreg} is divided into three parts. Section~\ref{sectprelimappB} is a preliminary section that introduces additional notation and auxiliary results. Section~\ref{sec:CZsec} contains the analytical core of the argument: exploiting the fact that the elliptic operator $\mathcal{L}_{\mathrm{spat}}^{\varphi\cdot}$ has small ellipticity contrast, we establish accurate estimates on the $L^p$-norm of the gradient of the heat kernel, for some exponent $p \in (1 , \infty)$ that tends to infinity as $\beta \to \infty$. This part of the argument is deterministic and relies on an adaptation of Calderón–Zygmund regularity theory. Finally, Section~\ref{sec:DDannealedregsec} introduces probabilistic elements: combining the regularity results from Section~\ref{sec:CZsec} with the stationarity of the Langevin dynamics—following the approach of Delmotte and Deuschel~\cite{DD05}—we complete the proof of Proposition~\ref{prop:Lpannealedreg}.

\subsection{Preliminaries} \label{sectprelimappB}

\subsubsection{Notation} 
Given an integer $L \in \N$ and $(t , x) \in (0 , \infty) \times \Zd$, we denote by $Q_L$ and $Q_L(t , x)$ the parabolic cylinders
\begin{equation*}
    Q_L := (- L^2 , 0) \times \Lambda_L \hspace{5mm} \mbox{and} \hspace{5mm} Q_L(t , x) = ( t - L^2 , t) \times ( x + \Lambda_L).
\end{equation*} 
We introduce the (discrete) parabolic boundary of $Q_L(t , x)$ according to
$$\partial_{\mathrm{par}} Q_L(t , x) := \left( \left\{ t - L^2 \right\} \times (x + \Lambda_L) \right) \cup \left( (t - L^2 , t) \times \partial (x + \Lambda_L)  \right)$$ 
Given a function $f : Q_L(t , x) \to \R$ and an exponent $p \in [1 , \infty)$, we introduce its average $L^p$-norm and $L^\infty$-norm according to the formulae
\begin{equation*}
    \left\| f \right\|_{\underline{L}^p(Q_L(t , x))}^p := \frac{1}{L^2} \int_{t - L^2}^t \frac{1}{\left| \Lambda_L \right|} \sum_{y \in (x + \Lambda_L)} \left| f(s , y) \right|^p \, ds \hspace{3mm} \mbox{and} \hspace{3mm} \left\| f \right\|_{L^\infty(Q_L(t , x))} := \sup_{(s , y) \in Q_L(t , x)} \left| f(s , y) \right|.
\end{equation*}
We next introduce the notation: for $C_0 \in (1 , \infty)$, $t \in (0 , \infty)$ and $x \in \Zd$, we let
    \begin{equation*}
        \Phi_{C_0} (t , x) := \frac{1}{t_+^{d/2}} \exp \left( - \frac{|x|}{C_0 \sqrt{t_+}} \right).
    \end{equation*}
Let us note that there exists a constant $C < \infty$ depending only on $C_0$ such that, for any $t \in (0 , \infty)$
\begin{equation} \label{ineq:almostprobab}
        \sum_{x \in \Zd} \Phi_{C_0} (t , x) \leq C.
\end{equation}

\subsubsection{Two preliminary results}

In this section, we collect two preliminary results which will be used in the proof of Proposition~\ref{prop:Lpannealedreg}. The first one is an optimal $L^2$-estimate for discrete gradient of the heat kernel.

\begin{proposition}[$L^2$-regularity estimate for the heat kernel] \label{prop:Caccioppoli}
    There exists a constant $C := C(d , \beta) < \infty$ such that for any $(t , x) \in (0 , \infty)$ and any realisation of the Langevin dynamic $(\varphi_t)_{t \geq 0}$,
    \begin{equation*}
        \left\| \nabla P^{\cdot}(\cdot , \cdot , 0) \right\|_{\underline{L}^2(Q_{\sqrt{t}/2}(t , x))} \leq \frac{C}{t_+^{\frac{d}{4} + \frac12}}\exp \left( - \frac{|x|}{C \sqrt{t_+} }\right).
    \end{equation*}
\end{proposition}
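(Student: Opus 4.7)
The plan is to derive Proposition~\ref{prop:Caccioppoli} by combining a quenched Caccioppoli-type energy estimate with the pointwise Nash--Aronson bound~\eqref{eq:estimateheat}. The heat kernel $P^{\varphi_\cdot}(\cdot, \cdot, 0)$ is a (system-valued) solution of the parabolic equation $\partial_s u + \mathcal{L}_{\mathrm{spat}}^{\varphi_\cdot} u = 0$ on $(0,\infty) \times \Zd$, so the task reduces to an energy argument controlling $\|\nabla u\|_{\underline{L}^2}$ on a parabolic cylinder by $\|u\|_{\underline{L}^2}$ on a larger one, and then inserting the Nash--Aronson estimate at the outer scale.

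Concretely, the first step would be to establish the following quenched Caccioppoli inequality: for any realisation of the Langevin dynamic, any $(t_0, x_0) \in (0, \infty) \times \Zd$, any $L > 0$ with $2L \leq \sqrt{t_0}$, and any solution $u$ of $\partial_s u + \mathcal{L}_{\mathrm{spat}}^{\varphi_\cdot} u = 0$ in $Q_{2L}(t_0, x_0)$,
\begin{equation*}
\|\nabla u\|_{\underline{L}^2(Q_L(t_0, x_0))}^2 \leq \frac{C}{L^2}\, \|u\|_{\underline{L}^2(Q_{2L}(t_0, x_0))}^2.
\end{equation*}
I would prove this by the classical energy method: test the equation against $\eta^2 u$, where $\eta = \eta(s, y)$ is a smooth space-time cutoff with $\eta \equiv 1$ on $Q_L(t_0, x_0)$, $\supp \eta \subset Q_{2L}(t_0, x_0)$, and $|\partial_s \eta| + |\nabla \eta|^2 \leq C L^{-2}$. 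The time-derivative part integrates by parts in $s$ to give a non-negative endpoint contribution plus an error of the form $\int \eta |\partial_s \eta| u^2$ controlled by the right-hand side. The spatial part produces the dissipation $\int \eta^2 (\nabla u, \mathbf{a}_0 \nabla u)$ with $\mathbf{a}_0$ the coefficient of the leading Laplacian $-\frac{1}{2\beta}\Delta$, plus cross terms of the form $\int \eta u\, \nabla \eta \cdot \mathbf{a}_0 \nabla u$ which are absorbed by Young's inequality at the cost of $C L^{-2} u^2$.

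Once the Caccioppoli inequality is available, the proof concludes in a few lines. On the outer cylinder $Q_{\sqrt{t}}(t, x)$, \eqref{eq:estimateheat} gives
\begin{equation*}
|P^{\varphi_\cdot}(s, y, 0)| \leq \frac{C}{t_+^{d/2}} \exp\!\left(-\frac{|x|}{C\sqrt{t_+}}\right)
\quad \text{for all } (s, y) \in Q_{\sqrt{t}}(t, x),
\end{equation*}
using $s_+ \simeq t_+$ and $|y| \geq |x| - \sqrt{t_+}$ on this cylinder, and this uniform bound transfers directly to the averaged $L^2$-norm. Applying the Caccioppoli inequality between $Q_{\sqrt{t}/2}(t, x)$ and $Q_{\sqrt{t}}(t, x)$ (or iterating a dyadic sequence of nested cylinders to match the precise exponent $t_+^{d/4 + 1/2}$ coming from the normalisation of $\|\cdot\|_{\underline{L}^2}$) yields the stated gradient bound.

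The main obstacle is the infinite-range character of $\mathcal{L}_{\mathrm{spat}}^{\varphi_\cdot}$: the iterated Laplacians $\sum_{n \geq 1} \beta^{-n/2} (-\Delta)^{n+1}$ and the long-range charge interactions $\sum_{q \in \mathcal{Q}} \nabla_q^* \mathbf{a}_q \nabla_q$ couple the interior of $Q_L(t_0, x_0)$ to its exterior, so the cross terms in the energy computation are no longer localised near $\supp \nabla \eta$. This is resolved by exploiting the small ellipticity contrast of $\mathcal{L}_{\mathrm{spat}}^{\varphi_\cdot}$: writing $\mathcal{L}_{\mathrm{spat}}^{\varphi_\cdot} = -\frac{1}{2\beta}\Delta + \mathcal{L}_{\mathrm{pert}}^{\varphi_\cdot}$ and using Hessian estimates analogous to~\eqref{H2small}--\eqref{H3small}, the quadratic form of the perturbation is bounded by $C(\beta^{-3/2} + e^{-c\sqrt{\beta}})$ times that of the local leading Dirichlet energy, so for $\beta$ sufficiently large all the long-range cross contributions (including those generated by a discrete Leibniz rule applied to $\nabla_q(\eta^2 u)$, which are further controlled summing over $\mathcal{Q}$ via Lemma~\ref{lemma.lemma2.5}) can be absorbed into the dominant local dissipation. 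The system-valued nature of $u$ poses no extra difficulty at this step, since the Caccioppoli argument runs coordinate-wise against the scalar uniformly elliptic leading term $-\frac{1}{2\beta}\Delta$.
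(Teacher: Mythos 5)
Your approach matches the paper's exactly: the paper's remark following Proposition~\ref{prop:Caccioppoli} states that the bound is obtained ``by combining the Caccioppoli inequality\ldots\ and the upper bound on the heat kernel stated in Proposition~\ref{prop.prop4.11chap4HK},'' which is precisely what you do, including the observation that the infinite-range perturbation of $-\frac{1}{2\beta}\Delta$ must be absorbed into the leading Dirichlet form using the small ellipticity contrast.

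Two small points worth flagging. First, the exponent $t_+^{d/4+1/2}$ in the statement of Proposition~\ref{prop:Caccioppoli} appears to be a typo: the $\underline{L}^p$ norms are already volume-averaged, so a single Caccioppoli step from a cylinder of radius $\sim\sqrt{t}$ combined with the Nash--Aronson bound $|P^{\varphi_\cdot}|\lesssim t_+^{-d/2}$ gives precisely $t_+^{-d/2-1/2}$, and indeed this is the exponent used when Proposition~\ref{prop:Caccioppoli} is invoked in the proof of Proposition~\ref{prop:CalderonZygmund} (see the display labelled~\eqref{eq:ineqB8Sh}). Your hedge about ``iterating a dyadic sequence of nested cylinders to match the precise exponent\ldots\ coming from the normalisation of $\|\cdot\|_{\underline{L}^2}$'' is chasing a phantom: iterating Caccioppoli does not change the power of $t$, and the averaged norm carries no extra normalisation factor. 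Second, you should not take the outer cylinder to be $Q_{\sqrt{t}}(t,x)$, since it reaches all the way down to time $s=0$ where $s_+\not\simeq t_+$ and $P^{\varphi_\cdot}(s,\cdot,0)$ is close to a Dirac mass; take instead something like $Q_{\sqrt{t/2}}(t,x)$ (or any outer radius $\leq\sqrt{t/8}$ as in the hypotheses of Proposition~\ref{prop:CalderonZygmundLpestimate}) so that $s\geq t/2$ on the cylinder and the Nash--Aronson bound is uniformly of order $t_+^{-d/2}\exp(-|x|/C\sqrt{t_+})$ there.
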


\begin{remark}
This estimate is obtained by combining the Caccioppoli inequality (see~\cite[Chapter 5, Section 1]{DW} but the argument is the standard one) and the upper bound on the heat kernel stated in Proposition~\ref{prop.prop4.11chap4HK}.
\end{remark}

The second result is a general statement asserting that if an $L^2$ function can be well-approximated on all scales by a bounded function, then it necessarily lies in some $L^p$-space, with a corresponding quantitative bound on its $L^p$-norm.

\begin{proposition} \label{prop:CalderonZygmundLpestimate}
For each $p \in  (2, \infty)$ and $A \geq 1$, there exists $\delta_0(p, A, d) > 0$
and $C(p, A, d) < \infty$ such that the following holds for every $\delta \in (0, \delta_0]$. Let $f : [0 , \infty) \times \Zd \to \R$ and a constant $K \in (0, \infty)$ be such that, for any $(s , y) \in [0 , \infty) \times \Zd$ and any $r \in \N$ such that $8r^2 \leq s$, there exists a function $f_{(s , y) , r} :  Q_r(s , y) \to \R $ satisfying
\begin{equation*}
     \left\| f - f_{(s , y) , r} \right\|_{\underline{L}^2\left( Q_r(s , y) \right)}  \leq \delta \left\| f \right\|_{\underline{L}^2\left( Q_{2r}(s , y) \right)} + K
\end{equation*}
and 
\begin{equation*}
    \left\|  f_{(s , y) , r} \right\|_{L^\infty \left( Q_r(s , y) \right)} \leq A \left\| f \right\|_{\underline{L}^2\left( Q_{2r}(s , y) \right)}.
\end{equation*}
Then for any $(t , x) \in (0 , \infty) \times \Zd$ and any $R \in \N$ with $8R^2 \leq t$,
\begin{equation*}
    \left\| f \right\|_{\underline{L}^p(Q_{R}(t , x))} \leq C  \left\| f \right\|_{\underline{L}^2(Q_{2R}(t , x))} + CK.
\end{equation*}
\end{proposition}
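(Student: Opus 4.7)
The plan is to follow the Calderón–Zygmund/good-$\lambda$ paradigm (in the spirit of Caffarelli–Peral and Wang), adapted to the parabolic discrete setting of the proposition. I would first introduce the parabolic maximal function
\begin{equation*}
\mathcal{M}(g)(t,x) := \sup_{r \in \N, \, 8r^2 \leq t} \|g\|_{\underline{L}^1(Q_r(t,x))},
\end{equation*}
and record the parabolic Hardy–Littlewood maximal inequality $\|\mathcal{M}(g)\|_{L^q(Q_R(t,x))} \leq C_q \|g\|_{L^q(Q_{2R}(t,x))}$ for any $q>1$, which is standard in this setting via a Vitali covering argument.

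Next, I would establish a one-scale decay estimate for super-level sets: for each $(s,y)$ and each admissible radius $r$, and for every $\mu \geq 2A\|f\|_{\underline{L}^2(Q_{2r}(s,y))}$, the approximation $f_{(s,y),r}$ satisfies $|f_{(s,y),r}|\le \mu/2$ throughout $Q_r(s,y)$, so $\{|f|>\mu\}\cap Q_r(s,y)\subseteq \{|f-f_{(s,y),r}|>\mu/2\}$, and Chebyshev combined with the $L^2$ closeness yields
\begin{equation*}
\frac{|\{(s',y')\in Q_r(s,y):|f(s',y')|>\mu\}|}{|Q_r(s,y)|} \leq \frac{8\bigl(\delta^2\|f\|_{\underline{L}^2(Q_{2r}(s,y))}^2+K^2\bigr)}{\mu^2}.
\end{equation*}

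The heart of the argument is a good-$\lambda$ inequality obtained by a stopping-time (Vitali) decomposition. Fix $\lambda$ large compared to $\|f\|_{\underline{L}^2(Q_{2R}(t,x))}$ and $K$. I would cover the open set $E_\lambda := \{\mathcal{M}(|f|^2)>c\lambda^2\}\cap Q_R(t,x)$ by a disjoint family of maximal parabolic cylinders $\{Q_{r_i}(s_i,y_i)\}$, where on $Q_{r_i}$ the $L^1$-average of $|f|^2$ just exceeds $c\lambda^2$ while on $Q_{2r_i}$ it is at most $c\lambda^2$. The second property ensures $\|f\|_{\underline{L}^2(Q_{2r_i})}\le c^{1/2}\lambda$, so the local estimate above with $\mu=\lambda$ gives
\begin{equation*}
|\{|f|>\lambda\}\cap Q_{r_i}| \leq \frac{8\bigl(c\delta^2\lambda^2+K^2\bigr)}{\lambda^2}|Q_{r_i}|,
\end{equation*}
which sums (after choosing $c$ to absorb the $A$-constants, in the range $\lambda\gg K$) to
\begin{equation*}
|\{|f|>\lambda\}\cap Q_R(t,x)| \leq C\delta^2\, |E_\lambda| + \indc_{\{\lambda \leq C'K\}}|Q_R(t,x)|.
\end{equation*}

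Finally, I would integrate in $\lambda$ using the layer-cake formula and the Hardy–Littlewood maximal inequality applied to $|f|^2$ with exponent $p/2>1$:
\begin{equation*}
\|f\|_{\underline{L}^p(Q_R(t,x))}^p \leq C\delta^2\|\mathcal{M}(|f|^2)\|_{\underline{L}^{p/2}(Q_{2R}(t,x))}^{p/2} + C\|f\|_{\underline{L}^2(Q_{2R}(t,x))}^p + CK^p \leq C\delta^2\|f\|_{\underline{L}^p(Q_{2R}(t,x))}^p + C\|f\|_{\underline{L}^2(Q_{2R}(t,x))}^p+CK^p.
\end{equation*}
Choosing $\delta_0=\delta_0(p,A,d)$ small enough so that $C\delta_0^2<1/2$ and then using a standard iteration/absorption lemma on concentric cylinders (Giaquinta-type absorption, to pass from $Q_{2R}$ back to $Q_R$ on the left-hand side) completes the proof.

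The main obstacle is carrying out the stopping-time decomposition cleanly in the parabolic discrete setting: one must ensure that the maximal cylinders $Q_{r_i}$ are admissible (i.e.\ $8r_i^2\le s_i$, so that hypothesis applies on $Q_{2r_i}$) and remain inside $Q_{2R}(t,x)$. This requires choosing the threshold $c\lambda^2$ large enough compared to $\|f\|_{\underline{L}^2(Q_{2R}(t,x))}^2$ and $K^2$ so that the stopping radii are genuinely small with respect to $R$, and then performing the standard Giaquinta-type iteration on concentric parabolic cylinders to absorb the right-hand side — a minor but careful bookkeeping exercise.
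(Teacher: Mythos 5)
Your good-$\lambda$/Calder\'on--Zygmund argument is exactly the standard approach, and is indeed how the reference that the paper cites in lieu of a proof (Armstrong--Kuusi--Mourrat, Lemma~7.2) establishes this statement; the paper explicitly remarks that the passage to the discrete parabolic setting is ``mostly notational,'' which is the bookkeeping you identify at the end. One small slip: in the good-$\lambda$ step the term $K^2/\lambda^2$ is absorbed by $C\delta^2$ only for $\lambda \gtrsim K/\delta$, so the indicator should read $\indc_{\{\lambda \leq C' K/\delta\}}$ and the low-$\lambda$ layer-cake contribution is $C(K/\delta)^p$ rather than $CK^p$ --- but this is harmless, since the hypotheses for any $\delta \leq \delta_0$ imply those for $\delta_0$, so one may take $\delta = \delta_0(p,A,d)$ throughout and absorb $\delta_0^{-p}$ into $C(p,A,d)$.
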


\begin{remark}
The proof of this result (in the continuous setting and for functions depending only on the spatial variable) can be found in~\cite[Lemma 7.2]{AKM} (by considering the function $g$ there to be the constant function equal to $K$). The extension to the discrete and parabolic setting (considered above) is mostly notational (and omitted here to reduce the technical complexity of this appendix).
\end{remark}

\subsection{Proof of the Calder\'on-Zygmund regularity} \label{sec:CZsec}

In this section, we establish a Calderón–Zygmund-type regularity estimate for the heat kernel $P^{\varphi_\cdot}$. Specifically, we derive an almost optimal bound on its $L^p$-norm for some exponent $p \in (1 , \infty)$, which depends on $\beta$ and tends to infinity as $\beta \to \infty$. At a high level, we rely on the observation that, when $\beta$ is large, the elliptic operator $\mathcal{L}_{\mathrm{spat}}^{\varphi_\cdot}$ has a small ellipticity contrast. This implies that the heat kernel is well-approximated in $L^2$ on every scale scales by a solution to a heat equation with strong regularity properties. This allows us to apply Proposition~\ref{prop:CalderonZygmundLpestimate} and obtain an (almost) sharp estimate on the $L^p$-norm of the heat kernel. We mention that the suboptimal logarithmic correction appears in this step of the proof and is caused by the infinite range of the elliptic operator $\mathcal{L}_{\mathrm{spat}}^{\varphi_\cdot}$.

\begin{proposition}[Calder\'{o}n-Zygmund regularity for the heat kernel] \label{prop:CalderonZygmund}
For any exponent $p \in (1 , \infty),$ there exists an inverse temperature $\beta_1 := \beta_1(p , d) < \infty$ such that for any $\beta \geq \beta_1$, there exists a constant $C := C(d , \beta , p) < \infty$ such that for any $t \in (0, \infty)$, and any realisation of the Langevin dynamic $(\varphi_t)_{t \geq 0}$, one has the inequality
\begin{equation*}
    \int_{t/4}^{t} \sum_{y \in \Zd} \left| \nabla P^{\varphi_\cdot} (s , y , 0 )  \right|^p e^{\frac{|y|}{C \sqrt{t_+}}} \, ds \leq C (\ln t_+)^{\frac{p(d+2)}{2}} t_+^{\frac{d+2}{2}  - \frac{p}{2} - \frac{pd}{2}}.
\end{equation*}
\end{proposition}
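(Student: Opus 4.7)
The plan is to apply the Calder\'on-Zygmund transfer principle of Proposition~\ref{prop:CalderonZygmundLpestimate} to $\nabla P^{\varphi_\cdot}(\cdot,\cdot,0)$ on parabolic cylinders at every scale, thereby upgrading an $L^2$ Caccioppoli-Nash-Aronson bound to an $L^p$ bound, and then to sum the resulting cylinder-wise estimates against the Gaussian weight $\exp(|y|/C\sqrt{t_+})$. The crucial input is the small ellipticity contrast of $\mathcal{L}^{\varphi_\cdot}_{\mathrm{spat}}$ when $\beta \gg 1$: the decomposition
$$\mathcal{L}^{\varphi_\cdot}_{\mathrm{spat}} = \tfrac{1}{2\beta}(-\Delta) + \mathcal{L}_{\mathrm{pert}}$$
isolates a principal part with constant coefficients, while the perturbative part $\mathcal{L}_{\mathrm{pert}}$ (comprising the iterated Laplacians $\frac{1}{2\beta}\sum_{n\geq 1}\beta^{-n/2}(-\Delta)^{n+1}$ and the charge terms $\sum_q \nabla_q^*\cdot\mathbf{a}_q\nabla_q$) has typical size $\beta^{-3/2}+e^{-c\sqrt{\beta}}$ and can be made arbitrarily small.

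The hypothesis of Proposition~\ref{prop:CalderonZygmundLpestimate} is verified by harmonic replacement. For each cylinder $Q_r(s_0,y_0)$ with $8r^2\leq s_0\leq t$, let $\bar P$ solve
$$\partial_\tau\bar P-\tfrac{1}{2\beta}\Delta\bar P=0\;\text{ in }\;Q_r(s_0,y_0),\qquad \bar P=P^{\varphi_\cdot}(\cdot,\cdot,0)\;\text{ on }\;\partial_{\mathrm{par}}Q_r(s_0,y_0).$$
The difference $u:=P^{\varphi_\cdot}(\cdot,\cdot,0)-\bar P$ vanishes on the parabolic boundary and satisfies $\partial_\tau u-\tfrac{1}{2\beta}\Delta u=-\mathcal{L}_{\mathrm{pert}}P^{\varphi_\cdot}(\cdot,\cdot,0)$; testing with $u$ and using the smallness of the quadratic form of $\mathcal{L}_{\mathrm{pert}}$ yields
$$\|\nabla u\|_{\underline{L}^2(Q_r(s_0,y_0))}\leq C(\beta^{-3/4}+e^{-c\sqrt{\beta}/2})\|\nabla P^{\varphi_\cdot}\|_{\underline{L}^2(Q_{2r}(s_0,y_0))}+K_r,$$
where the tail term $K_r$ reflects the infinite range of $\mathcal{L}_{\mathrm{pert}}$ and is controlled using the Nash-Aronson estimate~\eqref{eq:estimateheat}. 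On the other hand, $\bar P$ solves a constant-coefficient discrete heat equation, for which standard interior regularity theory supplies
$$\|\nabla\bar P\|_{L^\infty(Q_{r/2}(s_0,y_0))}\leq A\|\nabla P^{\varphi_\cdot}\|_{\underline{L}^2(Q_{2r}(s_0,y_0))}$$
with $A:=A(d,\beta)<\infty$. Choosing $f_{(s_0,y_0),r}:=|\nabla \bar P|$ in Proposition~\ref{prop:CalderonZygmundLpestimate}, with $\delta:=C(\beta^{-3/4}+e^{-c\sqrt{\beta}/2})$ taken below $\delta_0(p)$ by choosing $\beta$ large and $K:=K_R$, we obtain, for any $R$ with $8R^2\leq t$,
$$\|\nabla P^{\varphi_\cdot}(\cdot,\cdot,0)\|_{\underline{L}^p(Q_R(s_0,y_0))}\leq C\|\nabla P^{\varphi_\cdot}(\cdot,\cdot,0)\|_{\underline{L}^2(Q_{2R}(s_0,y_0))}+CK_R.$$

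The conclusion is obtained by specialising to $R=\lfloor\sqrt t/8\rfloor$, inserting the tight Caccioppoli-Nash-Aronson bound on the right-hand side (obtained by combining the argument of Proposition~\ref{prop:Caccioppoli} with the Nash-Aronson estimate~\eqref{eq:estimateheat} at the appropriate scale, which yields Gaussian decay in $|y_0|$), covering $(t/4,t)\times\Zd$ by translates $Q_R(s_k,y_k)$ centred on a $\sqrt t$-grid, absorbing the weight $\exp(|y|/C\sqrt{t_+})$ into the exponential decay by enlarging $C$, and summing the resulting geometric series over $k$. The principal difficulty, and the sole source of the suboptimal factor $(\ln t_+)^{p(d+2)/2}$, is the control of the tails $K_r$: the infinite range of the iterated Laplacians $(-\Delta)^{n+1}$ couples the harmonic replacement at scale $r$ to values of $P^{\varphi_\cdot}$ at arbitrarily large distances, and iterating the argument through the $O(\ln t_+)$ dyadic scales between $1$ and $\sqrt{t_+}$ accumulates the polylogarithmic correction. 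Reducing this exponent would require a substantially more refined multiscale analysis, which we do not pursue.
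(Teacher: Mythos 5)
Your overall architecture — harmonic replacement, verifying the hypotheses of Proposition~\ref{prop:CalderonZygmundLpestimate}, then a covering argument — matches the paper, but there is a genuine gap in how you verify those hypotheses, and your diagnosis of where the logarithm comes from is incorrect.

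The problem is the single constant $K$ in Proposition~\ref{prop:CalderonZygmundLpestimate}: it must work for all cylinders $Q_r(s,y)$ with $r$ ranging down to $r=1$. Your tail error $K_r$ arising from the infinite range of $\mathcal{L}_{\mathrm{pert}}$ decays like $e^{-c(\ln\beta)\,r}$ in the scale $r$ (after you discard the modes of $P^{\varphi_\cdot}$ outside $x+\Lambda_{2r}$ using the exponential decay of the coefficients), so the \emph{worst} tail occurs at $r=1$ and is of order $\beta^{-c}\exp(-|y|/C\sqrt{t_+})$. This has no decay in $t$. Setting $K:=K_R$ for the largest scale $R\sim\sqrt{t}$, as you propose, is therefore not admissible: it violates the hypothesis of Proposition~\ref{prop:CalderonZygmundLpestimate} at all small $r$. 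And using the honest $K:=K_1\sim\beta^{-c}\exp(-|y|/C\sqrt{t_+})$ makes the final output $C\|\nabla P\|_{\underline{L}^2(Q_{2R})}+CK$ worthless, since $CK$ swamps the $t_+^{-(d+1)/2}$ term you need.

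The paper's fix — which you need and do not have — is to coarsen: apply the Calder\'on--Zygmund transfer not to $\nabla P^{\varphi_\cdot}$ itself but to the function $f$ that is constant on each cylinder of the partition $\mathcal{P}$ at scale $\lfloor\ln t_+\rfloor$ and equals $\|\nabla P^{\varphi_\cdot}\|_{\underline{L}^2(Q)}$ on each such $Q$. For $r\geq\ln t_+$ the harmonic-replacement argument works and the tail is $e^{-c(\ln\beta)(\ln t_+)}=t_+^{-c\ln\beta}$, super-polynomially small; for $r<\ln t_+$ the function $f$ is (by construction) locally constant, so $f_{(s,y),r}:=f$ trivially satisfies both hypotheses with zero error. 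This is what makes a single $K$ admissible. The price is the conversion back from $f$ to $\nabla P^{\varphi_\cdot}$: bounding $|\nabla P^{\varphi_\cdot}(s,y,0)|^2$ by $(\ln t_+)^{d+2}\|\nabla P^{\varphi_\cdot}\|^2_{\underline{L}^2(Q_{\lfloor\ln t_+\rfloor}(s,y))}+CK$ introduces the volume factor of a cylinder of side $\ln t_+$. That is the \emph{sole} source of the $(\ln t_+)^{p(d+2)/2}$ factor. Your claim that the logarithm arises from ``iterating the argument through the $O(\ln t_+)$ dyadic scales'' is wrong: Proposition~\ref{prop:CalderonZygmundLpestimate} is applied once as a black box, not iterated through scales, and the loss is purely a volume/coarsening effect.
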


\begin{proof}[Proof of Proposition~\ref{prop:CalderonZygmund}]
We fix a realisation of the Langevin dynamic $(\varphi_t)_{t \geq 0}$ (N.B. this argument is deterministic and works for any realisation of the dynamic) and split the proof into several steps:
\begin{itemize}
\item \textbf{Step 1.} We use that the operator $\mathcal{L}^{\varphi}_{\mathrm{spat}}$ is a perturbation of the Laplacian to prove that the heat kernel $P^{\varphi_\cdot}$ is well-approximated by a solution $\bar u$ of the heat equation:
\begin{equation} \label{eq:caloricheateq}
\partial_t \bar u - \frac{1}{2\beta} \Delta \bar u = 0.
\end{equation}
More specifically, we prove the following result:
for each $\delta > 0$, there exists an inverse temperature $\beta_1(d , \delta)$ such that for each $\beta \geq \beta_1$, each $r \in \N$ and each $(t , x) \in (0 , \infty) \times \Zd$ with $8r^2 \leq t$, there exists a function $\bar u$ which solves the heat equation~\eqref{eq:caloricheateq} in the parabolic cylinder $Q_{2r}(t , x)$ and satisfies
\begin{equation} \label{ineq:wellapproxcalderon}
    \left\| \nabla P^{\varphi_\cdot} - \nabla \bar u \right\|_{\underline{L}^2 (Q_r(t , x))} \leq \delta \left\| \nabla P^{\varphi_\cdot} \right\|_{\underline{L}^2(Q_{2r}(t , x)) } + e^{- c \left( \ln \beta \right) r} \exp \left( - \frac{|x|}{C \sqrt{t_+}}\right)
\end{equation}
together with the estimate
\begin{equation} \label{eq:Sh95616}
   \left\| \nabla \bar u \right\|_{L^\infty(Q_r(t , x))} \leq C \left\| \nabla P^{\varphi_\cdot} \right\|_{\underline{L}^2(Q_{2r}(t , x))}.
\end{equation}
\item \textbf{Step 2.} We apply Proposition~\ref{prop:CalderonZygmundLpestimate} and the $L^2$-regularity estimate for the heat kernel (Proposition~\ref{prop:Caccioppoli}) to deduce that: for any exponent $p \geq 1$, there exists an inverse temperature $\beta_1 := \beta_1(p , d) < \infty$ such that for any $\beta \geq \beta_1$, any $t \in (0, \infty)$,
\begin{equation} \label{eq:mainresulstep2CZ}
    \left\| \nabla P^{\varphi_\cdot} (\cdot , \cdot , 0)  \right\|_{\underline{L}^p(Q_{\sqrt{t}/4} (t , x))} \leq  \frac{C(\ln t_+)^{\frac{d+2}{2}}}{t_+^{\frac{1}{2} + \frac{d}{2}}}  \exp \left( - \frac{|x|}{C \sqrt{t_+}}\right).
\end{equation}
\item \textbf{Step 3.} We post-process the result of Step 2 to obtain the estimate: there exists a constant $C_0 := C_0(d , \beta,p) < \infty$ such that
\begin{equation*}
    \int_{t/4}^{t} \sum_{y \in \Zd} \left| \nabla P^{\varphi_\cdot} (s , y , 0 )  \right|^p e^{\frac{|y|}{C_0 \sqrt{t_+}}} \, ds \leq C_0 (\ln t_+)^{\frac{p(d+2)}{2}} t_+^{\frac{d+2}{2}  - \frac{p}{2}- \frac{pd}{2}}.
\end{equation*}
\end{itemize}

\textit{Step 1.} Let us fix $(t , x) \in (0 , \infty) \times \Zd$ and $r \in \N$ such that $8r^2 \leq t$. For any index $i , j \in \left\{1 , \ldots, {d \choose 2} \right\}$, we then let $\bar u_{i j} : Q_{2r}(t , x) \to \R$ be the solution of the parabolic equation
\begin{equation} \label{eq:defbaruheat}
    \left\{ \begin{aligned}
    \partial_t \bar u_{i j} - \frac{1}{2\beta}\Delta \bar u_{i j} & = 0 &~~\mbox{in} ~~& Q_{2r}(t , x), \\
    \bar u_{i j} & = P_{i j}^{\varphi_\cdot} \left( \cdot , \cdot , 0\right) &~~\mbox{in} ~~&  \partial_{\mathrm{par}} Q_{2r}(t , x),
    \end{aligned} \right.
\end{equation}
and set $u := \left( u_{ij} \right)_{1 \leq i , j \leq {d \choose 2}}$. An energy estimate for the solutions of the heat equation implies the upper bound
\begin{equation*}
    \left\| \nabla \bar u \right\|_{L^2 \left( Q_{2r}(t , x) \right)} \leq C \left\| \nabla P^{\varphi_\cdot} (\cdot , \cdot , 0) \right\|_{L^2 \left( Q_{2r}(t , x) \right)}.
\end{equation*}
Additionally, as a solution of the heat equation, the function $\bar u $ possesses good regularity properties, in particular we have the $L^\infty$-$L^2$ estimate on the gradient of $\bar u$
\begin{equation*}
    \left\| \nabla \bar u \right\|_{L^\infty \left( Q_{r}(t , x) \right)} \leq C \left\| \nabla \bar u \right\|_{\underline{L}^2 \left( Q_{2r}(t , x) \right)} \leq C \left\| \nabla P^{\varphi_\cdot} (\cdot , \cdot , 0) \right\|_{\underline{L}^2 \left( Q_{2r}(t , x) \right)}.
\end{equation*}
We then note that the function $w := P - u$ solves the parabolic systems of equations
\begin{equation*}
    \left\{ \begin{aligned}
    \partial_t w - \frac{1}{2\beta} \Delta w & = \frac{1}{2\beta} \Delta P^{\varphi_\cdot}(\cdot , \cdot , 0) - \mathcal{L}^{\varphi_\cdot}_{\mathrm{spat}} P^{\varphi_\cdot}(\cdot , \cdot , 0) &~~\mbox{in} ~~& Q_{2r}(t , x), \\
    w & = 0  &~~\mbox{in} ~~&  \partial_{\mathrm{par}} Q_{2r}(t , x).
    \end{aligned} \right.
\end{equation*}
We may then use an energy estimate for the heat equation and use that the operator
$$
\mathcal{L}^{\varphi_\cdot}_{\mathrm{spat}} P^{\varphi_\cdot}(\cdot , \cdot , 0) - \frac{1}{2\beta} \Delta P^{\varphi_\cdot}(\cdot , \cdot , 0) =  \sum_{n \geq 1} \frac{1}{2\beta} \frac{1}{\beta^{n/2}} \Delta^{n+1} P^{\varphi_\cdot} + \sum_{q \in \mathcal{Q}} \mathbf{a}_q(\varphi_\cdot) (P^{\varphi_\cdot} (\cdot , \cdot , 0) , q) q
$$
is a small perturbation of $\frac{1}{2\beta} \Delta$ (since $\beta \gg 1$) to conclude that (N.B. in the following identity we extend the function $w$ to $(t-4r^2 , t) \times \Zd$ by setting $w(s , y) = 0$ for $y \in \Zd \setminus (x + \Lambda_{2r})$)
\begin{multline*}
    \sum_{y \in \Zd} w(t , y)^2 + \frac{1}{2\beta} \left\| \nabla w \right\|_{L^2(Q_{2r}(t , x))}^2 \\ 
    = \int_{t - 4r^2}^t   \sum_{n \geq 1} \frac{1}{2\beta} \frac{1}{\beta^{n/2}} ( \Delta^{n+1} P^{\varphi_\cdot}(s , \cdot) , w(s , \cdot)) + \sum_{q \in \mathcal{Q}} \mathbf{a}_q(\varphi_s) (P^{\varphi_\cdot}(s , \cdot) , q) (w(s , \cdot) , q) \, ds.
\end{multline*}
By performing integrations by parts (and using the identity~\eqref{identity:dandd*}), we see that the right-hand side is a linear function of the gradient of $w$. By applying the Cauchy-Schwarz inequality together with a computation similar to the one performed in the proof of Lemma~\ref{lemma.lemma2.5}, we obtain the inequality
\begin{equation*}
     \frac{1}{2\beta} \left\| \nabla w \right\|_{L^2(Q_{2r}(t , x))}^2 \leq \frac{C}{\beta \sqrt{\beta }} \left\| \nabla P^{\varphi_\cdot}(\cdot , \cdot , 0) \right\|_{L^2(Q_{2r}(t , x))}^2 + \frac{1}{\beta} \int_{t - 4r^2}^t \sum_{y \in \Zd \setminus (x + \Lambda_{2r})} e^{-c \left( \ln \beta \right) |x - y|} \left| \nabla P^{\varphi_\cdot}(s , y , 0) \right|^2 \, ds.
\end{equation*}
We then simplify the second term on the right-hand side: by using the definition of the discrete gradient (which implies the inequality $\left| \nabla P^{\varphi_\cdot}(s , y , 0) \right| \leq \sum_{y' \sim y} \left| P^{\varphi_\cdot}(s , y' , 0) \right| + \left| P^{\varphi_\cdot}(s , y , 0) \right|$) together with the upper bound on the heat kernel stated in Proposition~\ref{prop.prop4.11chap4HK}, we deduce that
\begin{align*}
    \int_{t - 4r^2}^t \sum_{y \in \Zd \setminus (x + \Lambda_{2r})} e^{-c \left( \ln \beta \right) |x - y|} \left| \nabla P^{\varphi_\cdot}(s , y , 0) \right|^2 \, ds & \leq C  \int_{t - 4r^2}^t \sum_{y \in \Zd \setminus (x + \Lambda_{2r})} e^{-c \left( \ln \beta \right) |x - y|} \left| P^{\varphi_\cdot}(s , y , 0) \right|^2 \, ds \\
    & \leq  C \int_{t - 4r^2}^t \sum_{y \in \Zd \setminus (x + \Lambda_{2r})} e^{-c \left( \ln \beta \right) |x - y|}  s_+^{- d} \exp \left( - \frac{|y|}{C \sqrt{s_+}} \right) \, ds.
\end{align*}
Using the assumption $8r^2 \leq t$, we have $s \geq t/2$ for any time $s$ in the interval of integration $(t - 4r^2 , t)$. Combining this observation with the (trivial) upper bound $s_+ \geq 1$, we thus deduce that
\begin{align*}
    \int_{t - 4r^2}^t \sum_{y \in \Zd \setminus (x + \Lambda_{2r})} e^{-c \left( \ln \beta \right) |x - y|} \left| \nabla P^{\varphi_\cdot}(s , y , 0) \right|^2 \, ds & \leq C \exp \left( - \frac{|x|}{C \sqrt{t_+}} \right)  \underset{\leq e^{- c (\ln \beta) r}}{\underbrace{\left( \int_{t - 4r^2}^t \sum_{y \in \Zd \setminus (x + \Lambda_{2r})} e^{-c \left( \ln \beta \right) |x - y|} \right)}} \\
    & \leq C  e^{-c (\ln \beta) r} \exp \left( - \frac{|x|}{C \sqrt{t_+}} \right).
\end{align*}
A combination of the three previous inequalities implies that
\begin{equation*}
    \left\| \nabla w \right\|_{L^2(Q_{2r}(t , x))}^2 \leq \frac{C}{\sqrt{\beta }} \left\| \nabla P^{\varphi_\cdot}(\cdot , \cdot , 0) \right\|_{L^2(Q_{2r}(t , x))}^2 + C  e^{-c (\ln \beta) r} \exp \left( - \frac{|x|}{C \sqrt{t_+}} \right).
\end{equation*}
For any $\delta \in (0 , 1)$, we may thus select the inverse temperature $\beta \in (1 , \infty)$ sufficiently large so that $C /\sqrt{\beta} \leq \delta^2$. This implies
\begin{equation*}
    \left\| \nabla w \right\|_{L^2(Q_{2r}(t , x))}^2 \leq \delta^2 \left\| \nabla P^{\varphi_\cdot}(\cdot , \cdot , 0) \right\|_{L^2(Q_{2r}(t , x))}^2 + C e^{-c (\ln \beta) r}  \exp \left( - \frac{|x|}{C \sqrt{t_+}} \right).
\end{equation*}
We then divide by the volume of the parabolic cylinder $Q_{2r}(t , x)$ and take the square-root on both sides of this inequality (and use $\sqrt{a + b} \leq \sqrt{a} + \sqrt{b}$ for $a , b \geq 0$) to complete the proof of Step 1.

\medskip

\textit{Step 2.} Let us fix an exponent $p \in (1 , \infty)$, let $\delta_0$ be the parameter associated with the exponent $p$ given by Proposition~\ref{prop:Lpannealedreg}. We then select an inverse temperature $\beta$ sufficiently large so that the inequality~\eqref{ineq:wellapproxcalderon} holds with $\delta = \delta_0$. In order to treat the error term coming from the infinite range of the elliptic operator (i.e., the second term on the right-hand side of~\eqref{ineq:wellapproxcalderon}), we introduce a coarsening of the heat kernel on a scale of size $(\ln t_+)$. To be more precise, we fix a time $t \in (1 , \infty)$, a vertex $x \in \Zd$ and partition the parabolic cylinder $Q_{\sqrt{t}/2} (t , x)$ into cylinders of size (approximately) $(\ln \, t_+)$ and denote this partition by $\mathcal{P}$, e.g.,
\begin{equation*}
    \mathcal{P} := \left\{ Q_{ \lfloor \ln t_+ \rfloor}(s' , y') \hspace{1mm} : \hspace{1mm} (s' , y') \in \left( \lfloor \ln t_+ \rfloor^2 \N \times \lfloor \ln t_+ \rfloor \Zd \right) \cap Q_{\sqrt{t}/2} (t , x) \right\}.
\end{equation*}
We then let $f$ be the real-valued function defined on $Q_{\sqrt{t}/2} (t , x)$ which is constant equal to the average $L^2$ norm of the heat kernel $P^{\varphi_\cdot}$ on the cylinders of $\mathcal{P}$, i.e.,
\begin{equation*}
    f(s, y) = \left\| \nabla P^{\varphi_\cdot} (\cdot , \cdot , 0)\right\|_{\underline{L}^2\left( Q\right)} \hspace{3mm} \mbox{for} \hspace{3mm} (s , y) \in Q \hspace{2mm} \mbox{with} \hspace{2mm} Q \in \mathcal{P}.
\end{equation*}
We next show that the function $f$ satisfies the assumptions of Proposition~\ref{prop:CalderonZygmundLpestimate}. Specifically, we fix $(s , y) \in [0 , \infty) \times \Zd$, $r \in \N$ such that $8r^2 \leq s$, set
\begin{equation} \label{eq:defofK}
    K:= e^{-c (\ln \beta) (\ln t_+)} \exp \left( - \frac{|y|}{C \sqrt{t_+}} \right).
\end{equation}
Let us note that, when $\beta$ is large, the first term in the definition of $K$ decays like a polynomial in $t$ with a large exponent and is thus smaller than all the other quantities we wish to estimate. We then distinguish two cases.

\medskip

 \textit{\underline{Case 1:}} For $r \geq \ln t_+$, then we set 
    $$f_{(s , y) , r}(s', y') := \left\| \nabla \bar u \right\|_{\underline{L}^2 \left(Q_{\lfloor \ln t_+ \rfloor}(s'' , y'')\right)} \hspace{3mm} \mbox{for} \hspace{3mm}  (s' , y') \in Q_{\lfloor \ln t_+ \rfloor}(s'' , y'') \in \mathcal{P},$$
    where $\bar u$ is the function introduced in Step 1 (see~\eqref{eq:defbaruheat}). By~\eqref{ineq:wellapproxcalderon} (and the triangle inequality), we have the inequality
    \begin{align*}
        \left\| f - f_{(s , y) , r} \right\|_{\underline{L}^2\left( Q_r(s , y) \right)}  \leq \delta \left\| f \right\|_{\underline{L}^2\left( Q_{2r}(s , y) \right)} + C e^{-c (\ln \beta) r}  \exp \left( - \frac{|y|}{C \sqrt{t_+}} \right)  \leq \delta \left\| f \right\|_{\underline{L}^2\left( Q_{2r}(s , y) \right)} + K,
    \end{align*}
    and by~\eqref{eq:Sh95616}, we have the inequality
    \begin{equation*}
        \left\|  f_{(s , y) , r} \right\|_{L^\infty \left( Q_r(s , y) \right)} \leq C \left\| f \right\|_{\underline{L}^2\left( Q_{2r}(s , y) \right)}.
    \end{equation*}
    The assumptions of Proposition~\ref{prop:CalderonZygmundLpestimate} is thus verified in that case.
    
    \smallskip
    
\textit{\underline{Case 2:}} For $r \leq \ln t_+$, we note that, since the functions $f$ is constant on the parabolic cylinders of the partition $\mathcal{P}$ and we are considering here scales which are smaller than the one of the partition $\mathcal{P}$, the function $f$ is (by construction) regular on these small scales and satisfies
    \begin{equation*}
        \left\|  f \right\|_{L^\infty \left( Q_r(s , y) \right)} \leq C \left\| f \right\|_{\underline{L}^2\left( Q_{2r}(s , y) \right)}.
    \end{equation*}
We may thus set $ f_{(s , y) , r} := f$ to verify the assumptions of Proposition~\ref{prop:CalderonZygmundLpestimate}.
\medskip
We may thus apply Proposition~\ref{prop:CalderonZygmundLpestimate} (with $R = \sqrt{t} / 4$) to deduce that, for any exponent $p \in (1 , \infty)$ if the inverse temperature $\beta$ is chosen sufficiently large,
\begin{equation} \label{ineq:SH1201}
    \left\| f \right\|_{\underline{L}^p(Q_{\sqrt{t}/4}(t , x))} \leq C  \left\| f \right\|_{\underline{L}^2(Q_{\sqrt{t}/2}(t , x))} + CK.
\end{equation}
We finally remove the coarsening on both sides of this inequality. For the left-hand side, we have by the definition of the function $f$ and by Proposition~\ref{prop:Caccioppoli}
\begin{equation} \label{eq:ineqB8Sh}
    \left\| f \right\|_{\underline{L}^2(Q_{\sqrt{t}/2}(t , x))}  \leq \left\| \nabla P^{\varphi_\cdot} \right\|_{\underline{L}^2(Q_{\sqrt{t}/2}(t , x))} \leq \frac{C}{t_+^{\frac{d}{2} + \frac{1}{2}}} \exp \left( - \frac{|x|}{C \sqrt{t_+} }\right).
\end{equation}
For the left-hand side of the inequality~\eqref{ineq:SH1201}, we first note that, by using the upper bound~\eqref{eq:PisLipschitz} on the derivative in time of the heat kernel and the Sobolev injection $H^1 \hookrightarrow L^\infty$ in dimension $1$, we have, for any $(s , y) \in Q_{\sqrt{t}/4}(t , x)$,
\begin{align*}
    \left| \nabla P^{\varphi_\cdot}(s , y , 0) \right|^2 & \leq C \int_{s-1}^s \left| \nabla P^{\varphi_\cdot}(s' , y, 0) \right|^2 \, ds' + C \int_{s-1}^s \sum_{z \in \Zd} e^{- c (\ln \beta) |z - y|} \left| \nabla P^{\varphi_\cdot}(s' , z, 0) \right|^2 \, ds' \\
    & \leq C  \int_{s-1}^s \sum_{z \in \Zd} e^{- c (\ln \beta) |z - y|} \left| \nabla P^{\varphi_\cdot}(s' , z, 0) \right|^2 \, ds'.
\end{align*}
We next split the right-hand side into two terms which can be estimated separately (and recall that the constant $K$ was introduced in~\eqref{eq:defofK})
\begin{align*}
    \left| \nabla P^{\varphi_\cdot}(s , y , 0) \right|^2 & \leq C  \int_{s-1}^s \sum_{z \in y + \Lambda_{\lfloor \ln t_+ \rfloor}} \underset{\leq 1}{\underbrace{e^{- c (\ln \beta) |z - y|}}} \left| \nabla P^{\varphi_\cdot}(s' , z, 0) \right|^2 \, ds' \\
    & \qquad + C \underset{\leq C K}{\underbrace{\int_{s-1}^s \sum_{z \in \Zd \setminus (y + \Lambda_{\lfloor \ln t_+ \rfloor})} e^{- c (\ln \beta) |z - y|} \left| \nabla P^{\varphi_\cdot}(s' , z, 0) \right|^2 \, ds'}} \\
    & \leq C  \int_{s-1}^s \sum_{z \in y + \Lambda_{\lfloor \ln t_+ \rfloor}}  \left| \nabla P^{\varphi_\cdot}(s' , z, 0) \right|^2 \, ds' + C K.
\end{align*}
We then increase the interval of integration to write
\begin{align*}
    \left| \nabla P^{\varphi_\cdot}(s , y , 0) \right|^2 & \leq C \int_{s-\lfloor \ln t_+ \rfloor^2}^s \sum_{z \in y + \Lambda_{\lfloor \ln t_+ \rfloor}}  \left| \nabla P^{\varphi_\cdot}(s' , z, 0) \right|^2 \, ds' + C K \\
    & = C \left\| \nabla P^{\varphi_\cdot}(\cdot , \cdot, 0)  \right\|_{L^2(Q_{\lfloor \ln t_+ \rfloor}(s , y))}^2 + C K\\
    & \leq C \left( \ln t_+ \right)^{d+2} \left\| \nabla P^{\varphi_\cdot}(\cdot , \cdot , 0) \right\|_{\underline{L}^2(Q_{\lfloor \ln t_+ \rfloor}(s , y))}^2 + C K.
\end{align*}
From this inequality, we may deduce that
\begin{equation*}
    \left\| \nabla P^{\varphi_\cdot}(\cdot , \cdot , 0) \right\|_{\underline{L}^p(Q_{\sqrt{t}/4}(t , x))} \leq C (\ln t_+)^{\frac{d+2}{2}} \left\| f \right\|_{\underline{L}^p(Q_{\sqrt{t}/4}(t , x))}+ C K .
\end{equation*}
Combining the previous inequality with~\eqref{ineq:SH1201} and~\eqref{eq:ineqB8Sh}, we deduce that
\begin{equation*}
    \left\| \nabla P^{\varphi_\cdot}(\cdot , \cdot , 0) \right\|_{\underline{L}^p(Q_{\sqrt{t}/4}(t , x))} \leq \frac{C(\ln t_+)^{\frac{d+2}{2}}}{t_+^{\frac{1}{2} + \frac{d}{2}}} \exp \left( - \frac{|x|}{C \sqrt{t_+} }\right) + CK.
\end{equation*}
We finally note that, if the inverse temperature $\beta$ is chosen sufficiently large, then the constant $K$ is smaller than the first term on the left-hand side, and we have obtained
\begin{equation*}
\left\| \nabla P^{\varphi_\cdot}(\cdot , \cdot , 0) \right\|_{\underline{L}^p(Q_{\sqrt{t}/4}(t , x))} \leq \frac{C(\ln t_+)^{\frac{d+2}{2}}}{t_+^{\frac{1}{2} + \frac{d}{2}}} \exp \left( - \frac{|x|}{C \sqrt{t_+} }\right).
\end{equation*}

\medskip

\textit{Step 3.} In this step, we post-process the result of Step 2. Specifically, we set $C_1 := 2C$ where $C$ is the constant on the right-hand side of~\eqref{eq:mainresulstep2CZ}. We then partition the space $(t/4 , t) \times \Zd$ by setting (N.B. we assume below for simplicity that $\sqrt{t}/4$ is an integer)
\begin{equation*}
    \mathcal{Z} :=\left( \frac{t}{16}  \N \cap \left[ \frac t4 , t \right] \right) \times \frac{\sqrt{t}}{4}  \Zd  ~~\mbox{and}~~ \mathcal{P}_1 := \left\{ Q_{\sqrt{t}/4} (s , y) \, : \, (s , y) \in \mathcal{Z} \right\}.
\end{equation*}
We then estimate
\begin{align*}
     \int_{t/4}^{t} \sum_{y \in \Zd} \left| \nabla P^{\varphi_\cdot} (s , y , 0 )  \right|^p e^{\frac{|y|}{C_1 \sqrt{t_+}}} \, ds & \leq C \sum_{(s , y) \in \mathcal{Z}} \left\| \nabla P^{\varphi_\cdot} (\cdot , \cdot , 0 )  \right\|_{L^p \left( Q_{\sqrt{t}/4} (s , y) \right)}^p e^{\frac{|y|}{C_1 \sqrt{t_+}}} \\
     & \leq C  t_+^{\frac{d+2}{2}} \sum_{(s , y) \in  \mathcal{Z}} \left\| \nabla P^{\varphi_\cdot} (\cdot , \cdot , 0 )  \right\|_{\underline{L}^p \left( Q_{\sqrt{t}/4} (s , y) \right)}^p e^{\frac{|y|}{C_1 \sqrt{t_+}}} \\
     & \leq  C  t_+^{\frac{d+2}{2}} \sum_{(s , y) \in  \mathcal{Z}}\frac{(\ln t_+)^{\frac{p(d+2)}{2}}}{t_+^{\frac{p}{2} + \frac{pd}{2}}}  \exp \left( - \frac{|y|}{C \sqrt{t_+}} + \frac{|y|}{C_1 \sqrt{t_+}} \right).
\end{align*}
Using the definition $C_1 := 2C$, we eventually obtain
\begin{align*}
    \int_{t/4}^{t} \sum_{y \in \Zd} \left| \nabla P^{\varphi_\cdot} (s , y , 0 )  \right|^p e^{\frac{|x|}{C_1 \sqrt{t_+}}} \, ds & \leq  C  t_+^{\frac{d+2}{2}} \frac{(\ln t_+)^{\frac{p(d+2)}{2}}}{t_+^{\frac{p}{2} + \frac{pd}{2}}}  \underset{\leq C}{\underbrace{\sum_{(s , y) \in \mathcal{Z}} \exp \left( - \frac{|x|}{2C \sqrt{t_+}}  \right)}} \\
    & \leq C  (\ln t_+)^{\frac{p(d+2)}{2}} t_+^{\frac{d+2}{2}  - \frac{p}{2} - \frac{pd}{2}}.
\end{align*}
\end{proof}

\subsection{Proof of the $L^p$-annealed regularity} \label{sec:DDannealedregsec}

We complete in this section the proof of Proposition~\ref{prop:Lpannealedreg} by combining the Calder\'{o}n-Zygmund regularity estimate proved in the previous section with the techniques of Delmotte and Deuschel~\cite{DD05} (which rely on the space and time stationarity of the Langevin dynamic).

\begin{proof}[Proof of Proposition~\ref{prop:Lpannealedreg}]
We first prove the result with the mixed codifferential (which is the most challenging case) and then point out how the argument can be adapted to obtain the proof with only one codifferential.
    
Given $s \geq 0$ and a realisation $(\varphi_t)_{t \geq 0}$ of the Langevin dynamic, let us denote by
        \begin{equation*}
        \left\{ \begin{aligned}
        \partial_t P^{\varphi_\cdot} (\cdot , \cdot ;s ,  y) + \mathcal{L}_{\mathrm{spat}}^{\varphi_\cdot}  P^{\varphi_\cdot}(\cdot , \cdot ; s , y) & =0 ~\mbox{in}~ (0 , \infty) \times \Zd , \\
        P^{\varphi_\cdot} \left(s,\cdot ; s ,  y \right) & = \delta_{y} ~\mbox{in}~ \Zd.
        \end{aligned}
        \right.
    \end{equation*}
From this definition, we may write the identity, for any $t , s \in (0 , \infty)$ with $s \leq t$,
\begin{equation*}
     \di^*_1 \di^*_2 P^{\varphi_\cdot} (t , x ;  0) = \sum_{y \in \Zd} \di^*_1 P^{\varphi_\cdot} (t , x ; s, y) \di^*_2 P^{\varphi_\cdot} (s , y ;  0).
\end{equation*}
We let $C_1 <\infty$ be a large constant which is allowed to depend on $d , \beta$ and $p$ and whose specific value will be selected below. From this identity, the Cauchy-Schwarz inequality and the inequality $ab \leq a^2 + b^2$, we deduce that, for any $s \in (0, t)$,
\begin{align} \label{eq:convpropheatkernel}
    \left| \di^*_1 \di^*_2 P^{\varphi_\cdot} (t , x ;  0) \right| e^{|x|/\left(C_1 \sqrt{t_+}\right)} & \leq \sum_{y \in \Zd} \left| \di^*_1 P^{\varphi_\cdot} (t , x ; s, y) \right| \left| \di^*_2 P^{\varphi_\cdot} (s , y ;  0) \right| e^{|x|/\left( C_1 \sqrt{t_+}\right)}
    \\
    & \leq \sum_{y \in \Zd} \left| \di^*_1 P^{\varphi_\cdot} (t , x ; s, y) \right|e^{|x-y|/\left(C_1 \sqrt{t_+}\right)}  \left| \di^*_2 P^{\varphi_\cdot} (s , y ;  0) \right| e^{|y|/\left(C_1 \sqrt{t_+}\right)} \notag \\
    & \leq \sum_{y \in \Zd} \left| \di^*_1 P^{\varphi_\cdot} (t , x ; s, y) \right|^2 e^{2|x-y|/\left(C_1 \sqrt{t_+}\right)} + \sum_{y \in \Zd} \left| \di^*_2 P^{\varphi_\cdot} (s , y ;  0) \right|^2 e^{2|y|/\left(C_1 \sqrt{t_+}\right)}. \notag
\end{align}
We next use the Jensen inequality (and noting that a constant $C$ appears on the second line because the sum on the left-hand side of~\eqref{ineq:almostprobab} is not necessarily equal to $1$) to deduce that
\begin{align*} 
   \left(  \sum_{y \in \Zd} \left| \di^*_1 P^{\varphi_\cdot} (t , x ; s, y) \right|^2 e^{2|x-y|/\left(C_1 \sqrt{t_+}\right)} \right)^p & =   \left(  \sum_{y \in \Zd} \left| \di^*_1 P^{\varphi_\cdot} (t , x ; s, y) \right|^2 e^{2|x-y|/\left(C_1 \sqrt{t_+}\right)}  \Phi_{C_1}(t , x - y)^{-1} \Phi_{C_1}(t , x- y) \right)^p \\
   & \leq C \sum_{y \in \Zd} \left| \di^*_1 P^{\varphi_\cdot} (t , x ; s, y) \right|^{2p} e^{2p|x-y|/\left(C_1 \sqrt{t_+}\right)} \Phi_{C_1}(t , x - y)^{-p} \Phi_{C_1}(t , x- y) \notag \\
   & \leq C\sum_{y \in \Zd} \left| \di^*_1 P^{\varphi_\cdot} (t , x ; s, y) \right|^{2p} \Phi_{C_2}(t , x- y)^{1-p}, \notag
\end{align*}
where we set $C_2 := C_1 / (1 + 2p/(p-1))$ in order to absorb the exponential factor. In the rest of the proof, we denote by $\E$ the expectation with respect to the stationary Langevin dynamic, i.e., $\E \left[ \cdot \right] := \langle \E_\varphi \left[ \cdot \right]\rangle_{\mu_\beta}$. Taking the expectation on the right-hand side and using the space and time stationarity of the dynamic, we deduce that
\begin{equation*}
    \E \left[ \left| \di^*_1 P^{\varphi_\cdot} (t , x ; s, y) \right|^{2p} \right] =   \E \left[ \left| \di^*_1 P^{\varphi_\cdot} (t - s , x - y ; 0, 0) \right|^{2p}  \right] .
\end{equation*}
By combining the two previous inequalities, we deduce that
\begin{align} \label{eq:111612}
    \E \left[ \left(  \sum_{y \in \Zd} \left| \di^*_1 P^{\varphi_\cdot} (t , x ; s, y) \right|^2 e^{2|x-y|/\left(C_1 \sqrt{t_+}\right)} \right)^p \right] & \leq C \sum_{y \in \Zd} \E \left[ \left| \di^*_1 P^{\varphi_\cdot} (t , x ; s, y) \right|^{2p} \right] \Phi_{C_2}(t , y)^{1-p} \\
    &  \leq C \sum_{y \in \Zd} \E \left[ \left| \di^*_1 P^{\varphi_\cdot} (t - s , x - y ; 0, 0) \right|^{2p} \right] \Phi_{C_2}(t , x - y)^{1-p} \notag \\
    & = C \sum_{y \in \Zd} \E \left[ \left| \di^*_1 P^{\varphi_\cdot} (t - s , y ; 0) \right|^{2p} \right] \Phi_{C_2}(t , y)^{1-p} \notag \\
    & =C \E \left[ \sum_{y \in \Zd}  \left| \di^*_1 P^{\varphi_\cdot} (t - s , y ; 0) \right|^{2p} \Phi_{C_2}(t , y)^{1-p} \right]. \notag
\end{align}
A similar computation shows that
\begin{equation*} 
    \E \left[ \left( \sum_{y \in \Zd} \left| \di^*_2 P^{\varphi_\cdot} (s , y ;  0) \right|^2 e^{2|y|/\left(C_1 \sqrt{t_+}\right)} \right)^p \right] \leq C \E \left[  \sum_{y \in \Zd} \left| \di^*_2 P^{\varphi_\cdot} (s , 0 ; y) \right|^{2p} \Phi_{C_2}(t , y)^{1-p} \right].
\end{equation*}
We then use Remark~\ref{rem:reversinghtetime} together with the time reversibility of the Langevin dynamic to deduce that, for any $y \in \Zd$,
\begin{equation*}
    \E \left[ \left| \di^*_2 P^{\varphi_\cdot} (s , 0 ; y) \right|^{2p} \right] = \E \left[  \left| \di^*_1 P^{\varphi_\cdot} (s , y ; 0) \right|^{2p} \right].
\end{equation*}
Consequently
\begin{equation} \label{eq:111712}
    \E \left[ \left( \sum_{y \in \Zd} \left| \di^*_2 P^{\varphi_\cdot} (s , y ;  0) \right|^2 e^{2|y|/\left(C_1 \sqrt{t_+}\right)} \right)^p \right] \leq C \E \left[  \sum_{y \in \Zd} \left| \di^*_1 P^{\varphi_\cdot} (s , y ; 0) \right|^{2p} \Phi_{C_2}(t , y)^{1-p} \right].
\end{equation}
Combining the identity~\eqref{eq:convpropheatkernel} with the inequalities~\eqref{eq:111612} and~\eqref{eq:111712}, we deduce that, for any $s \in (0 , t)$,
\begin{multline*}
    \E \left[ \left| \di^*_1 \di^*_2 P^{\varphi_\cdot} (t , x ;  0) \right|  \right] e^{|x|/\left(C_1 \sqrt{t_+}\right)} \\
    \leq C \E \left[ \sum_{y \in \Zd}  \left| \di^*_1 P^{\varphi_\cdot} (t - s , y ; 0) \right|^{2p} \Phi_{C_2}(t , y)^{1-p} \right] +  C \E \left[ \sum_{y \in \Zd}  \left| \di^*_1 P^{\varphi_\cdot} (s , y ; 0) \right|^{2p} \Phi_{C_2}(t , y)^{1-p} \right].
\end{multline*}
Since the time $s \in (0,t)$ is a free parameter, we may integrate over $s \in \left( \frac{t}{4}, \frac{3t}{4} \right)$ to deduce that
\begin{equation*}
    \E \left[ \left| \di^*_1 \di^*_2 P^{\varphi_\cdot} (t , x ;  0) \right|^p \right] e^{|x|/\left(C_1 \sqrt{t_+}\right)} \leq C \E \left[ t^{-1}\int_{t/4}^{3t/4} \sum_{y \in \Zd}  \left| \di^*_1 P^{\varphi_\cdot} (s , y ; 0) \right|^{2p} \Phi_{C_2}(t , y)^{1-p} \, ds \right].
\end{equation*}
Selecting the inverse temperature $\beta$ sufficiently large so that the Calder\'{o}n-Zygmund regularity (Proposition~\ref{prop:CalderonZygmund}) holds with the exponent $2p$ (and selecting the constant $C_1$, and thus $C_2$, sufficiently large depending on $d , \beta, p$), we deduce that
\begin{align*}
     \E \left[ t^{-1} \int_{t/4}^{3t/4} \sum_{y \in \Zd}  \left| \di^*_1 P^{\varphi_\cdot} (s , y ; 0) \right|^{2p} \Phi_{C_2}(t , y)^{1-p} \, ds \right] & = \E \left[ t^{-1 - \frac{(1-p)d}{2}} \int_{t/4}^{3t/4} \sum_{y \in \Zd}  \left| \di^*_1 P^{\varphi_\cdot} (s , y ; 0) \right|^{2p}  e^{\frac{(p-1)|y|}{C_2 \sqrt{t_+}}} \, ds \right] \\
     & \leq C \E \left[ t^{-1 -\frac{(1-p)d}{2}} \int_{t/4}^{3t/4} \sum_{y \in \Zd}  \left| \nabla P^{\varphi_\cdot} (s , y ; 0) \right|^{2p}  e^{\frac{(p-1)|y|}{C_2 \sqrt{t_+}}} \, ds \right] \\
     & \leq  \frac{C (\ln t_+)^{p(d+2)}}{t^{p(1 + \frac{d}{2})}}
\end{align*}
(N.B. in the second inequality, we used the first item of Remark~\ref{rem:remark25} to deduce that the norm of the codifferential is smaller than the norm of the gradient). Combining the two previous inequalities, we obtain
\begin{equation*}
    \left\| \di^*_1 \di^*_2 P^{\varphi_\cdot} (t , x ;  0)  \right\|_{L^p (\Omega \, ; \, \R^{d \times d})}^p =  \E \left[ \left| \di^*_1 \di^*_2 P^{\varphi_\cdot} (t , x ;  0) \right|^p \right] \leq  \frac{C (\ln t_+)^{p(d+2)}}{t^{p(1 + \frac{d}{2})}} \exp \left( - \frac{|x|}{C_1 \sqrt{t_+}} \right)
\end{equation*}
and thus completes the proof of Proposition~\ref{prop:Lpannealedreg} in the case of the mixed codifferential of the heat kernel.

In the case where there is only one codifferential (we will only treat the case when the codifferential is applied to the first variable), we start similarly by writing the identity, for any $t , s \in (0 , \infty)$ with $s \leq t$,
\begin{equation*}
     \di^*_1 P^{\varphi_\cdot} (t , x ;  0) = \sum_{y \in \Zd} \di^*_1 P^{\varphi_\cdot} (t , x ; s, y) P^{\varphi_\cdot} (s , y ;  0).
\end{equation*}
We then deduce that, for any $s \in (0, t)$,
\begin{align} \label{eq:convpropheatkernel}
    \left| \di^*_1 P^{\varphi_\cdot} (t , x ;  0) \right| e^{|x|/\left(C_1 \sqrt{t_+}\right)} & \leq \sum_{y \in \Zd} \left| \di^*_1 P^{\varphi_\cdot} (t , x ; s, y) \right|e^{|x-y|/\left(C_1 \sqrt{t_+}\right)}  \left| P^{\varphi_\cdot} (s , y ;  0) \right| e^{|y|/\left(C_1 \sqrt{t_+}\right)} \notag \\
    & \leq \sum_{y \in \Zd} \sqrt{t_+} \left| \di^*_1 P^{\varphi_\cdot} (t , x ; s, y) \right|^2 e^{2|x-y|/\left(C_1 \sqrt{t_+}\right)} + \sum_{y \in \Zd} \frac{1}{\sqrt{t_+}} \left| P^{\varphi_\cdot} (s , y ;  0) \right|^2 e^{2|y|/\left(C_1 \sqrt{t_+}\right)}. \notag
\end{align}
The first term on the right-hand side can be treated using the same argument as in the case of the mixed codifferential, and the second term can be estimated using Proposition~\ref{prop.prop4.11chap4HK} (which holds for any realisation of the Langevin dynamic).
\end{proof}

\section{Sums on the lattice} \label{App.sumonlattice}

In this appendix, we provide upper bounds for sum of polynomially decaying functions on the lattice which are frequently used in the proofs above.

\begin{proposition} \label{prop:appCdiscretesum}
    For any exponents $\alpha, \gamma, a, c \in [0, \infty)$ with $\alpha + \delta > d$, there exists a constant $C := C(d, \alpha, \gamma, a, b) < \infty$, such that, for any $x \in \Zd$,
    \begin{equation} \label{eq:truineqB1}
\sum_{y \in \Zd} \frac{( \ln |y-x|_+ )^{a}}{|y - x|_+^{\alpha}}  \frac{( \ln |y|_+ )^{c}}{|y|_+^{\gamma}}
\le
\left\{ \begin{aligned}
\frac{C( \ln |x|_+ )^{a + c} }{|x|_+^{\alpha + \gamma - d}} & \hspace{3mm} \mbox{if}~& \alpha, \delta \in (0,d),\\
\frac{C ( \ln |x|_+ )^{a + c + 1}}{|x|_+^{\alpha}} &\hspace{3mm} \mbox{if}~& \alpha \in (0,d] ~\mbox{and}~ \gamma = d,\\
 \frac{ \left( \ln |x|_+ \right)^{a}}{|x|_+^{\alpha}} &\hspace{3mm} \mbox{if}~& \gamma > d ~\mbox{and} ~ \alpha < \gamma  \\
\end{aligned} \right.
\end{equation}
and
\begin{equation} \label{ineqB4}
     \sum_{z , z' \in \Zd} \frac{1}{|z|_+^{d-1} |z - x|_+^{d-1}}  \frac{1}{|z'|_+^{d-1} |z' - x|_+^{d-1}} \frac{(\ln |z - z'|)^a}{|z - z'|_+^{d}}  \leq \frac{C}{|x|^{2d-2}_+}.
\end{equation}
There exists a constant $C := C(d, a , c) < \infty$ such that, for any $y_1 , y_2 \in \Zd$,
        \begin{equation}
\label{e.sumlattice1}
    \sum_{z \in \Zd} \frac{\left( \ln \left( |y_1-z|_+ + |y_2 - z|_+ \right) \right)^a}{|y_1-z|_+^{2d+1} + |y_2 - z|_+^{2d+1}}  \frac{\left( \ln \left( |z|_+  \right) \right)^c}{|z|^{d-1}_+} \leq \frac{C \left( \ln \left( |y_1|_+ + |y_2|_+ \right) \right)^{a + c} }{|y_1|_+^{d-1} + |y_2|_+^{d-1}}  \frac{1}{|y_1-y_2|^{d+1}_+}.
\end{equation}
\end{proposition}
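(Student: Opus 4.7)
My plan is to establish the three inequalities in sequence, with the second and third building on the first.

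\textbf{First inequality.} I will use the classical three-region decomposition of $\Zd$: partition into $A := \{y \in \Zd : |y| \leq |x|/2\}$, $B := \{y \in \Zd : |y-x| \leq |x|/2\}$, and $C := \Zd \setminus (A \cup B)$. On $A$, one has $|y - x|_+ \sim |x|_+$, so the factor $(\ln|y-x|_+)^a/|y-x|_+^\alpha$ is essentially $(\ln|x|_+)^a/|x|_+^\alpha$, and the contribution reduces to a radial sum $\sum_{|y| \leq |x|/2}(\ln|y|_+)^c/|y|_+^{\gamma}$, which by a standard layer-cake computation is of order $|x|^{d-\gamma}(\ln|x|)^c$ if $\gamma < d$, of order $(\ln|x|)^{c+1}$ if $\gamma = d$, and a bounded constant if $\gamma > d$. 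Region $B$ is handled symmetrically by the change of variables $y \mapsto y - x$. On $C$, both $|y|_+$ and $|y-x|_+$ are at least $|x|/2$, and the sum $\sum_{y \in C}(\ln|y|_+)^{a+c}/|y|_+^{\alpha+\gamma}$ converges (thanks to $\alpha + \gamma > d$) to a quantity dominated by $|x|^{d-\alpha-\gamma}(\ln|x|)^{a+c}$. Matching these three contributions to the three stated cases gives the claim.

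\textbf{Second inequality.} I will combine (1) with a dyadic split according to the size of $|z-z'|$. When $|z-z'| > |x|/4$, use $(\ln|z-z'|_+)^a/|z-z'|_+^d \leq C(\ln|x|_+)^a/|x|_+^d$ and then apply (1) independently in $z$ and $z'$ (case 1 with $\alpha = \gamma = d-1$, legitimate since $2(d-1) > d$ for $d \geq 3$), yielding a bound of order $(\ln|x|)^a|x|^{-d}\cdot|x|^{-2(d-2)} = (\ln|x|)^a|x|^{-(3d-4)}$, which is smaller than $|x|^{-(2d-2)}$ for $d \geq 3$. When $|z-z'| \leq |x|/4$, use the triangle inequality $|z|+|z-x| \geq |x|$ to conclude that at least one of them is $\geq |x|/2$; say $|z| \geq |x|/2$, which forces $|z'| \geq |x|/4$. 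Then $|z|^{-(d-1)}|z'|^{-(d-1)} \leq C|x|^{-(2d-2)}$, and the remaining sum decouples under the substitution $w = z - z'$: the inner sum $\sum_u |u|_+^{-(d-1)}|u-w|_+^{-(d-1)}$ is bounded by $C|w|_+^{-(d-2)}$ by (1), and the outer sum $\sum_w (\ln|w|_+)^a/|w|_+^{2d-2}$ is absolutely convergent.

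\textbf{Third inequality.} The starting point is the elementary estimate
\[
|y_1-z|^{2d+1} + |y_2-z|^{2d+1} \geq c\bigl(|y_1-z|+|y_2-z|\bigr)^{2d+1} \geq c\,|y_1-y_2|^{d+1}\max(|y_1-z|,|y_2-z|)^d,
\]
obtained from $\max(a,b) \geq (a+b)/2$ together with the triangle inequality $|y_1-z|+|y_2-z| \geq |y_1-y_2|$. This extracts the prefactor $|y_1-y_2|^{-(d+1)}$ and reduces the claim to
\[
\sum_{z \in \Zd}\frac{(\ln(|y_1-z|_+ + |y_2-z|_+))^a}{\max(|y_1-z|,|y_2-z|)_+^d}\cdot\frac{(\ln|z|_+)^c}{|z|_+^{d-1}} \leq \frac{C(\ln(|y_1|+|y_2|)_+)^{a+c}}{|y_1|_+^{d-1}+|y_2|_+^{d-1}}.
\]
Setting $R := \max(|y_1|,|y_2|)$ (so that $|y_1|_+^{d-1}+|y_2|_+^{d-1} \sim R^{d-1}$), split according to whether $|z| \leq R/2$, where $\max(|y_1-z|,|y_2-z|) \geq R/2$ and the first factor is bounded by $C(\ln R)^a/R^d$ while the remaining sum $\sum_{|z| \leq R/2}(\ln|z|_+)^c/|z|_+^{d-1}$ contributes $R(\ln R)^c$ by (1), or $|z| > R/2$, where $|z|_+^{-(d-1)} \leq C/R^{d-1}$ and the remaining $z$-sum reduces, after $u := z - y_1$, to estimating $\sum_u (\ln(|u|+|u-r|)_+)^a/\max(|u|,|u-r|)^d$ with $r := y_2 - y_1$; this last sum is controlled by splitting the $u$-space into $|u| \leq |r|$ (giving $O((\ln|r|)^a)$) and $|u| > |r|$ (where $\max(|u|,|u-r|) \sim |u|$ and we get an $O((\ln R)^{a+1})$-type bound).

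The main obstacle will be the bookkeeping of logarithms in the third inequality: the substitution-based estimates naturally produce a factor $(\ln R)^{a+c+1}$ coming from the $|u| > |r|$ regime, so reaching the cleaner $(\ln R)^{a+c}$ asserted in the statement requires either a finer geometric decomposition or, more pragmatically, absorbing the extra logarithm into the overall constant (which is harmless for the proof of Theorem~\ref{th:mainth} since the logarithmic exponent $\kappa$ is not optimised there).
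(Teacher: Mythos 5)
Your proofs of the first and second inequalities are correct. For \eqref{eq:truineqB1}, the three-region decomposition is the standard argument the paper also refers to. For \eqref{ineqB4}, your dyadic split according to the size of $|z-z'|$ is a genuinely different route from the paper's: the paper first derives a pointwise intermediate estimate on the inner sum over $z'$ (namely $\sum_{z'} |z'|_+^{1-d}|z'-x|_+^{1-d}|z-z'|_+^{-d} \leq C|x|_+^{1-d}(\ln|z|_+)|z|_+^{1-d} + C|x|_+^{1-d}(\ln|x-z|_+)|x-z|_+^{1-d} + C|x|_+^{2-2d}$) and then sums over $z$, whereas you decouple by fixing $w = z - z'$. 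Both are valid; yours is more symmetric and arguably shorter, but the paper's version also produces a standalone intermediate bound that could be reused.

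The third inequality \eqref{e.sumlattice1} is where there is a genuine gap, and it is not the one you flag. After extracting the prefactor $|y_1-y_2|_+^{-(d+1)}$, you are left to bound
\[
\sum_{z \in \Zd}\frac{(\ln(|y_1-z|_+ + |y_2-z|_+))^a}{\max(|y_1-z|,|y_2-z|)_+^d}\cdot\frac{(\ln|z|_+)^c}{|z|_+^{d-1}}.
\]
For $|z|$ much larger than $R := \max(|y_1|,|y_2|)$ this summand behaves like $(\ln|z|)^{a+c}|z|^{-(2d-1)}$, so the full sum converges. However, in your treatment of the region $|z| > R/2$ you bound $|z|_+^{-(d-1)} \leq C/R^{d-1}$ and then try to estimate what remains, namely $\sum_{|z|>R/2}(\ln(\cdots))^a(\ln|z|_+)^c \max(|y_1-z|,|y_2-z|)_+^{-d}$ (or, after your substitution $u = z - y_1$ and dropping the $(\ln|z|_+)^c$ factor, $\sum_u (\ln(|u|+|u-r|)_+)^a/\max(|u|,|u-r|)_+^d$). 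That remaining sum \emph{diverges}: for large $|u|$ the summand is comparable to $(\ln|u|)^a/|u|^d$, whose lattice sum over a half-space behaves like $\int^\infty (\ln r)^a\,dr/r = \infty$. Your claimed ``$O((\ln R)^{a+1})$-type bound'' in the $|u| > |r|$ regime therefore has no justification, and the issue is not merely an extra logarithm to absorb into the constant; the estimate as written is simply false. The problem is that the multiplicative extraction $|y_1-z|_+^{2d+1}+|y_2-z|_+^{2d+1} \geq c\,|y_1-y_2|_+^{d+1}\max(|y_1-z|,|y_2-z|)_+^d$ degrades the denominator from $\sim |z|^{2d+1}$ to $\sim |z|^d$ for large $|z|$, so the $|z|_+^{-(d-1)}$ decay is then essential to retain convergence and cannot be replaced by a constant over an unbounded region.

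A repair within your framework would be to keep the $|z|_+^{-(d-1)}$ decay and estimate the region $|z| > R/2$ via a genuine convolution bound (e.g.~further subdividing into $R/2 < |z| \lesssim R$, where a constant bound on $|z|^{-(d-1)}$ is safe because the region is bounded, and $|z| \gg R$, where you use the full decay of both factors and the fact that $\max(|y_1-z|,|y_2-z|) \gtrsim |z|$). Alternatively, as the paper does, do not split the denominator multiplicatively at all: set $x := y_1 - y_2$ and $y := (y_1+y_2)/2$, use the parallelogram law to bound $|y_1-z|_+^{2d+1}+|y_2-z|_+^{2d+1} \geq c(|x|_+^{2d+1}+|y-z|_+^{2d+1})$, and analyse $\sum_z \bigl(|x|_+^{2d+1}+|y-z|_+^{2d+1}\bigr)^{-1}|z|_+^{-(d-1)}$, whose summand still decays like $|z|^{-3d}$ at infinity, avoiding the divergence entirely.
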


The proof of this proposition is split into 4 different proofs.

\begin{proof}[Proof of the inequality~\eqref{eq:truineqB1}]
The inequality~\eqref{eq:truineqB1} can be established using standard arguments; we refer the reader to~\cite[Appendix C]{DW} for a complete proof (among other possible references). We note that not all possible cases for the exponents are treated here. A change of variables $y \mapsto x-y$ interchanges the roles of $\alpha$ and $\delta$, thereby covering several additional cases. The remaining cases, namely those with $\alpha = \delta$ and $\alpha , \delta \geq d$, can be handled similarly with little additional effort. However, these are not required for the purposes of this article.
\end{proof}

\begin{proof}[Proof of the inequality~\eqref{ineqB4}]
To simplify the presentation of the argument, we only present the proof in the case $a = 0$. For the inequality~\eqref{ineqB4}, we will first prove the upper bound, for any $x , z \in \Zd$,
\begin{equation} \label{intermediateB2}
    \sum_{z' \in \Zd} \frac{1}{|z'|_+^{d-1} |z' - x|_+^{d-1}} \frac{1}{|z - z'|_+^{d}}  \leq  \frac{C}{|x|_+^{d-1}} \frac{\ln |z|_+ }{\left| z \right|_+^{d-1}} +  \frac{C}{|x|_+^{d-1}} \frac{\ln |x - z|_+}{|x - z|_+^{d-1}} + \frac{C}{|x|^{2d-2}_+}.
\end{equation}
This inequality is shown by considering two different cases.

\textbf{Case I:} $|z|_+ \leq \frac{|x|_+}{10}$. In that case, we partition the space into two regions
\begin{equation*}
    \left\{ z' \in \Zd \, : \, \left| z' \right| \leq \frac{|x|_+}{2} \right\} ~~\mbox{and}~~ \left\{ z' \in \Zd \, : \, \left| z' \right| > \frac{|x|_+}{2} \right\}.
\end{equation*}
In the first region, we may use the inequality $|z' - x|_+ \geq |x|_+ / 2$ and the inequality~\eqref{eq:truineqB1} to write
\begin{align*}
    \sum_{\substack{z' \in \Zd \\ |z'|_+ \leq |x|_+ / 2 }} \frac{1}{|z'|_+^{d-1} |z' - x|_+^{d-1}} \frac{1}{|z - z'|_+^{d}} & \leq \frac{C}{|x|_+^{d-1}} \sum_{\substack{z' \in \Zd \\ |z'|_+ \leq |x|_+ / 2 }} \frac{1}{|z'|_+^{d-1}} \frac{1}{|z - z'|_+^{d}} \\
    & \leq \frac{C}{|x|_+^{d-1}} \sum_{z' \in \Zd } \frac{1}{|z'|_+^{d-1}} \frac{1}{|z - z'|_+^{d}} \\
    & \leq \frac{C}{|x|_+^{d-1}} \frac{ \ln |z|_+ }{\left| z \right|_+^{d-1}}.
\end{align*}
In the second region, we use the inequality $|z' - z|_+ \geq |x|_+/4$ and the inequality~\eqref{eq:truineqB1} to write
\begin{align*}
    \sum_{\substack{z' \in \Zd \\ |z'|_+ \geq |x|_+ / 2 }} \frac{1}{|z'|_+^{d-1} |z' - x|_+^{d-1}} \frac{1}{|z - z'|_+^{d}} & \leq \frac{C}{|x|_+^{d}} \sum_{\substack{z' \in \Zd \\ |z'|_+ \geq |x|_+ / 2 }} \frac{1}{|z'|_+^{d-1}} \frac{1}{|x - z'|_+^{d-1}} \\
    & \leq \frac{C}{|x|_+^{d}} \sum_{z' \in \Zd} \frac{1}{|z'|_+^{d-1}} \frac{1}{|x - z'|_+^{d-1}} \\
    & \leq \frac{C}{|x|_+^{2d-2}}.
\end{align*}
A combination of the two previous inequalities implies that
\begin{equation} \label{resultstep1B2}
    \sum_{z' \in \Zd} \frac{1}{|z'|_+^{d-1} |z' - x|_+^{d-1}} \frac{1}{|z - z'|_+^{d}} \leq \frac{C}{|x|_+^{d-1}} \frac{\ln |z|_+}{\left| z \right|_+^{d-1}} + \frac{C}{|x|^{2d-2}_+}.
\end{equation}

\textbf{Case II:} $|z|_+ \geq \frac{|x|_+}{10}$. In this case, we similarly split the space into two regions
\begin{equation*}
     \left\{ z' \in \Zd \, : \, \left| z' \right| \leq \frac{|x|_+}{20} \right\} ~~\mbox{and}~~ \left\{ z' \in \Zd \, : \, \left| z' \right| > \frac{|x|_+}{20} \right\}.
\end{equation*}
In the first region, we use that $|z' - z|_+ \geq c |x|_+$ and $|z' - x|_+ \geq c |x|_+$ (for some explicit $c > 0$) to deduce that
\begin{align*}
    \sum_{\substack{z' \in \Zd \\ |z'|_+ \leq |x|_+ / 20 }} \frac{1}{|z'|_+^{d-1} |z' - x|_+^{d-1}} \frac{1}{|z - z'|_+^{d}} & \leq \frac{C}{|x|_+^{2d-1}} \sum_{\substack{z' \in \Zd \\ |z'|_+ \leq |x|_+ / 20 }} \frac{1}{|z'|_+^{d-1}} \\
    & \leq \frac{C}{|x|_+^{2d-2}}.
\end{align*}
In the second region, we use the inequality~\eqref{eq:truineqB1} to write
\begin{align*}
    \sum_{\substack{z' \in \Zd \\ |z'|_+ \geq |x|_+ / 20 }} \frac{1}{|z'|_+^{d-1} |z' - x|_+^{d-1}} \frac{1}{|z - z'|_+^{d}} & \leq \frac{C}{|x|_+^{d-1}} \sum_{\substack{z' \in \Zd \\ |z'|_+ \geq |x|_+ / 20 }} \frac{1}{|z'-x|_+^{d-1}} \frac{1}{|z - z'|_+^{d}} \\
    & \leq \frac{C}{|x|_+^{d-1}} \sum_{z' \in \Zd} \frac{1}{|z' - x|_+^{d-1}} \frac{1}{|z - z'|_+^{d}} \\
    & \leq \frac{C}{|x|_+^{d-1}} \frac{ \ln |x - z|_+}{|x - z|_+^{d-1}}.
\end{align*}
A combination of the two previous inequalities implies that
\begin{equation*}
    \sum_{z' \in \Zd} \frac{1}{|z'|_+^{d-1} |z' - x|_+^{d-1}} \frac{1}{|z - z'|_+^{d}} \leq \frac{C}{|x|_+^{2d-2}} + \frac{C}{|x|_+^{d-1}} \frac{\ln |x - z|_+}{|x - z|_+^{d-1}}.
\end{equation*}
Combining this inequality with~\eqref{resultstep1B2} completes the proof of~\eqref{intermediateB2}.

There only remains to show that the inequality~\eqref{intermediateB2} implies~\eqref{ineqB4}. To this end, we make use of~\eqref{eq:truineqB1}, the inequality $\ln |z|_+ \leq C |z|_+^{1/2}$ and the inequality $2d - 2 -1/2 > d$ (since $d \geq 3$) to write
\begin{align*}
    \sum_{z , z' \in \Zd} \frac{1}{|z|_+^{d-1} |z - x|_+^{d-1}}  \frac{1}{|z'|_+^{d-1} |z' - x|_+^{d-1}} \frac{1}{|z - z'|_+^{d}} & \leq \frac{C}{|x|^{d-1}_+}\sum_{z \in \Zd} \frac{\ln |z|_+}{|z|_+^{2d-2} |z - x|_+^{d-1}}  + \frac{C}{|x|^{d-1}_+}\sum_{z \in \Zd} \frac{\ln |x - z|_+}{|z|_+^{d-1} |z - x|_+^{2d-2}} \\
    & \quad + \frac{C}{|x|^{2d-2}_+} \sum_{z \in \Zd} \frac{1}{|z|_+^{d-1} |z - x|_+^{d-1}} \\
    & \leq \frac{C}{|x|_+^{2d-2}}.
\end{align*}
The proof of the inequality~\eqref{ineqB4} is complete.
\end{proof}

\begin{proof}[Proof of the inequality~\eqref{e.sumlattice1}]
Once again, in order to simplify the presentation of the argument, we only present the proof in the case $a = c = 0$. We first use the parallelogram law to write
\begin{align*}
    |y_1-z|_+^{2d+1} + |y_2 - z|_+^{2d+1} \geq c \left( |y_1-z|_+^{2} + |y_2 - z|_+^{2} \right)^{\frac{2d+1}{2}}
    & \geq c \left( |y_1- y_2|_+^{2} + \left|\frac{y_2 + y_1}{2} - z\right|_+^{2} \right)^{\frac{2d+1}{2}} \\
    & \geq c \left( |y_1- y_2|_+^{2d+1} + \left|\frac{y_2 + y_1}{2} -  z\right|_+^{2d+1} \right).
\end{align*}
Setting $x = y_1- y_2$ and $y = \frac{y_2 + y_1}{2}$, we see that (by the parallelogram law and a similar computation)
\begin{equation*}
   |x|_+^{2d+1} + |y|_+^{2d+1} \geq c \left( |y_1|_+^{2d+1} + |y_2|_+^{2d+1}  \right).
\end{equation*}
From the two previous inequalities, we see that~\eqref{e.sumlattice1} is equivalent to the following inequality
\begin{equation} \label{eq:ineqB2}
    \sum_{z \in \Zd} \frac{1}{|x|_+^{2d+1} + |y - z|_+^{2d+1}}  \frac{1}{|z|^{d-1}_+} \leq \frac{C}{|x|_+^{d-1} + |y|_+^{d-1}}  \frac{1}{|x|^{d+1}_+}.
\end{equation}
The rest of the argument is devoted to the proof of this inequality, and we discuss two cases.

\medskip

\textbf{Case I:} $|y|>2|x|$. In this case, the inequality~\eqref{eq:ineqB2} can be restated as follows
\begin{equation*}
    \sum_{z \in \Zd} \frac{1}{|x|_+^{2d+1} + |y - z|_+^{2d+1}}  \frac{1}{|z|^{d-1}_+} \leq \frac{C}{|y|_+^{d-1}}  \frac{1}{|x|^{d+1}_+}.
\end{equation*}
We will prove this inequality by splitting the space into the four regions (see Figure~\ref{fig:4regions})
\begin{equation*}
    \left\{ z \in \Zd \, : \, |y - z| \leq |x| \right\}, ~~  \left\{ z \in \Zd \, : \, |x| < |y - z| \leq \frac{|y|}{2} \right\}~~\mbox{and}~~ \left\{ z \in \Zd \, : \,  |z| \leq \frac{|y|}{2}  \right\}
\end{equation*}
and
\begin{equation*}
    \left\{ z \in \Zd \, : \,  \frac{|y|}{2} < |y - z| ~\mbox{and}~ \frac{|y|}{2} < |z|  \right\}.
\end{equation*}

\begin{figure}
    \centering
    \includegraphics[scale=0.6]{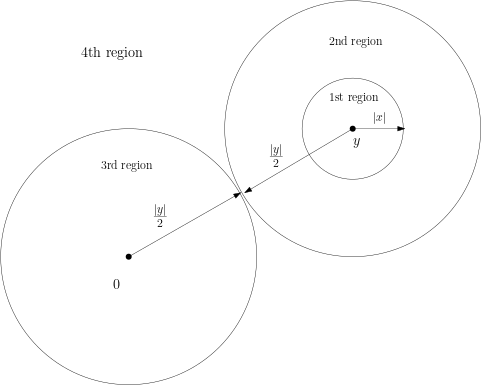}
    \caption{The four regions used in the proof of the inequality~\eqref{e.sumlattice1}.}
    \label{fig:4regions}
\end{figure}

\medskip

\textit{Step 1: the first region.} We may use the inequalities
\begin{equation*}
    |x|_+^{2d+1} + |y - z|_+^{2d+1} \geq |x|_+^{2d+1} \hspace{5mm} \mbox{and} \hspace{5mm} |z|_+ \geq \frac{|y|_+}{2}
\end{equation*}
to obtain
\begin{equation*}
    \sum_{z \in \Zd \, : \, |y - z| < |x| } \frac{1}{|x|_+^{2d+1} + |y - z|_+^{2d+1}}  \frac{1}{|z|^{d-1}_+} \leq \sum_{z \in \Zd \, : \, |y - z| < |x| } \frac{C}{|x|_+^{2d+1}} \frac{1}{|y|^{d-1}} \leq \frac{C}{|x|_+^{d+1}} \frac{1}{|y|_+^{d-1}},
\end{equation*}
where in the last inequality we used that the cardinality of the set $\{ z \in \Zd \, : \, |y - z| \leq |x| \}$ is of order $|x|^d_+$.

\medskip

\textit{Step 2: the second region.} We use the inequalities
\begin{equation*}
    |x|_+^{2d+1} + |y - z|_+^{2d+1} \geq |y - z|_+^{2d+1} \hspace{5mm} \mbox{and} \hspace{5mm} |z|_+ \geq \frac{|y|_+}{2}
\end{equation*}
to obtain that
\begin{align*}
    \sum_{z \in \Zd \, : \, |x| < |y - z| \leq \frac{|y|}{2}  } \frac{1}{|x|_+^{2d+1} + |y - z|_+^{2d+1}}  \frac{1}{|z|^{d-1}_+} & \leq \sum_{z \in \Zd \, : \,  |x| < |y - z| \leq \frac{|y|}{2} } \frac{1}{|y-z|_+^{2d+1}} \frac{1}{|y|_+^{d-1}} \\
    & \leq \frac{C}{|y|_+^{d-1}} \sum_{z \in \Zd \, : \,  |x| < |z| \leq \frac{|y|}{2} } \frac{1}{|z|_+^{2d+1}} \\
    & \leq \frac{C}{|y|_+^{d-1}} \sum_{z \in \Zd \, : \,  |x| < |z| } \frac{1}{|z|_+^{2d+1}}  \\
    & \leq \frac{C}{|y|_+^{d-1}} \frac{1}{|x|_+^{d+1}}.
\end{align*}

\medskip

\textit{Step 3: the third region.}  In this case, we note that there exists $c>0$ such that
\begin{equation*}
    |x|_+^{2d+1} + |y - z|_+^{2d+1} \geq c |y|_+^{2d+1}
\end{equation*}
and use it to deduce that
\begin{align*}
    \sum_{z \in \Zd \, : \, |z| < \frac{|y|}{2} } \frac{1}{|x|_+^{2d+1} + |y - z|_+^{2d+1}}  \frac{1}{|z|^{d-1}_+} \leq \frac{C}{|y|_+^{2d+1}} \sum_{z \in \Zd \, : \,  |z| < \frac{|y|}{2} } \frac{C}{|z|_+^{d-1}} \leq \frac{C}{|y|_+^{2d}} \leq  \frac{C}{|y|_+^{d-1}} \frac{1}{|x|_+^{d+1}}.
\end{align*}

\medskip

\textit{Step 4: the fourth region.} In this case, we use the inequality, for some constant $c > 0$,
\begin{equation*}
     |x|_+^{2d+1} + |y - z|_+^{2d+1} \geq c |z|_+^{2d+1}
\end{equation*}
to deduce that
\begin{align*}
    \sum_{z \in \Zd \, : \,  \frac{|y|}{2} < |y - z| ~\mbox{and}~ \frac{|y|}{2} < |z|} \frac{1}{|x|_+^{2d+1} + |y - z|_+^{2d+1}}  \frac{1}{|z|^{d-1}_+} \leq \sum_{z \in \Zd \, : \,  \frac{|y|}{2} < |y - z| ~\mbox{and}~ \frac{|y|}{2} < |z|} \frac{C}{|z|_+^{3d}} & \leq \sum_{z \in \Zd \, : \, \frac{|y|}{2} < |z|}  \frac{C}{|z|_+^{3d}} \\
    & \leq \frac{C}{|y|_+^{2d}} \\
    & \leq \frac{C}{|y|_+^{d-1}}  \frac{1}{|x|^{d+1}_+}.
\end{align*}

\medskip

\textbf{Case II:} $|y|\le 2|x|$. In this case, the inequality~\eqref{eq:ineqB2} can be restated as follows
\begin{equation*}
    \sum_{z \in \Zd} \frac{1}{|x|_+^{2d+1} + |y - z|_+^{2d+1}}  \frac{1}{|z|^{d-1}_+} \leq \frac{C}{|x|_+^{2d}}.
\end{equation*}
Similarly, we split the sum into two regions:
\begin{equation*}
    \left\{ z \in \Zd \, : \, |z| \leq |x| \right\} \hspace{5mm} \mbox{and} \hspace{5mm} \left\{ z \in \Zd \, : \, |z| > |x| \right\}.
\end{equation*}

\medskip

\textit{Step 1: the first region.} We use the inequality
\begin{equation*}
    |x|_+^{2d + 1} + |y - z|_+^{2d + 1} \geq |x|_+^{2d + 1} 
\end{equation*}
to deduce that
\begin{equation*}
    \sum_{z \in \Zd \, : \, |z| \leq |x| } \frac{1}{|x|_+^{2d+1} + |y - z|_+^{2d+1}}  \frac{1}{|z|^{d-1}_+} \leq \frac{1}{|x|_+^{2d+1}} \sum_{z \in \Zd \, : \, |z| \leq |x| }  \frac{1}{|z|^{d-1}_+} \leq \frac{C}{|x|_+^{2d}}.
\end{equation*}

\medskip

\textit{Step 2: the second region.} We use the inequality
\begin{equation*}
    |x|_+^{2d + 1} + |y - z|_+^{2d + 1} \geq c |z|_+^{2d + 1} 
\end{equation*}
to deduce that
\begin{equation*}
    \sum_{z \in \Zd \, : \, |z| \geq |x| } \frac{1}{|x|_+^{2d+1} + |y - z|_+^{2d+1}}  \frac{1}{|z|^{d-1}_+} \leq \sum_{z \in \Zd \, : \, |z| \geq |x| }  \frac{C}{|z|^{3d}_+} \leq \frac{C}{|x|_+^{2d}}.
\end{equation*}
\end{proof}

{\small
\bibliographystyle{alpha}
\bibliography{villain}
}
\end{document}